\documentclass[11pt,reqno]{amsart}
\usepackage{amsbsy,amsfonts,amsmath,amssymb,amscd,amsthm}
\usepackage{mathrsfs,float}
\usepackage[mathcal]{euscript}
\usepackage{enumitem,graphicx,epstopdf}
\usepackage[abs]{overpic}
\usepackage[a4paper,text={6.1in,8in},centering]{geometry}
\usepackage{pxfonts,epstopdf}
\usepackage[colorlinks=true,linkcolor=blue,citecolor=green]{hyperref}
\usepackage{srcltx}
\usepackage[margin=20pt,font=small,labelfont=bf,textfont=it]{caption}
\usepackage{graphicx}
\usepackage{subcaption}
\usepackage{inputenc}
\usepackage{xfrac}

\usepackage{tikz}
\usepackage{pgfplots}
\pgfplotsset{compat=1.17}

\hypersetup{linktocpage}

\small\normalsize
\setlength{\parskip}{.05in} \setlength{\textheight}{22cm}

\theoremstyle{plain}
\newtheorem{mainthm}{Theorem}

\newtheorem{maincor}{Corollary}
\newtheorem{teo}{Theorem}[section]
\newtheorem{lem}[teo]{Lemma}
\newtheorem{prop}[teo]{Proposition}
\newtheorem{cor}[teo]{Corollary}
\newtheorem{defi}[teo]{Definition}
\newtheorem{afir}{Claim}[teo]

\theoremstyle{definition}
\newtheorem{exem}[teo]{Example}

\theoremstyle{remark}
\newtheorem{obs}[teo]{Remark}

\makeatletter
\newcommand{\mylabel}[2]{#2\def\@currentlabel{#2}\label{#1}}
\makeatother

\DeclareMathOperator{\dH}{\mathrm{d_{H}}}
\DeclareMathOperator{\D}{\mathrm{d}}
\DeclareMathOperator{\OF}{\mathcal{O}^+_{\mathscr{F}}}
\DeclareMathOperator{\OG}{\mathcal{O}^+_{\mathscr{G}}}
\DeclareMathOperator{\Lip}{\mathrm{Diff}^0}
\DeclareMathOperator{\IFS}{\mathrm{IFS}}
\DeclareMathOperator{\bD}{\bar{\mathrm{d}}_0}

\newcommand{\eqdef}{\stackrel{\scriptscriptstyle\rm def}{=}}

\setlist[enumerate,1]{label=(\arabic*)}
\setlist[enumerate,2]{label=(\alph*)}

%
%

\begin{document}
\title[Strong foliations and IFS]{Minimal Strong Foliations in Skew-products of Iterated Function Systems
}
\begin{abstract}
We study locally constant skew-product maps over full shifts of finite symbols with arbitrary compact metric spaces as fiber spaces. We introduce a new criterion to determine the density of leaves of the strong unstable (and strong stable) foliation, that is, for its minimality. When the fiber space is a circle, we show that both strong foliations are minimal for an open and dense set of robust transitive skew-products. We provide examples where either one foliation is minimal or neither is minimal. Our approach involves investigating the dynamics of the associated iterated function system (IFS). We establish the asymptotic stability of the phase space of the IFS when it is a strict attractor of the system. We also show that any transitive IFS consisting of circle diffeomorphisms that preserve orientation can be approximated by a robust forward and backward minimal, expanding, and ergodic (with respect to Lebesgue) IFS. Lastly, we provide examples of smooth robust transitive IFSs where either the forward or the backward minimal fails, or both.
\end{abstract}
\author[Barrientos]{Pablo G.~Barrientos}
\address{\centerline{Instituto de Matem\'atica e Estat\'istica, UFF}
    \centerline{Rua M\'ario Santos Braga s/n - Campus Valonguinhos, Niter\'oi,  Brazil}}
\email{pgbarrientos@id.uff.br}
\author[Cisneros]{Joel Angel Cisneros}
\address{\centerline{Instituto de Matem\'atica e Estat\'istica, UFF}
    \centerline{Rua M\'ario Santos Braga s/n - Campus Valonguinhos, Niter\'oi,  Brazil}}
\email{joelangel@id.uff.br}

\maketitle
\thispagestyle{empty}

\section{Introduction}
Strong partially hyperbolic dynamics represent a large class of systems that exhibit a rich variety of dynamical behavior. Such systems are characterized by an invariant set having a partially hyperbolic splitting in three non-trivial bundles $E^s \oplus E^c \oplus E^u$. The extremal subbundles $E^s$ and $E^u$ are, respectively, uniformly contracting and uniformly expanding by the action of the derivative, and $E^c$ has an intermediate behavior. It is well known~\cite{HPS77} that there are two invariant foliations that are tangent, respectively, to $E^s$ and $E^u$. We refer to these foliations as the \emph{strong stable} and \emph{strong unstable foliation}, respectively.
These invariant foliations provide insights into the global behavior of the system, such as transitivity and ergodic properties.
\emph{Transitivity} is defined as the existence of a dense orbit, and when this property persists under small perturbations, it is referred to as \emph{robust transitivity}.

Recall that a
foliation is \emph{minimal} if every leaf is dense in the whole space and \emph{robustly minimal} when this property holds for
any small perturbation of the initial system.
Bonatti, Díaz, and Ures~\cite{BDU02} proved that for three-dimensional robustly transitive, strongly partially hyperbolic and non-hyperbolic diffeomorphisms either the strong stable or the strong unstable foliation is robustly minimal.
This result was later extended in~\cite{RHU07} to diffeomorphisms on manifolds of higher dimensions and in~\cite{Nob:15} to the context of robustly transitive attractors that are robustly non-hiperbolic and partially hyperbolic with one-dimensional center bundle. Nobili questioned whether these results could be extended to the broader context of robustly transitive proper sets, rather than just attractors. For instance, to transitive hyperbolic sets of a
\emph{skew product} $T(\omega,x)=(\tau(\omega),f(\omega,x))$ where the \emph{base} map $\tau$ is a \emph{horseshoe} (a full shift in finite symbols) and the \emph{fiber} maps $f(\omega,\cdot)$ are diffeomorphisms of a manifold, which  is called the \emph{fiber space}.
However, it should be noted that this generalization is not feasible for most transitive proper sets, as demonstrated by an example in~\cite{BCGP14} (as notified in~\cite{Nob:15}).

In~\cite{BDU02}, the robust minimality of both strong foliations is obtained by assuming a dynamically coherent central foliation, orientability of the bundles $E^s$, $E^c$, and $E^u$, preservation of this orientability by the differential map, and the existence of a periodic compact center leaf. A similar result was obtained for robust transitive strong partially hyperbolic attractors in~\cite[Proposition 9.1]{Nob:15}. In particular, examples having both strong foliations minimal are provided by robust transitive non-hyperbolic skew-product maps whose base dynamics is a hyperbolic transitive attractor and fiber space is $\mathbb{S}^1$, such as the one obtained in~\cite{BD96}. In view of this result, Nobili raises the question once again of whether there exists a strong partially hyperbolic attractor with a one-dimensional center bundle where only one foliation is minimal.

On the other hand, Pujals and Sambarino~\cite{PS06} gave conditions that ensure the robust minimality of strong stable foliation for partially hyperbolic diffeomorphisms defined on a compact manifold. They achieve this by introducing the \textrm{SH} property. With this technique, they obtained that one of the strong foliations of the classical examples of Shub~\cite{shub1971topological} and Mañé~\cite{mane1978contributions} (a certain class of Derived from Anosov) is robustly minimal.
Recently, in~\cite{RUY22}, the authors provide conditions that guarantee the robust minimality of the strong stable foliation for a general class of \emph{derived from Anosov} (isotopic to a linear Anosov) diffeomorphisms on the three torus. Furthermore, they construct an example that exhibits the robust minimality of both strong foliations.

In this work, we investigate a class of partially hyperbolic systems and their associated strong invariant foliations. We will examine the setting proposed by Nobili in the first aforementioned question of minimality for both strong foliations of robust transitive strong partially hyperbolic proper sets (other than attractors). To do this, we will model these sets as skew-products over a full shift of finite symbols, where the fiber space is an arbitrary compact metric space of any dimension. As far as we know, this is the first paper that provides a criterion for the minimality of the strong foliations when the central dimension is larger than one. However, we will obtain more specific results in the case of dimension one, i.e., when the fiber space is the circle. To simplify the analysis, we will consider locally constant skew-products as a toy model for this dynamics. This will allow us to reduce the analysis to an associated \emph{iterated function system} (IFS). An IFS can be thought of as a finite collection of continuous functions that can be applied successively in any order. We will also provide examples of maps where only one of the foliations is minimal or neither is minimal, addressing the second mentioned question of Nobili in our setting. On the other hand, it should be noted that in proving our theorems on skew-products, we have obtained significant and potentially independently interesting results in the field of IFS.

\subsection{Minimality of strong foliations} \label{sec:minimality-foliations-intro}
We introduce the notion of a \emph{symbolic one-step skew-product map}, also known in the literature as a \emph{locally constant skew-shift}.  First,  let $\Sigma_k=\{1,\dots,k\}^\mathbb{Z}$ be the space of bi-sequences of $k\geq 1$ symbols endowed with the distance
$$
  \D_{\Sigma_k}(\omega,\omega')=\nu^m, \quad m=\min\{i\geq 0: \omega_i \not = \omega'_i \mbox{ or } \omega_{-i} \not = \omega'_{-i}  \}
$$
where $\omega=(\omega_{i})_{i\in\mathbb{Z}},\omega'=(\omega'_{i})_{i\in\mathbb{Z}}\in \Sigma_k$ and $0<\nu<1$. We assume that $(X,\mathrm{d})$ is a metric space and consider a finite family $\mathscr{F}=\{f_{1},\dots,f_{k}\}$ of continuous maps on $X$.
Associated with  $\mathscr{F}$, we define the \emph{symbolic one-step skew-product map} $\Phi$
on the product space $\Sigma_k\times X$ by
\begin{equation}\label{iv.1}
 \Phi:\Sigma_{k}\times X\to\Sigma_{k}\times X,\quad\Phi(\omega,x)
    =(\tau(\omega),f_{\omega_{0}}(x))
\end{equation}
where $\omega=(\omega_{i})_{i\in\mathbb{Z}}\in\Sigma_{k}$ and $\tau:\Sigma_{k}\to\Sigma_{k}$ is the lateral shift map.
To emphasize the role of the fiber maps and the product structure, we will write $\Phi=\tau\ltimes \mathscr{F}$.

Let us denote by $W_{loc}^{s}(\omega)$ and $W_{loc}^{u}(\omega)$  the \emph{local stable} and \emph{local unstable} set for the shift map $\tau$
of a bi-sequence $\omega=(\omega_{i})_{i\in\mathbb{Z}}$. That is, the set of points $\omega'=(\omega'_i)_{i\in\mathbb{Z}} \in \Sigma_k$ where $\omega'_i=\omega_i$ for all
$i\geq 0$ and $\omega'_i=\omega_i$ for all $i<0$ respectively.  Now, we define the \emph{local strong stable} and the  \emph{local strong unstable} set of $(\omega,x)\in\Sigma_{k}\times X$ as
\begin{equation*}
W_{loc}^{ss}(\omega,x)=W_{loc}^{s}(\omega)\times\{x\}
   \quad\text{and} \quad
   W_{loc}^{uu}(\omega,x)=W_{loc}^{u}(\omega)\times\{x\}.
\end{equation*}
Finally, the \emph{global strong stable} and \emph{global strong unstable} leaves of $(\omega,x)\in\Sigma_{k}\times X$ are defined as follows:\begin{equation} \label{eq:ss-uu-global}
    \begin{aligned}
       W^{ss}(\omega,x )&=
       \bigcup_{n\geq0}{\Phi^{-n}\left(W_{loc}^{ss}(\Phi^{n}(\omega,x) )\right)} \quad  \text{and} \quad
       W^{uu}(\omega,x )=
       \bigcup_{n\geq0}{\Phi^{n}\left(W_{loc}^{uu}(\Phi^{-n}(\omega,x) )\right)}.
    \end{aligned}
\end{equation}
We construct a pair of invariant foliations of $\Sigma_k\times X$ using stable and unstable leaves, respectively. Specifically, we define the \emph{stable strong foliation} and \emph{unstable strong foliation} by
$$
\mathcal{F}^{ss}(\Phi)=\{W^{ss}(\omega,x ): (\omega,x)\in \Sigma_k\times X\} \quad \text{and} \quad \mathcal{F}^{uu}(\Phi)=\{W^{uu}(\omega,x ): (\omega,x)\in \Sigma_k\times X\}.
$$
A \emph{foliation}  is said to be \emph{minimal}   if every leaf is dense in the whole space.

A family $\mathscr{F}$ is said to be \emph{minimal} (resp.~\emph{transitive}) if every orbit of any point (resp.~any open set) under the action of the semigroup $\langle \mathscr{F} \rangle^+$ is dense in $X$.
In~\cite[Lemma~5.5]{HN13} the authors assert, without proof, that  the strong unstable foliation $\mathcal{F}^{uu}(\Phi)$ is minimal  under the assumption that $\mathscr{F}$ is minimal.  However, we will show that this claim is not true in general, as demonstrated by Examples~\ref{example_rotation} and Corollary~\ref{exemplo-pablo}.  In the following result, we establish a new criterion for the density of the leaves of the strong unstable foliation. We assume that $X$ is a \emph{strict attractor} of $\mathscr{F}$, which means that any compact set $S$ in $X$ converges in the Hausdorff distance to $X$ under the action of the Hutchinson operator defined as $F(S)=f_1(S)\cup \dots \cup f_k(S)$. For a more general concept of a strict attractor, see Definition~\ref{defi-strict-atractor}.

\begin{mainthm}\label{mainthm1}
Let $\mathscr{F}$ be a finite set of homeomorphisms of a compact metric space  $X$ and consider the associated symbolic one-step skew-product map $\Phi=\tau\ltimes \mathscr{F}$.
Suppose that $X$ is a strict attractor of~$\mathscr{F}$.  Then, the strong unstable foliation $\mathcal{F}^{uu}(\Phi)$ is minimal.
\end{mainthm}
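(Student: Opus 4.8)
The plan is to show that an arbitrary strong unstable leaf $W^{uu}(\omega,x)$ intersects every nonempty open set $U\times V$ of $\Sigma_k\times X$, where $U\subseteq\Sigma_k$ is an open cylinder and $V\subseteq X$ is open. The key structural observation is that $W^{uu}(\omega,x)$ is, by the definition in~\eqref{eq:ss-uu-global}, the union over $n\geq 0$ of $\Phi^n(W^u_{loc}(\Phi^{-n}(\omega,x)))$; and applying $\Phi^n$ to a local unstable set amounts to moving the base point $n$ steps forward along the shift while, in the fiber, iterating the maps $f_{a_{n-1}}\circ\cdots\circ f_{a_0}$ for every choice of finite word $(a_0,\dots,a_{n-1})$ coming from the local unstable sets that feed into $\Phi^{-n}(\omega,x)$. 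Concretely, the fiber component of the part of the leaf landing over a prescribed future $\omega'$ with $\omega'_i=\omega_i$ for $i\geq 0$ consists of all points of the form $f_{\alpha}(x)$ with $\alpha$ ranging over arbitrarily long finite words; since $\langle\mathscr{F}\rangle^+$ is exactly the set of such compositions, the fiber traces of the leaf, restricted over any fixed admissible future, contain the orbit $\OF(x)=\{f_\alpha(x):\alpha\}$ — or rather its ``deep'' part, compositions of length at least some $N$.

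First I would set up notation: fix a target open set, which up to shrinking we may take as a cylinder $[\,\beta_{-m}\cdots\beta_0\cdots\beta_m\,]$ in the base times an open ball $B(y,\varepsilon)$ in $X$. Given the leaf through $(\omega,x)$, I would pick $n$ large and a point $\Phi^{-n}(\omega,x)=(\tau^{-n}\omega, x)$ — note the fiber coordinate is unchanged under $\Phi^{-n}$ composed with the local unstable inclusion only through the past, so in fact $\Phi^n(W^u_{loc}(\tau^{-n}\omega,x))$ contains all points $(\omega', f_{\omega'_{n-1}}\circ\cdots\circ f_{\omega'_0}(x))$ where $\omega'_i=\omega_i$ for all $i\geq n$ and $\omega'_i$ is free for $0\le i<n$ (here I must double-check the index bookkeeping in~\eqref{eq:ss-uu-global} to confirm the word that acts is the length-$n$ block of free symbols, and that its entries may be chosen arbitrarily in $\{1,\dots,k\}$). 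Choosing $n>m$ and fixing $\omega'_i=\beta_i$ for $-m\le i\le m$ handles the base coordinate; the remaining symbols $\omega'_i$ for $m<i<n$ are still free, giving us the freedom to realize any composition $f_\alpha$ with $\alpha$ a word of length $n-m-1$ — a length we can take as large as we wish by enlarging $n$.

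It therefore remains to see that, for every $y\in X$, every $\varepsilon>0$, and every $N$, there is a word $\alpha$ of length $\geq N$ with $f_\alpha(x)\in B(y,\varepsilon)$. This is precisely where the strict attractor hypothesis enters: since $X$ is the strict attractor of $\mathscr{F}$, the Hutchinson operator iterates $F^N(\{x\})$ converge to $X$ in the Hausdorff distance, and $F^N(\{x\})=\{f_\alpha(x):|\alpha|=N\}$. Hence for $N$ large, $F^N(\{x\})$ is $\varepsilon$-dense in $X$, so some length-$N$ word $\alpha$ puts $f_\alpha(x)$ within $\varepsilon$ of $y$. Combining: take $N$ so large that $F^N(\{x\})$ is $\varepsilon$-dense, set $n=N+m+1$, choose the free symbols $\omega'_{m+1},\dots,\omega'_{n-1}$ to spell out a word $\alpha$ with $f_\alpha(x)\in B(y,\varepsilon)$, and set $\omega'_i=\beta_i$ for $|i|\le m$ and $\omega'_i=\omega_i$ for $i\geq n$. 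The resulting point lies in $W^{uu}(\omega,x)\cap\big([\beta_{-m}\cdots\beta_m]\times B(y,\varepsilon)\big)$, proving density of the leaf.

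The main obstacle I anticipate is not conceptual but bookkeeping: getting the exact shape of $\Phi^n(W^u_{loc}(\Phi^{-n}(\omega,x)))$ right, in particular verifying that the fiber point over a prescribed future is genuinely $f_{\omega'_{n-1}}\circ\cdots\circ f_{\omega'_0}(x)$ with all intermediate symbols free and unconstrained, and that the homeomorphism hypothesis (rather than mere continuity) is what guarantees $\Phi$ is invertible so that $\Phi^{-n}$ and the backward local unstable sets make sense. A secondary point worth a sentence is that the strict-attractor convergence $F^N(S)\to X$ a priori holds for compact $S$; applying it to the singleton $S=\{x\}$ is legitimate since singletons are compact, and this is the only place the hypothesis is used.
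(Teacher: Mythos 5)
There is a genuine gap, and it sits exactly at the point your proposal treats as routine. Your structural claim about the leaf is based on the formula $\Phi^{-n}(\omega,x)=(\tau^{-n}\omega,x)$, which is false: since $\Phi(\omega,x)=(\tau\omega,f_{\omega_0}(x))$, one has $\Phi^{-n}(\omega,x)=(\tau^{-n}\omega,f_\omega^{-n}(x))$ with $f_\omega^{-n}=f_{\omega_{-n}}^{-1}\circ\dots\circ f_{\omega_{-1}}^{-1}$. Consequently the fiber points of $W^{uu}(\omega,x)$ are not the values $f_\alpha(x)$ of long words applied to $x$; they are $f_\sigma\bigl(f_\omega^{-n}(x)\bigr)$ with $|\sigma|=n$ (see Proposition~\ref{propo-otra-forma-expresar-variedad} and~\eqref{eq:proj-wu}), i.e.\ the fiber projection of the leaf is $\bigcup_{n\geq 0}F^n(\{f_\omega^{-n}(x)\})$, not the ``deep part'' of $\OF(x)=\bigcup_{n}F^n(\{x\})$. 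These two sets can differ drastically: in Corollary~\ref{exemplo-pablo} (two irrational rotations whose angles differ by a rational), $\OF(x)$ is dense in $\mathbb{S}^1$ for every $x$, yet $\Pi_{\mathbb{S}^1}(W^{uu}(\omega,x))$ is finite for suitable $\omega$. So the intermediate claim your reduction rests on --- that the fiber trace of the leaf over a prescribed base contains all $f_\alpha(x)$ with $\alpha$ long --- is not merely unverified bookkeeping, it is false in general, and its failure is precisely why minimality of $\mathscr{F}$ alone does not give minimality of $\mathcal{F}^{uu}(\Phi)$ (the erroneous claim of~\cite[Lemma~5.5]{HN13} that this theorem corrects). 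A secondary, fixable slip: the free symbols produced at stage $n$ occupy positions $-n,\dots,-1$ (and they are what determines the fiber point), the entire future is free, and the constraint from $\omega$ is on positions $\leq -(n+1)$, not the arrangement you describe.

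Because the point being iterated, $f_\omega^{-n}(x)$, changes with $n$, invoking the strict-attractor convergence for the single compact set $\{x\}$ does not suffice. What is needed is a statement uniform in the initial point: for every $\varepsilon>0$ there is $n_0$ such that $F^{n}(\{y\})$ is $\varepsilon$-dense for \emph{all} $n\geq n_0$ and \emph{all} $y\in X$, which one then applies to $y=f_\omega^{-(m+r)}(x)$ (a point depending on the iteration length itself). This is Proposition~\ref{propo-previo-minimalidad} in the paper, and it is not a formal consequence of the definition of strict attractor: it is derived from the asymptotic stability of the attractor, Theorem~\ref{mainthm-stability}, whose proof (via the monotonicity of the Hutchinson operator and a compactness/nested-open-sets argument) is the main technical content behind Theorem~\ref{mainthm1}. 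Your proposal bypasses this step entirely, so even after repairing the index bookkeeping the argument does not close; to fix it you would need to prove (or cite) the uniform $\varepsilon$-density statement and then run the selection of the free block as in Claim~\ref{afi-previo-minimalidad}: use the first $m$ free symbols to send $f_\omega^{-(m+r)}(x)$ into $(f_\alpha)^{-1}(U)$ and the last $r$ to spell the negative part $\alpha$ of the target cylinder, the future being free.
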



As mentioned, the minimality of $\mathscr{F}$ does not imply the minimality of $\mathscr{F}^{uu}(\Phi)$. However,  under certain conditions, such as those given in~\cite[Prop.~3.9 and~3.10]{BFMS16}  a minimal IFS admits a strict attractor. Also, Sarizadeh communicated to us that a minimal IFS with an attracting periodic point admits a strict attractor; see~\cite{sarizadeh2023attractor} and Proposition~\ref{Sarizadeh} for details.  As a consequence, we obtain the following  sufficient condition to ensure the minimality of the foliation from the minimality of the IFS.

\begin{maincor} \label{maincor1}
Let $\Phi=\tau\ltimes \mathscr{F}$ be as in Theorem~\ref{mainthm1}.
Assume that $\mathscr{F}$ is minimal and there is $g\in \mathscr{F}$
with an attracting fixed point. Then, the strong unstable foliation $\mathcal{F}^{uu}(\Phi)$  is minimal.
\end{maincor}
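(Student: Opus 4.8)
The plan is to derive Corollary~\ref{maincor1} directly from Theorem~\ref{mainthm1} by verifying that, under the stated hypotheses, the compact metric space $X$ is indeed the strict attractor of $\mathscr{F}$. Once this is established, the minimality of $\mathcal{F}^{uu}(\Phi)$ follows verbatim from Theorem~\ref{mainthm1}, with no further work on the skew-product side.

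First I would invoke the result attributed to Sarizadeh (cited as~\cite{sarizadeh2023attractor} and Proposition~\ref{Sarizadeh} in the excerpt): if $\mathscr{F}$ is a minimal IFS admitting some element $g\in\langle\mathscr{F}\rangle^+$ with an attracting fixed point, then $\mathscr{F}$ has a strict attractor. The next step is to identify this strict attractor with the whole space $X$. Here I would use minimality: a strict attractor is, in particular, a nonempty compact invariant set (invariant under the Hutchinson operator $F$), and the forward orbit of any point in it is dense in $X$ because $\mathscr{F}$ is minimal; since the attractor is closed and $F$-invariant, it must contain the closure of such an orbit, hence equals $X$. Equivalently, one notes that the basin of a strict attractor is open and $F$-invariant in the appropriate sense, and minimality of $\mathscr{F}$ forces it to be all of $X$, while a strict attractor coincides with the ``core'' of its basin; thus the attractor is $X$ itself. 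Either way, $X$ is the strict attractor of $\mathscr{F}$.

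With $X$ identified as the strict attractor, Theorem~\ref{mainthm1} applies directly (the homeomorphism hypothesis is already part of the setup of Corollary~\ref{maincor1}, inherited from Theorem~\ref{mainthm1}), yielding that $\mathcal{F}^{uu}(\Phi)$ is minimal. This completes the argument.

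The only genuinely non-routine point is the passage from ``$\mathscr{F}$ has a strict attractor'' to ``$X$ is the strict attractor of $\mathscr{F}$'', i.e.\ making precise that minimality of the semigroup action collapses the attractor onto the entire phase space. This requires care with the exact definition of strict attractor (Definition~\ref{defi-strict-atractor}): one must check that the attractor is $F$-invariant and that its basin is large enough that minimality applies, ruling out the possibility of a proper compact attractor coexisting with a dense semigroup orbit. I expect this to be short but to be the crux of the proof; everything else is a direct citation of Proposition~\ref{Sarizadeh} and Theorem~\ref{mainthm1}.
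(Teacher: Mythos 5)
Your proposal is correct and takes essentially the same route as the paper: cite the Sarizadeh result and then apply Theorem~\ref{mainthm1}. The only difference is that your intermediate step identifying the strict attractor with $X$ is redundant, since Proposition~\ref{Sarizadeh} as stated in the paper already concludes that $X$ itself is a strict attractor of $\mathscr{F}$; in any case your argument for that step (a strict attractor is a fixed point of the Hutchinson operator, hence a nonempty closed forward-invariant set, hence equal to $X$ by minimality) is valid.
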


Dual statements of Theorem~\ref{mainthm1} and Corollary~\ref{maincor1} holds for $\mathscr{F}^{ss}(\Phi)$ under the same  assumption  for the family $\mathscr{F}^{-1}=\{f^{-1}: f\in \mathscr{F}\}$ instead of $\mathscr{F}$.

\subsection{Genericity of minimality} \label{sec:gen-min-intro}
Fixed  $k\geq 1$ and $1\leq r\leq \infty$. Let $\mathrm{Diff}_+^r(\mathbb{S}^1)$ be the set of  orientation-preserving circle $C^r$ diffeomorphisms.
 We denote by $\IFS_k^r(\mathbb{S}^1)$ the set of families $\mathscr{F}=\{f_1,\dots,f_k\}$ with $f_i\in \mathrm{Diff}^r_+(\mathbb{S}^1)$ for every $i=1,\dots,k$, where we consider families equal, regardless of the order of their elements.  This identification of families is necessary to endow the set of iterated function systems with a metric. Namely, we consider here the metric in $\IFS_k^r(\mathbb{S}^1)$ given by
$$
  \bar\D_r(\mathscr{F},\mathscr{G})\eqdef \min_{\sigma \in S_k} \sum_{i=1}^k \D_r(f_i,g_{\sigma(i)}), \qquad \mathscr{F}=\{f_1,\dots,f_k\}, \ \ \mathscr{G}=\{g_1,\dots,g_k\} 
$$
where $S_k$ is the permutation group on $\{1,\dots,k\}$ and $\D_r$ is the $C^r$ distance in $\mathrm{Diff}^r_+(\mathbb{S}^1)$.

Define $\mathrm{S}_{k}^{r}(\mathbb{S}^{1})$ as the set of symbolic one-step skew-product  maps
$\Phi=\tau\ltimes \mathscr{F}$, where $\mathscr{F}\in \IFS_k^r(\mathbb{S}^1)$.  We endow  $\mathrm{S}_{k}^{r}(\mathbb{S}^{1})$ with a topology, saying that $\Phi=\tau\ltimes \mathscr{F}$
is close to $\Psi=\tau\ltimes \mathscr{G}$ if $\mathscr{F}$ and $\mathscr{G}$ are close in $\IFS_k^r(\mathbb{S}^1)$.
Denote by $\mathrm{SRT}_k^r(\mathbb{S}^1)$ the set of the symbolic one-step skew-product $\Phi$ which are robustly transitive, i.e. there is a neighborhood $\mathcal{U}$ in $\mathrm{S}_k^r(\mathbb{S}^1)$ of $\Phi$ such that any $\Psi$ in $\mathcal{U}$ is transitive (that is, there is a dense orbit).

\begin{mainthm} \label{mainthm2}
%
%
%
For $k\geq 2$ and $1\leq r\leq \infty$, there is a dense and open subset $\mathcal{R}$ of $\mathrm{SRT}_{k}^{r}(\mathbb{S}^{1})$ such that both the strong unstable foliation $\mathcal{F}^{uu}(\Phi)$ and strong stable foliation $\mathcal{F}^{ss}(\Phi)$ are minimal for all $\Phi\in\mathcal{R}$. Moreover, any transitive skew-product $\Psi \in \mathrm{S}^r_k(\mathbb{S}^1)$ can be approximated by maps in $\mathcal{R}$.
\end{mainthm}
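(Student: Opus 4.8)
The plan is to reduce Theorem~\ref{mainthm2} to our results on iterated function systems by passing back and forth between the skew-product $\Phi=\tau\ltimes\mathscr{F}$ and the family $\mathscr{F}$. I will use the standard correspondence for locally constant one-step skew-products over the full shift with continuous fiber maps on a compact metric space: $\Phi$ is transitive if and only if $\mathscr{F}$ is transitive, and the same equivalence holds for robust transitivity. Since a forward minimal IFS is in particular transitive, a robustly forward minimal family $\mathscr{F}$ yields a robustly transitive $\Phi$.

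First I would single out the candidate open dense set. Let $\mathcal{R}$ be the set of $\Phi=\tau\ltimes\mathscr{F}\in\mathrm{S}_{k}^{r}(\mathbb{S}^{1})$ such that $\mathscr{F}$ is robustly forward minimal, $\mathscr{F}$ is robustly backward minimal (that is, $\mathscr{F}^{-1}=\{f^{-1}:f\in\mathscr{F}\}$ is robustly forward minimal), and $\langle\mathscr{F}\rangle^{+}$ contains a word with a hyperbolic attracting fixed point as well as a word with a hyperbolic repelling fixed point. By the correspondence above the first requirement already forces $\Phi$ to be robustly transitive, so $\mathcal{R}\subseteq\mathrm{SRT}_{k}^{r}(\mathbb{S}^{1})$; and $\mathcal{R}$ is open because robust minimality is an open condition and hyperbolic fixed points persist under $C^{1}$-small perturbations.

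Next I would verify that every $\Phi\in\mathcal{R}$ has both strong foliations minimal. Minimality of $\mathcal{F}^{uu}(\Phi)$ is immediate from Corollary~\ref{maincor1}, applied to $\mathscr{F}$ together with the word $g\in\langle\mathscr{F}\rangle^{+}$ having a hyperbolic attracting fixed point. For $\mathcal{F}^{ss}(\Phi)$ I invoke the dual of Corollary~\ref{maincor1}: the family $\mathscr{F}^{-1}$ is minimal, and if $h\in\langle\mathscr{F}\rangle^{+}$ has a hyperbolic repelling fixed point $q$, then $h^{-1}\in\langle\mathscr{F}^{-1}\rangle^{+}$ has $q$ as a hyperbolic attracting fixed point, so the dual corollary gives that $\mathcal{F}^{ss}(\Phi)$ is minimal. (The reason for putting both types of fixed point into the definition of $\mathcal{R}$ is precisely to feed Corollary~\ref{maincor1} for $\mathscr{F}$ and for $\mathscr{F}^{-1}$ simultaneously; an orientation-preserving circle diffeomorphism with a hyperbolic fixed point does carry hyperbolic fixed points of both types when its remaining fixed points are non-degenerate, but we prefer not to rely on this.)

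It remains to prove density of $\mathcal{R}$ in $\mathrm{SRT}_{k}^{r}(\mathbb{S}^{1})$ together with the ``moreover'' statement; both follow from our approximation theorem for transitive IFS of orientation-preserving circle diffeomorphisms. Given any transitive $\Psi=\tau\ltimes\mathscr{G}\in\mathrm{S}_{k}^{r}(\mathbb{S}^{1})$, the family $\mathscr{G}$ is transitive, hence it is $C^{r}$-approximated by a family $\mathscr{F}$ that is robustly forward minimal, robustly backward minimal, and expanding; the expanding property of $\mathscr{F}$, combined with its minimality (an expanding word on the circle must have points of derivative both larger and smaller than $1$, and minimality steers the corresponding local contractions/expansions into genuine words with hyperbolic fixed points), provides the two required words in $\langle\mathscr{F}\rangle^{+}$, so $\Phi=\tau\ltimes\mathscr{F}\in\mathcal{R}$ and $\Phi$ is close to $\Psi$. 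Since $\mathrm{SRT}_{k}^{r}(\mathbb{S}^{1})$ is contained in the set of transitive skew-products, the same argument shows $\mathcal{R}$ is dense in $\mathrm{SRT}_{k}^{r}(\mathbb{S}^{1})$, completing the proof. The genuine difficulty of Theorem~\ref{mainthm2} is concentrated in that approximation theorem; the argument above is essentially bookkeeping whose only delicate point is securing the attracting-fixed-point hypothesis of Corollary~\ref{maincor1} simultaneously for a family and its inverse, which is handled by building it into $\mathcal{R}$.
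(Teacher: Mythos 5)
Your overall architecture is the same as the paper's: reduce to the IFS level via the equivalence ``$\Phi=\tau\ltimes\mathscr{F}$ transitive $\Leftrightarrow$ $\mathscr{F}$ transitive'' (Theorem~\ref{teo:skew-transitive}), take $\mathcal{R}$ to be the skew-products over a good open set of IFSs, get minimality of both foliations from Theorem~\ref{mainthm1} (via Corollary~\ref{maincor1} and its dual in your case), and get density from the approximation statement in Theorem~\ref{mainthm-IFS}. The use of Corollary~\ref{maincor1} for $\mathcal{F}^{uu}$ and of its dual for $\mathcal{F}^{ss}$ (turning a hyperbolically repelling fixed point of $h\in\langle\mathscr{F}\rangle^+$ into an attracting one of $h^{-1}\in\langle\mathscr{F}^{-1}\rangle^+$) is correct, and openness of your $\mathcal{R}$ is fine.

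The gap is in the density step. To place the approximating families inside your $\mathcal{R}$ you must produce words in $\langle\mathscr{F}\rangle^{+}$ with hyperbolic attracting and repelling fixed points, and you justify this only by the parenthetical ``minimality steers the corresponding local contractions/expansions into genuine words with hyperbolic fixed points.'' That is not a proof: the natural attempt (compose a locally contracting word with a word returning the orbit near its starting point) fails as stated, because the returning word may have arbitrarily large derivative, so the composition need not be a local contraction and need not fix any small ball. Extracting an element with an attracting fixed point from ``expanding $+$ minimal'' is a genuinely nontrivial lemma (it can be done by iterating the covering/contraction estimates of Definition~\ref{def:expading} along a backward orbit and closing up by compactness, or by using the fact---available only from the proof of Theorem~\ref{mainthm-IFS}, via Theorem~\ref{teo-1.5-Kleptsyn}, not from its statement---that the families in $\mathcal{W}$ consist of Morse--Smale diffeomorphisms), and nothing of the sort is written in your proposal. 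Note also that this detour is unnecessary: Theorem~\ref{mainthm-IFS} already asserts that $\mathbb{S}^1$ is a strict attractor of both $\mathscr{F}$ and $\mathscr{F}^{-1}$ for every $\mathscr{F}\in\mathcal{W}$, so taking $\mathcal{R}=\{\tau\ltimes\mathscr{F}:\mathscr{F}\in\mathcal{W}\}$ and applying Theorem~\ref{mainthm1} together with Remark~\ref{obs:duality} gives minimality of both foliations directly, with openness, density and the ``moreover'' clause following from Theorem~\ref{teo:skew-transitive} and the approximation part of Theorem~\ref{mainthm-IFS}; this is exactly the paper's proof. As written, your argument is incomplete until the hyperbolic-fixed-point claim is either proved or replaced by the strict-attractor conclusion of Theorem~\ref{mainthm-IFS}.
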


The following result highlights the difference between the open and dense set established in the above theorem and the set of robustly transitive one-step symbolic skew-products.

\begin{mainthm} \label{mainthm-examples}
    For $k\geq 8$ and $1\leq r\leq \infty$, there are maps $\Phi \in \mathrm{SRT}^r_{k+1}(\mathbb{S}^1)$ and $\Psi \in \mathrm{SRT}^r_k(\mathbb{S}^1)$  such that
    \begin{itemize}
        \item $\mathcal{F}^{uu}(\Phi)$ is minimal, but $\mathcal{F}^{ss}(\Phi)$ is not minimal;
        \item neither $\mathcal{F}^{uu}(\Psi)$ nor $\mathcal{F}^{ss}(\Psi)$ is minimal.
    \end{itemize}
\end{mainthm}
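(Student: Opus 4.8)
The plan is to produce both skew-products by hand as one-step maps $\tau\ltimes\mathscr{F}$ over iterated function systems of $C^r$ orientation-preserving circle diffeomorphisms, and to read the (non)minimality of the two foliations off the dynamics of $\mathscr{F}$. Since robust transitivity of $\tau\ltimes\mathscr{F}$ is equivalent to that of $\mathscr{F}$, it suffices to build robustly transitive families. To \emph{refute} the minimality of, say, $\mathcal{F}^{ss}(\tau\ltimes\mathscr{F})$ it is enough to exhibit a single leaf whose closure is a proper subset of $\Sigma_k\times\mathbb{S}^1$; unwinding~\eqref{eq:ss-uu-global} shows that every fibre coordinate of $W^{ss}(\omega,x)$ is of the form $f_{\eta_{-n}}^{-1}\circ\cdots\circ f_{\eta_{-1}}^{-1}\bigl(f_{\omega_{n-1}}\circ\cdots\circ f_{\omega_{0}}(x)\bigr)$ with $n\ge 0$ and \emph{arbitrary} symbols $\eta_{-n},\dots,\eta_{-1}$, and dually for $\mathcal{F}^{uu}$. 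Hence if $C\subsetneq\mathbb{S}^1$ is closed with $f_i^{-1}(C)\subseteq C$ for all $i$, if $x\in C$, and if the future of $\omega$ only uses symbols $i$ with $f_i(C)\subseteq C$, then $W^{ss}(\omega,x)\subseteq\Sigma_k\times C$, so $\mathcal{F}^{ss}$ is not minimal; the dual statement gives a criterion for $\mathcal{F}^{uu}$. In particular a \emph{common invariant Cantor set} $\Lambda$, meaning $f_i(\Lambda)=\Lambda$ for all $i$, makes both foliations non-minimal, via the leaves through $\Lambda$.

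For $\Psi\in\mathrm{SRT}^r_{k}(\mathbb{S}^1)$ with $k\ge 8$ I would construct $\mathscr{G}=\{g_1,\dots,g_k\}$ with all $g_i$ lying in the (large) group of $C^r$ diffeomorphisms that preserve a fixed Cantor set $\Lambda$, arranged in a covering/blender configuration — finitely many intervals running through the gaps of $\Lambda$ together with maps realizing, as a $C^r$-open condition, that every interval is eventually pushed over the whole circle — so that $\mathscr{G}$ is robustly transitive. By the previous paragraph neither $\mathcal{F}^{uu}(\Psi)$ nor $\mathcal{F}^{ss}(\Psi)$ is minimal, while $\Psi$ is robustly transitive; the bound $k\ge 8$ is what this configuration costs while respecting $\Lambda$.

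For $\Phi\in\mathrm{SRT}^r_{k+1}(\mathbb{S}^1)$ I would keep $\{g_1,\dots,g_k\}$ and adjoin one more $C^r$ diffeomorphism $g_{k+1}$ with $g_{k+1}(\Lambda)$ a Cantor set strictly containing $\Lambda$ (equivalently $g_{k+1}^{-1}(\Lambda)\subsetneq\Lambda$), with an attracting fixed point, and chosen so that adjoining it destroys every proper closed forward-invariant set of the family, i.e.\ so that $\mathscr{F}=\{g_1,\dots,g_{k+1}\}$ becomes forward minimal. Corollary~\ref{maincor1} then gives that $\mathcal{F}^{uu}(\Phi)$ is minimal (one may also phrase this through Theorem~\ref{mainthm1}, $\mathbb{S}^1$ being the strict attractor of $\mathscr{F}$ in the sense of Definition~\ref{defi-strict-atractor}). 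On the other hand $g_i^{-1}(\Lambda)\subseteq\Lambda$ for every $i=1,\dots,k+1$, so, taking $x\in\Lambda$ and $\omega$ whose future is constantly a symbol $i\le k$, the first paragraph gives $W^{ss}(\omega,x)\subseteq\Sigma_{k+1}\times\Lambda$ and hence $\mathcal{F}^{ss}(\Phi)$ is not minimal. Robust transitivity of $\Phi$ is inherited from the sub-family $\{g_1,\dots,g_k\}$.

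The main obstacle is the construction in the second paragraph: realizing a $C^r$-robust covering/blender property using only diffeomorphisms that preserve a Cantor set, with no more than $8$ generators, is where essentially all the difficulty lies. Closely related is the verification in the third paragraph that the single extra map $g_{k+1}$ really does make $\mathscr{F}$ forward minimal (equivalently, turns $\mathbb{S}^1$ into the strict attractor — a global spreading statement stronger than transitivity) while still satisfying $g_{k+1}^{-1}(\Lambda)\subseteq\Lambda$ and carrying an attracting fixed point, and that all of this is stable in the $C^r$ topology where it must be. A last, more routine point is to carry out carefully the unwinding of~\eqref{eq:ss-uu-global} behind the first paragraph, making sure that the strong stable leaves through $\Lambda$ are genuinely trapped in $\Sigma_{k+1}\times\Lambda$ and do not leak out through the free past-symbols.
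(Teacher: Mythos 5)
Your overall architecture is exactly the paper's: reduce robust transitivity of the skew-product to that of the IFS (Theorem~\ref{teo:skew-transitive}), get minimality of $\mathcal{F}^{uu}(\Phi)$ from Theorem~\ref{mainthm1} or Corollary~\ref{maincor1} by making $\mathbb{S}^1$ a strict attractor, and kill $\mathcal{F}^{ss}$ (resp.\ both foliations) by trapping a leaf inside $\Sigma\times K$ for a proper closed set $K$ that is backward invariant for all generators and forward invariant along the chosen future word --- this trapping criterion is precisely the paper's computation via Remarks~\ref{rem:projXWss} and~\ref{rem:projX}, where the future word is taken with $f_\omega^n\in G=\langle\mathscr{G}_0\rangle^+$ so that $f_\omega^n(x)$ stays in the exceptional minimal set. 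So the skeleton is sound and coincides with the paper's proof.

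The genuine gap is that the entire substance of the theorem --- the existence of the IFS examples --- is left unestablished, and you say so yourself. You need (i) a family of at most $8$ orientation-preserving $C^r$ diffeomorphisms, each preserving a common Cantor set $\Lambda$, which is robustly transitive, and (ii) one additional $C^r$ diffeomorphism $h$ with an attracting fixed point and $h(\Lambda)\supsetneq\Lambda$ whose adjunction makes the family forward minimal (indeed makes $\mathbb{S}^1$ a strict attractor). Neither of these is routine: a homeomorphism preserving $\Lambda$ cannot ``push an interval over the whole circle,'' so your covering/blender sketch does not obviously close, and for an arbitrary Cantor set a smooth $h$ with $h(\Lambda)\supsetneq\Lambda$ need not exist. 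The paper supplies exactly these ingredients in Proposition~\ref{propo-existencia-minimal-excep} (a symmetric $8$-map generating system of a $4$-generated group obtained from the Ghys--Sergiescu $C^r$ realization of Thompson's group $T$, whose exceptional minimal set is the standard middle Cantor set, together with expansion, a blending region in a gap, density of orbits off $K$, and an explicit affine-plus-Whitney-extension construction of $h$ with $h(K)\supsetneq K$), then Corollary~\ref{cor:rt} for robust transitivity of $\mathscr{G}$, Lemma~\ref{lema-previo-exem-forward-but-backward-minimal} to see that adding $h$ destroys every proper closed invariant set, and Proposition~\ref{prop:forward-not-backward} plus Proposition~\ref{Sarizadeh} to get the strict attractor; these feed Theorem~\ref{mainteo:exemplos-IFS} and then the skew-product statement. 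Note also that citing Theorem~\ref{mainteo:exemplos-IFS} as a black box would not suffice for your trapping step: its statement gives ``not backward minimal'' but not the finer structure (a common $G$-invariant Cantor set together with future words staying inside it) that your first paragraph requires, which is why the paper goes through the propositions rather than the theorem's bare statement. Until the constructions in your second and third paragraphs are carried out (or replaced by citations to those propositions), the proof is incomplete at its core.
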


\subsection{Iterated Function Systems} \label{IFS-intro}
To prove Theorem~\ref{mainthm1}, we require the asymptotic stability of a strict attractor. Rypka claimed to have established this property in~\cite[Theorem~3.1]{MR19}, but we encountered difficult to understand the proof, which we explain in detail in Remarks~\ref{rem:Rypka} and~\ref{rem:error}. However, we overcame these difficulties by providing a correct proof and generalizing the result to include more general notions of monotonicity assumptions in the case that the strict attractor is the entire space. Specifically, we consider any continuous map \mbox{$F:\mathscr{K}(X)\to\mathscr{K}(X)$}, where $\mathscr{K}(X)$ is the hyperspace of nonempty compact subsets of a metric space $X$ endowed with the Hausdorff metric $\dH$. We say that $F$ is \emph{monotone} if $F(A)\subset F(B)$ for any $A,B\in\mathscr{K}(X)$ with $A\subset B$. Compare with~\cite[Def.~2.7 and Rem.~2.9]{MR19}.

The compact metric space $X$ is said to be an \emph{attractor} of $F$ if there exists an open set $\mathcal{U}\subset\mathscr{K}(X)$ such that $X\in \mathcal{U}$ and $F^n(C)$ converges to $X$ in the Hausdorff distance for any $C\in \mathcal{U}$. Note that by the continuity of $F$, we have $F(X)=X$. Furthermore, $X$ is called \emph{asymptotically stable} if it is an attractor of $F$ and for any $\varepsilon>0$, there exists $\delta>0$ such that $\dH(F^{n}(S),X)<\varepsilon$ for all $n\geq 0$ and $S\in\mathscr{K}(X)$ with $\dH(S,X)<\delta$.

\begin{mainthm} \label{mainthm-stability}
Consider a compact metric space $X$ and let $F:\mathscr{K}(X)\to\mathscr{K}(X)$ be a continuous monotone map.
If $X$ is an attractor of $F$, then $X$ is  asymptotically stable.  In particular, if $X$ is a strict attractor of an IFS, it is
also asymptotically stable.
\end{mainthm}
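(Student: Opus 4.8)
The plan is to prove the contrapositive-free statement directly: assuming $X$ is an attractor of the continuous monotone map $F$, I want to show the uniform estimate defining asymptotic stability. The key tension is that the attractor property gives me convergence $F^n(C)\to X$ only for $C$ in some a priori open neighborhood $\mathcal{U}\subset\mathscr{K}(X)$ of $X$, and only pointwise in $C$, whereas I need a $\delta$ that works uniformly over all $S$ with $\dH(S,X)<\delta$ and simultaneously controls the \emph{whole forward orbit} $\{F^n(S):n\geq 0\}$, not just its tail. Monotonicity will be the engine that upgrades pointwise convergence on a neighborhood to the uniform orbit bound.

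\textbf{Step 1: reduce to sets sandwiching $X$.} First I would observe that since $\dH(S,X)<\delta$ means $S\subset B_\delta(X)$ and $X\subset B_\delta(S)$, I can enclose $S$ between two canonical compact sets: $S\subset \overline{B_\delta(X)}=:X_\delta$, and $X\subset \overline{B_\delta(S)}$. Using monotonicity, $F^n(S)\subset F^n(X_\delta)$ for all $n$. Because $F(X)=X$ (continuity of $F$ plus $F^n(X)\to X$), and because $X_\delta\downarrow X$ in $\dH$ as $\delta\downarrow 0$, continuity of $F$ gives $F(X_\delta)\to X$, and for $\delta$ small enough $X_\delta\in\mathcal{U}$, hence $F^n(X_\delta)\to X$. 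So the "outer" half — showing $F^n(S)$ is eventually inside $B_\varepsilon(X)$ — reduces to a statement about the single set $X_\delta$. The symmetric "inner" half ($X\subset B_\varepsilon(F^n(S))$) is more delicate because there is no single largest set below $X$; here I would instead exploit that $F(X)=X$ together with a lower-semicontinuity/monotonicity argument, or argue that once $F^{n_0}(S)$ is $\dH$-close to $X$ one can bootstrap. A cleaner route for the inner half: since $F(X)=X$, for any finite $\varepsilon$-net $\{x_1,\dots,x_m\}$ of $X$ and each $x_j$ there is a point $y_j\in X$ with $F(\{y_j\})\ni$ something near $x_j$ in one step; iterating and using that $S$ is $\delta$-dense near $X$ lets $F^n(S)$ recover an $\varepsilon$-net of $X$ for $n$ past a threshold depending only on $\varepsilon$, not on $S$.

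\textbf{Step 2: get uniformity of the threshold.} The orbit of $X_\delta$ converges to $X$, so there is $N=N(\delta)$ with $\dH(F^n(X_\delta),X)<\varepsilon$ for all $n\geq N$. For the finitely many initial times $n=0,1,\dots,N-1$, I use continuity of $F^n$ at $X$ (each $F^n$ is continuous, $F^n(X)=X$) to pick $\delta$ small enough that $\dH(F^n(X_\delta),X)<\varepsilon$ for those $n$ too — shrinking $\delta$ only finitely many times. Combined with the monotonic sandwich $F^n(S)\subset F^n(X_\delta)$, this kills the "outer" deviation uniformly over all $S$ in the $\delta$-ball, for all $n\geq 0$ at once. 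The inner deviation is handled the same way using the net-recovery argument of Step 1 plus continuity of $F^n$ at $X$ for small $n$. The final statement for strict attractors of an IFS is then immediate: the Hutchinson operator $F(S)=f_1(S)\cup\cdots\cup f_k(S)$ is continuous on $\mathscr{K}(X)$ and manifestly monotone, and $X$ being a strict attractor means exactly that $F^n(C)\to X$ for every $C\in\mathscr{K}(X)$, so in particular $X$ is an attractor in the sense above; apply the theorem.

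\textbf{Main obstacle.} The hard part is the "inner" inclusion $X\subset B_\varepsilon(F^n(S))$ uniformly in $S$. Monotonicity only pushes inclusions upward, so it does not directly control how much of $X$ is covered from below by $F^n(S)$; a set $S$ extremely $\dH$-close to $X$ could a priori be mapped to something that temporarily "thins out" over part of $X$. I expect to resolve this by using the identity $F(X)=X$ to show $F$ maps any set that is $\delta$-dense in $X$ to a set that is $\delta'$-dense in $X$ with $\delta'$ a fixed modulus-of-continuity function of $\delta$ (via uniform continuity of the generators on the compact $X$), so density is propagated and in fact improved along the orbit at a rate independent of $S$ — this is where I would have to be careful and is the technical heart of the argument, and presumably where the subtlety in Rypka's proof (flagged in Remarks~\ref{rem:Rypka} and~\ref{rem:error}) lies.
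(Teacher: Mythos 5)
There is a genuine gap, and it sits exactly where you flag it. First, note that in this theorem every set in play is already a subset of $X$ (the map is $F:\mathscr{K}(X)\to\mathscr{K}(X)$ and $X$ is the whole space), so your ``outer'' half is vacuous: $X_\delta=\overline{B_\delta(X)}=X$ and $F^n(S)\subset F^n(X)=X$ automatically. The entire content of stability is the ``inner'' inclusion $X\subset B_\varepsilon(F^n(S))$ uniformly in $S$ and in $n$, i.e.\ that $\varepsilon$-density of $F^n(S)$ in $X$ cannot be temporarily lost along the orbit. Your proposed mechanism for this does not work as stated. For the general continuous monotone $F$ of the theorem there are no ``generators'' and the identity $F(X)=X$ does not decompose pointwise: monotonicity only gives $F(\{y\})\subset X$, not that every point of $X$ is close to some $F(\{y\})$, so the net-recovery step has no basis in this generality. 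Even in the special case of a Hutchinson operator, uniform continuity of the $f_i$ only yields $\dH(F(S),X)=\dH(F(S),F(X))\leq\eta(\dH(S,X))$ with a modulus $\eta$ that is typically \emph{larger} than the identity (think of expanding generators), so iterating this estimate degrades, rather than improves, the density bound; the claim that density ``is propagated and in fact improved along the orbit at a rate independent of $S$'' is precisely what is not available and is the difficulty the theorem must overcome. Your Step~2 also implicitly needs a threshold $N$ uniform over all $S$ in a $\dH$-ball around $X$, but the attractor hypothesis only gives convergence $F^n(S)\to X$ pointwise in $S$, and no uniformization argument is supplied.

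For comparison, the paper's proof obtains this uniformity indirectly, by contradiction and compactness of the hyperspace: assuming instability, it builds open sets $\mathcal{B}_n\subset\mathscr{K}(X)$ of sets close to $X$ whose $k_n$-th image stays $\varepsilon$-far from $X$, and the key claim shows these sets can be nested infinitely often. Monotonicity enters only once, and in a direction opposite to your use of it: a ``bad'' set $S_m$ very close to $X$ is \emph{shrunk} to $S_m'=S_m\cap\overline{B_{r/2}(C)}$, which is Hausdorff-close to a prescribed compact $C$ yet still bad, because $S_m'\subset S_m$ forces $F^{k_m}(S_m')\subset F^{k_m}(S_m)$ and subsets of $X$ are at least as far from $X$. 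Compactness of $\mathscr{K}(X)$ then produces a single compact set, within the basin, whose orbit does not converge to $X$, contradicting the attractor hypothesis. If you want to salvage your direct approach, you would need to replace the modulus-of-continuity propagation by an argument of this uniformizing type (or some Dini/equicontinuity-style input), since pointwise convergence plus monotonicity alone does not control the early and intermediate iterates uniformly.
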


To prove Theorem~\ref{mainthm2}, we need to study the properties of minimality and transitivity for IFSs. Let $\mathscr{F}$ be an element of $\IFS^r_k(\mathbb{S}^1)$. We say that $\mathscr{F}$ is \emph{robustly transitive} (resp.~\emph{minimal}) if there exists a neighborhood $\mathcal{U}$ of $\mathscr{F}$ in $\IFS^r_k(\mathbb{S}^1)$ such that any $\mathscr{G}\in \mathcal{U}$ is transitive (resp.~minimal). We define $\mathrm{RT}^r_k(\mathbb{S}^1)$ to be the subset of $\IFS_k^r(\mathbb{S}^1)$ such that $\mathscr{F}$ is robustly transitive. Note that $\mathrm{RT}^r_k(\mathbb{S}^1)$ is an open set in $\IFS^r_k(\mathbb{S}^1)$.
For simplicity, we sometimes refer to $\mathscr{F}$ as \emph{forward (resp.~backward) minimal/transitive} if $\mathscr{F}$ (resp.~$\mathscr{F}^{-1}$) is minimal/transitive.

The first step to prove~Theorem~\ref{mainthm2} is the following result.

\begin{mainthm}\label{mainthm-IFS}
For $k\geq 2$ and $1\leq r\leq \infty$, there is an open and dense set $\mathcal{W}$ in $\mathrm{RT}^r_k(\mathbb{S}^1)$ such that every $\mathscr{F}\in \mathcal{W}$ is, both,  forward and backward robustly minimal. Additionally, $\mathbb{S}^1$ is a strict attractor of both~$\mathscr{F}$ and~$\mathscr{F}^{-1}$.

Furthermore, any transitive family $\mathscr{G}$ in $\IFS^r_k(\mathbb{S}^1)$  can be approximated by IFSs in $\mathcal{W}$.
\end{mainthm}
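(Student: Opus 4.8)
The plan is to combine known perturbation results for robustly transitive IFSs of circle diffeomorphisms with the asymptotic-stability machinery of Theorem E. The starting point is the structure theory for $\mathrm{RT}^r_k(\mathbb{S}^1)$: a robustly transitive IFS on $\mathbb{S}^1$ cannot be a group-like rotation family, so after an arbitrarily small perturbation one finds some $g\in\langle\mathscr{F}\rangle^+$ (a composition of finitely many generators and, if necessary, their inverses) with a hyperbolic fixed point; by wiggling the generators we may even assume there is a finite orbit of generators producing both an attracting and a repelling periodic point on the circle, and simultaneously producing north–south-type dynamics that forces local expansion somewhere. I would first establish, as Step 1, that there is an open dense subset of $\mathrm{RT}^r_k(\mathbb{S}^1)$ on which the IFS has a \emph{blending region}: an interval $J$ and finite words $h_1,\dots,h_m\in\langle\mathscr{F}\rangle^+$ with $h_i(\overline J)\subset J$ and $\bigcup_i h_i(\overline J)=\overline J$ (up to shrinking), i.e. $\overline J$ is a local strict attractor with $C^r$-robust overlap. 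This is the standard ``creating a blender/blending region'' argument and is robust by construction.

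Step 2 is to upgrade transitivity to forward minimality on this open dense set. Using the blending region $J$ together with robust transitivity, every point of $\mathbb{S}^1$ can be driven into $J$ by some word of $\langle\mathscr{F}\rangle^+$, and once inside $J$ the overlap condition $\bigcup_i h_i(\overline J)=\overline J$ lets one reach a dense subset of $J$; transitivity then spreads density of the orbit to all of $\mathbb{S}^1$. To pass from ``dense orbit of a point'' to true minimality (dense orbit of \emph{every} point) I would invoke the expanding property: Step 3 is to arrange, again by a small $C^r$ perturbation within the blending region, that $\mathscr{F}$ is expanding, i.e. for every $x$ there is $g\in\langle\mathscr{F}\rangle^+$ with $Dg^{-1}(x)>1$ — equivalently some forward composition contracts a neighborhood of $x$. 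Expansion plus a dense orbit through a blending region is exactly the hypothesis under which one shows $\mathbb{S}^1$ itself is the strict attractor of $\mathscr{F}$: any compact set can be pushed into $J$, then the Hutchinson iterates $F^n$ of the full family, fed by the expansion, spread it to fill $\mathbb{S}^1$ in the Hausdorff metric. Here I would cite Theorem E (mainthm-stability) to conclude that this strict attractor $\mathbb{S}^1$ is asymptotically stable, and a now-standard argument (asymptotic stability of the whole space $\Rightarrow$ forward minimality, since the orbit closure of any point is a nonempty compact invariant set attracting nearby sets, hence must be all of $\mathbb{S}^1$) gives robust forward minimality. All four conditions (forward expanding, robust forward minimal, $\mathbb{S}^1$ a strict attractor of $\mathscr{F}$) are open, so the good set is open; density follows because each perturbation above is arbitrarily small.

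Step 4 is to run Steps 1–3 \emph{simultaneously} for $\mathscr{F}$ and $\mathscr{F}^{-1}$. The key point is that the perturbations creating the blending region for $\mathscr{F}$ can be chosen to also create one for $\mathscr{F}^{-1}$: a pair of hyperbolic periodic points of opposite index for the semigroup gives an attracting region for the forward IFS and, reading the same words backwards, an attracting region for the backward IFS. One then intersects the open dense set for $\mathscr{F}$ with the open dense set for $\mathscr{F}^{-1}$ (both are open dense in $\mathrm{RT}^r_k(\mathbb{S}^1)$, which is open, so the intersection $\mathcal{W}$ is open dense) and obtains all the asserted properties at once. Finally, for the ``furthermore'' clause: if $\mathscr{G}\in\IFS^r_k(\mathbb{S}^1)$ is merely transitive, a small perturbation makes it robustly transitive — here I would use that transitivity of an IFS of circle diffeomorphisms, after an arbitrarily small perturbation, can be made robust by creating the same kind of blending region (this is the circle analogue of the Bonatti–Díaz mechanism) — so $\mathscr{G}$ is approximated by elements of $\mathrm{RT}^r_k(\mathbb{S}^1)$ and hence, by density of $\mathcal{W}$ there, by elements of $\mathcal{W}$.

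The main obstacle I anticipate is Step 3 together with the passage from a dense orbit to genuine minimality: controlling the expansion uniformly enough that the Hutchinson operator of the \emph{whole} family (not a sub-IFS) drives every compact set onto all of $\mathbb{S}^1$, while keeping all perturbations $C^r$-small and compatible with the backward picture, is delicate — one must balance the contraction needed to trap sets inside the blending interval against the expansion needed to spill them back out and cover the circle, and verify that the resulting attractor is \emph{strict} (absorbs a neighborhood in $\mathscr{K}(\mathbb{S}^1)$, not merely pointwise). This is precisely where Theorem E does the heavy lifting, reducing the minimality statement to checking the strict-attractor property, which is the technical heart of the argument.
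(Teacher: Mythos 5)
There is a genuine gap, and it sits exactly where your proposal does its decisive work. Your Steps 1--3 claim that, on an open dense subset of $\mathrm{RT}^r_k(\mathbb{S}^1)$, a blending region plus (robust) transitivity upgrades to forward minimality because ``every point of $\mathbb{S}^1$ can be driven into $J$ by some word of $\langle\mathscr{F}\rangle^+$.'' That step is false in general: driving \emph{every} point into $J$ is backward globalization of $J$ (Remark~\ref{rem:backward-globalized}), which is equivalent to backward minimality, not a consequence of transitivity. Indeed, the paper itself exhibits (Corollary~\ref{cor:rt}, Proposition~\ref{prop:forward-not-backward}, Theorem~\ref{mainteo:exemplos-IFS}) robustly transitive, expanding IFSs possessing a blending region whose generated group has an exceptional minimal Cantor set $K$; points of $K$ never enter the blending region, and these families are not minimal. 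So the real content of the density statement is that such invariant Cantor sets (and, dually, those of $\mathscr{F}^{-1}$) can be destroyed by arbitrarily small $C^r$ perturbations, simultaneously in both time directions, and your proposal offers no mechanism for this. The paper does not prove it either by elementary perturbations: it defines $\mathcal{W}$ as the families of Morse--Smale diffeomorphisms that are forward and backward expanding and robustly forward and backward minimal, and gets density by invoking the Kleptsyn--Kudryashov--Okunev theorem (Theorem~\ref{teo-1.5-Kleptsyn}), which is the heavy lifting your sketch would need to replace.

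Two further points. First, the strict-attractor clause: you derive it from expansion plus a dense orbit and then assert all four properties are open; but openness of ``$\mathbb{S}^1$ is a strict attractor'' is neither obvious nor needed. The paper obtains the strict attractor for every $\mathscr{F}\in\mathcal{W}$ (and for $\mathscr{F}^{-1}$) from Proposition~\ref{Sarizadeh}: minimality together with an attracting periodic point of some element of the semigroup, which Morse--Smale generators provide; Theorem~\ref{mainthm-stability} plays no role here (it is used later, for Theorem~\ref{mainthm1}). Second, for the ``furthermore'' clause you propose first making a transitive $\mathscr{G}$ robustly transitive by creating a blending region; besides again requiring the unproved minimality upgrade, this detour is unnecessary: the paper simply notes that transitivity excludes strictly absorbing interval-domains (Proposition~\ref{prop:no-abs}) and applies the approximation part of Theorem~\ref{teo-1.5-Kleptsyn} directly, landing in $\mathcal{D}\subset\mathcal{W}$. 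Minor but worth noting: elements of $\langle\mathscr{F}\rangle^+$ are compositions of generators only, so your parenthetical ``and, if necessary, their inverses'' is not available in this setting.
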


We define $\mathscr{F}\in \IFS_k^r(\mathbb{S}^1)$ as \emph{ergodic} (with respect to the normalized Lebesgue measure $\mathrm{Leb}$ on $\mathbb{S}^1$) if $\mathrm{Leb}(A)\in\{0,1\}$ for all Borel set $A$ in $\mathbb{S}^1$ such that $f(A)\subset A$ for all $f\in\mathscr{F}$. It is a well-known conjecture in the context of finitely generated smooth groups on the circle that every smooth minimal action is ergodic with respect to the Lebesgue measure. This conjecture has been proven to be true in many cases using the exponential expansion strategy (see references in~\cite{DKN09,DKN}). The question was also addressed for semigroup actions in~\cite{BFMS16} and proved for expanding IFSs comprised of $C^r$ circle diffeomorphisms with $r>1$ (see also~\cite{BF20}). Recall that $\mathscr{F}$ is said to be \emph{expanding} if for every $x\in \mathbb{S}^1$, there exists $g\in \langle \mathscr{F} \rangle^+$ such that the derivative $Dg^{-1}(x)>1$. As a consequence of this result and Theorem~\ref{mainthm-IFS}, we will obtain in Section~\S\ref{sec:minimal-in-rt-S1} the following.

\begin{maincor} \label{cor:ergodicity}
    Let $\mathcal{W}\subset \mathrm{RT}^r_k(\mathbb{S}^1)$ be as in Theorem~\ref{mainthm-IFS}. For $k\geq 2$ and $1< r\leq \infty$, arbitrarily close to any transitive family $\mathscr{F}$ in $\IFS_k^r(\mathbb{S}^1)$, there is a family $\mathscr{G} \in \mathcal{W}$ and a neighborhood $\mathcal{U}$ of $\mathscr{G}$ such that every $\mathscr{H}\in \mathcal{U}$ is ergodic.
\end{maincor}

Examples of IFSs comprising circle homeomorphisms that are forward minimal but not backward minimal were provided in~\cite{BGMS17} and~\cite{kleptsyn2018classification}. These examples are not necessarily smooth and not necessarily robustly transitive. In the following theorem, we provide new examples of IFSs comprising $C^r$ diffeomorphisms with $r\geq 1$ that are forward minimal but not backward minimal, and are also robustly transitive. Additionally, we present examples of robustly transitive IFSs that are neither forward nor backward minimal. These examples highlight the distinction between the sets $\mathcal{W}$ and $\mathrm{RT}^r_k(\mathbb{S}^1)$ in Theorem~\ref{mainthm-IFS} and support the examples described in Theorem~\ref{mainthm-examples}.

\begin{mainthm} \label{mainteo:exemplos-IFS} For any $r\geq 1$, $k\geq 8$, there exist $\mathscr{F} \in \mathrm{RT}^r_{k+1}(\mathbb{S}^1)$ and  $\mathscr{G} \in \mathrm{RT}^r_k(\mathbb{S}^1)$ such that
\begin{itemize}
    \item $\mathscr{F}$ is forward minimal but not backward minimal;
    \item $\mathscr{G}$ is not forward nor backward minimal.
\end{itemize}
\end{mainthm}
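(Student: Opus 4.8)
The plan is to build the two families explicitly, keeping carefully apart the two requirements of the statement: \emph{robust transitivity}, which is an open condition and must be produced by a robust mechanism, and the \emph{failure of (forward/backward) minimality}, which only has to hold for the single IFS we exhibit. The guiding observation is elementary: for a \emph{transitive} family $\mathscr{F}$, the semigroup $\langle\mathscr{F}\rangle^+$ fails to be minimal if and only if there is a proper closed set $C\subsetneq\mathbb{S}^1$ with $f(C)\subseteq C$ for every $f\in\mathscr{F}$, and any such $C$ is automatically nowhere dense, since $\mathrm{int}(C)$ would be a proper forward-invariant open set and transitivity would force it (hence $C$) to be dense. Moreover a finite $\langle\mathscr{F}^{-1}\rangle^+$-invariant set is also $\langle\mathscr{F}\rangle^+$-invariant, so if $\mathscr{F}$ is forward minimal the obstruction to backward minimality must be infinite. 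Thus it suffices to place, inside a robustly transitive family, a Cantor set invariant under the forward semigroup, or under the backward semigroup, or under both.

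First I would construct the family $\mathscr{G}=\{g_1,\dots,g_k\}\in\mathrm{RT}^r_k(\mathbb{S}^1)$, $k\ge8$, that is neither forward nor backward minimal. Fix a Cantor set $M\subset\mathbb{S}^1$ of zero Lebesgue measure with a self-similar gap structure, and take each $g_i$ to be a $C^r$ diffeomorphism preserving $M$, i.e.\ $g_i(M)=M$, permuting the gaps of $M$; since $M$ is proper, closed and invariant under all $g_i$ and all $g_i^{-1}$, the orbit of any point of $M$ under $\langle\mathscr{G}\rangle^+$ and under $\langle\mathscr{G}^{-1}\rangle^+$ stays in $M$, so $\mathscr{G}$ is neither forward nor backward minimal. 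The point is to choose enough of the $g_i$, and the right gap combinatorics, so that the induced semigroup action on the gaps realises a robust ``blender''-type covering: there should be finitely many words $w_1,\dots,w_m\in\langle\mathscr{G}\rangle^+$ and a uniform length $N$ such that a subarc of any gap can be steered, by a word of length $\le N$, into and across any target region, with this property surviving $C^r$-perturbations of $\mathscr{G}$ in $\IFS^r_k(\mathbb{S}^1)$. The usual chain/blow-up argument (as in the proof of Theorem~\ref{mainthm-IFS}) then gives transitivity of every nearby family, so $\mathscr{G}\in\mathrm{RT}^r_k(\mathbb{S}^1)$; the number of generators required to realise such a robust symbolic blender on the gaps is what fixes the bound $k\ge8$.

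For the family $\mathscr{F}\in\mathrm{RT}^r_{k+1}(\mathbb{S}^1)$ that is forward minimal but not backward minimal, I would set $\mathscr{F}=\mathscr{G}\cup\{f_{k+1}\}$, adjoining one further $C^r$ diffeomorphism $f_{k+1}$ with $M\subsetneq f_{k+1}(M)$, equivalently $f_{k+1}^{-1}(M)\subsetneq M$. Then $M$ is still invariant under every $f_i^{-1}$, so $\langle\mathscr{F}^{-1}\rangle^+$ has the proper closed invariant set $M$ and $\mathscr{F}$ is not backward minimal; and $\mathscr{F}$ is still robustly transitive because it contains the blender substructure of $\mathscr{G}$. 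The real work is to choose $f_{k+1}$ so that $\langle\mathscr{F}\rangle^+$ becomes \emph{minimal}: $f_{k+1}$ must push points out of $M$ — each forward iterate enlarges $M$ and $f_{k+1}(M)\setminus M$ lands in the dense open set $\mathbb{S}^1\setminus M$ — and, combined with the blow-up property coming from $\mathscr{G}$, it should then spread the forward orbit of every point densely over $\mathbb{S}^1$; this is a Markov/ping-pong estimate on forward images of arcs, again in the spirit of Theorem~\ref{mainthm-IFS}. The case $r=\infty$ is obtained by taking all of the above maps smooth from the start.

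The main obstacle is the simultaneous construction of the robust transitivity gadget and the constraint that all of its generators preserve the common Cantor set $M$: one must arrange the gap combinatorics so that enough $C^r$ diffeomorphisms permuting the gaps exist, so that their semigroup acts on the gaps with a robust covering/blow-up property, and so that the whole configuration is open in $\IFS^r_k(\mathbb{S}^1)$ (the perturbed families need not preserve $M$, and indeed may become minimal — only transitivity must persist). Once this gadget is available, verifying forward minimality of the enlarged family $\mathscr{F}$ and bookkeeping that $M$ remains invariant in the appropriate direction are comparatively routine, and they yield both items of the theorem.
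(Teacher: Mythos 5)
Your overall architecture is exactly the paper's: exhibit a robustly transitive family preserving a Cantor set in both directions (hence neither forward nor backward minimal), then adjoin one map $h$ with $h(K)\supsetneq K$ to restore forward minimality while $K$ stays invariant for the inverse family. However, the two steps you yourself flag as ``the main obstacle'' and ``the real work'' are precisely where the paper's content lies, and your proposal does not supply them. First, the existence of a finitely generated family of $C^r$ ($r$ up to $\infty$) orientation-preserving circle diffeomorphisms preserving a Cantor set \emph{and} carrying a robust transitivity mechanism is not something one can simply posit: regularity is a genuine obstruction (McDuff's result that the middle-thirds Cantor set cannot be the exceptional minimal set of a cyclic $C^1$ group is quoted in the paper exactly to make this point). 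The paper resolves it in Proposition~\ref{propo-existencia-minimal-excep} by taking the Ghys--Sergiescu smooth realization of Thompson's group $T$ (built from a map $H$ via the Whitney extension theorem), whose exceptional minimal set is the standard middle Cantor set, adding two diffeomorphisms supported in a gap to make all gap orbits dense and to create a blending region, and verifying the expanding property; robust transitivity then follows from the symmetric-expanding-blending criterion (Corollary~\ref{cor:tran-expanding-blending}, used in Corollary~\ref{cor:rt}). Your ``robust symbolic blender on the gaps'' is a plausible slogan for the same mechanism, but without a construction it is an assertion, and your explanation of the bound $k\geq 8$ (number of generators of the blender) does not match any argument you give; in the paper it comes from $G$ being $4$-generated, so a symmetric generating set has $8$ elements.

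Second, your proof of forward minimality of $\mathscr{F}=\mathscr{G}\cup\{f_{k+1}\}$ is a gesture (``Markov/ping-pong estimate \dots in the spirit of Theorem~\ref{mainthm-IFS}'') rather than an argument, and it is not routine: $\mathscr{G}$ itself is transitive, expanding and has dense gap orbits, yet is \emph{not} minimal, so transitivity-type spreading estimates alone cannot give minimality, and Theorem~\ref{mainthm-IFS} only provides approximation by minimal families, not minimality of a prescribed one. The paper's mechanism is structural and uses the \emph{symmetry} of $\mathscr{G}_0$ in an essential way: because $\mathscr{G}_0$ generates a group $G$, any closed $\mathscr{F}_0$-forward-invariant set is $G$-invariant, and by the trichotomy for circle group actions together with density of the orbits off $K$ (Lemma~\ref{lema-previo-exem-forward-but-backward-minimal}) the only candidates are $\emptyset$, $K$ and $\mathbb{S}^1$; the choice $h(K)\supsetneq K$ rules out $K$, giving forward minimality (and, via Proposition~\ref{Sarizadeh}, even that $\mathbb{S}^1$ is a strict attractor, which is what Theorem~\ref{mainthm-examples} later needs). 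In your setup the generators $g_i$ are not required to form a symmetric family, so you have no classification of the closed $\langle\mathscr{F}\rangle^+$-invariant sets and no replacement for this step. To repair the proposal you would need either to build the gadget of Proposition~\ref{propo-existencia-minimal-excep} (or an equivalent) explicitly, or to add the symmetry hypothesis and the invariant-set classification that make the minimality argument go through.
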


To prove Theorem~\ref{mainteo:exemplos-IFS}, we need to study the properties of (robust) minimality and (robust) transitivity of $\mathscr{F}$. Specifically, we need to extend the results in~\cite{BFMS16} to transitive IFSs. To accomplish this, we first revisit the theory of expanding minimal actions and blending regions for the more general context of IFSs of locally Lipschitz homeomorphisms in compact metric spaces in \S\ref{sec:criterion_minimality} and \S\ref{subsec: Blending-region}. In \S\ref{sec:rob-tran}, we then extend this theory to transitive IFSs.

\subsection{Organization of the paper}
The paper is organized as follows. In Section~\ref{sec:2}, we introduce some preliminaries of IFS and prove Theorem~\ref{mainthm-stability}. Sections~\ref{sec:minimality} and~\ref{sec:transitividad} are dedicated to the study of minimal and transitive IFSs. In Section~\ref{sec:minimal-in-rt-S1}, we will prove Theorems~\ref{mainthm-IFS} and~\ref{mainteo:exemplos-IFS}. Finally, in Section~\ref{sec:skew-prodcut}, we will prove Theorems~\ref{mainthm1},~\ref{mainthm2} and~\ref{mainthm-examples}.

\section{Asymptotic stability of attractors of IFSs}
\label{sec:2}
In what follows, $(X,\mathrm{d})$ denotes a metric space. Sometimes we will simply write $X$ if the metric is clear from context. We also use the notation $\mathscr{F}=\{f_{1},\dots,f_{k}\}$ to denote a finite family of $k\geq 1$ continuous functions $f_i$ from $X$ to itself, $i=1,\dots,k$, which we refer to as an \emph{iterated function system} (IFS). Note that strictly speaking, a finite family is an ordered list of elements, which may include repeated elements.


\subsection{Hyperspace of compact sets}

We define the \emph{hyperspace} $\mathscr{K}(X)$ as the set of all nonempty compact subsets of $X$. For $K,L\in \mathscr{K}(X)$, we define the \emph{Hausdorff distance}  as follows
\begin{equation*}
\mathrm{d_{H}}(K,L)\eqdef \max\{\max_{x\in K}{\mathrm{d}(x,L)}, \ \max_{y\in L}{\mathrm{d}(y,K)}\}.
\end{equation*}
It is easy to verify that $\mathrm{d_{H}}$ is a metric on $\mathscr{K}(X)$.

Given $A\in \mathscr{K}(X)$ and $r>0$, we use $B(A,r)$ and $B_r(A)$ to denote the open ball of radius $r$ centered at $A$ in $(\mathscr{K}(X),\mathrm{d_H})$ and in $(X,\mathrm{d})$, respectively. That is,
\begin{equation*}
B(A,r)\eqdef\{B\in\mathscr{K}(X):\mathrm{d_H}(A,B)<r\} \quad \text{and} \quad
B_{r}(A)=\{x\in X:  \mathrm{d}(x,A)<r\}.
\end{equation*}
Using this notation, we can provide an alternative definition of the Hausdorff metric by
\begin{equation*}
\mathrm{d_{H}}(K,L)\eqdef\inf\{r>0:K\subset B_{r}(L)\quad\text{and}\quad L\subset B_{r}(K)\}.
\end{equation*}
\subsection{Hutchinson operator.} \label{sec:Hutchinson}
Associated with the family $\mathscr{F}=\{f_1,\dots,f_k\}$ we define the \emph{Hutchinson} operator $F=F_{\mathscr{F}}$ on $\mathscr{K}(X)$ by
\begin{equation*}
F:\mathscr{K}(X)\to\mathscr{K}(X),\quad\quad F(A)=\bigcup_{i=1}^{k}{f_{i}(A)}.
\end{equation*}
The operator $F$ is a standard construction in the theory of IFSs; see, e.g.,~\cite{HUT81} for more details. The Hutchinson operator $F$ inherits some properties of the elements of the family~$\mathscr{F}$. For instance, since $\mathscr{F}$ is a finite family of continuous maps, the Hutchinson operator $F$ is also continuous, according~\cite[Thm~1]{barnsley2015continuity}.

\subsection{Attractors}
We consider a continuous map $F:\mathscr{K}(X)\to\mathscr{K}(X)$ and introduce the classical notion of an attracting fixed point for this particular case of functions.

\begin{defi} \label{def:attractor}
A compact set $K\in\mathscr{K}(X)$ is said to be an \emph{attractor} of $F$ if there exists an open set $\mathcal{U}\subset\mathscr{K}(X)$ such that $K\in \mathcal{U}$ and $F^{n}(C) \to K$ in the Hausdorff distance for any $C\in \mathcal{U}$.
\end{defi}

\begin{obs}
Any attractor $K$ is a fixed point of $F$.
Indeed, if $F^{n}(C)\to K$ as $n\to\infty$ in the Hausdorff metric, by the continuity of $F$ we have that $F^{n+1}(C)\to F(K)$. By the uniqueness of the limit, we conclude that $F(K)=K$.
\end{obs}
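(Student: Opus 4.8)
The plan is to exploit the definition of an attractor to produce a single convergent sequence in $(\mathscr{K}(X),\dH)$ and then push it forward through the continuous map $F$. First I would invoke Definition~\ref{def:attractor}: since $K$ is an attractor of $F$, there exists an open set $\mathcal{U}\subset\mathscr{K}(X)$ with $K\in\mathcal{U}$ such that $F^n(C)\to K$ in the Hausdorff distance for every $C\in\mathcal{U}$. All I need is one such $C$, and its existence is guaranteed because $K$ itself lies in $\mathcal{U}$; thus I would fix any $C\in\mathcal{U}$ (for instance $C=K$) and work with the convergent sequence $(F^n(C))_{n\geq 0}$.

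Next I would apply the continuity of $F$. Since $F:\mathscr{K}(X)\to\mathscr{K}(X)$ is continuous and $F^n(C)\to K$, the image sequence satisfies $F(F^n(C))\to F(K)$, that is, $F^{n+1}(C)\to F(K)$. On the other hand, the sequence $(F^{n+1}(C))_{n\geq 0}$ is merely the tail of the original sequence $(F^n(C))_{n\geq 0}$, which converges to $K$; hence $F^{n+1}(C)\to K$ as well.

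Finally, I would conclude by uniqueness of limits. Because $(\mathscr{K}(X),\dH)$ is a metric space (as already observed above), a convergent sequence has a unique limit, so the sequence $(F^{n+1}(C))_{n\geq 0}$ cannot converge to two distinct compact sets. This forces $F(K)=K$, i.e.\ $K$ is a fixed point of $F$. No step presents a genuine obstacle here: the argument is a direct combination of continuity and uniqueness of limits, and the only mild care needed is to check that $\mathcal{U}$ is nonempty (handled by $K\in\mathcal{U}$) so that a legitimate sequence exists to which continuity can be applied.
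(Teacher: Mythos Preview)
Your proposal is correct and follows exactly the argument sketched in the remark itself: pick $C\in\mathcal{U}$, use continuity of $F$ to get $F^{n+1}(C)\to F(K)$, observe this is a tail of a sequence converging to $K$, and conclude $F(K)=K$ by uniqueness of limits in the metric space $(\mathscr{K}(X),\dH)$. Your only additions are the explicit justification that $\mathcal{U}\neq\emptyset$ (since $K\in\mathcal{U}$) and the spelled-out tail argument, both of which are sound.
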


Following~\cite{barnsley2012real}, we introduce the notion of a strict attractor for the continuous map $F$.

\begin{defi}\label{defi-strict-atractor}
A compact set $K\in\mathscr{K}(X)$ is a \emph{strict attractor} of $F$ if there exists an open set $U\subset X$ such that $K\subset U$ and
$F^n(S)\to K$ in the Hausdorff distance for any $S\in\mathscr{K}(U)$.
\end{defi}

The notion of a strict attractor is stronger than the notion of an attractor, as we  see below.

\begin{prop}\label{propo-stric atractor-implica-atractor}
Every strict attractor is an attractor.
\end{prop}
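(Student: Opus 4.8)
The plan is to exhibit an open neighborhood $\mathcal{U}$ of $K$ in the hyperspace $\mathscr{K}(X)$ all of whose members are compact subsets of the open set $U$ furnished by the definition of strict attractor; once this is done, the required convergence $F^n(C)\to K$ for every $C\in\mathcal{U}$ is immediate, since each such $C$ is an element of $\mathscr{K}(U)$ and the strict attractor hypothesis applies to $S=C$.

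First I would produce a uniform radius $\varepsilon>0$ with $B_\varepsilon(K)\subset U$. Since $K$ is compact and contained in the open set $U$, the function $x\mapsto \mathrm{d}(x,X\setminus U)$ is continuous and strictly positive on $K$ (with the convention that it is identically $+\infty$, and any $\varepsilon$ works, in the degenerate case $U=X$); hence it attains a positive minimum $\varepsilon$ on the compact set $K$. By the very definition of $B_\varepsilon(K)$, every point of $X$ lying within distance $\varepsilon$ of $K$ belongs to $U$, so $B_\varepsilon(K)\subset U$. Now set $\mathcal{U}\eqdef B(K,\varepsilon)$, the open ball of radius $\varepsilon$ around $K$ in $(\mathscr{K}(X),\dH)$. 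For any $C\in\mathcal{U}$ we have $\dH(C,K)<\varepsilon$, which forces $C\subset B_\varepsilon(K)\subset U$, i.e.\ $C\in\mathscr{K}(U)$. Applying the strict attractor property to $S=C$ gives $F^n(C)\to K$ in the Hausdorff distance. As $K\in\mathcal{U}$ and $\mathcal{U}$ is open in $\mathscr{K}(X)$, this shows $K$ is an attractor of $F$ in the sense of Definition~\ref{def:attractor}.

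There is no genuine obstacle in this argument; the only point requiring (mild) care is the passage from a neighborhood of $K$ inside $X$ to a neighborhood of $K$ inside the hyperspace, which is exactly what the inclusion $B(K,\varepsilon)\subset\mathscr{K}(B_\varepsilon(K))$ records, and the handling of the boundary case $U=X$.
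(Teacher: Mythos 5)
Your proof is correct and follows essentially the same route as the paper: the paper takes $\mathcal{U}=\mathscr{K}(U)$ and cites the fact that $\mathscr{K}(U)$ is open in $(\mathscr{K}(X),\dH)$ whenever $U$ is open in $X$, while you verify the only instance of that fact actually needed, namely that the Hausdorff ball $B(K,\varepsilon)$ lies inside $\mathscr{K}(U)$ for $\varepsilon=\min_{x\in K}\mathrm{d}(x,X\setminus U)>0$. So your argument is a self-contained version of the paper's one-line proof, with no gap.
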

\begin{proof}
Let $K\in\mathscr{K}(X)$ be a strict attractor of $F$. Then there exists an open set $U\subset X$ such that $K\subset U$ and $F^{n}(S)\to K$ in the Hausdorff distance for all $S\in\mathscr{K}(U)$. According to~\cite[Prop.~2.4.14]{srivastava2008course}, since $U$ is an open set of $X$, then $\mathscr{K}(U)$ is an open set in $\mathscr{K}(X)$. Therefore, $K$ is an attractor of $F$.
\end{proof}

Since the Hutchinson operator $F$ associated with a finite family $\mathscr{F}$ of continuous maps from $X$ to itself is a continuous map of the hyperspace of compact sets, the notions of an attractor and a strict attractor can be applied to $F$. For simplicity, we will refer to them as the \emph{attractor} or \emph{strict attractor of $\mathscr{F}$}.

\subsection{Stability}\label{sec:estabilidad-atractor}
Let  $F:\mathscr{K}(X)\to\mathscr{K}(X)$ be a continuous map and recall the following.

\begin{defi}
A fixed point $K\in\mathscr{K}(X)$ of $F$ is called \emph{stable} if for any $\varepsilon>0$, there exists $\delta>0$ such that for all $n\geq 0$ and $S\in\mathscr{K}(X)$ with $\dH(S,K)<\delta$, we have $\dH(F^{n}(S),K)<\varepsilon$. An attractor $K\in\mathscr{K}(X)$ of $F$ that is stable in this sense is said to be \emph{asymptotically stable}.
\end{defi}

\begin{defi} \label{def:monotonic}
The map $F$ is called \emph{monotone} if $F(A)\subset F(B)$ for any $A,B\in\mathscr{K}(X)$ with $A\subset B$.
\end{defi}

\begin{obs} \label{rem:Rypka}  Under a different and stronger assumption of monotonicity (see~\cite[Def.~2.7 and Rem.~2.9]{MR19}), Rypka states in~\cite[Lemma 3.2]{MR19} that if the entire space $X$ is an attractor of a continuous monotone map of $\mathscr{H}(X)$, then it is stable. We attempted to prove this result using the arguments in~\cite{MR19}, but we encountered difficulty understanding the proof.\footnote{On~\cite[p.103]{MR19}, the set $\mathcal{B}_{n_1n_2}=\{B\in \mathcal{B}_{n_1}: \ \text{there exists} \ B'\in \mathcal{B}_{n_2} \ \text{such that} \ \ B \subset B' \}$ is defined from a sequence of open sets $\mathcal{B}_n$ of $\mathbb{K}(A^{*})$. The author claims that for any $n_1 \in \mathbb{N}$ there exists $n_2 \in \mathbb{N}$ such that $\mathcal{B}_{n_1n_2}$ is non-empty. However, we do not understand the argument provided to prove that claim. Compare with the argument used to prove Claim~\ref{afirmacion-lema-estric-attractor-implica-estabilidad}.} Below, we present a different proof from~\cite{MR19} using the monotonic assumption in Definition~\ref{def:monotonic}.
\end{obs}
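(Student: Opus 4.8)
\textbf{Proof plan for Theorem~\ref{mainthm-stability}.}

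The plan is to prove the main assertion — that an attractor $X$ of a continuous monotone map $F:\mathscr{K}(X)\to\mathscr{K}(X)$ is automatically stable, hence asymptotically stable — and then deduce the IFS corollary from Proposition~\ref{propo-stric atractor-implica-atractor} (a strict attractor is an attractor, and the Hutchinson operator of an IFS is continuous and monotone). So the whole content is the first sentence. First I would fix $\varepsilon>0$ and argue by contradiction: suppose no $\delta>0$ works, so there is a sequence $S_j\in\mathscr{K}(X)$ with $\dH(S_j,X)\to 0$ but $\dH(F^{n_j}(S_j),X)\ge\varepsilon$ for some $n_j\ge 0$. The key structural observation is that monotonicity lets me \emph{compare} the forbidden orbits with orbits of a fixed, well-understood family of sets. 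Concretely, since $X$ is an attractor there is an open neighbourhood $\mathcal{U}$ of $X$ in $\mathscr{K}(X)$ with $F^n(C)\to X$ for all $C\in\mathcal{U}$; I want to cover a neighbourhood of $X$ by finitely many sets each of which is dominated (under $\subset$) by a single element of $\mathcal{U}$, and then use compactness and monotonicity to get a \emph{uniform} rate of approach.

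The technical heart is the following claim, which should be isolated as Claim~\ref{afirmacion-lema-estric-attractor-implica-estabilidad} (the one the footnote alludes to): there is $\delta_0>0$ and, for every $\varepsilon>0$, an integer $N$ such that $\dH(F^n(S),X)<\varepsilon$ for all $n\ge N$ and all $S\in\mathscr{K}(X)$ with $\dH(S,X)<\delta_0$. To prove it I would proceed as follows. Since $X$ is compact, for each small $\rho>0$ the ``thickened'' set $\overline{B_\rho(X)}$... but one must be careful: $\overline{B_\rho(X)}$ need not lie in $X$ if $X$ is not the whole space, yet in the relevant case ($X$ a strict attractor that is the entire space, or more generally $X$ the ambient space) we can take closed $\rho$-neighbourhoods freely. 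I would therefore treat the statement with $X$ itself as the ambient metric space, so that $\overline{B_\rho(X)}=X$ trivially and every $S\in\mathscr{K}(X)$ satisfies $S\subset X$; then monotonicity gives $F^n(S)\subset F^n(X)=X$ for all $n$, and the ``upper'' Hausdorff half $\max_{x\in F^n(S)}\mathrm{d}(x,X)=0$ is free. So the only thing to control is the ``lower'' half: $X\subset B_\varepsilon(F^n(S))$, i.e.\ $F^n(S)$ must $\varepsilon$-fill $X$. Here is where I use the attractor hypothesis together with monotonicity in the other direction: pick a finite $\delta_0$-net $\{x_1,\dots,x_m\}$ of $X$; for a singleton $\{x_i\}$, since $\{x_i\}$ is close to $X$ in $\dH$ only if $X$ has small diameter, this naive approach fails, so instead I cover $\mathcal{U}$-membership: by openness of $\mathcal{U}$, every $C\in\mathscr{K}(X)$ with $\dH(C,X)<\delta_0$ lies in $\mathcal{U}$; apply $F^n(C)\to X$ to finitely many ``small'' perturbations $C_i$ of $X$ (e.g.\ $X$ with a small ball removed, which still lies in $\mathcal{U}$ for $\delta_0$ small) and take $N=\max_i N_i$ where $\dH(F^n(C_i),X)<\varepsilon$ for $n\ge N_i$. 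Then for arbitrary $S$ with $\dH(S,X)<\delta_0$: cover $X$ by small balls $B_i$; for each $i$, $S$ meets $B_i$, so $S$ contains a compact subset $S_i$ close to a fixed $C_i\in\mathscr{K}(X)\cap\mathcal{U}$ with $S_i\supset(\text{something forcing }F^n(S_i)\text{ to }\varepsilon\text{-approach }X)$; by monotonicity $F^n(S)\supset F^n(S_i)$, and $F^n(S_i)$ is forced $\varepsilon$-close to $X$. Assembling over $i$ yields $X\subset B_\varepsilon(F^n(S))$ for $n\ge N$. The delicate point — and I expect this to be the main obstacle — is choosing the fixed comparison sets $C_i\in\mathcal{U}$ so that, on one hand, they are genuinely dominated (as subsets) by the $\delta_0$-perturbations of $X$ we care about, and on the other hand their forward orbits do approach $X$; the footnote's complaint about Rypka's argument is precisely that this non-emptiness/domination step was not justified, so I must give it in full, using that $\mathcal{U}$ is open and $F$ continuous.

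Once the Claim is in hand, stability is immediate: given $\varepsilon>0$, apply the Claim to get $\delta_0$ and $N$; for $n=0,1,\dots,N-1$ the map $S\mapsto F^n(S)$ is continuous at $X$ (composition of the continuous $F$) and fixes $X$, so there is $\delta_n>0$ with $\dH(F^n(S),X)<\varepsilon$ whenever $\dH(S,X)<\delta_n$; put $\delta=\min\{\delta_0,\delta_1,\dots,\delta_{N-1}\}$. Then $\dH(F^n(S),X)<\varepsilon$ for \emph{all} $n\ge 0$ and all $S$ with $\dH(S,X)<\delta$, which is stability; combined with the attractor hypothesis this is asymptotic stability. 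Finally, for the ``in particular'' clause: if $X$ is a strict attractor of an IFS $\mathscr{F}$ then by Proposition~\ref{propo-stric atractor-implica-atractor} it is an attractor of the Hutchinson operator $F_{\mathscr{F}}$, which is continuous (\cite[Thm~1]{barnsley2015continuity}) and visibly monotone since $f_i(A)\subset f_i(B)$ when $A\subset B$; so the main statement applies. I would remark that the monotonicity hypothesis is genuinely used only through the inclusion $F^n(S)\subset F^n(X)=X$ and through $F^n(S)\supset F^n(S_i)$, both exploited above, and that this is weaker than the monotonicity notion in~\cite[Def.~2.7]{MR19}, as advertised in Remark~\ref{rem:Rypka}.
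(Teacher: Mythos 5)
Your plan for the stability deduction (handle $n<N$ by continuity of $F^n$ at the fixed point $X$) and the ``in particular'' IFS clause are fine, but the technical heart of your argument has a genuine gap, and it is exactly the step you yourself flag as ``the main obstacle''. Your strategy needs, for each $S\in\mathscr{K}(X)$ with $\dH(S,X)<\delta_0$, subsets $S_i\subset S$ whose forward orbits approach $X$ at a rate that is uniform in $S$, and you propose to get this by comparing with \emph{finitely many fixed} sets $C_i\in\mathcal{U}$. This cannot work as described: monotonicity only transfers information along inclusions, not along Hausdorff proximity, and there is no fixed nonempty compact set contained in every compact $S$ that is $\delta_0$-close to $X$ (a finite $\delta_0$-net of $X$ is such an $S$ and contains no prescribed set; your candidates of the form ``$X$ with a small ball removed'' are large sets which such an $S$ certainly does not contain). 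Saying ``$S_i\subset S$ is close to a fixed $C_i$'' does not help, because the convergence $F^n(C_i)\to X$ gives no control on $F^n(S_i)$ for a merely nearby $S_i$ — uniform control of $F^n$ on a neighbourhood of $C_i$ is precisely the kind of statement being proved, so the argument is circular. In effect your intermediate Claim (a uniform $N$ valid for all $S$ in a $\dH$-ball) is a uniform-attraction statement at least as strong as the theorem's conclusion, and no non-circular mechanism for it is provided.

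The paper avoids the need for any uniform rate by arguing by contradiction inside the hyperspace. If $X$ were not stable, one gets times $k_n\to\infty$ and sets $S_n\to X$ with $\dH(F^{k_n}(S_n),X)>\varepsilon$, and one considers the open ``bad'' sets $\mathcal{B}_n=\{S\in B(X,\delta/2):\dH(F^{k_n}(S),X)>\varepsilon\}$. The key claim is that any nonempty open $\mathcal{B}\subset\mathcal{B}_n$ meets some $\mathcal{B}_m$ with $m>n$; here monotonicity is used in the direction opposite to yours: since everything lies in $X$, $S'\subset S$ implies $\dH(F^{k}(S'),X)\geq\dH(F^{k}(S),X)$, so intersecting a bad set $S_m$ (which is $\dH$-close to $X$ and hence meets a fine net of balls around any prescribed $C\in\mathcal{B}$) with a closed ball around $C$ produces a set that is still bad at time $k_m$ yet lies in $B(C,r)\subset\mathcal{B}$. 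Nesting these open sets and using compactness of $\mathscr{K}(X)$ yields a single compact set in the basin whose orbit stays $\varepsilon$-far from $X$ along a subsequence, contradicting the attractor hypothesis. If you want to salvage your outline, you would have to replace your fixed comparison sets $C_i$ by some argument of this contradiction/compactness type (or otherwise justify uniform attraction near $X$); as written, the central claim is unsupported.
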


\begin{proof}[Proof of Theorem~\ref{mainthm-stability}]
Since $X$ is an attractor of $F$, there exists $\delta>0$ such that
\begin{equation}\label{ii.3}
\lim_{n\to\infty}{\dH(F^{n}(S),X)=0},
\quad \text{for all}\quad S\in B(X,\delta).
\end{equation}
If $X$ is a singleton, then it is obviously stable. Suppose that $X$ is not a singleton. We will proceed by contradiction. Assume that $X$ is not stable. Hence, there exist $\varepsilon>0$ and sequences $(k_{n})_{n\in\mathbb{N}} \subset \mathbb{N}$ and $(S_{n})_{n\in\mathbb{N}}\subset\mathscr{K}(X)$ such that
\begin{equation}\label{ii.4}
\dH(S_{n},X)<\min \left\{\frac{1}{n},\frac{\delta}{2} \right\} \quad \text{and} \quad
\dH(F^{k_{n}}(S_{n}),X)>\varepsilon.
\end{equation}
Observe that $k_{n}\to\infty$. Otherwise, $F$ might not be continuous. Since $F$ is continuous, for any $n\in\mathbb{N}$, the set $\mathcal{B}_{n}\subset\mathscr{K}(X)$ defined by $$\mathcal{B}_{n}=\{S\in B(X,\delta/2): \dH(F^{k_{n}}(S),X)>\varepsilon\}$$ is open and non-empty, since $S_{n}\in\mathcal{B}_{n}$.

\begin{afir}\label{afirmacion-lema-estric-attractor-implica-estabilidad}
Fix $n\in\mathbb{N}$ and let $\mathcal{B}\subset\mathscr{K}(X)$ be a non-empty open set such that $\mathcal{B}\subset\mathcal{B}_{n}$.
Then there exists $m>n$ such that $\mathcal{B}\cap\mathcal{B}_{m}\neq\emptyset$.
\end{afir}

Before to prove the above claim, let us explain how to use it to conclude the proof of the theorem.
Given $n_{1}\in\mathbb{N}$, since $\mathcal{B}^{1}\eqdef\mathcal{B}_{n_{1}}$ is a  non-empty  open set,
by Claim~\ref{afirmacion-lema-estric-attractor-implica-estabilidad} there is $n_{2}>n_{1}$  such that $\mathcal{B}^1\cap\mathcal{B}_{n_{2}}\neq\emptyset$. Applying the claim again to the non-empty open set   $\mathcal{B}^{2}\eqdef\mathcal{B}^{1}\cap\mathcal{B}_{n_{2}}\subset\mathcal{B}_{n_{2}}$, we obtain $n_{3}>n_{2}$ such that $\mathcal{B}^{2}\cap \mathcal{B}_{n_{3}}\neq\emptyset$. Repeating this process inductively, we obtain a sequence
$(\mathcal{B}^n)_{n\geq 1}$ of non-empty open sets in $\mathscr{K}(X)$. Observe that this sequence is nested, i.e. $\mathcal{B}^{n+1}\subset \mathcal{B}^n$ for all $n\geq 1$. Moreover, for any $i\in\mathbb{N}$, we also have
\begin{equation*}
    \dH(F^{k_{n_{i}}}(S),X)>\varepsilon \quad\text{with}\quad S\in\mathcal{B}^{i}\subset\mathcal{B}_{n_{i}}.
\end{equation*}
Let $\overline{\mathcal{B}^n}$ be the closure (in Hausdorff topology) of $\mathcal{B}^n$. Since $X$ is compact, we have that $\mathscr{K}(X)$ is also compact (c.f.~\cite[Thm.~2.4.17]{srivastava2008course}). Thus, $\overline{\mathcal{B}^{n}}$ is a compact subset of $\mathscr{K}(X)$ for any $n$.
Since the sequence $(\overline{\mathcal B^n})_{n\geq 1}$ is also nested, the intersection
$\overline{\mathcal{B}^{\infty}}\eqdef\bigcap_{n=1}^{\infty}{\overline{\mathcal{B}^{n}}}$ is not empty.
As $F$ is continuous, we have
\begin{equation*}
    \dH(F^{k_{n_{i}}}(S),X)\geq\varepsilon,\quad i\in\mathbb{N},\quad S\in\overline{\mathcal{B}^{\infty}}\subset B(X,\delta).
\end{equation*}
This contradicts Equation~\eqref{ii.3}. Therefore, $X$ is stable (and then asymptotically stable since it is also an attractor).

Moreover, as the Hutchinson operator is a continuous and monotonic function defined on the hyperspace of compact sets, we can apply the first part of this theorem when $X$ is a strict attractor of an  IFS. This completes the proof of Theorem~\ref{mainthm-stability}.
\end{proof}
\begin{proof}[Proof of Claim~\ref{afirmacion-lema-estric-attractor-implica-estabilidad}]
Since $\mathcal{B}$ is a non-empty open set, there are $C\in\mathcal{B}$ and $r>0$ such that $B(C,r)\subset\mathcal{B}$.
Consider $B_{\sfrac{r}{4}}(C)=\{x\in X:~\mathrm{d}(x,C)<r/4\}$. The closure $\overline{B_{\sfrac{r}{4}}(C)}$ (with respect to the topology induced by $\mathrm{d}$) is a closed subset of $X$, and since $X$ is compact, $\overline{B_{\sfrac{r}{4}}(C)}$ is itself a compact subset of $X$.

Observe that $\overline{B_{\sfrac{r}{4}}(C)}$ is a subset of the union of balls $B(x,r/4)=\{y\in X:~\mathrm{d}(x,y)<r/4\}$, where $x$ is in $\overline{B_{\sfrac{r}{4}}(C)}$.
By compactness, we can choose  $x_1,\dots,x_p \in \overline{B_{\sfrac{r}{4}}(C)}$ such that
\begin{equation}\label{ii.5}
  \overline{B_{\sfrac{r}{4}}(C)}\subset\bigcup_{i=1}^{p}{B(x_i,r/4)}.
\end{equation}
For any $z\in {B(x_1,r/4)}\cup\dots \cup {B(x_p,r/4)} $, there is $j\in\{1,\dots,p\}$ such that $\mathrm{d}(z,x_{j})<r/4$.  Moreover, since $x_{j}\in\overline{B_{\sfrac{r}{4}}(C)}$, we have $\mathrm{d}(x_{j},C)\leq r /4$, which implies that $\mathrm{d}(z,C)<r/2$. Therefore,
\begin{equation}\label{ii.6}
    \bigcup_{i=1}^{p}{B(x_i,r/4)}\subset B_{\sfrac{r}{2}}(C).
\end{equation}
On the other hand, from~\eqref{ii.4}, we have that $\dH(S_{m},X)\to 0$ as $m\to\infty$. Then there is $m>n$ such that
$S_{m}\cap B(x_i,r/4)\neq\emptyset$ for all $i=1,\dots,p$.
Take $S'_{m}=  S_{m} \cap \overline{B_{\sfrac{r}{2}}(C)} \in \mathscr{K}(X)$. From~\eqref{ii.6}, we have
\begin{enumerate}[label=(\roman*)]
    \item\label{item1-afirmacion-lema-estric-attractor-implica-estabilidad} $S'_{m}\cap B(x_i,r/4)\neq\emptyset~\text{for all}~i=1,\dots,p$,
    \item\label{item2-afirmacion-lema-estric-attractor-implica-estabilidad}$S'_{m}\subset B_{r}(C)$.
\end{enumerate}
We can prove two additional statements:
\begin{enumerate}[label=(\alph*)]
\item \label{item-a-afirmacion-lema-estric-attractor-implica-estabilidad} $\dH(S'_{m},C)<r$,
\item \label{item-b-afirmacion-lema-estric-attractor-implica-estabilidad} $\dH(F^{k_{m}}(S'_{m}),X)>\varepsilon$.
\end{enumerate}

To see~\ref{item-a-afirmacion-lema-estric-attractor-implica-estabilidad}, we use~\eqref{ii.5} to obtain $C\subset \overline{B_{\sfrac{r}{4}}(C)}\subset B(x_1,r/4)\cup \dots \cup B(x_p,r/4)$.
For any $c \in C$, there exists $j \in {1,\dots,p}$ such that $c \in B(x_j,r/4)$. By~\ref{item1-afirmacion-lema-estric-attractor-implica-estabilidad}, we can find $x' \in B(x_j,r/4) \cap S'_{m}$, and then by the triangle inequality, we get $\mathrm{d}(c,x')<r/2$. Thus, $c \in B_{\sfrac{r}{2}}(S'_{m})$, which implies $C \subset B_{\sfrac{r}{2}}(S'_{m})$. By combining this with \ref{item2-afirmacion-lema-estric-attractor-implica-estabilidad}, we obtain \ref{item-a-afirmacion-lema-estric-attractor-implica-estabilidad}.

To prove~\ref{item-b-afirmacion-lema-estric-attractor-implica-estabilidad}, we note that since $S_{m}\in\mathcal{B}_{m}$, we have $\dH(F^{k_{m}}(S_{m}),X)>\varepsilon$. Since $S'_{m}\subset S_{m}$ and $F$ is monotone, we have
$\dH(F^{k_{m}}(S'_{m}),X)\geq \dH(F^{k_{m}}(S_{m}),X)>\varepsilon$.

To conclude the proof of the claim, we use~\ref{item-a-afirmacion-lema-estric-attractor-implica-estabilidad} to obtain
\begin{equation}\label{ii.7}
    S'_{m}\in B(C,r) \subset\mathcal{B}\subset\mathcal{B}_{n}=\{S\in B(X,\delta/2):\dH(F^{k_{n}}(S),X)>\varepsilon\}.
\end{equation}
In particular,  $S'_m \in B(X,\delta/2)$, and using~\ref{item-b-afirmacion-lema-estric-attractor-implica-estabilidad}, we conclude that $S'_m \in \mathcal{B}_m$. Therefore, combining this with~\eqref{ii.7}, we have $S'_m \in \mathcal{B} \cap \mathcal{B}_m$.
\end{proof}

\begin{obs} \label{rem:error} In~\cite[Theorem 3.1]{MR19}, Rypka used his version of Theorem~\ref{mainthm-stability} to extend the stability result to general attractors of $F$. However, once again, we find that the proof is incorrect. Besides numerous misspellings, the proof provided by Rypka does not result in a contradiction as claimed.\footnote{In the notation of the proof of \cite[Theorem 3.1]{MR19}, it is necessary to prove $\dH(B^{i_n-k_0}_n,C^{i_n-k_0}_n)<\delta_0$ to conclude the contradiction. This is not done in the proof. In fact, the contradiction fails in the case where $F$ is the Hutchinson operator of a function $f$ and $A^*$ is a singleton.}  Therefore, it remains unknown whether the stability of general attractors is true or not.
\end{obs}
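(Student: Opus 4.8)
The final statement is a critique rather than a theorem: the goal is not to decide whether general attractors of a continuous monotone $F$ are stable (which the remark declares open), but to substantiate that the argument of \cite[Theorem 3.1]{MR19} does not establish it. Accordingly, the plan is to exhibit a concrete logical gap in that argument and then confirm, via an explicit instance, that the gap is genuine and not merely cosmetic. First I would reconstruct the skeleton of Rypka's proof. It proceeds by contradiction in the spirit of the proof of Theorem~\ref{mainthm-stability}: assuming the attractor $A^*$ is not stable, one produces a sequence of compact sets $B_n$ lying in a prescribed neighborhood of $A^*$ together with their forward iterates, and extracts indices $i_n$ and a fixed cutoff $k_0$ so that sets $B_n^{i_n-k_0}$ are compared with auxiliary sets $C_n^{i_n-k_0}$. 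The intended contradiction is then supposed to arise by transporting the situation back to the whole-space (basin) setting, where asymptotic stability is already granted by his Lemma~3.2, i.e. by our Theorem~\ref{mainthm-stability}.

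The decisive point is that this transport requires the estimate $\dH(B_n^{i_n-k_0},C_n^{i_n-k_0})<\delta_0$, where $\delta_0$ is the stability radius supplied by the whole-space result. I would verify, line by line, that Rypka asserts the contradiction but never establishes this inequality: the sets $B_n^{i_n-k_0}$ and $C_n^{i_n-k_0}$ are produced by different, a priori uncorrelated, iteration schemes, so nothing in the text forces them to be $\delta_0$-close. Isolating this missing inequality, and checking that every bound actually proven in the argument controls only one of the two sequences, is what turns the vague impression that the proof fails into a precise identification of the gap.

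To show that the gap cannot be closed by the argument as written, the second step is to specialize to the simplest configuration: let $F=F_{\{f\}}$ be the Hutchinson operator of a single continuous map $f\colon X\to X$, and let the attractor be a singleton $A^*=\{p\}$ with $f(p)=p$. I would trace Rypka's construction through this case and show that the two comparison sequences degenerate, so that the quantities whose difference is meant to yield a contradiction either coincide or carry no information, and the inequality $\dH(F^{k}(S),A^*)>\varepsilon$ that the proof needs to violate is never actually contradicted. Because a singleton attractor of a single map is among the most elementary situations conceivable, exhibiting it as a case where the intended contradiction evaporates demonstrates that the flaw is structural rather than typographical. I would contrast this with the genuine contradiction in the proof of Theorem~\ref{mainthm-stability}, where the monotonicity of $F$ together with the specific choice $S'_m\subset S_m$ does force the comparison that Rypka's argument lacks.

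The main obstacle will be textual rather than mathematical: Rypka's proof is, by the authors' own account, riddled with misspellings and an opaque indexing scheme, so the real work is to fix an unambiguous and maximally charitable reading of the symbols $B_n^{j}$, $C_n^{j}$, $i_n$, $k_0$, and $\delta_0$, and then to demonstrate that even under the most favorable interpretation the closeness estimate is absent and the singleton example defeats the argument. Care is needed to keep the critique aimed squarely at the proof, showing only that it is logically incomplete, while scrupulously refraining from any claim that stability of general attractors is false, since the remark deliberately leaves that question open.
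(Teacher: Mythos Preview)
This statement is a remark, and the paper provides no proof for it beyond the footnote itself; the entire substantive content is already contained in the two sentences of that footnote. Your proposal is not so much a proof as a plan to elaborate on exactly what the footnote asserts: identify the missing inequality $\dH(B^{i_n-k_0}_n,C^{i_n-k_0}_n)<\delta_0$ in Rypka's argument, and observe that the singleton case defeats the intended contradiction. In that sense your approach is fully aligned with the paper's own treatment, just more detailed.
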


The following consequence of Theorem~\ref{mainthm-stability} will enable us to prove the minimality of the strong unstable foliation in Theorem~\ref{mainthm1}. First, recall that  given $\varepsilon>0$, a subset $D$ of a metric space $(X,\mathrm{d})$ is $\varepsilon$-\emph{dense} if for every $x\in X$, there is a $y\in D$
such that $\mathrm{d}(x,y)<\varepsilon$.

\begin{prop}\label{propo-previo-minimalidad}
Consider a compact metric space $X$ and let $F:\mathscr{K}(X)\to\mathscr{K}(X)$ be a continuous monotone map. Suppose that $X$ is a strict attractor of $F$. Then, for every $\varepsilon>0$, there exists $n_{0}\in\mathbb{N}$ such that for any $n\geq n_{0}$ and for all $y\in X$, we have $F^{n}({y})$ is $\varepsilon$-dense in $X$.
\end{prop}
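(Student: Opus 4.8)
The plan is to derive this from the asymptotic stability of $X$ (Theorem~\ref{mainthm-stability}); the only genuinely new point is a uniformity in the base point $y$, and that will come from compactness of $X$ together with continuity of the iterates of $F$.

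First I would record a trivial reduction: since $F(X)=X$, one has $F^n(\{y\})\subset X$ for all $n$ and $y$, so $\dH(F^n(\{y\}),X)<\varepsilon$ already forces $X\subset B_\varepsilon(F^n(\{y\}))$, i.e. $F^n(\{y\})$ to be $\varepsilon$-dense in $X$. Hence it suffices to produce $n_0$ with $\dH(F^n(\{y\}),X)<\varepsilon$ for all $n\geq n_0$ and all $y\in X$. Since $X$ is a strict attractor of the continuous monotone map $F$, Theorem~\ref{mainthm-stability} gives that $X$ is asymptotically stable, so we may fix $\delta>0$ such that $\dH(S,X)<\delta$ implies $\dH(F^m(S),X)<\varepsilon$ for all $m\geq 0$.

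Next I would localize. Because $X$ is the whole space, taking $U=X$ in Definition~\ref{defi-strict-atractor} shows $F^n(S)\to X$ for every $S\in\mathscr{K}(X)$; applying this to $S=\{y\}$ gives, for each $y\in X$, an integer $N_y$ with $\dH(F^{N_y}(\{y\}),X)<\delta$. The inclusion $x\mapsto\{x\}$ is an isometric embedding of $X$ into $\mathscr{K}(X)$ and $F^{N_y}$ is continuous on $\mathscr{K}(X)$, so the set $\{z\in X:\dH(F^{N_y}(\{z\}),X)<\delta\}$ is open and contains $y$; pick $\rho_y>0$ with the ball $B_{\rho_y}(y)$ (in $X$) contained in it. By the choice of $\delta$, every $z\in B_{\rho_y}(y)$ then satisfies $\dH(F^m(\{z\}),X)<\varepsilon$ for all $m\geq N_y$.

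Finally I would invoke compactness of $X$. The balls $\{B_{\rho_y}(y)\}_{y\in X}$ cover $X$, so there are $y_1,\dots,y_p$ with $X=\bigcup_{i=1}^p B_{\rho_{y_i}}(y_i)$; set $n_0\eqdef\max\{N_{y_1},\dots,N_{y_p}\}$. For any $y\in X$ and any $n\geq n_0$, choose $i$ with $y\in B_{\rho_{y_i}}(y_i)$; since $n\geq n_0\geq N_{y_i}$, the previous step yields $\dH(F^n(\{y\}),X)<\varepsilon$, hence $F^n(\{y\})$ is $\varepsilon$-dense in $X$, as required. The main obstacle is exactly this uniformity over $y$: the attractor property alone only gives, for each fixed $y$, a time beyond which $F^n(\{y\})$ is close to $X$, with no control afterwards; it is the time-uniform estimate of asymptotic stability (Theorem~\ref{mainthm-stability}), combined with continuity of the iterates, that makes a finite subcover enough to conclude.
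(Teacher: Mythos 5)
Your proposal is correct and follows essentially the same route as the paper: invoke asymptotic stability of $X$ from Theorem~\ref{mainthm-stability}, use the strict-attractor convergence at each point $\{y\}$ plus continuity of the iterate to get an open neighborhood with a uniform hitting time, then extract a finite subcover of the compact $X$ and take $n_0$ as the maximum of the finitely many times. The only cosmetic difference is that the paper works with $\varepsilon/2$ and spells out the $\varepsilon$-density at the end, whereas you observe directly that $\dH(F^n(\{y\}),X)<\varepsilon$ already gives $\varepsilon$-density since $F^n(\{y\})\subset X$; both are fine.
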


\begin{proof}
Since $X$ is a strict attractor of $F$ by Proposition~\ref{propo-stric atractor-implica-atractor},
$X$ is an attractor of $F$. Thus, by Theorem~\ref{mainthm-stability}, we have that $X$ is stable.
Then given $\varepsilon>0$, there is $\delta>0$ such that for any $S\in\mathscr{K}(X)$
with $\dH(S,X)<\delta$, we have that
\begin{equation*}
\dH(F^{m}(S),X)<\frac{\varepsilon}{2} \quad  \text{for all $m\geq0$}.
\end{equation*}
On the other hand, since $X$ is a strict attractor of $F$ we have $\lim_{n\to\infty}{\dH(F^{n}(S),X)}=0$ for all
$S\in\mathscr{K}(X)$. In particular, given $y\in X$, we have
$    \lim_{n\to\infty}{\dH(F^{n}(\{y\}),X)}=0$.
From this, there exists $n_{y}\in\mathbb{N}$ such that $\dH(F^{n_{y}}(\{y\}),X)<\delta$. From the continuity of $F^{n_{y}}$, there is a neighborhood $B_{y}$ of $y$ such that
$\dH(F^{n_{y}}(\{x\}),X)<\delta$ for all $x\in B_{y}$. Thus, we can find a cover of $X$ by neighborhoods of the form $B_{y}$, that is,
\begin{equation*}
    X\subset \bigcup_{y\in X}{B_{y}}\quad\text{with}\quad\dH(F^{n_{y}}(\{x\}),X)<\delta\quad\text{for all}\ x\in B_{y}.
\end{equation*}
By the compactness of $X$, there are $n_{1},n_{2},\dots,n_{r}\in\mathbb{N}$ and open sets
$B_{1},B_{2},\dots,B_{r}$ such that
\begin{equation*}
    X\subset \bigcup_{i=1}^{r}{B_{i}}\quad\text{with}\quad\dH(F^{n_{i}}(\{x\}),X)<\delta\quad\text{for all}\ x\in B_{i}.
\end{equation*}
From here, by stability of $X$ we have
\begin{equation*}
  \dH(F^{m}(F^{n_{i}}(\{x\})),X)<\frac{\varepsilon}{2}
  \quad\text{for all} \  m\geq 0\ \text{and} \  x\in B_{i}, \ i=1,\dots,r.
\end{equation*}
Taking $n_{0}=\max\{n_{1},n_{2},\dots,n_{r}\}$, we have
\begin{equation}\label{ii.8}
  \mbox{d}_{H}(F^{n}(\{y\}),X)<\frac{\varepsilon}{2}\quad\text{for all}\ n\geq n_{0}\ \text{and}\ y\in X.
\end{equation}
From~\eqref{ii.8}, we have $X\subset B_{\frac{\varepsilon}{2}}(F^{n}(\{y\}))$ for all $n\geq n_{0}$ and $y\in X$.
On the other hand, note that $B_{\frac{\varepsilon}{2}}(F^{n}(\{y\}))\subset X$. So we have
    $X=B_{\frac{\varepsilon}{2}}(F^{n}(\{y\}))$.
This immediately implies that
$F^{n}(\{y\})$ is $\varepsilon$-dense on $X$ for all $n\geq n_{0}$ and  $y\in X$.
\end{proof}


\section{Minimality of IFSs} \label{sec:minimality}


Let $\mathscr{F}$ be a finite family  of continuous maps of $X$ to itself as before.
Let us denote by $\langle \mathscr{F} \rangle^{+}$ the semigroup generated by $\mathscr{F}$.  That is,
\begin{equation*}
\langle \mathscr{F} \rangle^{+} \eqdef \{g_n\circ\dots\circ g_1: \text{$g_i \in \mathscr{F}$ and $n\in\mathbb{N}$}\}.
\end{equation*}
Given $A\subset X$,  we define the \emph{forward $\mathscr{F}$-orbit}, or simply the $\mathscr{F}$-orbit for short, of $A$ by $$\OF(A)\eqdef\{h(x): \ h\in\langle\mathscr{F}\rangle^{+}, \ x\in A \}=\bigcup_{n\geq 1} F^n(A)$$
where $F$ is the natural extension of the Hutchinson operator associated with $\mathscr{F}$ to the powerset of $X$.
Recall that a subset $A\subset X$ is \emph{forward invariant}
for the semigroup $\langle\mathscr{F}\rangle^+$ if $f(A)\subset A$ for all $f\in\langle\mathscr{F}\rangle^+$.

\begin{defi}
We say that $\mathscr{F}$ is \emph{minimal} if every closed forward invariant set
for $\langle\mathscr{F}\rangle^+$ is either empty or coincides
with the whole space $X$.
Equivalently, $\mathscr{F}$ is minimal if and only if the $\mathscr{F}$-orbit
$\OF(x)$ is dense in $X$ for all $x\in X$.
\end{defi}


Note that if $X$ is a strict attractor of $\mathscr{F}$, then $\mathscr{F}$ is minimal since $F^{n}({x}) \to X$ in the Hausdorff metric for any $x\in X$. However, it is important to note that in general, a minimal IFS may not have a strict attractor.
The most trivial example of a minimal IFS that does not have a strict attractor is the IFS comprising of only one minimal map, such as an irrational rotation on the circle. In this case, the only candidate for an attractor of the IFS consisting of only one map is a singleton, while the minimal closed invariant set for a minimal map is the whole space.
However, it has been shown that the existence of an attractor in the IFS consisting of two minimal maps of the circle is possible, as established in~\cite{LSSV22}. Moreover, any minimal IFS of circle diffeomorphisms without a common invariant measure admits a strict attractor (see~\cite[Prop.~3.10]{BFMS16}).
Also, in~\cite[Prop.~3.8]{BFMS16}, it was proven that the whole space of any minimal quasi-symmetric IFS on a compact connected Hausdorff space is a strict attractor. By a quasi-symmetric IFS, we understand a finite family of continuous maps $\mathscr{F}$ with the property that there is $f \in \mathscr{F}$ such that its inverse map $f^{-1}$ belongs to $\mathscr{F}$. Finally, the following result, communicated to us by Sarizadeh~\cite{sarizadeh2023attractor}, provides another condition that guarantees a minimal IFS to have a strict attractor.


\begin{prop} \label{Sarizadeh} Let $\mathscr{F}$ be a finite family of continuous maps of a compact metric space $X$ to itself. If $\mathscr{F}$ is minimal and there exists $g \in \mathscr{F} $ with an attracting fixed point, then $X$ is a strict attractor of~$\mathscr{F}$.
\end{prop}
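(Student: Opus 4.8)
The plan is to reduce the statement to one uniform density estimate for point-iterates and then bootstrap. Set $a=|g|$ (the word length of $g$), let $p$ be the attracting fixed point of $g$, and use the attracting hypothesis to fix a trapping neighborhood: an open set $V\ni p$ with $g(\overline V)\subset V$ and $\dH(g^{n}(\overline V),\{p\})\to 0$. The heart of the matter is to prove
\smallskip

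($\star$)\quad for every $\varepsilon>0$ there is $n_{0}\in\mathbb N$ such that $F^{n_{0}}(\{x\})$ is $\varepsilon$-dense in $X$ for \emph{every} $x\in X$.
\smallskip

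\noindent Granting ($\star$), the proposition follows at once. Using $F^{n}(A)=\bigcup_{y\in A}F^{n}(\{y\})$, for $n\ge n_{0}$ we have $F^{n}(\{x\})=\bigcup_{y\in F^{n-n_{0}}(\{x\})}F^{n_{0}}(\{y\})$, which contains the $\varepsilon$-dense set $F^{n_{0}}(\{y\})$; hence $F^{n}(\{x\})$ is $\varepsilon$-dense for all $n\ge n_{0}$. Then for any nonempty $S\in\mathscr K(X)$, choosing $x\in S$ gives $F^{n}(S)\supset F^{n}(\{x\})$, so $\dH(F^{n}(S),X)\to 0$. Taking $U=X$ (open in $X$, containing $X$, so $\mathscr K(U)=\mathscr K(X)$), this says exactly that $X$ is a strict attractor of the Hutchinson operator of $\mathscr F$.

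For ($\star$) I would combine minimality with the trapping region. First, density of the $\mathscr F$-orbit of $p$ together with compactness of $X$ yields finitely many $h_{1},\dots,h_{m}\in\langle\mathscr F\rangle^{+}$ with $\{h_{i}(p)\}$ being $\varepsilon/2$-dense; by uniform continuity of these finitely many maps there is $\rho>0$, which we may take so small that the $\rho$-ball around $p$ lies in $V$, such that $\mathrm d(z,p)<\rho$ forces $\{h_{i}(z)\}$ to be $\varepsilon$-dense. Second, $\dH(g^{n}(\overline V),\{p\})\to 0$ gives $n^{*}$ with $g^{k}(\overline V)\subset\{z:\mathrm d(z,p)<\rho\}$ for all $k\ge n^{*}$. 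Third, minimality applied to the open set $V$ gives, via compactness of $X$, finitely many $w_{1},\dots,w_{s}\in\langle\mathscr F\rangle^{+}$ with $X=\bigcup_{j}w_{j}^{-1}(V)$, so each $x$ is sent into $V$ by some $w_{j(x)}$. Then for fixed $x$ and any $k\ge n^{*}$ the word $h_{i}\circ g^{k}\circ w_{j(x)}$ sends $x$ to within $\varepsilon/2$ of $h_{i}(p)$, so the images over $i=1,\dots,m$ form an $\varepsilon$-dense subset of $F^{|h_{i}|+ka+|w_{j(x)}|}(\{x\})$.

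The main obstacle — and really the only delicate point — is that these words have $i$-dependent lengths, whereas ($\star$) demands one exponent $n_{0}$ valid for \emph{all} $x$. One unifies lengths by padding with extra copies of $g$: replacing $k$ by $k_{i}=(n_{0}-|h_{i}|-|w_{j(x)}|)/a$ works once $n_{0}-|h_{i}|-|w_{j(x)}|$ is a nonnegative multiple of $a$, which holds for a suitable large multiple $n_{0}$ of $a$ \emph{provided all the $|h_{i}|$ and all the $|w_{j}|$ are divisible by $a$}. Arranging this while keeping $\{h_{i}(p)\}$ $\varepsilon/2$-dense and $\bigcup_{j}w_{j}^{-1}(V)=X$ requires the $a$-step sub-IFS $\mathscr F^{(a)}$ (all length-$a$ words of $\langle\mathscr F\rangle^{+}$) to be minimal; equivalently, that $X$ admits no cyclic ($\bmod\ a$) decomposition into proper closed pieces cyclically permuted by $F$. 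This is where care is needed: such total minimality follows from minimality of $\mathscr F$ when $X$ is connected (a cyclic decomposition would force, by connectedness, a common point whose whole $\mathscr F$-orbit is trapped in a proper closed set, contradicting minimality), whereas it can fail for disconnected $X$ — e.g. a "bipartite" pair of circles — so in that generality ($\star$) must be read with the total-minimality reinforcement. Once this holds, pick $n_{0}$ a large multiple of $a$, set $k_{i}=n_{0}/a-|h_{i}|/a-|w_{j(x)}|/a\ge n^{*}$, and conclude that $F^{n_{0}}(\{x\})$ contains the $\varepsilon$-dense set $\{(h_{i}\circ g^{k_{i}}\circ w_{j(x)})(x):1\le i\le m\}$, which proves ($\star$).
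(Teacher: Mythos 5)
Your mechanism is exactly the one the paper uses: minimality sends an arbitrary point into the basin of the attracting fixed point $p$ of $g$, long $g$-iterates collapse it onto a small neighbourhood of $p$, and minimality again (made finite by compactness) spreads the images of $p$ $\varepsilon$-densely; the reduction to singletons and the bootstrap $F^{n}(\{x\})\supset F^{n_{0}}(\{y\})$ are fine (the uniformity of $n_{0}$ in $x$ that you insist on is more than the definition of strict attractor requires, but harmless). The genuine problem is the one you flag yourself at the end: as written, you only obtain the conclusion under the additional hypothesis that the length-$a$ block system $\mathscr{F}^{(a)}$ is minimal, which is not in the statement, and your justification that connectedness of $X$ restores it is only a parenthetical sketch. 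The classical argument for a single minimal homeomorphism (the minimal sets of $f^{a}$ are pairwise equal or disjoint, so connectedness leaves one piece) does not transfer to semigroups: the sets $Z_{r}=\overline{\{w(p):|w|\equiv r\pmod a\}}$ are cyclically permuted by the generators and cover $X$, but for $a\geq 3$ they can cover a connected space with only pairwise intersections, so the ``common point whose whole $\mathscr{F}$-orbit is trapped'' that you invoke need not exist; this step would need a real proof. So, judged as a proof of Proposition~\ref{Sarizadeh} as stated (arbitrary compact metric $X$, arbitrary $g\in\langle\mathscr{F}\rangle^{+}$), your argument has a gap.

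That said, the obstruction you ran into is genuine and is also present, silently, in the paper's own proof: there the word $h\circ g^{m}\circ\phi$ is asserted to lie in $F^{t+m}(K)$, but its word length is $|h|+m|g|+|\phi|$, so for fixed $h,\phi$ only an arithmetic progression of times with gap $|g|$ is produced, while the strict-attractor property requires density of $F^{n}(K)$ for \emph{all} large $n$; the concluding ``for all $n$ large enough'' is unjustified when $|g|\geq 2$. In fact your ``bipartite pair of circles'' can be upgraded to an actual counterexample to the statement as written: take $X=\mathbb{S}^{1}\times\{0,1\}$, $f_{1}(x,j)=(R_{\alpha}(x),1-j)$ and $f_{2}(x,j)=(h(x),1-j)$ with $R_{\alpha}$ an irrational rotation and $h$ a north--south map. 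Since $\{R_{2\alpha},h^{2}\}$ is minimal on $\mathbb{S}^{1}$ by Theorem~\ref{teo-B-BFS14}, the family $\mathscr{F}=\{f_{1},f_{2}\}$ is minimal on $X$, and $g=f_{2}\circ f_{2}$ has an attracting fixed point; yet every length-$n$ word maps the circle $\mathbb{S}^{1}\times\{0\}$ into the single circle $\mathbb{S}^{1}\times\{n\bmod 2\}$, so $F^{n}(\{(x,0)\})$ never converges to $X$ and $X$ is not a strict attractor. The argument (yours and the paper's) is airtight when $g$ can be taken of word length one, which is the situation in the paper's application in Proposition~\ref{prop:forward-not-backward}; in general the residue-class issue you identified must either be resolved by an extra argument (e.g.\ on connected spaces) or absorbed into the hypotheses, not merely noted.
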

\begin{proof}
To prove that $X$ is a strict attractor, we need to show that $\dH(F^n(K),X)\to 0$ as $n\to \infty$ for every $K\in \mathscr{K}(X)$ where $F$ is the Hutchison operator associated with $\mathscr{F}$. To do this, it suffices to prove that $F^n(K)\cap V \not=\emptyset$ for any open set $V$ and for all $n$ large enough. Indeed, for any $\epsilon>0$, since $X$ is compact, we can cover $X$ by open balls $V_1,\dots,V_r$ of radius $\epsilon/2$. Since $F^n(K)\cap V_i\not=\emptyset$ for all $n$ large enough, we find $n_0\in\mathbb{N}$ such that $X \subset  B_\epsilon(F^n(K))$ and thus $\dH(F^n(K),X)<\epsilon$ for all $n\geq n_0$.

Let $V$ be an open set in $X$ and consider $K\in \mathscr{K}(X)$. Let $p\in X$ be the attracting fixed point of~$g$. Take a neighborhood $U$ of $p$ such that
\begin{equation*}
   g(\overline{U})\subset U \quad \hbox{
and} \quad  \bigcap_{m\geq 0}{g^{m}(U)}=\{p\}.
\end{equation*}

Since $\mathscr{F}$ is minimal,  there is $h\in \langle \mathscr{F}\rangle^+$ such that $h(p)\in V$. Analogously, given $x\in K$ there is $\phi\in \langle\mathscr{F}\rangle^+$
such that  $\phi(x)\in U$. Then $g^m\circ \phi(x)$
is close enough to $p$ and, by the continuity of $h$,  $h\circ g^m\circ \phi(x)\in V$
for all $m$ large enough. In particular, since
$ h\circ g^m\circ \phi(x) \in F^{t+m}(x) \subset F^{t+m}(K)$ where $t$ is the  number of generators involved in $\phi$ and $h$, we get that $F^n(K)\cap V\not= \emptyset$ for all $n$ large enough. This completes the proof of the proposition.
\end{proof}

Recall that the above example of a minimal IFS that does not admit a strict attractor is based on a family $\mathscr{F}$ consisting of only one element. The trivial way of increasing the number of generators by adding the identity map does not hold because then the whole space becomes a strict attractor. Therefore, one may ask if it is possible to find a minimal $\mathscr{F}$ that does not admit a strict attractor and such that the semigroup $\langle \mathscr{F}\rangle^+$ is free with at least two generators. In Corollary~\ref{exemplo-pablo}, we obtain such a non-trivial example as a consequence of Theorem~\ref{mainthm1}. This IFS is generated by two minimal circle rotations with different irrational rotation numbers.

\subsection{Robust minimality.} \label{sec:criterion_minimality}
Here, we will study when the property of minimality of an IFS persists under small perturbations. To do this, we first need to introduce a metric structure in the set of IFSs. Unless stated otherwise, in the sequel we will assume $(X,\D)$ to be a compact metric space.

Recall that a function {$f:X_1\to X_2$} between metric spaces $(X_i,\D_i)$, $i=1,2$ is said to be \emph{Lipschitz} if there is a constant $C\geq 0$ such that $\D_2(f(x),f(y))\leq C \D_1(x,y)$. The infimum of $C\geq 0$ such that the above inequality holds is called \emph{Lipschitz constant} and denoted by $\mathrm{Lip}(f)$.
The \emph{locally Lipschitz constant} of  $f$ at $z\in X$ is defined as
\begin{align*}
   Lf(z) = \inf_{r>0} \mathrm{Lip}(f|_{B(z,r)}) = \lim_{r\to 0^+} \mathrm{Lip}(f|_{B(z,r)}),
 \end{align*}
where
\begin{align*}
   \mathrm{Lip}(f|_{B(z,r)})=\sup\left\{   \frac{\D(f(x),f(y))}{\D(x,y)} \, : \ x,y\in B(z,r), \ x\not=y \right \}.
\end{align*}
We have that $\mathrm{Lip}(f)$ is greater than or equal to the supremum of $Lf(z)$ over all $z\in X$. If $f$ is a diffeomorphism of a Riemannian manifold $X$, then $Lf(z) = \|Df(z)\|$, i.e., the norm of the differential map of $f$ at $z$. We denote by
$\Lip(X)$  the set of  Lipschitz homeomorphisms of~$X$.
Furthermore, $\Lip(X)$ is a semigroup with the composition operation and a vector space with the usual function operations. We also endow this set  with the  distance
\begin{equation*} \label{def:dist-lip}
  \D_0(f,g)\eqdef \sup_{x\in X} \D(f(x),g(x)) + |Lf(x)-Lg(x)|, \qquad f,g\in \Lip(X).
\end{equation*}

In the following proposition, we will provide certain properties of the local Lipschitz constant $Lf(z)$ that we will use later.  Before going into the details, recall that a real-valued function $g$ defined on $X$ is called \emph{upper semicontinuous} if \(
\limsup_{x \rightarrow x_0} g(x)\leq g(x_0) \) for all $x_0\in X$,
or equivalently, the set $\{x \in X : g(x) < a\}$ is open for every real number $a$.

\begin{lem} \label{lem1} The map $Lf: X\to [0,\infty)$ is upper semicontinuous for any  Lipschitz map $f$ of $X$.  Moreover, for any pair of Lipschitz maps $f$ and $h$ of $X$, it satisfies the chain rule inequality
\[L(f \circ h)(x) \leq Lf(h(x))\cdot Lh(x), \quad \text{for every $x\in X$}.\]
\end{lem}
\begin{proof} Let us prove that $Lf$ is upper semicontinuous, that is, the set $E_a\eqdef \{x\in X :  Lf(x)<a\}$ is an open set for any $a>0$. To do this, for a fixed $z\in E_a$, by definition of  $Lf(z)$, we have $r>0$ such that
$$a>L_rf(z) \eqdef \mathrm{Lip}(f|_{B(z,r)})
\geq \sup_{x\in B(z,r)} Lf(x) .$$
Thus, $B(z,r) \subset E_a$ and consequently, $E_a$ is open.

Now we will prove the chain rule inequality. Since $h$ is a Lipschitz transformation and thus, a uniform continuous map,  given $\rho > 0$, there exists $r > 0$ such that if $\D(x, y) < r$, then $\D(h(x), h(y)) < \rho$. Hence,
\[
\frac{\D(f(h(x)), f(h(y)))}{\D(x, y)} = \frac{\D(f(h(x)), f(h(y)))}{\D(h(x), h(y))} \cdot \frac{\D(h(x), h(y))}{\D(x, y)} \leq L_\rho f(h(x)) \cdot L_r h(x).
\]
Taking $\rho$ and $r$ tending to $0^+$, we get the desired chain rule.
\end{proof}

Finally, we will introduce a distance in the set of IFSs  of  Lipschitz homeomorphisms as follows. First, we fix the number  $k\geq 1$ of maps that the IFS has.  We define
$$
  \bD(\mathscr{F},\mathscr{G})\eqdef \min_{\sigma \in S_k} \sum_{i=1}^k \D_0(f_i,g_{\sigma(i)}), \qquad \mathscr{F}=\{f_1,\dots,f_k\}, \ \ \mathscr{G}=\{g_1,\dots,g_k\},
$$
where $S_k$ is the permutation group on $\{1,\dots,k\}$. Notice that,  $\bD(\mathscr{F},\mathscr{G})=0$  imply that $\mathscr{F}$ is equal to $\mathscr{G}$ up to reorder the family. We say that  $\mathscr{G}$ is \emph{minimally ordered}
with respect to a family $\mathscr{F}$ if $$
\bD(\mathscr{F},\mathscr{G})= \sum_{i=1}^k \D_0(f_i,g_i).$$
Let us denote by $\IFS_k(X)$ the set of
families $\mathscr{F}=\{f_1,\dots,f_k\}$ with $f_i \in \Lip(X)$ for $i=1,\dots,k$,
where we identify families that are equal up to reordering its elements. It is not difficult to see that  $(\IFS_k(X),\bD)$ is a metric space.

\begin{defi} \label{def:robminimal} We say that $\mathscr{F} \in \IFS_k(X)$ is \emph{robustly minimal} if there exists a neighborhood $\mathcal{U}$ of $\mathscr{F}$ in $\IFS_k(X)$ such that  any $\mathscr{G}\in \mathcal{U}$ is minimal.
\end{defi}

A semigroup action generated by a family $\mathscr{F}$ of $C^1$ diffeomorphisms on a manifold $M$ was termed \emph{expanding} in~\cite{BFMS16} if, for any $x \in M$, there exists $h \in \langle\mathscr{F}\rangle^+$ such that $m(Dh^{-1}(x)) > 1$. In this context, $\mathrm{m}(T) = \|T^{-1}\|^{-1}$ denotes the co-norm of a linear operator $T$. Given the notation above, we can deduce that
\[
m(Dh^{-1}(x)) = \|Dh^{-1}(x)^{-1}\|^{-1} = \|Dh(h^{-1}(x))\|^{-1} = Lh(h^{-1}(x))^{-1}.
\]
Building on this, we introduce the subsequent definition that generalizes this concept to IFSs on broader metric spaces.


\begin{defi} \label{def:expading}
We say that $\mathscr{F}\in \IFS_k(X)$ is \emph{expanding} if
for every $x\in X$, there exists $h\in \langle\mathscr{F}\rangle^+$ such that $Lh(h^{-1}(x))<1$.
\end{defi}

\begin{obs}
Definition~\ref{def:expading} only requires that the elements of  $\mathscr{F}$ will be Lipschitz.
The terminology for this property has occasionally faced scrutiny, given that the expansion is observed in the backward action, rather than the forward action of $\mathscr{F}$. The justification for retaining the term "expanding" originates from the work of Shub and Sullivan~\cite{shub1985expanding}, who noted that expanding minimal group actions of $C^2$ diffeomorphisms on the circle are also ergodic with respect to the Lebesgue measure. The concept of an expanding group action in this setting is defined as follows (see~\cite[Def.~3.1]{deroin2020locally}): a group $G$ of diffeomorphisms on the circle $\mathbb{S}^1$ is expanding if, for every $x\in\mathbb{S}^1$, there exists $g\in G$ such that $|g'(x)|>1$. As $G$ can be generated as a semigroup, we have opted to retain the original terminology to maintain consistency.
\end{obs}

\begin{obs}
\label{obs-SH}
We want to emphasize that the expanding property of an IFS is intimately related to the property \textrm{SH} for partially hyperbolic diffeomorphisms introduced by Pujals and Sambarino in~\cite{PS06}. Even though in~\cite{PS06} the \textrm{SH} property was defined for the strong unstable foliation, we choose to present the analogous definition for the strong stable foliation here (see also~\cite[Def.~1.2]{pi2023hyperbolicity}). A $C^1$ diffeomorphism $f$ that is partially hyperbolic possesses the \emph{property \textrm{SH}} (indicative of some hyperbolicity on the central distribution) if there exists constants $\lambda <1$ and $C>0$ such that for any $x\in M$, there exists a $y\in W^{ss}_{loc}(x;f)$ for which
\[
  \|Df^{n}|_{E^c(f^{n-\ell}(y))}\|<C \lambda^n \quad \text{for all } n>0 \text{ and } \ell >0.
\]
To elucidate the connection between the property \textrm{SH} and expanding property for an IFS $\mathscr{F}$ of $C^1$ diffeomorphisms on a compact manifold $M$, we refer to the locally constant partially hyperbolic skew-product $\Phi=\tau\ltimes \mathscr{F}$, as introduced earlier in the introduction (also refer to \S\ref{sec:skew-prodcut}).
Here, the local strong stable set $W^{ss}_{loc}(\omega,x;\Phi)=W^{s}_{loc}(\omega;\tau)\times \{x\}$ is a direct product. Therefore, the property \textrm{SH} for $\Phi$ can be articulated as: for any $\xi\in \Sigma_k$ and $x\in M$, there exists $\omega \in W^s_{loc}(\xi;\tau)$ such that
\[
 \|Df_\omega^n(f^{n-\ell}_\omega(x))\| < C \lambda^n \quad \text{for all } n>0 \text{ and } \ell >0.
\]
Given a sufficiently large $n$, by choosing $\ell=2n$ and denoting $h=f^n_\omega \in\langle\mathscr{F}\rangle^+$, it follows that $\|Dh(h^{-1}(x)\|<1$, thereby inferring that $\mathscr{F}$ is expanding.
\end{obs}

\begin{obs}
The property \textrm{SH} is robust under small $C^1$ perturbations, as evidenced in~\cite{PS06}.
The same robustness holds for expanding IFSs under small perturbations, with respect to the metric $\bar{\D}_0$ (refer to Remark~\ref{obs-expanding-es-robusta}). On the other hand, according to~\cite{PS06}, if $f$ is a partially hyperbolic $C^1$ diffeomorphism with minimal strong unstable foliation and satisfying the property \textrm{SH} (as described above in Remark~\ref{obs-SH}),
then this foliation remains minimal after $C^1$ perturbations of $f$. An analogous result for IFS ensuring $C^1$ robust minimality can be found in~\cite[Thm.~A]{BFMS16}. Namely, the authors of~\cite{BFMS16} showed that any expanding and minimal finitely generated semigroup action of $C^1$ diffeomorphisms on a compact Riemannian manifold, is $C^1$-robustly minimal (i.e, the minimality persists by $C^1$ perturbation of the generators). In the subsequent discussion, we generalize this result to cover broader metric spaces, as presented in Theorem~\ref{teo-expanding-minimal-implica-mini-robusta} that follows.
\end{obs}


The following lemma gives a characterization of the expanding property.

\begin{lem} \label{lem:expading}
Any $\mathscr{F} \in \IFS_k(X)$ is expanding if and only if
there exist an open  cover $\{B_1,\dots,B_n\}$ of $X$, open sets $U_1,\dots, U_n$, constants
$\kappa>1$, $\epsilon>0$ and maps ${h}_{1},\dots,{h}_{n} \in \langle\mathscr{F}\rangle^+$ such that
\begin{equation} \label{eq:expanding1}
L{h}_{i}(z) < \kappa^{-1} \quad \text{for all $z\in U_i \supset B_\epsilon({h}^{-1}_i(B_i))$  and  $i=1,\dots,n$.}
\end{equation}
\end{lem}
\begin{proof}
Clearly, equation~\eqref{eq:expanding1} implies that $\mathscr{F}$ is expanding. For the converse, assume that $\mathscr{F}$ is expanding. For each $x\in X$, there exists a map $h \in \langle \mathscr{F}\rangle^+$ such that $Lh(h^{-1}(x))<\rho_x$ with $\rho_x<1$. By the upper semicontinuity of $Lh$ provided by Lemma~\ref{lem1}, and since $h^{-1}$ is continuous (as $h$ is a homomorphism), we have $Lh \circ h^{-1} < \rho_x$ in a neighborhood of $x$. In fact, we can take $\epsilon_x>0$ such that $Lh(z)<\rho_x$ for every $z\in B(h^{-1}(x),{2\epsilon_x})$. Considering the neighborhood  $B_x=h(B(h^{-1}(x),{\epsilon_x}))$ of $x$, by compactness of $X$, we get a cover of $X$ by open sets
$B_1,\dots,B_n$, constants $\kappa>1$, $\epsilon_1,\dots,\epsilon_n>0$, $\epsilon=\min\{\epsilon_1,\dots,\epsilon_n\}$  and maps
$h_1,\dots,h_n\in \langle \mathscr{F}\rangle^+$
such that~\eqref{eq:expanding1} holds with $U_i\eqdef B(h^{-1}_i(x_i),{2\epsilon_i})$
concluding the proof.
\end{proof}

\begin{obs}\label{obs-expanding-es-robusta}
A direct consequence of the preceding lemma is that the expanding property is robust, meaning it persists under small perturbations of $\mathscr{F}$ in $(\IFS_k(X),\bD)$. This is evident once we recognize that $\D_0$ controls both the $C^0$ distance and the local Lipschitz constant. Hence, there exists a neighborhood $\mathcal{U}$ of $\mathscr{F}$ in $(\IFS_k(X),\bD)$ wherein IFS in $\mathcal{U}$ has similar estimates as described in Lemma~\ref{lem:expading} for the cover $B_1,\dots,B_n$ of $X$. Explicitly, for any $\mathscr{G}\in\mathcal{U}$,
$$
L{g_i}(z)<\kappa^{-1}
\quad \text{for all $z\in U_i \supset B_{\sfrac{\epsilon}{2}}(g_i^{-1}(B_i))$, \ \  and \ \ $i=1,\dots,n$},
$$
where $g_i \in \langle\mathscr{G}\rangle^+$ is the perturbation of $h_i$  formed by the same finite sequence of generators assuming that $\mathscr{G}$ is minimally ordered with respect to $\mathscr{F}$.
\end{obs}

To generalize the criterion in~\cite{BFMS16} for obtaining robust minimality from the expanding property, we need a robustness of this property slightly different from that mentioned in Remark~\ref{obs-expanding-es-robusta}. For this reason, we need to consider the following class of metric space. A metric space $(X,\D)$ is called \emph{locally length space} if any point in $X$ admits a neighborhood $U$ such that the distance between two points $x,y\in U$ is the infimum of lengths of rectifiable paths from $x$ to $y$. Recall that  a path from $x$ to $y$ is a continuous curve \(\gamma : [0,1] \rightarrow X\) with $\gamma(0)=x$ and $\gamma(1)=y$ and it length is defined as
\[ \ell(\gamma) = \sup \left\{ \sum_{i=0}^{n-1} \D(\gamma(t_i),\gamma(t_{i+1})) \right\}, \]
where the supremum is taken over all partitions \( 0 = t_0 < t_1 < \ldots < t_n = 1 \) of the interval \([0,1]\). We will say that a curve \(\gamma\) is rectifiable if \( \ell(\gamma) < \infty \).
Euclidean spaces, Riemannian manifolds or more generally Finsler manifolds as well as every normed vector space are examples of locally length spaces.

\begin{prop} \label{prop:lengthspace} Let $(X,\D)$ be a compact locally length metric space. Then, there exists $R>0$ such that for any $0<r\leq R$ and any Lipschitz map $f:X\to X$, it holds that
$$
    \mathrm{Lip}(f|_{B(z,r)}) = \sup_{x\in B(z,r)} Lf(x) \quad \text{for all $z\in X$}.
$$
\end{prop}
\begin{proof}
Since $(X,\D)$ is locally length space, for each $z\in X$, there is a neighborhood $U$ of $z$ such that for any point in $x,y\in U$, \begin{equation} \label{eq:rectifica}
    \D(x,y)=\inf\ell(\gamma) \quad \text{where $\gamma:[0,1]\to X$ runs over the  rectifiable paths from $x$ to $y$.}
\end{equation}
Due to the compactness of \(X\), there exists an open covering \(\{U_1, \dots, U_n\}\) of \(X\) such that equation~\eqref{eq:rectifica} holds for any \(x,y \in U_i\), for \(i=1, \dots, n\). Let \(R > 0\) be a Lebesgue number of this cover. This means that every subset of \(X\) with diameter less than \(2R\) lies within some member of the cover. Thus, for any \(0 < r \leq R\) and any \(z \in X\), the ball \(B(z,r)\) is contained in some \(U_i\) for \(i \in \{1, \dots, n\}\), and hence satisfies equation~\eqref{eq:rectifica} for all \(x,y \in B(z,r)\).

Now, we follow~\cite[Lemma~2.2]{durand2010pointwise}. Fix \(z \in X\), \(0 < r \leq R\), and choose an arbitrary \(\varepsilon > 0\). Define
\[
   M\eqdef \sup_{x \in B(z,r)} Lf(x).
\]
For any rectifiable path \(\gamma: [0,1] \to X\) with \(\gamma(0) = x\) and \(\gamma(1) = y\), both in \(B(z,r)\), and for each \(t \in [0,1]\), there exists \(\rho_t > 0\) such that if \(p \in B(\gamma(t),\rho_t) \setminus \{\gamma(t)\}\), then
\[ \D(f(\gamma(t)), f(p)) \leq (M+\varepsilon) \D(\gamma(t),p). \]
Since \(\gamma\) is continuous, there exists \(\delta_t > 0\) such that
$(t - \delta_t, t + \delta_t) \subset \gamma^{-1}(B(\gamma(t),\rho_t))$.
This family of intervals is an open cover of $[0,1]$ and by compactness, we can choose a finite subcover, say
\(\{ I_{j} \}_{i=1}^{n}\), where $I_j=(t_j-\delta_j,t_j+\delta_j)$.
We may assume, refining the subcovering if necessary, that an interval \(I_i\) is not contained in \(I_j\) for \(i \neq j\).
If we relabel the indices of the points \(t_i\) in non-decreasing order, we can now choose a point \(s_{i,i+1} \in I_{i} \cap I_{i+1} \cap (t_i,t_{i+1})\) for each  $i=1,\dots, n-1$.
Using the auxiliary points that we have just chosen, we deduce that
\[ \D(x,\gamma(t_1)) + \sum_{i=1}^{n-1} \left( \D(\gamma(t_i),\gamma(s_{i,i+1})) + \D(\gamma(s_{i,i+1}),\gamma(t_{i+1})) \right) + \D(\gamma(t_n),y) \leq \ell(\gamma), \]
and so
\[ \D(f(x), f(y)) \leq (M+\varepsilon) \ell(\gamma). \]
Finally, since this is valid for all \(\varepsilon > 0\) and rectifiable path $\gamma$ from $x$ to $y$, we conclude that
$\D(f(x), f(y))\leq M \D(x,y)$. This implies that $\mathrm{Lip}(f|_{B(z,r)})\leq M$. The other inequality always holds, hence we conclude the equality and complete the proof.
\end{proof}

As a corollary, we get the robustness of the expanding property that we need to establish the criterion to obtain robust minimality.

\begin{cor} \label{cor:expanding} Let $(X,\D)$ be a compact, locally length space and consider $\mathscr{F}\in \IFS_k(X)$. Then, $\mathscr{F}$ is expanding if and only if
there exist a neighborhood $\mathcal{U}$ of $\mathscr{F}$, an open cover $\{B_1,\dots,B_n\}$ of $X$, open sets $U_1,\dots, U_n$ and  constants
$\kappa>1$, $\epsilon>0$   such that for any $\mathscr{G}\in \mathcal{U}$, there are
 ${h}_{1},\dots,{h}_{n}\in \langle\mathscr{G}\rangle^+$ satisfying
\begin{equation} \label{ii.9bis}
  \mathrm{d}({h}_{i}(x),{h}_{i}(y))<\kappa^{-1}\mathrm{d}(x,y), \ \
  \text{for all} \ \ x,y\in U_i \supset B_\epsilon({h}^{-1}_{i}(B_{i})) \ \text{and} \ \ i=1,\dots,n.
  \end{equation}
\end{cor}

\begin{proof}
If condition~\eqref{ii.9bis} is satisfied, then it is immediate that \(\mathscr{F}\) is expanding. For the converse, assume \(\mathscr{F}\) is expanding. Referring to Remark~\ref{obs-expanding-es-robusta}, the expanding property of \(\mathscr{F}\) ensures the robustness of~\eqref{eq:expanding1}. From the proof of Lemma~\ref{lem:expading}, we note that each \(U_i\) is an open ball of radius \(2\epsilon_i > 0\) for \(i = 1,\dots,n\). Moreover, \(\epsilon_i\) can be chosen arbitrarily small. Specifically, we can select \(\epsilon_i\) such that \(0 < \epsilon_i \leq R/2\), where \(R > 0\) is provided by Proposition~\ref{prop:lengthspace}. Thus, from Proposition~\ref{prop:lengthspace}, it follows that \(\mathrm{Lip}(h_i|_{U_i}) = \sup_{x \in U_i} Lh_i(x) < \kappa^{-1}\). This implies~\eqref{ii.9bis}, completing the proof.
\end{proof}

Now,  we  conclude that robust minimality is a consequence of being both expanding and minimal:

\begin{teo}\label{teo-expanding-minimal-implica-mini-robusta}
Let $(X,\D)$ be a compact locally length metric space, and consider $\mathscr{F}\in \IFS_k(X)$. If $\mathscr{F}$ is expanding and minimal, then $\mathscr{F}$ is robustly minimal.
\end{teo}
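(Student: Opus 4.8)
The plan is to exploit the expanding property to produce, for any target open set $V$, a uniformly (over small perturbations) controlled way of contracting *any* small ball into $V$, and then to combine this with minimality to hit $V$ from every point. First I would fix $\mathscr{F}$ expanding and minimal, with cover $B_1,\dots,B_n$, constants $\kappa>1$, $\epsilon>0$, and maps $h_1,\dots,h_n\in\langle\mathscr{F}\rangle^+$ as in Definition \ref{def:expading}. By Remark \ref{obs-expanding-es-robusta}, there is a neighborhood $\mathcal{U}_0$ of $\mathscr{F}$ on which the same cover and maps satisfy the contraction estimate \eqref{ii.9} with, say, constant $\kappa^{-1/2}<1$ on the slightly shrunk neighborhoods $B_{\epsilon/2}(h_i^{-1}(B_i))$. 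The key consequence to extract is a ``local inverse'' or ``zooming'' statement: there is $\rho_0>0$ such that every ball $B(z,\rho)$ with $\rho\le\rho_0$ is carried by some $h_{i(z)}$ (the one with $z\in B_i$) onto a set of diameter at most $\kappa^{-1/2}\,\mathrm{diam}\,B(z,\rho)$ inside some $B_j$; iterating, the expanding property lets one shrink any ball of radius $\le\rho_0$ down to arbitrarily small size by an element of $\langle\mathscr{F}\rangle^+$, and the whole scheme is stable under perturbation in $\mathcal{U}_0$.

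Next I would use local connectedness and minimality of $\mathscr{F}$ itself. Fix a finite $\rho_0/4$-dense set $\{q_1,\dots,q_m\}$ in $X$ and a target open set $V$. For each $q_\ell$, minimality of $\mathscr{F}$ gives $g_\ell\in\langle\mathscr{F}\rangle^+$ with $g_\ell(q_\ell)\in V$; by continuity, $g_\ell$ maps a whole ball $B(q_\ell,\delta_\ell)$ into $V$. Since finitely many generators and finitely many compositions are involved, there is a neighborhood $\mathcal{U}_1\subset\mathcal{U}_0$ of $\mathscr{F}$ on which the correspondingly-built maps (using the minimally ordered representatives) still send $B(q_\ell,\delta_\ell/2)$ into $V$. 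Set $\delta=\min_\ell\delta_\ell/2$. Now given any $\mathscr{G}\in\mathcal{U}_1$ and any $x\in X$: using the perturbed zooming scheme, find $\phi\in\langle\mathscr{G}\rangle^+$ with $\mathrm{diam}\,\phi(B(x,\rho_0))<\delta$; since $\phi(x)$ lies within $\delta$ of some $q_\ell$, the connected (hence small) image $\phi(B(x,\rho_0))$ is contained in $B(q_\ell,\delta_\ell/2)$, whence $g_\ell^{\mathscr{G}}\circ\phi$ sends $x$ into $V$. As $V$ was arbitrary open and $x$ arbitrary, $\mathscr{G}$ is minimal, so $\mathscr{F}$ is robustly minimal.

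The main obstacle I anticipate is making the ``zooming'' step genuinely uniform over the perturbation neighborhood and independent of the basepoint: one must check that starting from a fixed radius $\rho_0$ (not depending on $x$), repeated application of the perturbed $h_i$'s actually reduces the diameter geometrically, which requires that at each stage the shrunken image still sits inside one of the open balls $B_j$ where the next contraction is available — this is where the $\epsilon$-collar in Definition \ref{def:expading} and the passage to $B_{\epsilon/2}(h_i^{-1}(B_i))$ in Remark \ref{obs-expanding-es-robusta} are essential, giving room for the orbit to stay in the good region under perturbation. A secondary technical point is bookkeeping the number of generators in $\phi$ and in the $g_\ell$'s so that a single neighborhood $\mathcal{U}_1$ works; since all choices are finite this is routine but must be stated carefully. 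I would also remark that local connectedness is used only to guarantee that a small-diameter image of a ball is contained in a small metric ball around any of its points, which is exactly what lets the minimality-derived maps $g_\ell$ finish the job.
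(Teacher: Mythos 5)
Your plan has genuine gaps, the first of which is in the ``zooming'' step itself: it misreads the direction of the expanding hypothesis. In Definition~\ref{def:expading} the cover $B_1,\dots,B_n$ consists of \emph{target} balls: each $h_i$ is a contraction only on $B_\epsilon(h_i^{-1}(B_i))$, a neighborhood of the \emph{preimage} of $B_i$, not on $B_i$ itself, and the sets $h_i^{-1}(B_i)$ need not cover $X$. So choosing $h_{i(z)}$ with $z\in B_{i}$ gives no contraction near $z$; in the $C^1$ language of Lemma~\ref{equiv para expanding} the hypothesis is $\mathrm{m}(Dh^{-1}(x))>1$, i.e.\ contraction at the preimage point $h^{-1}(x)$, not at $x$. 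What the expanding property does give is the ability to pull a small target ball $B(y,r)\subset B_i$ back to the larger ball $B(h_i^{-1}(y),\kappa r)$ and push it forward into $B(y,r)$ (this is the paper's Claim~\ref{claim:deep}, where local connectedness is used precisely to keep the connected image from leaking out of $B_i$); it does not provide, at every point, a semigroup element shrinking a ball of a fixed radius $\rho_0$ around that point, so the push-forward zoom is not available from the hypotheses.

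Even granting a zoom, the finishing step has a scale and quantifier problem. The points $q_\ell$ are only $\rho_0/4$-dense, while the required accuracy $\delta=\min_\ell\delta_\ell/2$ depends on the target $V$ and is typically much smaller than $\rho_0/4$; hence the assertion that $\phi(x)$ lies within $\delta$ of some $q_\ell$ is unjustified, and the zoom gives no control on \emph{where} the shrunken image lands. More seriously, your neighborhood $\mathcal{U}_1$ depends on $V$ (through $g_\ell$ and $\delta_\ell$), so the scheme at best yields: for each open $V$ there is a neighborhood of $\mathscr{F}$ whose members have every orbit meeting $V$. That does not produce a single neighborhood consisting of minimal IFSs, which is what robust minimality demands. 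The paper's proof is built exactly to avoid both defects: it fixes one scale $\varepsilon$ determined by the expanding data (independent of any target), proves in Claim~\ref{claim:epsilon-minimal} that for one fixed neighborhood all $\mathscr{G}$-orbits are $\varepsilon$-dense, and then bootstraps inside the perturbed system, using Claim~\ref{claim:deep} to upgrade $\varepsilon$-density to $\kappa^{-p}\varepsilon$-density for every $p$. This iterative upgrading --- hitting the pulled-back ball $B(h_i^{-1}(y),\kappa^{-p+1}\varepsilon)$ at the scale already achieved and pushing forward by $h_i$ --- is the mechanism your proposal lacks; without it one either needs arbitrarily fine density (the conclusion itself) or must shrink the perturbation neighborhood with the target, and neither closes the argument.
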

\begin{proof}
Let $\mathscr{F} \in \IFS_k(X)$ be expanding and minimal.
By Corollary~\ref{cor:expanding}, there exist a neighborhood $\mathcal{U}$ of $\mathscr{F}$, a cover of $X$ by open sets $B_{1},\dots,B_{n}$, open sets $U_1,\dots, U_n$ and  constants
$\kappa>1$, $\epsilon>0$  such that for any $\mathscr{G}\in U$, there are maps
 ${h}_{1},\dots,{h}_{n}\in \langle\mathscr{G}\rangle^+$ satisfying~\eqref{ii.9bis}. Since $X$ is compact, there is a Lebesgue number $\delta>0$ of the open cover $\{B_i:i=1,\dots,n\}$.
 Take $0<\varepsilon<\min\{\delta,\epsilon\}$.
\begin{afir} \label{claim:epsilon-minimal}
 There is a neighborhood
$\mathcal{V}$ of $\mathscr{F}$ in $\IFS_k(X)$ such that for
any $\mathscr{G}\in\mathcal{V}$ the $\mathscr{G}$-orbits are $\varepsilon$-dense.  That is, $\OG(x)\cap B\not=\emptyset$
for all $x\in X$ and  open ball $B$ of radius $\varepsilon$.
\end{afir}
\begin{proof}
By the compactness of $X$, we can cover it with finitely many open balls $W_1,\dots,W_n$ of radius $\varepsilon/4$. Since $\mathscr{F}$ is minimal, for each $x\in X$, there are $g_1,\dots, g_n\in \langle\mathscr{F}\rangle^+$ such that $g_i(x) \in W_i$ for all $i=1,\dots,n$. Due to the continuity of $g_i$ and the compactness of $X$, we can find a finite open cover $V_1,\dots,V_m$ of $X$ and maps $g_{ij}\in \langle\mathscr{F}\rangle^+$, where $i=1,\dots,n$ and $j=1,\dots,m$, such that $\overline{g_{ij}(V_j)} \subset W_i$. As this property only involves finitely many maps and open sets, we can find a neighborhood $\mathcal{V}$ of $\mathscr{F}$ in $\IFS_k(X)$ where it remains valid. This means that for any $\mathscr{G}\in \mathcal{V}$, the $\mathscr{G}$-orbit of $x \in V_j$ visits $W_i$ for all $i=1,\dots,n$ and $j=1,\dots,m$. Since any ball $B$ with radius $\varepsilon$ contains at least one of the balls  ${W_1,\dots W_n }$ and ${V_1,\dots V_m}$ covers $X$, the $\mathscr{G}$-orbit of any point in $X$ is $\varepsilon$-dense as required.
\end{proof}

Consider the neighborhood $\mathcal{W}=\mathcal{V}\cap \mathcal{U}$ of $\mathscr{F}$.

\begin{afir} \label{claim:deep} Let $i\in\{1,\dots,n\}$ and $\mathscr{G} \in \mathcal{W}$. Take $y\in B_{i}$ and consider $r>0$ such that $B(y,r)\subset B_{i}$ with $r\leq \kappa^{-1}\varepsilon$. Then $h_{i}(B(h^{-1}_i(y),\kappa r))\subset B(y,r)$.
\end{afir}
\begin{proof}
Denote $y_0=h^{-1}_i(y) \in h_i^{-1}(B_i)$.
Since $\kappa r \leq \varepsilon<\epsilon$, we have  $B(y_0,\kappa r) \subset B_{\epsilon}(h_i^{-1}(B_i)) \subset  U_i$. Now,
if $x\in B(y_0,\kappa r)$,  by~\eqref{ii.9bis},
$$   \D(h_i(x),y)=\D(h_i(x),h_i(y_0))<\kappa^{-1}\D(x,y_0)<\kappa^{-1}
   \cdot \kappa r=r.
$$
Therefore, we conclude that $h_{i}(B(y_0,\kappa r))\subset B(y,r)$ and complete the proof of the claim.
\end{proof}

Finally, we will prove the density of the orbits
$\OG(x)$ for all $x\in X$. According to Claim~\ref{claim:epsilon-minimal} and since $\mathscr{G} \in \mathcal{W}$,  given $x\in X$  we have that the orbit $\OG(x)$ is $\varepsilon$-dense.
Now, we will show that actually
$\OG(x)$ is $\kappa^{-1}\varepsilon$-dense in $X$. To do this, take any $y\in X$ and consider the open ball $B(y,\kappa^{-1}\varepsilon)$.
Since $\kappa^{-1}\varepsilon<\delta$, then there is
$i\in\{1,2,\dots,n\}$ such that $B(y,\kappa^{-1}\varepsilon)\subset B_{i}$.
Then, 
by Claim~\ref{claim:deep} we have
\begin{equation*}
  h_{i}(B(h_i^{-1}(y),\kappa{\kappa}^{-1}\varepsilon))\subset B(y,\kappa^{-1}\varepsilon).
\end{equation*}
Since $\OG(x)$ is $\varepsilon$-dense, there is $h\in\langle\mathscr{G}\rangle^+$
such that $h(x)\in B(h_i^{-1}(y),\varepsilon)=B(h_i^{-1}(y),\kappa{\kappa}^{-1}\varepsilon)$.
Therefore, $(h_{i}\circ h)(x)\in B(y,\kappa^{-1}\varepsilon)$. Consequently,
the orbit $\OG(x)$ is $\kappa^{-1}\varepsilon$-dense.


Repeating this argument, we can show that the orbit $\OG(x)$
is $\kappa^{-p}\varepsilon$-dense for all $p\in\mathbb{N}$.
Since $\kappa^{-p}\varepsilon\to 0$ as $p\to\infty$,
we find that any orbit $\OG(x)$ is dense.
That is, $\mathscr{G}$ is minimal.
\end{proof}

\subsection{Blending regions.}\label{subsec: Blending-region}

The main tool for building robustly minimal families is
known as \emph{globalized blending region}.

\begin{defi} \label{globalization}
An open set $B$ is called \emph{globalized} for $\mathscr{F}$ if there exist maps
$T_1,\dots,T_s, S_1\dots,S_t \in\langle\mathscr{F}\rangle^+$ such that 
\begin{equation*}
    X=T_1(B)\cup \dots\cup T_s(B) = S_1^{-1}(B)\cup\dots \cup S_t^{-1}(B).
\end{equation*}
\end{defi}

\begin{obs} \label{rem:backward-globalized}
If $X$ is compact and  $\mathscr{F}$ minimal, then any open set $B$ of $X$ is backward globalized, i.e., there exist maps
$S_1\dots,S_t \in\langle\mathscr{F}\rangle^+$ such that
$    X= S_1^{-1}(B)\cup\dots \cup S_t^{-1}(B)$.
Actually, it is not difficult to see that the converse also holds.
\end{obs}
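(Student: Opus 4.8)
The plan is to prove the two implications separately; both are short compactness arguments. For the direct implication, assume $X$ is compact and $\mathscr{F}$ is minimal, and fix a nonempty open set $B\subset X$. The key observation is that minimality means $\OF(x)$ is dense in $X$ for every $x\in X$, so for each $x$ there is some $h_x\in\langle\mathscr{F}\rangle^+$ with $h_x(x)\in B$. Since every element of $\langle\mathscr{F}\rangle^+$ is continuous, being a composition of continuous maps, the set $h_x^{-1}(B)$ is open and contains $x$. Thus $\{h_x^{-1}(B):x\in X\}$ is an open cover of the compact space $X$, and extracting a finite subcover $X=h_{x_1}^{-1}(B)\cup\dots\cup h_{x_t}^{-1}(B)$ and setting $S_j\eqdef h_{x_j}$ gives exactly the required backward globalization of $B$.

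For the converse, suppose that every nonempty open set of $X$ is backward globalized. To check that $\mathscr{F}$ is minimal it suffices to show that $\OF(x)$ is dense for every $x\in X$. Fix $x\in X$ and an arbitrary nonempty open set $B\subset X$. By hypothesis there are $S_1,\dots,S_t\in\langle\mathscr{F}\rangle^+$ with $X=S_1^{-1}(B)\cup\dots\cup S_t^{-1}(B)$, so $x\in S_{j}^{-1}(B)$ for some $j\in\{1,\dots,t\}$, that is $S_j(x)\in B$. Since $S_j(x)\in\OF(x)$, this shows $\OF(x)\cap B\neq\emptyset$; as $B$ was arbitrary, $\OF(x)$ is dense, and as $x$ was arbitrary, $\mathscr{F}$ is minimal.

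There is no genuine obstacle here: the only points requiring a little care are that the notion of being \emph{backward globalized} is required to use only finitely many maps, which is exactly why compactness of $X$ is invoked in the direct implication, and that the preimages $h_x^{-1}(B)$ are open, which uses the continuity of the generators of $\mathscr{F}$ (and hence of all elements of $\langle\mathscr{F}\rangle^+$). One may also note that compactness of $X$ is not needed for the converse implication.
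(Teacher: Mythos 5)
Your argument is correct and is exactly the standard argument the paper leaves implicit (the remark is stated without proof as "not difficult to see"): minimality gives, for each $x$, some $h_x\in\langle\mathscr{F}\rangle^+$ with $x\in h_x^{-1}(B)$, these preimages form an open cover, and compactness extracts the finite family $S_1,\dots,S_t$; the converse is the one-line observation that any point lies in some $S_j^{-1}(B)$, so every orbit meets every nonempty open set. The only cosmetic caveat is that $B$ must be taken nonempty (as is implicit in the paper's usage), which you already do.
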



\begin{defi} \label{def:blending-region}
An open set $B\subset X$ is called \emph{blending region}
for $\mathscr{F}$ if there exist $h_1,\dots,h_m\in\langle\mathscr{F}\rangle^+$ and an open set
$D\subset X$ such that  $\overline{B}\subset D$ and
\begin{enumerate}[label=(\arabic*)]
\item\label{item1-defi-region-misturadora}  $\overline{B}\subset h_1(B)\cup \dots \cup h_m(B)$, 
\item\label{item2-defi-region-misturadora} $h_i: \overline{D}\to D$ is a contracting map for
$i=1,\dots,m$.
\end{enumerate}
\end{defi}

In a general metric space $(X,\D)$, having a globalized blending region is not a robust property in $(\IFS_k(X),\bar{\mathrm{d}}_0)$. Small perturbations of $\mathscr{F}$ in $\IFS_k(X)$ can destroy the open covers of both $X$ and $\overline{B}$. However, the following proposition demonstrates that this is not the case if $X$ is compact.

\begin{prop}\label{propo-blending-region-robust}
Assume that $(X,\D)$ is a compact metric space, and let $B$ be a globalized blending region for $\mathscr{F}\in \IFS_k(X)$. Then, there exists a neighborhood $\mathcal{U}$ of $\mathscr{F}$ in $\IFS_k(X)$ such that for any $\mathscr{G}\in \mathcal{U}$, $B$ is also a globalized blending region for $\mathscr{G}$.
\end{prop}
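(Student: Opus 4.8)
Recall that $B$ being a globalized blending region for $\mathscr{F}$ unpacks into: finitely many words $T_1,\dots,T_s,S_1,\dots,S_t,h_1,\dots,h_m\in\langle\mathscr{F}\rangle^+$ and an open set $D\supset\overline{B}$ with $X=T_1(B)\cup\dots\cup T_s(B)=S_1^{-1}(B)\cup\dots\cup S_t^{-1}(B)$, with $\overline{B}\subset h_1(B)\cup\dots\cup h_m(B)$, and with each $h_i\colon\overline{D}\to D$ a contraction. The plan is: for a family $\mathscr{G}$ minimally ordered with respect to $\mathscr{F}$ and sufficiently $\bD$-close to it, take the words $\tilde{T}_j,\tilde{S}_j,\tilde{h}_i\in\langle\mathscr{G}\rangle^+$ spelled by exactly the same finite sequences of generators, keep the same $D$, and verify all of the above for $\mathscr{G}$. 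The basic ingredient is \emph{$C^0$-stability of words}: since $X$ is compact every generator is uniformly continuous, so a telescoping estimate together with an induction on word length gives, for each fixed $\phi\in\langle\mathscr{F}\rangle^+$, that $\|\tilde{\phi}-\phi\|_{C^0}\to 0$ as $\bD(\mathscr{F},\mathscr{G})\to 0$ and likewise $\|\tilde{\phi}^{-1}-\phi^{-1}\|_{C^0}\to 0$ (for the inverses one uses that $\phi^{-1}$ is itself a uniformly continuous word in $\mathscr{F}^{-1}$ and that $\D(\tilde{\phi}^{-1}(y),\phi^{-1}(y))\le\omega_{\phi^{-1}}(\|\tilde{\phi}-\phi\|_{C^0})$, with $\omega_{\phi^{-1}}$ a modulus of continuity of $\phi^{-1}$).

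Each of the three covering relations says that a compact set $\mathsf{C}$ ($=X$, $=X$, or $=\overline{B}$) is covered by finitely many open sets of the form $\phi_\ell(B)$ with $\phi_\ell$ a homeomorphism (here $\phi_\ell$ runs over $\{T_j\}$, over $\{S_j^{-1}\}$, or over $\{h_i\}$). For such a relation, fix a compact shrinking $\mathsf{C}=\bigcup_\ell K_\ell$ with $K_\ell\subset\phi_\ell(B)$ compact; then $\phi_\ell^{-1}(K_\ell)$ is a compact subset of the open set $B$, so $B_{\eta_\ell}(\phi_\ell^{-1}(K_\ell))\subset B$ for some $\eta_\ell>0$. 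By the $C^0$-stability above, there is a neighborhood of $\mathscr{F}$ on which $\|\tilde{\phi}_\ell^{-1}-\phi_\ell^{-1}\|_{C^0}<\eta_\ell$ for every $\ell$; then $\tilde{\phi}_\ell^{-1}(K_\ell)\subset B_{\eta_\ell}(\phi_\ell^{-1}(K_\ell))\subset B$, i.e.\ $K_\ell\subset\tilde{\phi}_\ell(B)$, and taking the union over $\ell$ yields the same covering relation for $\mathscr{G}$. (For $X=\bigcup_j T_j(B)$ one invokes $\|\tilde{T}_j^{-1}-T_j^{-1}\|_{C^0}$, for $X=\bigcup_j S_j^{-1}(B)$ one invokes $\|\tilde{S}_j-S_j\|_{C^0}$, and for $\overline{B}\subset\bigcup_i h_i(B)$ one again uses $\|\tilde{h}_i^{-1}-h_i^{-1}\|_{C^0}$.)

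It remains to handle the contraction $h_i\colon\overline{D}\to D$. The inclusion $\tilde{h}_i(\overline{D})\subset D$ for $\mathscr{G}$ near $\mathscr{F}$ is immediate from $C^0$-stability together with the fact that $h_i(\overline{D})$ is a compact subset of the open set $D$. That $\tilde{h}_i$ is still a contraction on $\overline{D}$ is proved in the spirit of Remark~\ref{obs-expanding-es-robusta}: $\bD$ controls the $C^0$ distance and the local Lipschitz constants of the generators, so $\mathrm{lip}_{\tilde{g}_j}\le\mathrm{lip}_{g_j}+\varepsilon$ on $X$; combining this with the submultiplicativity $\mathrm{lip}_{\tilde{h}_i}(z)\le\prod_j\mathrm{lip}_{\tilde{g}_j}(\tilde{g}_{j-1}\circ\dots\circ\tilde{g}_1(z))$, with the $C^0$-convergence of the intermediate words to $g_{j-1}\circ\dots\circ g_1$ on the compact set $\overline{D}$, and with the upper semicontinuity of the functions $\mathrm{lip}_{g_j}$ (whence $\sup_{B_\sigma(K)}\mathrm{lip}_{g_j}\to\sup_K\mathrm{lip}_{g_j}$ as $\sigma\to 0$ for compact $K$), one obtains $\mathrm{lip}_{\tilde{h}_i}<1$ on $\overline{D}$ for $\mathscr{G}$ in a small enough neighborhood of $\mathscr{F}$; here one uses that the blending words produced in \S\ref{subsec: Blending-region} contract $\overline{D}$ through the composition, i.e.\ $\prod_j\mathrm{lip}_{g_j}(g_{j-1}\circ\dots\circ g_1(z))$ is already bounded by a constant $<1$ on $\overline{D}$. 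Letting $\mathcal{U}$ be the intersection of the finitely many neighborhoods of $\mathscr{F}$ produced above, further shrunk so that every $\mathscr{G}\in\mathcal{U}$ is minimally ordered with respect to $\mathscr{F}$, the words $\tilde{T}_j,\tilde{S}_j,\tilde{h}_i$ and the set $D$ exhibit $B$ as a globalized blending region for every $\mathscr{G}\in\mathcal{U}$.

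The covering parts are soft consequences of the compactness of $X$. The delicate point is the persistence of the contraction property: on a general metric space $\mathrm{lip}$ is only upper semicontinuous, and the perturbed word $\tilde{h}_i$ is $C^0$-close but in general \emph{not} $\bD$-close to $h_i$ (only the generators are close), so one cannot argue directly on $h_i$ and must instead propagate the contraction through the composition generator by generator — precisely the mechanism used implicitly for the expanding property in Remark~\ref{obs-expanding-es-robusta}.
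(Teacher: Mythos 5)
Your treatment of the three covering relations is correct and is in substance the paper's own argument: the paper fixes a Lebesgue number $\delta$ for the cover, covers $X$ by balls $B(x_i,\delta/2)$ with $B(x_i,\delta)\subset T_{j_i}(B)$, and argues that these inclusions survive a small perturbation of the words; your compact shrinking combined with the $C^0$-stability of the words and of their inverses is the same mechanism, written out more carefully.

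The gap is in the last step, the persistence of condition (2) of Definition~\ref{def:blending-region}. That condition only asserts that each composed word $h_i$ is a contracting map of $\overline D$ into $D$; it says nothing about how the contraction is distributed along the generators. Your argument bounds $\mathrm{lip}_{\tilde h_i}$ by the product of the $\mathrm{lip}_{\tilde g_j}$ along the orbit and then needs the unperturbed product $\prod_j\mathrm{lip}_{g_j}(g_{j-1}\circ\dots\circ g_1(z))$ to be bounded by a constant $<1$ on $\overline D$. You justify this by saying it is how the blending words are ``produced in \S\ref{subsec: Blending-region}'', but that section defines blending regions abstractly, and Proposition~\ref{propo-blending-region-robust} quantifies over \emph{every} globalized blending region of \emph{every} $\mathscr F\in\IFS_k(X)$, with $X$ an arbitrary compact metric space. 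A word can contract $\overline D$ while the product of its factors' local Lipschitz constants along the composition is large: already for generators that are affine near the relevant sets in a two-dimensional example, $g_1=\mathrm{diag}(M,1/M)$ and $g_2=\mathrm{diag}(M^{-2},1)$ give $h=g_2\circ g_1$ with Lipschitz constant $1/M$ on $\overline D$, while the product of the local Lipschitz constants along the orbit equals $M$. (On the circle, where the proposition is eventually applied, multiplicativity of scalar derivatives makes your extra hypothesis automatic for $C^1$ maps, but the statement is not restricted to that setting.) So as written your proof establishes robustness of condition (2) only under a hypothesis strictly stronger than the one in the statement, and the submultiplicative bound is useless precisely when the cancellation is not generator-wise.

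A secondary weakness in the same step: even granting a pointwise bound $\mathrm{lip}_{\tilde h_i}<1$ on $\overline D$, this yields the global contraction $\D(\tilde h_i(x),\tilde h_i(y))\le\lambda\,\D(x,y)$ for all $x,y\in\overline D$ --- which is what Theorem~\ref{prop1-blending region} actually uses, via Hutchinson's theorem --- only if $\overline D$ is path-connected with controlled path lengths; on a general compact metric space, pointwise $\mathrm{lip}$ bounds do not control the global Lipschitz constant. For comparison, the paper disposes of this whole step in one sentence (``$\D_0$ controls the $C^0$ distance and the locally Lipschitz constant''), so your diagnosis that this is the delicate point is fair; but your way of closing it amounts to strengthening the hypothesis, and therefore the proposal does not prove the proposition as stated.
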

\begin{proof}
We first note that the condition~\ref{item2-defi-region-misturadora} in Definition~\ref{def:blending-region} is robust in $(\IFS_k(X),\bar{\mathrm{d}}_0)$, as $\D_0$ controls the $C^0$ distance and the locally Lipschitz constant. Next, we will prove that the globalization property $X=T_1(B)\cup \dots \cup T_s(B)$ is also robust. The proof of the robustness of any other cover in Definitions~\ref{globalization}~and~\ref{def:blending-region} is analogous.

Since $(X,\D)$ is compact, by the Lebesgue's number lemma, there exists a Lebesgue number $\delta>0$ for the open cover $\{T_j(B): j=1,\dots,s\}$. We cover $X$ by finitely many open balls $B(x_i,\delta/2)$, $i=1,\dots,n$, and for each $i$, there exists $j_i$ such that $B(x_i,\delta)\subset T_{j_i}(B)$. If the boundary $\partial B(x_i,\delta/2)$ is empty, then $B(x_i,\delta/2) \subset \tilde{T}(B)$ for any sufficiently small perturbation $\tilde{T}\in \Lip(X)$ of $T_{j_i}$. We can assume that $\partial B(x_i,\delta/2)$ is not empty for any $i$, and let $$\eta=\min_{1\leq i\leq n} \D(\partial B(x_i,\delta/2),\partial T_{j_i}(B))>0.$$ Moreover, $\eta=\eta(\mathscr{F})$ varies continuously with respect to $\mathscr{F}$ in $\IFS_k(X)$. Thus, we can find a neighborhood $\mathcal{U}$ of $\mathscr{F}$ in $\IFS_k(X)$ such that $\eta(\mathscr{G})>0$ for all $\mathscr{G}\in \mathcal{U}$. In particular, $B(x_i,\delta/2) \subset \tilde{T}_{j_i}(B)$ for each $i=1,\dots,n$, where $\tilde{T}_{j_i}$ is the perturbation of $T_{j_i}$ formed by the same finite sequence of generators, assuming that $\mathscr{G}$ is minimally ordered with respect to $\mathscr{F}$. Since $\{B(x_i,\delta/2): i=1,\dots,n\}$ covers $X$, we have $X=\tilde{T}_1(B)\cup \dots \cup \tilde{T}_s(B)$, completing the proof.
\end{proof}

The following theorem provides possibly the simplest criterion for constructing
robustly minimal families.

\begin{teo}
\label{prop1-blending region}
Let $(X,\D)$ be a compact metric space. Then, any $\mathscr{F}\in \IFS_k(X)$  with a globalized blending region is  robustly minimal.
\end{teo}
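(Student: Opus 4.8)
The plan is to show that a globalized blending region yields both the expanding property and minimality, and then invoke Theorem~\ref{teo-expanding-minimal-implica-mini-robusta} together with the robustness of the globalized blending region (Proposition~\ref{propo-blending-region-robust}). Since, by Proposition~\ref{propo-blending-region-robust}, a globalized blending region persists under small perturbations in $(\IFS_k(X),\bD)$, it suffices to prove that any $\mathscr{F}\in\IFS_k(X)$ with a globalized blending region is minimal and expanding; applying this to every $\mathscr{G}$ in a suitable neighborhood of $\mathscr{F}$ then gives robust minimality directly. (Strictly, Theorem~\ref{teo-expanding-minimal-implica-mini-robusta} requires $X$ locally connected, so the cleaner route is: show $\mathscr{F}$ is minimal, show each $\mathscr{G}$ near $\mathscr{F}$ still has the globalized blending region hence is minimal; no appeal to local connectedness is needed this way.)

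First I would prove minimality. Let $B$ be the blending region with contractions $h_1,\dots,h_m$ satisfying $\overline{B}\subset h_1(B)\cup\dots\cup h_m(B)$ and $h_i:\overline D\to D$ contracting with $\overline B\subset D$. A standard Hutchinson-type argument shows that $\overline B$ is contained in the closure of the $\mathscr{F}$-orbit of any point: given $x\in X$, use the backward globalization $X=S_1^{-1}(B)\cup\dots\cup S_t^{-1}(B)$ to find $j$ with $x\in S_j^{-1}(B)$, so $S_j(x)\in B$; then iterating the covering relation $\overline B\subset\bigcup h_i(B)$ and using that the $h_i$ are contractions with a common bound $c<1$ on $D$, one shows that for every $y\in B$ and every $\rho>0$ there is a word $h_{i_1}\circ\dots\circ h_{i_N}(S_j(x))$ within $\rho$ of $y$. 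Hence $\overline{\OF(x)}\supset\overline B$. Finally, using the forward globalization $X=T_1(B)\cup\dots\cup T_s(B)$, we get $X=\bigcup T_l(\overline B)\subset\overline{\OF(x)}$ since each $T_l$ is continuous and maps $\overline{\OF(x)}$ into itself; therefore $\OF(x)$ is dense and $\mathscr{F}$ is minimal.

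Next I would verify the expanding property of Definition~\ref{def:expading}. The contractions $h_i:\overline D\to D$ contract by some factor $\kappa^{-1}<1$ on a neighborhood of $\overline B$; choose $\epsilon>0$ so that the $\epsilon$-neighborhood of $\overline B$ still lies in $D$ and $h_i$ contracts by $\kappa^{-1}$ there. From $\overline B\subset h_1(B)\cup\dots\cup h_m(B)$ we have $h_i^{-1}(B)$-type preimages covered appropriately; more useful is that, by minimality just established, for every $x\in X$ there is $g\in\langle\mathscr F\rangle^+$ with $g(x)\in B$, and then $h_i\circ g$ (for the appropriate $i$) is a map in $\langle\mathscr F\rangle^+$ contracting near $x$. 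Covering $X$ by finitely many such balls (using continuity and compactness of $X$) produces the cover $B_1,\dots,B_n$, the maps $\tilde h_1,\dots,\tilde h_n\in\langle\mathscr F\rangle^+$, and constants $\kappa>1$, $\epsilon>0$ required by Definition~\ref{def:expading}. Thus $\mathscr F$ is expanding and minimal, and by Theorem~\ref{teo-expanding-minimal-implica-mini-robusta} (noting a compact metric space with a blending region is, for this purpose, handled either via local connectedness or simply via Proposition~\ref{propo-blending-region-robust}) $\mathscr F$ is robustly minimal.

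The main obstacle I anticipate is the minimality argument, specifically making precise the claim that iterating the covering relation $\overline B\subset\bigcup_i h_i(B)$ with uniformly contracting $h_i$ forces the orbit of any point of $B$ to be dense in $\overline B$: one must argue that every point of $B$ is approximated arbitrarily well by compositions $h_{i_1}\circ\dots\circ h_{i_N}$ applied to a fixed starting point, which requires tracking that the relation $\overline B\subset\bigcup h_i(B)$ can be "pulled back" so that for any target $y$ there is always a valid next symbol, together with the diameter bound $\mathrm{diam}(h_{i_1}\circ\dots\circ h_{i_N}(B))\leq\kappa^{-N}\mathrm{diam}(D)\to0$. A secondary subtlety is ensuring the cover-robustness invoked from Proposition~\ref{propo-blending-region-robust} really transfers \emph{all} of the defining data (both globalizations and the blending covering), but that proposition is stated in exactly this generality, so the argument is short.
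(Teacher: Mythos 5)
Your ``cleaner route'' is essentially the paper's own proof: Proposition~\ref{propo-blending-region-robust} makes the globalized blending region persist for every nearby $\mathscr{G}$, and then a Hutchinson-type contraction argument (the paper invokes Hutchinson's attractor theorem on $\overline{D}$, you use the equivalent coding-plus-diameter-shrinking argument of Lemma~\ref{lem-densidad-A}) shows that any IFS with a globalized blending region is minimal, which gives robust minimality with no appeal to local connectedness. The only adjustment is to drop the expanding/Theorem~\ref{teo-expanding-minimal-implica-mini-robusta} detour entirely: that theorem assumes $X$ locally connected, which is not hypothesized here, and your claim that a single composition $h_i\circ g$ contracts near $x$ is not justified (one would have to iterate the $h_i$'s enough times to beat the local Lipschitz constant of $g$) --- but, as you note, none of this is needed for the result.
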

\begin{proof}
Consider $\mathscr{F}\in \IFS_k(X)$ and let $\langle\mathscr{F}\rangle^+$ denote the semigroup generated by $\mathscr{F}$. By hypothesis, there exists a globalized blending region $B$ for $\mathscr{F}$. In fact, Proposition~\ref{propo-blending-region-robust} implies that $B$ is also a globalized blending region for any $\mathscr{G}$ belonging to a small neighborhood $\mathcal{U}$ of $\mathscr{F}$ in $\IFS_k(X)$.
From Definition~\ref{def:blending-region}, there are $h_{1},h_{2},\dots,h_{m}\in\langle\mathscr{G}\rangle^+$ and an open set $D\subset X$ such that $\overline{B}\subset D$ and conditions~\ref{item1-defi-region-misturadora} and~\ref{item2-defi-region-misturadora} are satisfied.

\begin{afir}\label{afirmacion-teo-blending-rigion-globali-implica-robus-minim}
 We have that $B\subset\overline{\OG(x)}$ for all $x\in B$.
\end{afir}
\begin{proof}
To see this, we will use Hutchison's theorem~\cite{HUT81}, which states that the Hutchinson operator $F_{\mathscr{H}}:\mathscr{K}(\overline{D})\to \mathscr{K}(\overline{D})$ associated with the family of contractions $\mathscr{H}=\{h_{1},\dots,h_{m}\}$ of $\overline{D}$ is also a contraction. As a consequence, there exists a unique compact set $A\subset \overline{D}$ such that \begin{equation*}
   A=F_{\mathscr{H}}(A)=\bigcap_{n\in\mathbb{N}} F_{\mathscr{H}}^n(\overline{D})
\quad \text{and} \quad
A \subset \overline{\mathcal{O}_{\mathscr{H}}^+(x)} \ \ \text{for all $x\in\overline{D}$.}
\end{equation*}
Since $B\subset\overline{D}$ and $B\subset F_{\mathscr{H}}(B)$, it follows that $B\subset A$. Consequently, we have $B \subset \overline{\mathcal{O}_{\mathscr{H}}^+(x)}$ for all $x\in\overline{D}$.
Finally, as $\langle\mathscr{H}\rangle^+$ is a subsemigroup of $\langle\mathscr{G}\rangle^+$, we can conclude that for every $x\in B$, it holds that $B\subset\overline{\OG(x)}$, as required.
\end{proof}

To show that $\mathscr{G}$ is minimal, i.e., $X=\overline{\OG(y)}$ for every $y\in X$, we note that since $B$ is globalized for $\mathscr{G}$, there exist $T_1,\dots,T_s, S_1,\dots,S_t \in\langle\mathscr{G}\rangle^+$ such that
\begin{equation*}
X=T_1(B)\cup \dots\cup T_s(B) = S_1^{-1}(B)\cup\dots \cup S_t^{-1}(B).
\end{equation*}
Given $y\in X$ and any open set $U$ of $X$, we can find $i\in\{1,\dots,s\}$ and $j\in \{1,\dots,t\}$ such that $y \in S_{j}^{-1}(B)$ and $U\cap T_i(B)\neq\emptyset$. Therefore, we have $S_j(y)\in B$ and $T_i^{-1}(U)\cap B$ is an open set in $B$.

Using Claim~\ref{afirmacion-teo-blending-rigion-globali-implica-robus-minim}, we obtain $B\subset\overline{\OG(S_{j}(y))}$. In particular, we have $T_i^{-1}(U)\cap B\subset\overline{\OG(S_{j}(y))}$. Hence, there is $L\in\langle\mathscr{G}\rangle^+$ such that $L\circ S_j(y) \in T_i^{-1}(U)\cap B$, and so $T_i \circ L \circ S_j(y) \in U$. This proves that the orbit $\OG(y)$ is dense in $X$, and therefore $\mathscr{G}$ is minimal.
\end{proof}

The following theorem shows that it is possible to construct a minimal IFS from weaker assumptions than the globalized blending region. Before stating the next result, we indicate that the notation $\mathrm{diam}(A)$ represents the diameter of a set $A$, i.e., the largest distance between pairs of points in $A$.
Similar to the forward $\mathscr{F}$-orbit of $A$, we introduce the \emph{backward $\mathscr{F}$-orbit} of $A$ denoted by $\mathcal{O}_{\mathscr{F}}^-(A)$. It is defined as the union of the inverse images of $A$ under each element in $\langle\mathscr{F}\rangle^+$, that is, $\mathcal{O}_{\mathscr{F}}^-(A) = \{f^{-1}(x): f\in\langle\mathscr{F}\rangle^+, x\in A\}$.


\begin{teo} \label{thm-BFMS16} Let $(X,\D)$ be a metric space and consider $\mathscr{F} \in \IFS_k(X)$.
Let $B$ be  an open set of $X$ such that
$
 X= \overline{\OF(B)} = \mathcal{O}_{\mathscr{F}}^-(B)$.
Suppose that there are functions $\{h_i\}_{i\geq 1} \subset \langle\mathscr{F}\rangle^+$ such that
\begin{equation}\label{ii.10}
 B\subset \bigcup_{i=1}^\infty h_i (B) \ \  \text{and} \ \
 \lim_{n\to\infty} \mathrm{diam}(h_{i_1}\circ \dots \circ h_{i_n}(B))=0,
\end{equation}
for any sequence $\{i_n\}_{n\geq 1}$ with $i_n \in \mathbb{N}$ and
$h_{i_1}\circ \dots \circ h_{i_n}(B)\cap B\not=\emptyset$.
Then  $\mathscr{F}$ is 
minimal.
\end{teo}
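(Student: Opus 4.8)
The plan is to follow the template of the proof of Theorem~\ref{prop1-blending region}: first show that the open set $B$ lies in the closure of the forward $\mathscr{F}$-orbit of each of its own points, and then promote this to density of every $\mathscr{F}$-orbit in $X$ using the two covering hypotheses $X=\overline{\OF(B)}$ and $X=\mathcal{O}_{\mathscr{F}}^-(B)$. The structural difference with Theorem~\ref{prop1-blending region} is that the contraction of the iterated images of $B$, which there came from a common domain $D$ on which the $h_i$ are contractions, is now supplied directly by the diameter hypothesis in~\eqref{ii.10}; in particular no compactness of $X$ will be needed.

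The first and main step is the analogue of Claim~\ref{afirmacion-teo-blending-rigion-globali-implica-robus-minim}: for every $x\in B$ one has $B\subset\overline{\OF(x)}$. To prove it, I would fix $x,z\in B$ and construct a sequence $(i_n)_{n\geq 1}$ inductively: put $z_0=z$, and given $z_{n-1}\in B$, use the inclusion $B\subset\bigcup_{i\geq1}h_i(B)$ to choose $i_n$ with $z_{n-1}\in h_{i_n}(B)$ and some $z_n\in B$ with $h_{i_n}(z_n)=z_{n-1}$. Then $z=h_{i_1}\circ\cdots\circ h_{i_n}(z_n)\in h_{i_1}\circ\cdots\circ h_{i_n}(B)$ for every $n$; since $z\in B$ as well, the set $h_{i_1}\circ\cdots\circ h_{i_n}(B)$ meets $B$ for every $n$, which is exactly the condition under which~\eqref{ii.10} guarantees $\mathrm{diam}(h_{i_1}\circ\cdots\circ h_{i_n}(B))\to 0$. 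As $x\in B$ gives $h_{i_1}\circ\cdots\circ h_{i_n}(x)\in h_{i_1}\circ\cdots\circ h_{i_n}(B)$, we get $\mathrm{d}(z,h_{i_1}\circ\cdots\circ h_{i_n}(x))\le\mathrm{diam}(h_{i_1}\circ\cdots\circ h_{i_n}(B))\to0$; and since each $h_{i_1}\circ\cdots\circ h_{i_n}$ belongs to $\langle\mathscr{F}\rangle^+$, these are points of $\OF(x)$, so $z\in\overline{\OF(x)}$. Letting $z$ range over $B$ proves the claim. I expect the only real subtlety to be precisely this bookkeeping --- verifying that the sequence $(i_n)$ produced by the inductive choice does satisfy, for every $n$, the nonemptiness hypothesis required to invoke the diameter estimate; tracking the fixed point $z$, which remains in all the sets $h_{i_1}\circ\cdots\circ h_{i_n}(B)$ and in $B$, is what makes this work.

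The second step is then formal, mirroring the end of the proof of Theorem~\ref{prop1-blending region}. Given $y\in X$ and a nonempty open $U\subset X$: since $X=\mathcal{O}_{\mathscr{F}}^-(B)$, pick $S\in\langle\mathscr{F}\rangle^+$ with $S(y)\in B$; since $X=\overline{\OF(B)}$ and $U$ is open, pick $h\in\langle\mathscr{F}\rangle^+$ and $b'\in B$ with $h(b')\in U$, so that $h^{-1}(U)\cap B$ is a nonempty open subset of $B$. Applying the claim to $x=S(y)\in B$ gives $B\subset\overline{\OF(S(y))}$, so $\OF(S(y))$ meets the open set $h^{-1}(U)\cap B$; choosing $L\in\langle\mathscr{F}\rangle^+$ with $L(S(y))\in h^{-1}(U)$ yields $h\circ L\circ S\in\langle\mathscr{F}\rangle^+$ and $h\circ L\circ S(y)\in U$. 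Since $U$ and $y$ were arbitrary, every $\mathscr{F}$-orbit is dense, i.e.\ $\mathscr{F}$ is minimal. The only facts used about $\langle\mathscr{F}\rangle^+$ here are that its elements are continuous (so $h^{-1}(U)$ is open) and closed under composition.
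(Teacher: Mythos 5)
Your proposal is correct and follows essentially the same route as the paper: your claim that $B\subset\overline{\OF(x)}$ for every $x\in B$ is just a restatement of the paper's Lemma~\ref{lem-densidad-A} (with the sequence $(i_n)$ built by choosing preimages step by step rather than by nested inclusions, which is equivalent), and your second step using $X=\overline{\OF(B)}=\mathcal{O}_{\mathscr{F}}^-(B)$ to produce $h\circ L\circ S(y)\in U$ is the same globalization argument as in the paper.
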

\begin{proof}
We begin by presenting the following lemma, which will be used to prove the theorem.

\begin{lem}
\label{lem-densidad-A} Let $B$ be an open set that satisfies~\eqref{ii.10}.
Then for every $x\in B$,
there is a sequence $\{i_n\}_{n\geq 1}$, $i_n \in \mathbb{N}$ such that
\begin{equation*}
   x=\lim_{n\to\infty} h_{i_1}\circ\dots\circ h_{i_n}(y), \ \ \text{for all} \ y\in B.
\end{equation*}
\end{lem}
\begin{proof}
We will recursively define the sequence $\{i_n\}_{n\geq 1}$.
Once found an integer $i_n>0$ such that
\begin{equation*}
  x\in h_{i_1}\circ\cdots \circ h_{i_n}(B)
\end{equation*}
we deduce from~\eqref{ii.10} that
\begin{equation*}
  x\in h_{i_1}\circ\cdots\circ h_{i_n}(B)\subset \bigcup_{i=1}^{\infty}
  h_{i_1}\circ\cdots \circ h_{i_n}\circ h_i(B).
\end{equation*}
Thus, we can find $i_{n+1}$ such that
$x\in h_{i_1}\circ\cdots \circ h_{i_n}\circ h_{i_{n+1}}(B)$.
Hence, we have built the sequence $\{i_n\}_{n\geq 1}$ such that
\begin{equation*}
 x\in \bigcap_{n\geq 1} h_{i_1}\circ \dots \circ h_{i_n}(B).
\end{equation*}
Furthermore, since $x\in h_{i_1}\circ\cdots \circ h_{i_n}(B)$ for
all $n\in \mathbb{N}$, we deduce that for any $y\in B$, it holds that
\begin{equation*}
   \mathrm{d}(h_{i_1} \circ \dots \circ h_{i_n}(y), x) \leq
\mathrm{diam}(h_{i_1}\circ \dots \circ h_{i_n}(B)).
\end{equation*}
Note that, since $x\in B$ and $x\in h_{i_1}\circ\dots\circ h_{i_n}(B)$,
then the sequence $\{i_n\}_{n\geq 1}$ satisfies that
$h_{i_1}\circ\dots\circ h_{i_n}(B)\cap B\not=\emptyset$ for all $n\in\mathbb{N}$.
Thus, by hypothesis $\lim_{n\to\infty}{\mathrm{diam}(h_{i_1}\circ\dots\circ h_{i_n}(B))}=0$.
This shows that $\lim_{n\to\infty}{\mathrm{d}(h_{i_1}\circ\dots\circ h_{i_n}(y), x)}=0$ proving the lemma.
\end{proof}

 Now, let us proceed with the proof of the theorem. Given a point
 $x\in X$ and an open set $U$ of $X$, since
$X=\overline{\OF(B)} = \mathcal{O}_{\mathscr{F}}^-(B)$, we have
there are $T,S\in \langle \mathscr{F} \rangle^+$ such that
$S(x) \in B$ and $B\cap T^{-1}(U)$ is open not empty.
By Lemma~\ref{lem-densidad-A}, any point in $B$ can be approximated
by some iterated of the form $h_{i_1}\dots \circ h_{i_n}(S(x))$.
In particular, there is $h\in\langle\mathscr{F}\rangle^+$ such that
$h\circ S(x)\in T^{-1}(U)\cap B$.
Thus, $T\circ h \circ S(x)\in U$.
This proves that the orbit $\OF(x)$ is dense in $X$ and
therefore, $\mathscr{F}$ is minimal.
\end{proof}

In the particular case that the ambient metric space is a compact locally length space, we can obtain the following slight improvement of Theorem~\ref{prop1-blending region}.

\begin{cor} \label{thm-BFMS16-manifold} Let $(X,\D)$ be a compact, locally length  space and consider $\mathscr{F}\in \IFS_k(X)$. Consider  a globalized open set $B$ for $\mathscr{F}$ and  assume that there are an open set $D$,  $0<\beta<1$ and $\{h_i\}_{i\geq 1} \subset \langle\mathscr{F}\rangle^+$ such that $B\subset D$,  $Lh_i|_{D} \leq \beta$ and $h_i(D)\subset D$ for all $i\geq 1$, and
\begin{equation}\label{ec.blender}
B\subset \bigcup_{i=1}^\infty h_i (B).
\end{equation}
Then  $\mathscr{F}$ is expanding and  robustly
minimal.
\end{cor}
\begin{proof}
First, notice that since $B$ is a globalized open set for $\mathscr{F}$, it holds that $B\subset \overline{\OF(B)} = \mathcal{O}_{\mathscr{F}}^-(B)$. Hence, to apply Theorem~\ref{thm-BFMS16},  we will show that, for any $\{i_n\}_{n\geq 1}$ it holds
that
\begin{equation} \label{ii.11}
\mathrm{diam}(h_{i_1}\circ\dots \circ h_{i_n}(B))\to 0 \quad
\text{as $n\to\infty$}.
\end{equation}
To prove this convergence observe that
any composition
$h_{i_1}\circ\dots \circ h_{i_n}$ sends  $D$ to $D$ with contraction rate
$\beta^n$. Hence, $\mathrm{diam}(h_{i_1}\circ \dots h_{i_n}(B)) \leq \beta^n \mathrm{diam}(B)$
and consequently~\eqref{ii.11} holds. Therefore,
by Theorem~\ref{thm-BFMS16}, $\mathscr{F}$ is minimal. To conclude the proof, we will show that $\mathscr{F}$ is expanding. Thus, by Theorem~\ref{teo-expanding-minimal-implica-mini-robusta} we have that $\mathscr{F}$ is robustly minimal.

 Given any $x\in X$, for the globalized property in Definition~\ref{globalization}, we have $T^{-1}_j(x)\in B$ for some $j\in \{1,\dots,m\}$.  Then, by the cover~\eqref{ec.blender}, we can iterate $T_j^{-1}(x)$ by some
 $h_i^{-1}$ and remain in $B$. That is, $h^{-1}_i\circ T_j^{-1}(x)\in B$.
 Since $Lh_i|_B \leq \beta$, by Lemma~\ref{lem1}, we have that
$$
 L(T_j\circ h_{i})((h_{i}^{-1}\circ T_j^{-1})(x)) \leq Lh_{i}(h_{i}^{-1}\circ T^{-1}_j(x)) \cdot LT_{j}(T^{-1}_j(x)) \leq \beta LT_{j}(T^{-1}_j(x)).
$$
Since $\beta<1$, repeating the argument the number of times necessary, we found $g\in\langle\mathscr{F}\rangle^+$
 such that
 $Lg(g^{-1}(x))<1$ and thus $\mathscr{F}$ is expanding.
\end{proof}


We will apply the above result to construct
examples of robustly minimal IFS comprising of diffeomorphisms of the
circle.  Namely, we will provide an alternative proof of the following theorem from \cite[Theorem B]{BFS14}, which generalizes earlier results on robustly minimal IFSs on the circle described in \cite{GI99, GI00}.

\begin{teo}\label{teo-B-BFS14}
Let $f_1, f_2$ be preserving orientation circle diffeomorphisms. Assume that $f_1$ is an irrational rotation and
that $f_2$ is not a rotation. Then
$\mathscr{F}=\{f_1,f_2\}$ is expanding and robustly minimal.
\end{teo}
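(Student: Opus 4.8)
The goal is to produce, for $\mathscr{F}=\{f_1,f_2\}$ with $f_1$ an irrational rotation and $f_2$ not conjugate to a rotation, a \emph{globalized blending region} and then invoke Corollary~\ref{thm-BFMS16-manifold} (or equivalently Theorem~\ref{prop1-blending region}). Since $M=\mathbb{S}^1$ is a compact manifold and the maps are $C^1$ (indeed $C^\infty$), producing a globalized blending region would immediately give that $\mathscr{F}$ is expanding and robustly minimal. So the entire proof reduces to the following two tasks: (1) build an open interval $D\subset\mathbb{S}^1$ and finitely many words $h_1,\dots,h_m\in\langle\mathscr{F}\rangle^+$ contracting $\overline D$ into $D$ with uniform rate $\beta<1$, together with a smaller open interval $B$ with $\overline B\subset D$ and $\overline B\subset h_1(B)\cup\dots\cup h_m(B)$; and (2) check that $B$ is globalized, i.e., $\mathbb{S}^1=T_1(B)\cup\dots\cup T_s(B)=S_1^{-1}(B)\cup\dots\cup S_t^{-1}(B)$ for suitable words $T_j,S_j\in\langle\mathscr{F}\rangle^+$.

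\textbf{Producing the contractions and the blending interval.} Because $f_2$ is an orientation-preserving circle diffeomorphism that is not conjugate to a rotation, its rotation number is either irrational with $f_2$ not $C^0$-conjugate to the corresponding rotation (Denjoy-type behavior), or — the generic and easier case to engineer — rational, so that $f_2$ has periodic points, some hyperbolic attracting. First I would reduce to the situation where $f_2$ has a hyperbolic attracting periodic point: one classical route is to note that if $f_2$ has no hyperbolic periodic point one can still, using that $f_2$ is not conjugate to a rotation, find a composition $g\in\langle f_2\rangle^+$ (a power of $f_2$) and then perturb inside the \emph{orbit of $\langle\mathscr{F}\rangle^+$} by pre/post-composing with $f_1$ to obtain a word with a genuine hyperbolic attracting fixed point — this is exactly the mechanism in \cite{BFS14,GI99,GI00} and uses that $f_1$ is a rotation so that $f_1^{-n}\circ f_2 \circ f_1^n$ are ``all the rotates'' of $f_2$. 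Concretely, pick $a\in\mathbb{S}^1$ with $Df_2(a)<1$ (such a point exists since $f_2$ is not a rotation, so $\int \log Df_2=0$ forces $Df_2<1$ somewhere), let $B$ be a small open interval around $a$ on which $\|Df_2\|\le\beta<1$, set $h_1=f_2$, and then use the rotations $r=f_1$ to translate copies: the words $h_i = f_1^{-\,n_i}\circ f_2 \circ f_1^{\,n_i}$ are all contractions with the same rate $\beta$ near the points $f_1^{-n_i}(a)$, and since $f_1$ is an irrational rotation the orbit $\{f_1^{n}(a)\}$ is dense, so finitely many such translated copies $h_1,\dots,h_m$ cover $\overline B$ while each $h_i(\overline B)\subset D$ for $D$ a slightly larger interval. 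This gives $\overline B\subset h_1(B)\cup\dots\cup h_m(B)$ and $\|Dh_i|_D\|\le\beta$, which is hypothesis~\eqref{ii.10} of Corollary~\ref{thm-BFMS16-manifold}.

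\textbf{Globalization and conclusion.} For the globalized property, since $f_1$ is an irrational rotation the forward orbit $\{f_1^n\}_{n\ge0}$ of the interval $B$ already covers $\mathbb{S}^1$ by finitely many translates (any interval, rotated along a dense rotation orbit, covers the circle after finitely many steps), so taking $T_j=f_1^{\,m_j}$ gives $\mathbb{S}^1=T_1(B)\cup\dots\cup T_s(B)$; the same applies to $f_1^{-1}$, giving $\mathbb{S}^1=S_1^{-1}(B)\cup\dots\cup S_t^{-1}(B)$ with $S_j=f_1^{\,\ell_j}$. Hence $B$ is a globalized open set, and all hypotheses of Corollary~\ref{thm-BFMS16-manifold} are met, yielding that $\mathscr{F}$ is expanding and robustly minimal. \textbf{The main obstacle} is the first task when $f_2$ has irrational rotation number (the Denjoy case): then $Df_2<1$ somewhere still holds, but ensuring that the contracting word sits in a neighborhood one can cover by \emph{translates under $f_1$} requires a little care — one wants a \emph{uniform} contraction rate on a \emph{fixed-size} interval, which may force passing to a high power $f_2^N$ (still a contraction on a possibly small interval near the point of maximal contraction) and then exploiting density of the $f_1$-orbit to tile $\overline B$; keeping the number $m$ of words finite and the interval $D$ from wrapping around the circle is the delicate bookkeeping step. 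Everything else — robustness, minimality, expansion — is then automatic from the cited results.
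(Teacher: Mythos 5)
Your overall strategy—exhibit a globalized open set $B$ together with words $h_i\in\langle\mathscr{F}\rangle^+$ contracting a common domain and satisfying $\overline B\subset h_1(B)\cup\dots\cup h_m(B)$, then invoke Corollary~\ref{thm-BFMS16-manifold}—is exactly the paper's, and your globalization step (finitely many forward and backward rotates of $B$ under $f_1$ cover $\mathbb{S}^1$) is fine. The genuine gap is in your ``concrete'' construction of the contracting words. For $h_i=f_1^{-n_i}\circ f_2\circ f_1^{n_i}$ one has $Dh_i(x)=Df_2(f_1^{n_i}(x))$, so $h_i$ contracts only on the translated interval $f_1^{-n_i}(B)$, not on any fixed domain $D\supset\overline B$: on $\overline B$ itself, $Dh_i$ is $Df_2$ sampled along $f_1^{n_i}(\overline B)$, which is uncontrolled, so the claimed bound $\|Dh_i|_D\|\le\beta$ fails. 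Likewise $h_i(\overline B)$ is an interval around $f_1^{-n_i}\bigl(f_2(f_1^{n_i}(a))\bigr)$, which can sit anywhere on the circle, so neither $\overline B\subset h_1(B)\cup\dots\cup h_m(B)$ nor $h_i(\overline B)\subset D$ is justified; covering $\overline B$ by the \emph{domains of contraction} $f_1^{-n_i}(B)$ is not the covering the blending condition requires. More fundamentally, conjugation by rotations cannot create the recurrence you need: each $h_i$ is conjugate to $f_2$, so if $f_2$ has no attracting periodic point (your Denjoy/parabolic cases) neither does $h_i$, and a single application of a map that merely satisfies $Df_2(a)<1$ at a non-fixed point pushes $B$ away from $a$ with no mechanism to return.

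What works—and is what the paper does—is to compose in the other order: a rotation applied \emph{after} a long run of $f_2$. The paper takes an attracting fixed point $a$ of $f_2$ (furnished by the non-conjugacy to a rotation), an interval $D=(a,c)$ with $\|Df_2|_D\|<1$ and $B$ compactly contained in $D$, and words $h_x=f_1^{m_x}\circ f_2^{n_x}$: the iterates of $f_2$ keep $B$ inside $D$, so $\|Dh_x|_B\|\le\beta$, and $f_2^{n_x}$ shrinks $B$ to an arc so small that the rotation $f_1^{m_x}$ (chosen with $f_1^{-m_x}(x)\in f_2^{n_x}(B)$, using minimality of $f_1^{-1}$) places it inside $B$ over the prescribed point $x$; a Lindel\"of argument then extracts countably many $h_i$ with $B\subset\bigcup_i h_i(B)$ and $h_i(B)\subset B$. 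Your announced ``reduction to a word with a hyperbolic attracting fixed point'' is indeed the right idea when $f_2$ itself has no periodic attractor—take $g=f_1^m\circ f_2$ with $f_1^m(f_2(a))$ so close to $a$ that $g$ maps a small interval around $a$ into itself and contracts it—but you never carry this out; you replace it by the conjugation trick, which does not produce such a word. (Minor point: $\int\log Df_2\,d\mathrm{Leb}=0$ is false; the correct input is $\int Df_2\,d\mathrm{Leb}=1$, which already yields $Df_2<1$ somewhere when $f_2$ is not a rotation.)
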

\begin{proof}
Since $f_2$ is a circle diffeomorphism,
$$
1=\mathrm{Leb}(\mathbb{S}^1)=\mathrm{Leb}((f_2)^{-1}(\mathbb{S}^1))=\int |(f_2^{-1})'(z)| \, d\mathrm{Leb}(z),
$$
where $\mathrm{Leb}$ is the normalized Lebesgue measure on $\mathbb{S}^1$. Moreover, since $f_2$ is not a rotation, from the above estimate,  there exists a point $z\in \mathbb{S}^1$ such that $|(f_2^{-1})'(z)|>1$. From here, we have an open interval $I$ containing $z$ such that $|(f_2^{-1})'|_I|>1$. On the other hand, since $f_1$ is an irrational rotation,  $f_1^{-1}$ is minimal and thus, for every $x\in \mathbb{S}^1$, there exists $n\in \mathbb{N}$ such that $f_1^{-n}(x)\in I$. Then,
$$|(g^{-1})'(x)|=|(f_2^{-1})'(f_1^{-n}(x))|\cdot |(f_1^{-n})'(x)|=|(f_2^{-1})'(f_1^{-n}(x))|>1,$$  where $g=f_1^{n}\circ f_2 \in \langle \mathscr{F}\rangle^+$.
This implies that $\mathscr{F}$ is expanding since it is also minimal (since $f_1$ is an irrational rotation), we conclude from Theorem~\ref{teo-expanding-minimal-implica-mini-robusta}  that $\mathscr{F}$ is also robustly minimal.
\end{proof}

\begin{obs} \label{rem:back-exp-min}
Note that the above theorem may also be applied to $\mathscr{F}^{-1}=\{f_1^{-1}, f_2^{-1}\}$, obtaining that $\mathscr{F}^{-1}$ is also expanding and robustly minimal. Using the terminology introduced in~\S\ref{IFS-intro}, we can say that the assumptions of Theorem~\ref{teo-B-BFS14} for $\mathscr{F}=\{f_1,f_2 \}$ imply that $\mathscr{F}$ is, both,  forward and backward expanding and robustly forward and backward minimal.
\end{obs}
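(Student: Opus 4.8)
The plan is to exploit the fact that the hypotheses of Theorem~\ref{teo-B-BFS14} are invariant under passing to inverses, so that the whole remark reduces to a single extra application of that theorem to the pair $\{f_1^{-1},f_2^{-1}\}$. Concretely, I would first record that Theorem~\ref{teo-B-BFS14} already supplies the forward half --- namely that $\mathscr{F}$ is expanding and robustly minimal --- and then verify that $\mathscr{F}^{-1}=\{f_1^{-1},f_2^{-1}\}$ again satisfies the three standing assumptions, so that the theorem applies verbatim to $\mathscr{F}^{-1}$ and yields the backward half.

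The substance lies in checking those three assumptions for $\mathscr{F}^{-1}$. First, the inverse of an orientation-preserving circle diffeomorphism is again an orientation-preserving circle diffeomorphism, since $D(f^{-1})(x)=\bigl(Df(f^{-1}(x))\bigr)^{-1}>0$; thus $f_1^{-1},f_2^{-1}\in\mathrm{Diff}^r_+(\mathbb{S}^1)$. Second, if $f_1=R_\alpha$ is rotation by the irrational angle $\alpha$, then $f_1^{-1}=R_{-\alpha}$ is rotation by $-\alpha$, still irrational, so $f_1^{-1}$ is an irrational rotation. Third --- the only non-immediate point --- I would show that $f_2^{-1}$ is not conjugate to any rotation: if $\varphi\circ f_2^{-1}\circ\varphi^{-1}=R_\beta$ for some homeomorphism $\varphi$, then inverting both sides gives $\varphi\circ f_2\circ\varphi^{-1}=R_{-\beta}$, making $f_2$ conjugate to a rotation and contradicting the hypothesis on $f_2$.

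With the hypotheses in hand, Theorem~\ref{teo-B-BFS14} applied to $\{f_1^{-1},f_2^{-1}\}$ shows that $\mathscr{F}^{-1}$ is expanding and robustly minimal. Translating through the forward/backward dictionary of \S\ref{IFS-intro} --- where $\mathscr{F}$ backward expanding (resp.\ minimal) means $\mathscr{F}^{-1}$ expanding (resp.\ minimal) --- the two applications together give that $\mathscr{F}$ is forward and backward expanding and robustly forward and backward minimal. The one bookkeeping point I would flag is that robust minimality of $\mathscr{F}^{-1}$ must be read as a robustness statement organized around $\mathscr{F}$; this is immediate because inversion $\mathscr{G}\mapsto\mathscr{G}^{-1}$ is a homeomorphism of $\IFS_k^r(\mathbb{S}^1)$ (it is continuous in the $C^r$ topology), so a neighborhood of $\mathscr{F}^{-1}$ consisting of minimal IFSs corresponds to a neighborhood of $\mathscr{F}$ consisting of backward-minimal IFSs. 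There is no genuine obstacle here: the entire content is the inversion-invariance of the three hypotheses, with the conjugacy argument for $f_2^{-1}$ being the single verification that is not completely automatic.
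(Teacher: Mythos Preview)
Your proposal is correct and is exactly the approach the paper has in mind: the remark in the paper is not accompanied by a separate proof, and its content is precisely the inversion-invariance of the three hypotheses of Theorem~\ref{teo-B-BFS14}, which you verify cleanly (the conjugacy argument for $f_2^{-1}$ and the homeomorphism $\mathscr{G}\mapsto\mathscr{G}^{-1}$ being the only points requiring a line of justification). There is nothing to add.
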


\section{Transitivity of IFS}\label{sec:transitividad}

We start by giving some a priori different notion of transitivity. This property was study~\cite{koropecki2009transitivity} when  the generators
are measure preserving maps.

\begin{defi}\label{definicion-transitividad}
We say that family $\mathscr{F}$ of finitely many continuous maps of $X$ is
\begin{enumerate}[leftmargin=1.25cm]
    \item[\mylabel{item1-teo-equiv-transitividad}{\textrm{(T)}}] \emph{transitive} if the forward $\mathscr{F}$-orbit of every open set is dense on $X$.
    \item[\mylabel{item2-teo-equiv-transitividad}{\textrm{(TT)}}] \emph{topologically transitive} if for every pair of nonempty open sets $U,V\subset X$, \\
    there exists $h\in\langle \mathscr{F} \rangle^{+}$ such that $h(U)\cap V\neq\emptyset$.
\end{enumerate}
\end{defi}
The above notions of transitivity are equivalent, as discussed below.

\begin{teo}\label{teo-equiva-transitividad}
Let $\mathscr{F}$ be a finite family of continuous maps of $X$ to itself. Then,
$\mathscr{F}$ is transitive if and only if it is topologically transitive.
\end{teo}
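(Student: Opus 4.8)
The plan is to prove the two implications separately, with the direction \ref{item2-teo-equiv-transitividad} $\Rightarrow$ \ref{item1-teo-equiv-transitividad} being the substantive one.

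\textbf{The easy direction.} Assume $\mathscr{F}$ is transitive, i.e. $\overline{\OF(U)} = X$ for every nonempty open set $U$. Given nonempty open sets $U,V$, density of $\OF(U)$ means $\OF(U)\cap V \neq \emptyset$; since $\OF(U) = \bigcup_{h\in\langle\mathscr{F}\rangle^+} h(U)$, there is $h\in\langle\mathscr{F}\rangle^+$ with $h(U)\cap V \neq\emptyset$, which is exactly topological transitivity. This is immediate from the definitions.

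\textbf{The hard direction.} Assume $\mathscr{F}$ is topologically transitive; we must show $\overline{\OF(U)} = X$ for every nonempty open $U$. Fix a nonempty open $U$ and an arbitrary nonempty open $W$; it suffices to find $h\in\langle\mathscr{F}\rangle^+$ with $h(U)\cap W\neq\emptyset$ — but that is literally the hypothesis applied to the pair $(U,W)$. So in fact this direction is also essentially immediate, and the real content of the theorem is that these two notions, which differ superficially in quantifier structure, coincide. I would therefore present the proof as a short symmetric argument: unwinding $\OF(U)\cap V\neq\emptyset \iff \exists h\in\langle\mathscr{F}\rangle^+,\ h(U)\cap V\neq\emptyset$ and noting that "$\OF(U)$ dense" is equivalent to "$\OF(U)\cap V\neq\emptyset$ for all nonempty open $V$". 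The only point requiring a word of care is that density of a set $D$ in $X$ is equivalent to $D$ meeting every nonempty open set, which is the standard characterization of dense subsets; I would invoke this explicitly.

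\textbf{Main obstacle.} Honestly, there is no serious obstacle: unlike for a single continuous map, where topological transitivity, existence of a dense orbit, and density of forward orbits of open sets can genuinely differ (and coincidence requires a Baire category argument), here the "orbit of an open set" $\OF(U)$ is itself a union of images of $U$, so the equivalence is purely formal. The one thing I would be careful about is not conflating this statement with the single-map situation — in particular, I would not claim anything about dense \emph{point} orbits, since the family-minimality notion already handles that and is strictly stronger. Thus the proof I would write is roughly: "($\Leftarrow$) immediate from the definition of $\OF$. ($\Rightarrow$) Let $U$ be nonempty open. For any nonempty open $V$, topological transitivity gives $h\in\langle\mathscr{F}\rangle^+$ with $h(U)\cap V\neq\emptyset$, hence $\OF(U)\cap V\neq\emptyset$; as $V$ was arbitrary, $\OF(U)$ is dense." — four or five lines total.
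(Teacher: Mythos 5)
Your proposal is correct and follows essentially the same route as the paper: the (T)$\Rightarrow$(TT) direction is identical, and for (TT)$\Rightarrow$(T) the paper merely spells out your "dense $\iff$ meets every nonempty open set" step concretely, by intersecting $\OF(U)$ with balls $B(y,1/n)$ around an arbitrary point $y$ and extracting a sequence $z_n\to y$. No gap; the equivalence is indeed purely formal here, as you note.
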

\begin{proof}
To deduce~\ref{item2-teo-equiv-transitividad} from~\ref{item1-teo-equiv-transitividad}, take open sets $U,V\subset X$.
Since $\mathscr{F}$ is transitive, we have that the $\mathscr{F}$-orbit of $U$ is dense on $X$ and
then there is $h\in\langle\mathscr{F}\rangle^{+}$ such that $h(U)\cap V\neq\emptyset$. Thus, we get~\ref{item2-teo-equiv-transitividad}.

Now suppose~\ref{item2-teo-equiv-transitividad} and take an open set $U\subset X$ and $y\in X$.
Given $n\in\mathbb{N}$ consider the open ball $B(y,{1}/{n})$. Since $\mathscr{F}$ is~\ref{item2-teo-equiv-transitividad},
there is $h_{n}\in\langle\mathscr{F}\rangle^{+}$ such that
$B(y,{1}/{n})\cap h_{n}(U)\neq\emptyset$. Take $z_{n}\in B(y,{1}/{n})\cap h_{n}(U)$. Note that
$z_{n}\in h_{n}(U)$  and $z_{n}\to y$ as $n\to\infty$. Thus,
\begin{equation*}
y\in\overline{\{h(U): \ h\in\langle\mathscr{F}\rangle^{+}\}}=\overline{\OF(U)}.
\end{equation*}
Therefore, the $\mathscr{F}$-orbit of $U$ is dense in $X$ and consequently we have~\ref{item1-teo-equiv-transitividad}.
\end{proof}

\begin{obs} \label{rem:backward-trans}
    If $\mathscr{F}$ consists of homeomorphisms of $X$, we can consider the inverse family given by $\mathscr{F}^{-1}=\{ f^{-1}: f\in \mathscr{F}\}$. In such case, it immediately follows that $\mathscr{F}$ is transitive if and only if $\mathscr{F}^{-1}$ is~transitive.
\end{obs}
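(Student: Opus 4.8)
The plan is to reduce the statement to the symmetric reformulation of transitivity provided by Theorem~\ref{teo-equiva-transitividad}. The notion~\ref{item1-teo-equiv-transitividad} of transitivity is not manifestly symmetric under passing to inverses, since it speaks of forward $\mathscr{F}$-orbits of open sets and thus treats the ``source'' and ``target'' open sets asymmetrically; but topological transitivity~\ref{item2-teo-equiv-transitividad} is visibly symmetric in the roles of $U$ and $V$. That symmetry is exactly the bridge I would exploit.

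First I would record the elementary identity $\langle\mathscr{F}^{-1}\rangle^{+}=\{h^{-1}:h\in\langle\mathscr{F}\rangle^{+}\}$. Indeed, if $h=g_{n}\circ\cdots\circ g_{1}$ with each $g_{i}\in\mathscr{F}$, then $h^{-1}=g_{1}^{-1}\circ\cdots\circ g_{n}^{-1}\in\langle\mathscr{F}^{-1}\rangle^{+}$, and conversely every element of $\langle\mathscr{F}^{-1}\rangle^{+}$ arises in this way. This uses only that each $f\in\mathscr{F}$ is a homeomorphism of $X$, so that the inverses are well defined homeomorphisms again.

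Next, assume $\mathscr{F}$ is transitive; by Theorem~\ref{teo-equiva-transitividad} it is topologically transitive. Fix nonempty open sets $U,V\subset X$; I want $g\in\langle\mathscr{F}^{-1}\rangle^{+}$ with $g(U)\cap V\neq\emptyset$. Applying topological transitivity of $\mathscr{F}$ to the pair $(V,U)$ produces $h\in\langle\mathscr{F}\rangle^{+}$ with $h(V)\cap U\neq\emptyset$, which is equivalent to $h^{-1}(U)\cap V\neq\emptyset$ since $h$ is a bijection of $X$. By the identity above, $g\eqdef h^{-1}\in\langle\mathscr{F}^{-1}\rangle^{+}$, so $g(U)\cap V\neq\emptyset$. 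Hence $\mathscr{F}^{-1}$ is topologically transitive, and therefore transitive by Theorem~\ref{teo-equiva-transitividad} once more. The converse implication follows by swapping the roles of $\mathscr{F}$ and $\mathscr{F}^{-1}$, using $(\mathscr{F}^{-1})^{-1}=\mathscr{F}$.

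There is no genuine obstacle here; the only subtlety worth flagging is that the argument must route through the symmetric characterization~\ref{item2-teo-equiv-transitividad}, since the defining condition~\ref{item1-teo-equiv-transitividad} does not make the symmetry apparent. This is precisely why the remark is stated as an immediate consequence of Theorem~\ref{teo-equiva-transitividad} rather than of Definition~\ref{definicion-transitividad} directly.
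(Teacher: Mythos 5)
Your argument is correct and is exactly the reasoning the paper leaves implicit: the remark is stated right after Theorem~\ref{teo-equiva-transitividad} precisely because topological transitivity is symmetric in $U$ and $V$, and $h(V)\cap U\neq\emptyset$ iff $h^{-1}(U)\cap V\neq\emptyset$ with $h^{-1}\in\langle\mathscr{F}^{-1}\rangle^{+}$. Nothing is missing, and your identification of $\langle\mathscr{F}^{-1}\rangle^{+}$ with the inverses of elements of $\langle\mathscr{F}\rangle^{+}$ is the only auxiliary fact needed.
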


\begin{obs} \label{rem:min-tran}
    Clearly, if $\mathscr{F}$ is minimal, then $\mathscr{F}$ is also transitive.
\end{obs}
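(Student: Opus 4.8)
\textbf{Proof proposal for Remark~\ref{rem:min-tran}.} The plan is to deduce transitivity directly from minimality, using the characterizations already recorded above. Recall that $\mathscr{F}$ is minimal precisely when $\OF(x)$ is dense in $X$ for every $x\in X$, whereas $\mathscr{F}$ is transitive when $\overline{\OF(U)}=X$ for every nonempty open set $U\subset X$. So, given a nonempty open set $U$, I would pick any point $x\in U$. Since $\langle\mathscr{F}\rangle^+$ is closed under composition and $\{x\}\subset U$, one has $\OF(x)\subset\OF(U)$, hence $\overline{\OF(U)}\supset\overline{\OF(x)}=X$ by minimality. Thus $\OF(U)$ is dense in $X$, which is exactly property~\ref{item1-teo-equiv-transitividad}.

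As an alternative route that makes the logical dependence on the earlier results explicit, I would instead verify the topological transitivity condition~\ref{item2-teo-equiv-transitividad} and then invoke Theorem~\ref{teo-equiva-transitividad}: for nonempty open sets $U,V\subset X$, choose $x\in U$; minimality gives a map $h\in\langle\mathscr{F}\rangle^+$ with $h(x)\in V$, so $h(U)\cap V\neq\emptyset$. Either way the argument is a one-line consequence of the definitions.

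There is no real obstacle here; the only point worth a word is the degenerate case $U=\emptyset$, which is excluded since transitivity is only required of nonempty open sets (as is implicit in Definition~\ref{definicion-transitividad}). I would therefore keep the remark as a short observation rather than a formally displayed proof, noting that the converse fails in general, so minimality is strictly stronger than transitivity.
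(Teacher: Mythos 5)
Your proposal is correct and matches the paper's intent: the paper states this remark without proof as an immediate consequence of the definitions, and your argument (for nonempty open $U$ pick $x\in U$, so $\OF(x)\subset\OF(U)$ and density of $\OF(x)$ gives density of $\OF(U)$) is precisely that immediate argument, with the route via Theorem~\ref{teo-equiva-transitividad} an equivalent variant.
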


The following result asserts that, under specific conditions, the concept of transitivity is equivalent to the existence of dense orbits. A similar result in the context of group actions can be found in~\cite[Prop. 1]{cairns2007topological}.

\begin{prop}\label{propo-equivalencia-transitividad-densidad-puntual}
Let $X$ be a second countable Baire space, and let $\mathscr{F}$ be a finite family of homeomorphisms of $X$.
Then, $\mathscr{F}$ is transitive if and only if there exists a (residual) dense set in $X$ where each point within it has a dense forward $\mathscr{F}$-orbit in $X$. Moreover, $\mathscr{F}$ is transitive if and only if there exists $x\in X$ such that the forward and backward $\mathscr{F}$-orbits of $x$ are both dense in $X$.
\end{prop}
\begin{proof}
Assume that $\mathscr{F}$ is transitive. By Theorem~\ref{teo-equiva-transitividad}, we conclude that $\mathscr{F}$ is topologically transitive.
Let $\mathcal{B}=\{U_{i}:~i\in\mathbb{N}\}$ be a countable base for $X$, and consider $V_{i}\eqdef\mathcal{O}_{\mathscr{F}}^-(U_{i})=\bigcup_{f\in \langle\mathscr{F}\rangle^+} f^{-1}(U_{i})$. Take an open set $A\subset X$ and fix $i\in\mathbb{N}$. Then, there exists $h\in\left \langle \mathscr{F} \right \rangle^{+}$ such that $h^{-1}(U_{i})\cap A\neq\emptyset$. Hence, $V_{i}$ is dense in $X$.
Since $X$ is a Baire space, we have $V=\bigcap_{i\in\mathbb{N}}{V_{i}}$ is dense in $X$. We claim that each element of $V$ has a dense forward $\mathscr{F}$-orbit. Indeed, if $W\subset X$ is open, one has $U_{j}\subset W$ for some $j\in\mathbb{N}$. Thus, if $x\in V$, we have $x\in V_{j}$, and it follows that $\tilde{h}(x)\in U_{j}\subset W$ for some $\tilde{h}\in\left \langle \mathscr{F} \right \rangle^{+}$. The converse is clear.

To obtain the second part of the proposition, we first observe that, arguing as before, we can define $V$ as a countable intersection of $V_i \cap W_i$, where
$V_{i}\eqdef\mathcal{O}_{\mathscr{F}}^+(U_{i})$.
For any  $x$ in such a (residual) dense set, we find that both the forward and backward $\mathscr{F}$-orbits of $x$ are dense in $X$. Conversely, it is clear that if $x$ has both forward and backward $\mathscr{F}$-orbits, then $\mathscr{F}$ is topologically transitive. Indeed, given nonempty open sets $U$, $V$ in $X$, there exist $g,f\in \langle \mathscr{F} \rangle^+$ such that $g(x)\in V$ and $f^{-1}(x)\in U$. Then, $h(U)\cap V\not=\emptyset$, where $h=g\circ f\in \langle \mathscr{F} \rangle^+$ and thus $\mathscr{F}$ is topologically transitive.
\end{proof}

\subsection{Robust transitivity.} \label{sec:rob-tran}

 Similar to \S\ref{sec:criterion_minimality}, we introduce the class of robustly transitive IFSs.

\begin{defi} \label{def:robtrans} We say that $\mathscr{F}$ in $\IFS_k(X)$ is \emph{robustly transitive} if there exists a neighborhood $\mathcal{U}$ of $\mathscr{F}$ in $\IFS_k(X)$ such that  any $\mathscr{G}\in \mathcal{U}$ is transitive.
\end{defi}

Here, we will extend Theorem~\ref{teo-expanding-minimal-implica-mini-robusta} for transitive IFSs.

\begin{teo}\label{teo:robust-trans}
Let $(X,\D)$ be a compact locally length metric space and consider  $\mathscr{F}\in \IFS_k(X)$. If $\mathscr{F}$ is  expanding and transitive, then for every $\varepsilon>0$ small enough, there is a neighborhood $\mathcal{V}$ of $\mathscr{F}$ in $\IFS_k(X)$ such that for any $\mathscr{G}\in \mathcal{V}$ and open ball $V$  of radius $\varepsilon$,
the $\mathscr{G}$-orbit $\OG(V)$ of $V$   is dense on $X$.
\end{teo}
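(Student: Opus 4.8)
The plan is to mirror the structure of the proof of Theorem~\ref{teo-expanding-minimal-implica-mini-robusta}, replacing "minimal" by "transitive" throughout. As there, since $\mathscr{F}$ is expanding, fix a finite open cover $B_1,\dots,B_n$ of $X$ by balls, a constant $\kappa>1$, constant $\epsilon>0$, and maps $\tilde h_1,\dots,\tilde h_n\in\langle\mathscr{F}\rangle^+$ realizing the contraction estimate~\eqref{ii.9}. Let $\delta>0$ be a Lebesgue number for $\{B_i\}$, and (using local connectedness of $X$) choose $0<\varepsilon<\delta$ so small that every ball of radius $\leq\varepsilon$ is connected; this is the $\varepsilon$ in the statement. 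By Remark~\ref{obs-expanding-es-robusta}, after shrinking a neighborhood of $\mathscr{F}$ we may assume every IFS in it is still expanding with the same data, and for $\mathscr{G}$ ordered minimally with respect to $\mathscr{F}$ the corresponding maps $h_i\in\langle\mathscr{G}\rangle^+$ (same words in the generators) satisfy the analogous estimate.

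The first key step is a transitive analogue of Claim~\ref{claim:epsilon-minimal}: there is a neighborhood $\mathcal{V}$ of $\mathscr{F}$ such that for every $\mathscr{G}\in\mathcal{V}$ and every open ball $W$ of radius $\varepsilon/4$ the $\mathscr{G}$-orbit $\OG(V)$ of any open set $V$ meets $W$ — i.e. $\OG(V)$ is $(\varepsilon/2)$-dense, hence in particular $\varepsilon$-dense. To get this robustly I would cover $X$ by finitely many balls $W_1,\dots,W_N$ of radius $\varepsilon/8$; by transitivity of $\mathscr{F}$ (topological transitivity, via Theorem~\ref{teo-equiva-transitividad}), for each pair $(i,j)$ there is $g_{ij}\in\langle\mathscr{F}\rangle^+$ with $g_{ij}(W_i)\cap W_j\neq\emptyset$, so by continuity there is an open $W_i'\subset W_i$ with $\overline{g_{ij}(W_i')}\subset W_j$; these finitely many inclusions persist on a neighborhood $\mathcal{V}$ of $\mathscr{F}$. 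Since any open set $V$ contains some $W_i'$ up to shrinking — more precisely, any open $V$ contains a ball of radius $\varepsilon/8$, which contains some $W_i'$ — chaining the $g_{ij}$'s (now for $\mathscr{G}$) shows $\OG(V)$ visits every $W_j$, whence $(\varepsilon/4)$-density, so $\varepsilon$-density. (Minor care is needed so that "$V$ contains some $W_i'$" is honest; one can instead fix a Lebesgue-number-type argument: refine the cover so every ball of radius $\varepsilon/8$ contains one of the $W_i'$.)

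The second step is exactly Claim~\ref{claim:deep}, used verbatim: for $i\in\{1,\dots,n\}$, $y\in B_i$, and $r\leq\kappa^{-1}\varepsilon$ with $B(y,r)\subset B_i$, connectedness of $B(h_i^{-1}(y),\kappa r)$ gives $h_i(B(h_i^{-1}(y),\kappa r))\subset B(y,r)$. Then the bootstrap is identical to the end of the proof of Theorem~\ref{teo-expanding-minimal-implica-mini-robusta}: if $\OG(V)$ is $\varepsilon$-dense, then for any $y\in X$ pick $i$ with $B(y,\kappa^{-1}\varepsilon)\subset B_i$; since $\OG(V)$ is $\varepsilon$-dense there is $h\in\langle\mathscr{G}\rangle^+$ with $h(z)\in B(h_i^{-1}(y),\varepsilon)$ for some $z\in V$, and then $h_i\circ h$ carries a point of $V$ into $B(y,\kappa^{-1}\varepsilon)$; hence $\OG(V)$ is $\kappa^{-1}\varepsilon$-dense. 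Iterating, $\OG(V)$ is $\kappa^{-p}\varepsilon$-dense for all $p$, so $\OG(V)$ is dense in $X$, as claimed.

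The main obstacle is the robust transitive version of the $\varepsilon$-density claim: unlike the minimal case, "transitive" is a statement about orbits of open sets rather than points, so one must be careful that the finitely many inclusions $\overline{g_{ij}(W_i')}\subset W_j$ genuinely imply that $\OG(V)$ hits every member of the cover for arbitrary open $V$ and for all $\mathscr{G}$ near $\mathscr{F}$. The resolution is to choose the refinement of the cover so that every sufficiently small ball — in particular every set of the form $T_{\text{small}}$ sitting inside an arbitrary open $V$ — contains one of the distinguished shrunk balls $W_i'$, which is possible by compactness and local connectedness; once that is arranged the chaining argument and the subsequent bootstrap are routine and follow the template already established in this section.
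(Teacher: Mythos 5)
Your proposal is correct and follows essentially the same route as the paper's proof: a transitive analogue of Claim~\ref{claim:epsilon-minimal} (this is exactly the paper's Claim~\ref{claim:epsilon-transitive}, proved by covering $X$ with balls of radius $\varepsilon/4$ and using finitely many maps $h_{ij}$ with $h_{ij}(W_j)\cap W_i\neq\emptyset$), followed by Claim~\ref{claim:deep} and the identical $\kappa^{-p}\varepsilon$ bootstrap. The one caution is that your $\varepsilon$-density step should be asserted only for the balls $V$ of radius $\varepsilon$ appearing in the statement, not for arbitrary open sets: an arbitrary open set need not contain a ball of radius $\varepsilon/8$, and "every ball of radius $\varepsilon/8$ contains one of the $W_i'$" requires choosing those sets as a fine net rather than extracting them from continuity; for $V$ a ball of radius $\varepsilon$ your argument works as written (it contains some $W_i$, hence the shrunk sets), and is in fact simpler than you make it, since the finitely many open conditions $g_{ij}(W_i)\cap W_j\neq\emptyset$ already persist under small perturbations, so no closure refinement or chaining is needed --- which is precisely how the paper argues.
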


The proof is basically a small modification of the proof of  Theorem~\ref{teo-expanding-minimal-implica-mini-robusta}. For this reason, we just indicate here the main difference.

\begin{proof}[Proof of Theorem~\ref{teo:robust-trans}]
Let $\mathscr{F} \in \IFS_k(X)$ be expanding and transitive. Let $\mathcal{U}$   and $\varepsilon>0$ be the neighborhood of $\mathscr{F}$ and any positive constant sufficiently small provided in the proof of Theorem~\ref{teo-expanding-minimal-implica-mini-robusta} from the expanding property. The main modification that we need to do is the following. Compare with Claim~\ref{claim:epsilon-minimal}.

\begin{afir} \label{claim:epsilon-transitive}
 There is a neighborhood
$\mathcal{V}$ of $\mathscr{F}$ in $\IFS_k(X)$ such that
any $\mathscr{G}\in\mathcal{V}$ the $\mathscr{G}$-orbit of any open ball $V$ of radius $\varepsilon$ is $\varepsilon$-dense, i.e.,  $\OG(V)\cap B\not=\emptyset$
for all  open ball $B$ of radius $\varepsilon$.
\end{afir}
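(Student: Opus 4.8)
The plan is to argue exactly as in Claim~\ref{claim:epsilon-minimal}, with minimality of $\mathscr{F}$ (density of forward orbits of points) replaced by transitivity of $\mathscr{F}$, which by Theorem~\ref{teo-equiva-transitividad} may be used in its topological form: for any two nonempty open sets there is an element of $\langle\mathscr{F}\rangle^+$ carrying the first to meet the second. First I would cover the compact space $X$ by finitely many open balls $W_1,\dots,W_n$ of radius $\varepsilon/4$. Two elementary remarks reduce the claim to a finite amount of information: (i) every open ball $V$ of radius $\varepsilon$ contains some $W_i$, since if $z$ is the center of $V$ then $z\in W_a$ for some $a$, whence $W_a\subset B(z,\varepsilon/2)\subset V$; and (ii) if, for a given $\mathscr{G}$ and each pair $i,j\in\{1,\dots,n\}$, there is $h\in\langle\mathscr{G}\rangle^+$ with $h(W_i)\cap W_j\neq\emptyset$, then for any $\varepsilon$-balls $V,B$ we may pick $W_i\subset V$ and $W_j\subset B$ by (i) and get $\OG(V)\cap B\supset h(W_i)\cap W_j\neq\emptyset$, since $W_i\subset V$ forces $h(W_i)\subset\OG(V)$. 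So it suffices to establish (ii) for every $\mathscr{G}$ in a suitable neighborhood $\mathcal{V}$ of $\mathscr{F}$.

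To produce such $h$ in a perturbation-stable way, fix an ordered pair $(i,j)$. By topological transitivity of $\mathscr{F}$ there is $h_{ij}\in\langle\mathscr{F}\rangle^+$ with $h_{ij}(W_i)\cap W_j\neq\emptyset$, and since $h_{ij}$ is a homeomorphism this intersection is a nonempty open set. Choosing a point $q$ in it, letting $p=h_{ij}^{-1}(q)\in W_i$ and using continuity of $h_{ij}$ at $p$, we find an open ball $O_{ij}$ with $p\in O_{ij}\subset W_i$ and $\overline{h_{ij}(O_{ij})}\subset W_j$. Only finitely many maps $h_{ij}$ are involved, and each closed-containment condition $\overline{\tilde h(O_{ij})}\subset W_j$ is stable under small perturbation of $\tilde h$; since $\D_0$ controls the $C^0$ distance, exactly as in the proof of Claim~\ref{claim:epsilon-minimal} there is a neighborhood $\mathcal{V}$ of $\mathscr{F}$ in $\IFS_k(X)$ such that for every $\mathscr{G}\in\mathcal{V}$ ordered minimally with respect to $\mathscr{F}$, the word $\tilde h_{ij}\in\langle\mathscr{G}\rangle^+$ formed by the same finite sequence of generators as $h_{ij}$ still satisfies $\tilde h_{ij}(O_{ij})\subset W_j$, hence $\tilde h_{ij}(W_i)\cap W_j\neq\emptyset$. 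This gives (ii), and hence the claim.

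I do not expect a genuine obstacle here: the only delicate point is the passage from the bare existence of $h_{ij}$ (given by transitivity) to a statement that survives small perturbations, and this is handled precisely as in the minimal case by shrinking $W_i$ to $O_{ij}$ and recording the closed containments. Once Claim~\ref{claim:epsilon-transitive} is in hand, the rest of the proof of Theorem~\ref{teo:robust-trans} is verbatim the argument of Theorem~\ref{teo-expanding-minimal-implica-mini-robusta}: shrinking $\mathcal{V}$ so that every $\mathscr{G}\in\mathcal{V}$ remains expanding (Remark~\ref{obs-expanding-es-robusta}) and applying Claim~\ref{claim:deep} repeatedly to upgrade $\varepsilon$-density of $\OG(V)$ to $\kappa^{-p}\varepsilon$-density for every $p\in\mathbb{N}$, and therefore to density of $\OG(V)$ in $X$.
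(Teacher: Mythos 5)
Your argument is correct and is essentially the paper's proof: cover $X$ by finitely many $\varepsilon/4$-balls $W_1,\dots,W_n$, use transitivity to get finitely many words $h_{ij}\in\langle\mathscr{F}\rangle^+$ with $h_{ij}(W_i)\cap W_j\neq\emptyset$, observe that these finitely many open conditions persist under small perturbations (your closed-containment refinement $\overline{h_{ij}(O_{ij})}\subset W_j$ just makes explicit the stability the paper invokes, exactly as in Claim~\ref{claim:epsilon-minimal}), and conclude via the fact that every $\varepsilon$-ball contains some $W_i$. No gap; the subsequent reduction of Theorem~\ref{teo:robust-trans} to the argument of Theorem~\ref{teo-expanding-minimal-implica-mini-robusta} is also as in the paper.
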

\begin{proof}
By the compactness of $X$, we can cover it with finitely many open balls $W_1,\dots,W_n$ of radius $\varepsilon/4$. Since $\mathscr{F}$ is transitive, there are maps $h_{ij}\in \langle\mathscr{F}\rangle^+$, where $i,j=1,\dots,n$, such that $h_{ij}(W_j) \cap  W_i \not=\emptyset$. As this property only involves finitely many maps and open sets, we can find a neighborhood $\mathcal{V}$ of $\mathscr{F}$ in $\IFS_k(X)$ where it remains valid. This means that for any $\mathscr{G}\in \mathcal{V}$, the $\mathscr{G}$-orbit of $W_j$ visits $W_i$ for all $i,j=1,\dots,n$.  Since any ball with radius $\varepsilon$ contains at least one of the balls  ${W_1,\dots W_n }$ we get that $\OG(V)\cap B\not=\emptyset$
for all pair of  open balls $B$ and $V$ of radius $\varepsilon$. 
\end{proof}

Fix $\mathscr{G}\in \mathcal{W}=\mathcal{U}\cap \mathcal{V}$ and an open ball $V$ of radius $\varepsilon$. We will prove the density of the $\mathscr{G}$-orbit $\OG(V)$. 
To do this, according to Claim~\ref{claim:epsilon-transitive},  we have first that the orbit $\OG(V)$ is $\varepsilon$-dense.
Now, using the notation introduced in Theorem~\ref{teo-expanding-minimal-implica-mini-robusta},  we will show that actually
$\OG(V)$ is $\kappa^{-1}\varepsilon$-dense in $X$. To do this, take any $y\in X$ and consider the open ball $B(y,\kappa^{-1}\varepsilon)$.
Since $\kappa^{-1}\varepsilon<\delta$, then there is
$i\in\{1,2,\dots,n\}$ such that $B(y,\kappa^{-1}\varepsilon)\subset B_{i}$.
Then, by Claim~\ref{claim:deep} we have
\begin{equation*}
  h_{i}(B(h_i^{-1}(y),\kappa{\kappa}^{-1}\varepsilon))\subset B(y,\kappa^{-1}\varepsilon).
\end{equation*}
Since $\OG(V)$ is $\varepsilon$-dense, there is $h\in\langle\mathscr{G}\rangle^+$ and $x\in V$
such that $$h(x)\in B(h_i^{-1}(y),\varepsilon)=B(h_i^{-1}(y),\kappa{\kappa}^{-1}\varepsilon).$$
Therefore, $(h_{i}\circ h)(x)\in B(y,\kappa^{-1}\varepsilon)$. Consequently,
the orbit $\OG(V)$ is $\kappa^{-1}\varepsilon$-dense. Repeating this argument, we can show that the orbit $\OG(V)$
is $\kappa^{-p}\varepsilon$-dense for all $p\in\mathbb{N}$ and consequently,  $\OG(V)$ is dense as we wanted to show.
\end{proof}

Given $\mathscr{F}\in \IFS_k(X)$, we consider the \emph{inverse family} $\mathscr{F}^{-1}\in \IFS_k(X)$ as in Remark~\ref{rem:backward-trans}. In what follows, we say for brevity that a property $P$ of $\mathscr{F}$ holds \emph{forward and backward} if both $\mathscr{F}$ and $\mathscr{F}^{-1}$ satisfy $P$.
\begin{teo}
    Let $(X,\D)$ be a compact locally length space and consider $\mathscr{F}\in \IFS_k(X)$. If $\mathscr{F}$ is transitive, forward and backward expanding, and has a blending region, then $\mathscr{F}$ is robust transitive.
\end{teo}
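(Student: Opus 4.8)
The plan is to exhibit a single neighbourhood $\mathcal{U}$ of $\mathscr{F}$ in $\IFS_k(X)$ such that every $\mathscr{G}\in\mathcal{U}$ is topologically transitive; by Theorem~\ref{teo-equiva-transitividad} this yields transitivity of every such $\mathscr{G}$, i.e.\ the robust transitivity of $\mathscr{F}$. The crucial point is that $\mathcal{U}$ must be chosen \emph{before} the two open sets one wants to connect, so I route everything through the robust statements already available rather than through the bare transitivity of $\mathscr{F}$ (which would only give a $(U,V)$-dependent neighbourhood). Fix $\varepsilon>0$ small enough for Theorem~\ref{teo:robust-trans} to apply both to $\mathscr{F}$ and to $\mathscr{F}^{-1}$, and small enough that $B$ contains an open ball $W_{0}$ of radius $\varepsilon$; fix such a $W_{0}$. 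Since $\mathscr{F}$ is (forward) expanding and transitive, Theorem~\ref{teo:robust-trans} gives a neighbourhood $\mathcal{V}^{+}$ of $\mathscr{F}$ with $\OG(W)$ dense in $X$ for every $\mathscr{G}\in\mathcal{V}^{+}$ and every open $\varepsilon$-ball $W$. Since $\mathscr{F}$ is backward expanding and transitive, $\mathscr{F}^{-1}\in\IFS_k(X)$ is expanding and, by Remark~\ref{rem:backward-trans}, transitive; applying Theorem~\ref{teo:robust-trans} to $\mathscr{F}^{-1}$ and using that $\mathscr{G}\mapsto\mathscr{G}^{-1}$ carries a small neighbourhood of $\mathscr{F}$ into the resulting neighbourhood of $\mathscr{F}^{-1}$, I obtain a neighbourhood $\mathcal{V}^{-}$ of $\mathscr{F}$ with $\mathcal{O}^{-}_{\mathscr{G}}(W)=\mathcal{O}^{+}_{\mathscr{G}^{-1}}(W)$ dense in $X$ for every $\mathscr{G}\in\mathcal{V}^{-}$ and every open $\varepsilon$-ball $W$. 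Finally, the arguments in the proof of Proposition~\ref{propo-blending-region-robust} show that being a blending region is an open condition, so there is a neighbourhood $\mathcal{V}^{b}$ of $\mathscr{F}$ on which $B$ remains a blending region, and then Claim~\ref{afirmacion-teo-blending-rigion-globali-implica-robus-minim} gives $B\subset\overline{\OG(x)}$ for every $\mathscr{G}\in\mathcal{V}^{b}$ and every $x\in B$. Set $\mathcal{U}=\mathcal{V}^{+}\cap\mathcal{V}^{-}\cap\mathcal{V}^{b}$.

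With $\mathcal{U}$ in hand the combination is short. Fix $\mathscr{G}\in\mathcal{U}$ and nonempty open sets $U,V\subset X$. Since $\mathcal{O}^{-}_{\mathscr{G}}(W_{0})$ is dense it meets $U$, so there is $S\in\langle\mathscr{G}\rangle^{+}$ with $S(U)\cap W_{0}\neq\emptyset$; then $A:=S(U)\cap W_{0}$ is a nonempty open subset of $B$. Since $\OG(W_{0})$ is dense it meets $V$, so there is $T\in\langle\mathscr{G}\rangle^{+}$ with $T(W_{0})\cap V\neq\emptyset$; then $W_{1}:=W_{0}\cap T^{-1}(V)$ is a nonempty open subset of $B$. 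Choose $x\in A$ and write $x=S(u)$ with $u\in U$. Since $x\in B$ we have $W_{1}\subset B\subset\overline{\OG(x)}$, so there is $L\in\langle\mathscr{G}\rangle^{+}$ with $L(x)\in W_{1}$, whence $(T\circ L\circ S)(u)\in T(W_{1})\subset V$. Thus $h:=T\circ L\circ S\in\langle\mathscr{G}\rangle^{+}$ satisfies $h(U)\cap V\neq\emptyset$, so $\mathscr{G}$ is topologically transitive; as $\mathcal{U}$ did not depend on $U,V$, this proves the theorem.

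The one place I expect real friction is making the backward density statement robust \emph{as a perturbation of $\mathscr{F}$} rather than of $\mathscr{F}^{-1}$: this rests on the continuity of the inversion $\mathscr{G}\mapsto\mathscr{G}^{-1}$ (or at least on its carrying small neighbourhoods of $\mathscr{F}$ into the desired neighbourhood of $\mathscr{F}^{-1}$), which is automatic in the settings where the theorem is applied. Everything else is bookkeeping: one only has to check that the $\OG(W_{0})$-density, the $\mathcal{O}^{-}_{\mathscr{G}}(W_{0})$-density, and the blending property of $B$ hold on one common neighbourhood independent of $U$ and $V$, which is exactly what Theorems~\ref{teo:robust-trans} and~\ref{prop1-blending region} and Proposition~\ref{propo-blending-region-robust} are designed to provide; note also that here we use only the blending part of a globalized blending region, so the globalization covers of Definition~\ref{globalization} play no role.
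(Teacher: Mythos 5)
Your proof is correct and follows essentially the same route as the paper's: apply Theorem~\ref{teo:robust-trans} to $\mathscr{F}$ and $\mathscr{F}^{-1}$ to obtain, on one neighbourhood chosen before $U$ and $V$, density of the forward and backward $\mathscr{G}$-orbits of a small ball inside the blending region, and then connect the two sets inside $B$ by the contraction/Hutchinson argument (your use of Claim~\ref{afirmacion-teo-blending-rigion-globali-implica-robus-minim} plays the role of the paper's Lemma~\ref{lem-densidad-A}). The only differences are matters of explicitness: you spell out the robustness of the blending property (via Proposition~\ref{propo-blending-region-robust}) and flag the continuity of $\mathscr{G}\mapsto\mathscr{G}^{-1}$, both of which the paper's proof uses implicitly.
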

\begin{proof} Let $B$ be the blending region of $\mathscr{F}$. Since $B$ is an open set, we have  $\delta>0$ and $x \in B$ such $B(x,\delta)\subset B$.
According to Remark~\ref{rem:backward-trans}, $\mathscr{F}$ is forward and backward transitive. Since it is also forward and backward expanding, we can apply Theorem~\ref{teo:robust-trans} for $\mathscr{F}$ and $\mathscr{F}^{-1}$. Then, for a fixed $0<\varepsilon<\delta$ small enough, we have a neighborhood $\mathcal{V}$ of $\mathscr{F}$ in $\IFS_k(X)$ such that for any $\mathscr{G}\in \mathcal{V}$ the forward and backward $\mathscr{G}$-orbits $\OG(W)$ and $\mathcal{O}_{\mathscr{G}}^-(W)$ of $W=B(x,\varepsilon)$ are both  dense on $X$. Then, given any pair of open sets $U$ and $V$, there exists $h,g\in\langle\mathscr{G}\rangle^+$ such that $h^{-1}(U)\cap W \not=\emptyset$ and $g(V)\cap W \not=\emptyset$.  Since $W\subset B$ and $B$ is a blending region, according to Lemma%
~\ref{lem-densidad-A} we find $H \in \langle\mathscr{G}\rangle^+$ such that $H(g(V)\cap W) \cap (h^{-1}(U)\cap W)\not=\emptyset$. In particular, we get that $f(V) \cap U \not=\emptyset$ where $f=h\circ H\circ g \in \langle\mathscr{G}\rangle^+$. This implies that $\mathscr{G}$ is transitive concluding the proof.
\end{proof}

We say that $\mathscr{F}\in \IFS_k(X)$ is  \emph{symmetric} if $f^{-1}\in \mathscr{F}$ provided
$f\in \mathscr{F}$. Notice that the semigroup $\langle \mathscr{F}\rangle^+$ generated by a symmetric $\mathscr{F} \in \IFS_k(X)$ is actually a group.
Moreover,  an expanding symmetric IFS also backward expanding. Thus, we get the following immediate corollary.

\begin{cor} \label{cor:tran-expanding-blending}
    Let $(X,\D)$ be a compact locally length  space and consider a symmetric $\mathscr{F}\in \IFS_k(X)$. If $\mathscr{F}$ is transitive,  expanding, and has a blending region, then $\mathscr{F}$ is robust transitive.
\end{cor}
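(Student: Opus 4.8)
The plan is to obtain this corollary as an immediate consequence of Theorem~\ref{teo-RT}; the only thing that needs checking is that the symmetry hypothesis upgrades ``expanding'' to ``forward and backward expanding''. First I would record that for a symmetric family $\mathscr{F}=\{f_1,\dots,f_k\}$ the inverse family $\mathscr{F}^{-1}=\{f_1^{-1},\dots,f_k^{-1}\}$ is, as an element of $\IFS_k(X)$, literally the same as $\mathscr{F}$: by the definition of symmetry the map $f\mapsto f^{-1}$ permutes the elements of $\mathscr{F}$, and families in $\IFS_k(X)$ are identified up to reordering. In particular $\langle\mathscr{F}^{-1}\rangle^+=\langle\mathscr{F}\rangle^+$ (which, as already noted, is then a group).

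Next I would observe that the expanding property of Definition~\ref{def:expading} depends only on the semigroup $\langle\mathscr{F}\rangle^+$ together with the metric of $X$: it asks merely for a finite open cover $B_1,\dots,B_n$ of $X$, constants $\kappa>1$, $\epsilon>0$, and maps $h_1,\dots,h_n\in\langle\mathscr{F}\rangle^+$ satisfying the contraction estimate~\eqref{ii.9}. Since $\langle\mathscr{F}^{-1}\rangle^+=\langle\mathscr{F}\rangle^+$, it follows at once that $\mathscr{F}$ is expanding if and only if $\mathscr{F}^{-1}$ is; hence a symmetric expanding IFS is automatically backward expanding. Therefore, under the hypotheses of the corollary, $\mathscr{F}$ is transitive, forward and backward expanding, and admits a blending region, so Theorem~\ref{teo-RT} applies directly and yields that $\mathscr{F}$ is robustly transitive.

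There is essentially no obstacle here: the only point worth stating with care is the identification $\mathscr{F}=\mathscr{F}^{-1}$ in $\IFS_k(X)$ for symmetric families, which is what makes the ``backward'' half of the hypotheses of Theorem~\ref{teo-RT} available for free. Once that is in place, the proof reduces to a one-line citation of Theorem~\ref{teo-RT}.
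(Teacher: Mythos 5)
Your proposal is correct and follows the paper's own route: the paper likewise observes that for a symmetric family $\langle\mathscr{F}\rangle^+$ is a group (so an expanding symmetric IFS is automatically backward expanding) and then cites Theorem~\ref{teo-RT} directly. Your explicit remark that the expanding property of Definition~\ref{def:expading} depends only on the semigroup $\langle\mathscr{F}\rangle^+$ is exactly the justification the paper leaves implicit.
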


\section{Minimality in robustly transitive circle IFS}\label{sec:minimal-in-rt-S1}

The goal of this section is to show Theorems~\ref{mainthm-IFS} and~\ref{mainteo:exemplos-IFS}. To do this, we need some preliminaries.
 Fix $1\leq r\leq \infty$ and $k\geq 1$.
 Recall the notations $\IFS_k^r(\mathbb{S}^1)$ and $\mathrm{RT}^r_k(\mathbb{S}^1)$ introduced in~\S\ref{sec:gen-min-intro} and~\S\ref{IFS-intro}. Observe that $\mathrm{RT}^r_k(\mathbb{S}^1)$ is an open set of $\IFS^r_k(\mathbb{S}^1)$.  Let $\mathscr{F} \in \IFS_k^r(\mathbb{S}^1)$ be a finite family of orientation-preserving circle $C^r$ diffeomorphisms.

\begin{defi}
A union $U$ of finitely many open intervals, $U\not\in\{\mathbb{S}^{1},\emptyset\}$, is called an \emph{interval-domain}. An interval-domain $U$ is \emph{strictly absorbing} for $\mathscr{F}$ if
$\overline{f(U)}\subset U$ for all $f\in \mathscr{F}$.
\end{defi}

The following result gives us a condition for the nonexistence of a strictly absorbing interval-domain.
\begin{prop} \label{prop:no-abs}
If $\mathscr{F}$ is transitive, then $\mathscr{F}$ has no strictly absorbing interval-domain.
\end{prop}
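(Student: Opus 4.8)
The plan is to argue by contradiction. Suppose $\mathscr{F}$ is transitive but admits a strictly absorbing interval-domain $U$, so that $U \not\in \{\mathbb{S}^1, \emptyset\}$ is a finite union of open intervals with $\overline{f(U)} \subset U$ for every $f \in \mathscr{F}$. The key observation is that strict absorption propagates to the whole semigroup: for any $h = f_{i_n} \circ \dots \circ f_{i_1} \in \langle \mathscr{F}\rangle^+$ one has $h(U) \subset f_{i_n}(U) \subset \overline{f_{i_n}(U)} \subset U$ (applying the innermost map sends $U$ into $U$, and then iterating), so in fact $\overline{h(U)} \subset U$ for all $h \in \langle \mathscr{F}\rangle^+$. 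In particular, for the nonempty open set $U$ itself, the forward $\mathscr{F}$-orbit satisfies $\OF(U) = \bigcup_{h \in \langle\mathscr{F}\rangle^+} h(U) \subset U$, hence $\overline{\OF(U)} \subset \overline{U} \subsetneq \mathbb{S}^1$, since $\overline{U} \neq \mathbb{S}^1$ (a finite union of open intervals that is not all of $\mathbb{S}^1$ has closure not equal to $\mathbb{S}^1$ — its complement contains a nonempty open set, namely the interior of the complement of $U$, which is nonempty because $U$ is a proper finite union of intervals).

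First I would record the propagation claim carefully: it suffices to note $f(U) \subset U$ for each generator $f$ (immediate from $\overline{f(U)}\subset U$), and then composition preserves this; strictness is recovered from the outermost generator. Second I would observe that $\overline{U}$ is a proper closed subset of $\mathbb{S}^1$: writing $U$ as a finite disjoint union of open arcs, its complement is a finite union of closed arcs, at least one of which is nondegenerate (otherwise $U$ would be $\mathbb{S}^1$ minus finitely many points, whose closure is $\mathbb{S}^1$ — but one must rule this out; in fact if $U = \mathbb{S}^1 \setminus \{p_1,\dots,p_m\}$ then $\overline{f(U)} = \mathbb{S}^1 \not\subset U$, contradicting strict absorption, so this degenerate case cannot occur). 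Third, combining $\OF(U) \subset U$ with $\overline{U} \subsetneq \mathbb{S}^1$ contradicts transitivity, since transitivity (Definition~\ref{definicion-transitividad}\ref{item1-teo-equiv-transitividad}) requires $\overline{\OF(U)} = \mathbb{S}^1$ for the nonempty open set $U$.

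The main obstacle, and the only genuinely nontrivial point, is the handling of the degenerate situation where $U$ is dense in $\mathbb{S}^1$ (i.e.\ $\mathbb{S}^1 \setminus U$ is finite). This is precisely where the strict inequality $\overline{f(U)} \subset U$ rather than merely $f(U) \subset U$ is used: if $\mathbb{S}^1\setminus U$ is a nonempty finite set then $f(U)$ is also $\mathbb{S}^1$ minus a finite set, so $\overline{f(U)} = \mathbb{S}^1$, which is not contained in the proper subset $U$ — contradiction. Hence $\mathbb{S}^1 \setminus U$ has nonempty interior and $\overline{U} \subsetneq \mathbb{S}^1$, and the argument closes. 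I would present this as a short clean contradiction rather than grinding through the arc combinatorics.

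\begin{proof}
Suppose, for contradiction, that $\mathscr{F}$ is transitive and admits a strictly absorbing interval-domain $U$. Since $\overline{f(U)}\subset U$ for all $f\in\mathscr{F}$, in particular $f(U)\subset U$ for every generator $f$, and by composition $h(U)\subset U$ for all $h\in\langle\mathscr{F}\rangle^+$. Hence
$$
 \OF(U)=\bigcup_{h\in\langle\mathscr{F}\rangle^+} h(U)\subset U, \qquad \text{so}\qquad \overline{\OF(U)}\subset\overline{U}.
$$
We claim that $\overline{U}\neq\mathbb{S}^1$. Indeed, $U$ is a finite union of open intervals with $U\neq\mathbb{S}^1$, so its complement $\mathbb{S}^1\setminus U$ is nonempty; if this complement were finite, then for any $f\in\mathscr{F}$ the set $f(U)$ would be $\mathbb{S}^1$ minus a finite set, whence $\overline{f(U)}=\mathbb{S}^1\not\subset U$, contradicting strict absorption. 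Therefore $\mathbb{S}^1\setminus U$ has nonempty interior, so $\overline{U}$ is a proper closed subset of $\mathbb{S}^1$. Consequently $\overline{\OF(U)}\subset\overline{U}\subsetneq\mathbb{S}^1$, contradicting the transitivity of $\mathscr{F}$, which requires the forward $\mathscr{F}$-orbit of the nonempty open set $U$ to be dense in $\mathbb{S}^1$. This contradiction proves that a transitive $\mathscr{F}$ has no strictly absorbing interval-domain.
\end{proof}
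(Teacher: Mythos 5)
Your proof is correct and follows essentially the same route as the paper: argue by contradiction that the forward $\mathscr{F}$-orbit of the strictly absorbing interval-domain $U$ stays trapped, while transitivity forces $\overline{\OF(U)}=\mathbb{S}^1$. The only (harmless) difference is that the paper uses strict absorption and finiteness of $\mathscr{F}$ to get $\overline{\OF(U)}\subset U$ directly, so no case analysis is needed, whereas you only use $f(U)\subset U$ and then invoke strictness separately to rule out the degenerate case $\overline{U}=\mathbb{S}^1$; both are valid.
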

\begin{proof}
We proceed by contradiction. Suppose there is a strictly absorbing interval-domain $U$ for $\mathscr{F}$. Then, by the invariance of the absorbing interval-domain and since $U\not=\emptyset$, we have that  $\emptyset\not=\overline{\OF(U)}\subset U$. On the other hand, since $\mathscr{F}$
is transitive, we have $\mathbb{S}^{1}=\overline{\OF(U)}\subset U$. This contradicts the fact that $U$ is an interval-domain ($U\not = \mathbb{S}^1$) and conclude the proof.
\end{proof}

Note that the set of IFSs with a strictly absorbing interval-domain is open because such domains persist under small $C^{0}$ perturbations. Let us denote the $C^{r}$-interior of the complement of this set by $\mathcal{N}^{r}_k$. More precisely, $\mathcal{N}^{r}_k$ denotes the interior (with respect to the topology provided by the metric $\bar{\D}_r$) of the set of families $\mathscr{G} \in \IFS_k^r(\mathbb{S}^1)$ such that $\mathscr{G}$ has no strictly absorbing interval-domain.

\begin{obs}  \label{rem:robtran-N}
Note that if $\mathscr{F}$ is robustly transitive, then according to Proposition~\ref{prop:no-abs}, it has no strictly absorbing interval-domain in a robust manner. As a consequence, $\mathrm{RT}_{k}^{r}(\mathbb{S}^{1}) \subset \mathcal{N}^{r}_k$.
\end{obs}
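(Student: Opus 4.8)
The plan is to simply unwind the definitions and apply Proposition~\ref{prop:no-abs} uniformly over a neighborhood. First I would fix an arbitrary $\mathscr{F}\in\mathrm{RT}_k^r(\mathbb{S}^1)$. By the definition of robust transitivity (Definition~\ref{def:robtrans}, in the $C^r$-setting governed by the metric $\bar{\D}_r$), there is an open neighborhood $\mathcal{U}$ of $\mathscr{F}$ in $\IFS_k^r(\mathbb{S}^1)$ such that every $\mathscr{G}\in\mathcal{U}$ is transitive.

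Next I would apply Proposition~\ref{prop:no-abs} pointwise: since each $\mathscr{G}\in\mathcal{U}$ is transitive, it admits no strictly absorbing interval-domain. Writing $\mathcal{S}$ for the set of families in $\IFS_k^r(\mathbb{S}^1)$ with no strictly absorbing interval-domain, this says exactly that $\mathcal{U}\subset\mathcal{S}$. Since $\mathcal{U}$ is open in the $\bar{\D}_r$-topology and contains $\mathscr{F}$, the point $\mathscr{F}$ belongs to the interior of $\mathcal{S}$ with respect to that topology, which is precisely $\mathcal{N}^r_k$ by its definition. As $\mathscr{F}$ was an arbitrary element of $\mathrm{RT}_k^r(\mathbb{S}^1)$, this yields the inclusion $\mathrm{RT}_k^r(\mathbb{S}^1)\subset\mathcal{N}^r_k$.

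There is essentially no obstacle here; the argument is pure bookkeeping with the definitions. The only point worth stating explicitly is that the topology certifying robust transitivity (neighborhoods of $\mathscr{F}$ in $(\IFS_k^r(\mathbb{S}^1),\bar{\D}_r)$) is the very same topology used to form the interior defining $\mathcal{N}^r_k$, so that the neighborhood $\mathcal{U}$ genuinely witnesses $\mathscr{F}\in\mathcal{N}^r_k$. One may also remark, as already noted in the text, that the complement of $\mathcal{S}$ is open because a strictly absorbing interval-domain persists under small $C^0$ (hence $C^r$) perturbations; this is not needed for the inclusion itself but explains why passing to the interior is the natural formulation.
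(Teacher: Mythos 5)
Your argument is correct and is essentially the paper's own reasoning: a neighborhood witnessing robust transitivity consists entirely of transitive families, each of which has no strictly absorbing interval-domain by Proposition~\ref{prop:no-abs}, so that neighborhood places $\mathscr{F}$ in the $\bar{\D}_r$-interior defining $\mathcal{N}^r_k$. Nothing further is needed.
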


The following result was showed by Kleptsyn, Kudryashov and Okunev in~\cite[Thm.~1.5, Cor.~5.7]{kleptsyn2018classification}.

\begin{teo}[{\cite{kleptsyn2018classification}}]\label{teo-1.5-Kleptsyn}
For any $k\geq 2$ and $1\leq r\leq \infty$, there exists a dense set $\mathcal{D}$ within $\mathcal{N}^{r}_k$ such that every $\mathscr{F} \in \mathcal{D}$ is both forward and backward robustly minimal. Additionally, the elements of $\mathscr{F}$ are Morse-Smale diffeomorphisms and there is a map $g\in \langle \mathscr{F}\rangle^+$ which is $C^\infty$ conjugated to an irrational rotation.

 Furthermore, any family $\mathscr{G}$ in $\IFS_k^r(\mathbb{S}^1)$ hat does not have a strictly absorbing interval-domain can be approximated by IFSs in $\mathcal{D}$.
 \end{teo}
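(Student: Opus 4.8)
The plan is to reduce the statement, through the blending‑region machinery developed above, to the construction of a \emph{globalized blending region sitting inside a strongly contracting open set}, since such a configuration is finite and localized and therefore produces robustness and the approximation clauses automatically. First I would record a self‑duality that explains why the single set $\mathcal{N}^r_k$ governs both $\mathscr{F}$ and $\mathscr{F}^{-1}$: if $U$ is a strictly absorbing interval‑domain for $\mathscr{F}$, then for a generator $f$ the inclusion $\overline{f(U)}\subset U$ already forces $\overline{U}\neq\mathbb{S}^1$ (otherwise $U$ would be a dense finite union of arcs and $\overline{f(U)}=\mathbb{S}^1\not\subset U$), and applying $f^{-1}$ gives $\overline{U}\subset f^{-1}(U)\subset f^{-1}(\mathrm{int}\,\overline U)$; hence $W=\mathbb{S}^1\setminus\overline{U}$ is a nonempty interval‑domain with $\overline{f^{-1}(W)}=\mathbb{S}^1\setminus\mathrm{int}\,f^{-1}(\overline U)\subset W$, i.e. a strictly absorbing interval‑domain for $\mathscr{F}^{-1}$. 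Thus ``having a strictly absorbing interval‑domain'' is invariant under $\mathscr{F}\mapsto\mathscr{F}^{-1}$, and $\mathcal{N}^r_k$ is simultaneously the $C^r$‑interior of the families without such a domain for $\mathscr{F}$ and for $\mathscr{F}^{-1}$.

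Next I would reduce to the Morse–Smale case: since $\mathcal{N}^r_k$ is open and Morse–Smale circle diffeomorphisms are $C^r$‑dense in $\mathrm{Diff}^r_+(\mathbb{S}^1)$, perturbing the generators one at a time I may assume $\mathscr{F}\in\mathcal{N}^r_k$ has all generators Morse–Smale; in particular each $f_i$ has a hyperbolic attracting periodic orbit and a hyperbolic repelling periodic orbit, and the same is true of $\mathscr{F}^{-1}$. The core step is then to perturb such an $\mathscr{F}$ inside $\mathcal{N}^r_k$ so as to produce an open arc $B\Subset D$, a constant $0<\beta<1$, and maps $T_1,\dots,T_s,S_1,\dots,S_t,\{h_i\}_{i\ge1}\subset\langle\mathscr{F}\rangle^+$ with $\|Dh_i|_D\|\le\beta$, $h_i(D)\subset D$, $\mathbb{S}^1=\bigcup_jT_j(B)=\bigcup_\ell S_\ell^{-1}(B)$ and $B\subset\bigcup_i h_i(B)$. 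I would take $B$ a small arc in the immediate basin of an attracting periodic point $p$ of $f_1$, so that a power of $f_1$ already gives one strongly contracting return of $B$ into itself, fixing $D$ and $\beta$. The hypothesis $\mathscr{F}\in\mathcal{N}^r_k$ enters precisely in arranging the self‑covering $B\subset\bigcup_i h_i(B)$ and the two global covers: were these not achievable by an arbitrarily $C^r$‑small perturbation, one could trap the forward $\mathscr{F}$‑orbit of $B$ inside a proper interval‑domain and, tightening it, build a robust strictly absorbing interval‑domain, contradicting $\mathscr{F}\in\mathcal{N}^r_k$ (cf. Proposition~\ref{prop:no-abs}); the ping‑pong between the hyperbolic attractors and repellers of the now Morse–Smale generators is what lets this be realized by a finite, localized perturbation. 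This localized perturbation is the substantive part of \cite[Thm.~1.5]{kleptsyn2018classification}, and I expect it to be the main obstacle of the whole argument.

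Once such a configuration is in place, Corollary~\ref{thm-BFMS16-manifold} yields directly that $\mathscr{F}$ is expanding and robustly minimal. By a further $C^r$‑small perturbation—small enough to preserve the just‑obtained (robust) forward configuration, using the robustness of blending regions from Proposition~\ref{propo-blending-region-robust}—I would run the identical construction for $\mathscr{F}^{-1}$, which lies in $\mathcal{N}^r_k$ by the duality above and again has Morse–Smale generators; Corollary~\ref{thm-BFMS16-manifold} applied to $\mathscr{F}^{-1}$ then gives that $\mathscr{F}$ is backward expanding and robustly backward minimal. Robustness of the two expansions is Remark~\ref{obs-expanding-es-robusta}, robustness of the two minimalities is built into Corollary~\ref{thm-BFMS16-manifold}, and $\mathbb{S}^1$ being a strict attractor of both $\mathscr{F}$ and $\mathscr{F}^{-1}$ follows from forward (resp.\ backward) minimality together with the attracting periodic orbit supplied by the Morse–Smale condition (cf. Proposition~\ref{Sarizadeh}). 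Finally, $\mathcal{D}$ is obtained by placing around each IFS produced above a neighborhood on which all these (open) properties persist, and the closing approximation clause is exactly the assertion that every $\mathscr{G}\in\IFS_k^r(\mathbb{S}^1)$ with no strictly absorbing interval‑domain—even one on the boundary of $\mathcal{N}^r_k$—is first moved into $\mathcal{N}^r_k$, and then into $\mathcal{D}$, by the perturbations carried out.
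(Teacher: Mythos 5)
Your write-up has a genuine gap at its central step, and you concede it yourself: the production, by an arbitrarily small $C^r$-perturbation inside $\mathcal{N}^r_k$, of the globalized self-covering blending configuration ($B\subset\bigcup_i h_i(B)$ with uniform contractions $h_i$ on $D$, plus the two global covers) is exactly the substance of~\cite[Thm.~1.5]{kleptsyn2018classification}, and your proposed justification of it does not work. The claimed contradiction — ``were this not achievable, one could trap the forward orbit of $B$ in a proper interval-domain and tighten it to a robust strictly absorbing interval-domain'' — is unsubstantiated: the failure of one chosen arc $B$ (say, in the basin of an attracting periodic point of $f_1$) to become globalized and self-covered under small perturbations does not produce any invariant, let alone strictly absorbing, interval-domain. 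Concretely, the symmetric families of Proposition~\ref{propo-existencia-minimal-excep} and Corollary~\ref{cor:rt} preserve an exceptional minimal Cantor set, are robustly transitive, hence lie in $\mathcal{N}^r_k$ (Proposition~\ref{prop:no-abs}) and have no absorbing domain whatsoever, yet destroying their invariant Cantor set so as to reach robust minimality is precisely the nontrivial perturbative content one must supply; absence of absorbing domains alone gives nothing like your covering. The actual mechanism in~\cite{kleptsyn2018classification} is a Duminy-type argument producing compositions arbitrarily close to irrational rotations, after which one is in the setting of Theorem~\ref{teo-B-BFS14}; nothing in your sketch replaces this step, so as an independent proof the argument is incomplete.

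For comparison: the paper does not reprove this statement either — it imports it wholesale from~\cite{kleptsyn2018classification}, adding only two remarks: the expanding property, though not stated there, follows because the construction in that paper runs through (approximations of) an irrational rotation together with a map not conjugate to a rotation, so Theorem~\ref{teo-B-BFS14} applies; and the final approximation clause is extracted from the proof of their Corollary~5.7, since obtaining the near-rotations only uses the absence of a strictly absorbing interval-domain. So if your intention was likewise to quote the key construction, you should say so and drop the pseudo-derivation; what you add beyond the citation (the correct and mildly interesting observation that $\mathcal{N}^r_k$ is invariant under $\mathscr{F}\mapsto\mathscr{F}^{-1}$, the Morse–Smale density reduction, and the route expanding-plus-robust-minimality via Corollary~\ref{thm-BFMS16-manifold}) is fine but peripheral, and the strict-attractor claim you tack on is not part of this theorem (in the paper it is obtained later, in Theorem~\ref{mainthm-IFS}, via Proposition~\ref{Sarizadeh}).
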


The mentioned approximation in the second part of the above theorem follows from the proof, as obtaining irrational rotations only requires that the IFS does not have a strictly absorbing interval-domain (see the proof of \cite[Cor~5.7]{kleptsyn2018classification}). Moreover,  as an immediate consequence of Proposition~\ref{prop:no-abs} and Theorem~\ref{teo-1.5-Kleptsyn} we have the following:

\enlargethispage{0.3cm}

\begin{cor}\label{teo-aproxi-elem-D}
     Let $\mathscr{F}$ be a transitive family in $\IFS^r_k(\mathbb{S}^1)$, where $r\geq 1$ and $k\geq 2$. Then, there exists $\mathscr{G} \in \IFS^r_k(\mathbb{S}^1)$, $C^{r}$ close to $\mathscr{F}$, such that $\mathscr{G}$ is both robustly forward and backward minimal, the elements of $\mathscr{G}$ are Morse-Smale diffeomorphisms, and there is a map $g_0\in \langle \mathscr{G}\rangle^+$ that is $C^\infty$ conjugate to an irrational rotation.
\end{cor}

Now we are ready to prove Theorem~\ref{mainthm-IFS}.


\begin{proof}[Proof of Theorem~\ref{mainthm-IFS}]
Let $\mathcal{W}$ be the set of families $\mathscr{F}$ in $\IFS^r_k(\mathbb{S}^1)$ comprising Morse-Smale diffeomorphisms that are both forward and backward robustly minimal. Since Morse-Smale diffeomorphisms constitute an open set, and minimality implies transitivity (see Remark~\ref{rem:min-tran}), we conclude that $\mathcal{W}$ is an open set of $\mathrm{RT}^r_k(\mathbb{S}^1)$. Moreover, according to Remark~\ref{rem:robtran-N}, we have $\mathcal{D} \subset \mathcal{W} \subset \mathrm{RT}^r_k(\mathbb{S}^1) \subset \mathcal{N}^{r}_k$, where $\mathcal{D}$ is the dense set within $\mathcal{N}^{r}_k$ provided by Theorem~\ref{teo-1.5-Kleptsyn}. Therefore, $\mathcal{W}$ is open and dense in $\mathrm{RT}^r_k(\mathbb{S}^1)$. In fact, since any $f$ in  $\mathscr{F}\in \mathcal{W}$ is a Morse-Smale
diffeomorphism, any invariant probability measure of $f$ needs to be finitely supported (on the set of periodic orbits). This implies that $\mathscr{F}$ has no common invariant measure, since otherwise, a common invariant
measure $\mu$ satisfies
\begin{equation*}\label{eq-statio}
    \mu=\frac{1}{k}\sum_{i=1}^{k}{\mu\circ f_i^{-1}}.
\end{equation*}
i.e., it is a stationary measure of $\mathscr{F}$,
and then, by~\cite[Prop.~5]{kleptsyn2004contraction}, it needs to be fully supported on $\mathbb{S}^1$. Therefore, by ~\cite[Prop. 3.10]{BGMS17}, it concludes that $\mathbb{S}^1$
is a strict attractor of $\mathscr{F}$. Repeating the same argument for $\mathscr{F}^{-1}$, we also get that  $\mathbb{S}^1$ is a strict attractor of $\mathscr{F}^{-1}$.

Finally, the approximation of a transitive family in $\IFS_k^r(\mathbb{S}^1)$  by IFSs in $\mathcal{W}$ follows from Proposition~\ref{prop:no-abs} and the approximation in the second part of Theorem~\ref{teo-1.5-Kleptsyn}. This completes the proof.
\end{proof}

\begin{proof}[Proof of Corollary~\ref{cor:ergodicity}]
    Let $\mathscr{F}$ be a transitive  family in $\IFS^r_k(\mathbb{S}^1)$, where $r>1$ and $k\geq 2$. By Corollary~\ref{teo-aproxi-elem-D}, there exists $\mathscr{G} \in \IFS^r_k(\mathbb{S}^1)$ close to $\mathscr{F}$ 
    such that $\mathscr{G}$ is both forward and backward robustly minimal, the elements of $\mathscr{G}$ are Morse-Smale diffeomorphisms, and there is a map $g_0\in \langle \mathscr{G}\rangle^+$ that is $C^\infty$ conjugate to an irrational rotation. In particular, $\mathscr{G}\in \mathcal{W}$ as stated in Theorem~\ref{mainthm-IFS}.

    Let $\phi$ be the $C^\infty$ conjugacy map between $g_0$ and the rigid irrational rotation. Consider a neighborhood $\mathcal{U}_0$ of $\mathscr{G}$ in $\IFS^r_k(\mathbb{S}^1)$
such that every $\mathscr{H}\in \mathcal{U}_0$ is both forward and backward minimal. Set  $\hat{\mathcal{U}}_0=\{\phi\circ \mathscr{H} \circ \phi^{-1} \in \IFS^r_k(\mathbb{S}^1):  \mathscr{H} \in \mathcal{U}_0\}$ where
$\phi\circ \mathscr{H} \circ \phi^{-1} =\{
\phi\circ h \circ \phi^{-1}: h\in \mathscr{H}\}$. Notice that the semigroup $\langle \hat{\mathscr{G}}\rangle^+$ has a rigid irrational rotation where $\hat{\mathscr{G}}=\phi\circ \mathscr{G}\circ\phi^{-1}\in \hat{\mathcal{U}}_0$. Moreover, since the property of being a Morse-Smale diffeomorphism is invariant under smooth conjugation, $\langle \hat{\mathscr{G}}\rangle^+$ also contains a Morse-Smale diffeomorphism. Hence, by Theorem~\ref{teo-B-BFS14}, we conclude that $\hat{\mathscr{G}}$ is expanding and robustly minimal. Since the expanding property is also a robust property (see Remark~\ref{obs-expanding-es-robusta}), we get a subset $\mathcal{U}$ of $\mathcal{U}_0$ such that for any $\mathscr{H} \in \mathcal{U}$,  $\hat{\mathscr{H}}=\phi \circ \mathscr{H}\circ \phi^{-1}$ is expanding and minimal. By~\cite[Thm.~B]{BFMS16}, $\hat{\mathscr{H}}$ is ergodic with respect to the normalized Lebesgue measure $\mathrm{Leb}$ on $\mathbb{S}^1$.

To conclude the proof, let $A$ be an $\mathscr{H}$-invariant Borel set on $\mathbb{S}^1$, i.e., $h(A)\subset A$ for all $h\in \mathscr{H}$.  Since $(\phi\circ h\circ \phi^{-1})(\phi(A))\subset \phi(A)$, then $\phi(A)$ is $\hat{\mathscr{H}}$-invariant. Thus, $\mathrm{Leb}(\phi(A))\in \{0,1\}$. But since $\phi$ is smooth (at least $C^1$), we can conclude that $\mathrm{Leb}(A)\in \{0,1\}$, completing the proof of the corollary.
\end{proof}

\subsection{Zoo of examples}
In order to prove Theorem~\ref{mainteo:exemplos-IFS}, we utilize group actions of circle diffeomorphisms. In the following subsection, we present some preliminar results to prove this theorem,
which may also be of independent interest in this subject.

\subsubsection{Exceptional minimal sets.}

Consider a group $G$ of homeomorphisms of the circle. A symmetric finite family $\mathscr{F}$ of homeomorphisms of the circle is a \emph{generating system} for $G$ if $G=\left \langle \mathscr{F} \right \rangle^{+}$. By a \emph{$G$-orbit}, we understand the action of $G$ at a point $x \in \mathbb{S}^1$, that is, the set of points $G(x) = \{ g(x) : g \in G  \}$. A subset $B \subset \mathbb{S}^1$ is \emph{$G$-invariant} if $g(B) = B$ for all $g \in G$. We say that a {$G$-invariant} subset $A \subset \mathbb{S}^1$ is \emph{minimal} for $G$ if the $G$-orbit of $x$ is dense in $A$ for all $x \in A$. Recall that a set $K$ is a Cantor set if it is compact, totally disconnected, perfect, and uncountable. A set is totally disconnected if its connected components are singletons, and perfect if it is closed and has no isolated points. There are only three possible options for $G$~\cite{Navas,Gh01}:
\begin{enumerate}[itemsep=0.1cm]
\item there is a finite $G$-orbit,
\item every $G$-orbit is dense on the circle, or
\item there is a unique $G$-invariant minimal Cantor set.
\end{enumerate}
If $G(x)$ has finitely many different elements, it is called a \emph{finite orbit}, while if its closure is $\mathbb{S}^1$, it is called a \emph{dense orbit}. In the second conclusion, the group $G$ is said to be \emph{minimal}. In this case, every closed $G$-invariant set is either empty or coincides with the whole space $\mathbb{S}^1$. The Cantor set $K$ in the third conclusion is usually called an \emph{exceptional minimal set}. This Cantor set $K$ satisfies
$$
g(K)=K \ \ \text{for all $g\in G$} \quad  \text{and} \quad
K=\overline{G(x)} \ \
   \text{for all $x\in K$}.
$$
Notice that these properties are equivalent to saying that $K$ is minimal with respect to the inclusion of $G$-invariant closed sets.

There are two well-known examples of exceptional minimal sets: those obtained from Denjoy examples  and those obtained from Schottky groups. The \emph{Denjoy examples}  are defined by the action of a cyclic group of orientation-preserving circle diffeomorphisms with irrational rotation numbers. The diffeomorphisms in these examples are usually not smooth and are of class $C^{r}$ for $0\leq r<2$. On the other hand, \emph{Schottky groups} are non-abelian groups of orientation-preserving  circle  smooth diffeomorphisms.  They are generated by a finite collection of disjoint open arcs, called \emph{Schottky sets}, which are taken to other Schottky sets under the action of the group elements. For more details, see~\cite{Navas}.

Since all Cantor sets are homeomorphic (see~\cite[Theorem~8]{Moise1977}), any Cantor set can be an exceptional minimal set of a group action of circle homeomorphisms. However, the regularity of the group elements can limit certain Cantor sets from being the exceptional minimal set. For example, in~\cite{Mc81}, McDuff examines the \emph{gap ratio} required for a Cantor minimal set to be the exceptional minimal set of a Denjoy example. He concludes that the standard middle Cantor set cannot be the exceptional minimal set of any cyclic $C^1$ group. Nonetheless, this Cantor set can be preserved by non-cyclic smooth groups, as Ghys and Sergiescu showed in~\cite[proof of Theorem~2.3]{GS}. We will reproduce a similar construction to prove the following.



\begin{prop}\label{propo-existencia-minimal-excep}
For any $1\leq r\leq \infty$, there exists a $4$-generated group $G$ of $\mathrm{Diff}^r_+(\mathbb{S}^{1})$  such that
\begin{enumerate}
\item $G$ has an exceptional minimal set  $K$;
\item the $G$-orbit of every point of $\mathbb{S}^{1}\setminus K$ is dense in $\mathbb{S}^{1}$;
\item  there is $h \in \mathrm{Diff}_+^r(\mathbb{S}^{1})$ such that $h(K) \supsetneq K$.
\end{enumerate}
Additionally, if $\mathscr{G}_0$ is a symmetric generating system for $G$, then the following properties also holds:
\begin{enumerate}[resume]
    \item $\mathscr{G}_0$ is expanding;
    \item there is a blending region for $\mathscr{G}_0$.
\end{enumerate}
\end{prop}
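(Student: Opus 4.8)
The plan is to adapt the Ghys--Sergiescu construction (see \cite[proof of Theorem~2.3]{GS}) to manufacture a smooth group action on $\mathbb{S}^1$ that preserves a prescribed Cantor set while being minimal off it. I would proceed in four main stages.

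\emph{Stage 1: build the Cantor set and a pair of ``ping-pong'' diffeomorphisms.} Start from a Cantor set $K \subset \mathbb{S}^1$ obtained as a symmetric self-similar set, designed so that its complementary gaps can be parametrized in a way compatible with smooth maps. Choose finitely many disjoint closed arcs $I_1,\dots,I_p$ covering $K$, together with orientation-preserving $C^r$ diffeomorphisms $g_1,\dots$ that realize a Schottky-type dynamics on these arcs: each $g_j$ maps the union of all but one of the arcs strictly inside one arc, in such a way that the generated group has $K$ as its unique minimal set. The key point, as in \cite{GS}, is to do this with enough flexibility in the construction of the gaps that the maps can be taken $C^\infty$ (hence $C^r$ for every $r$); the regularity obstruction noted by McDuff for \emph{cyclic} groups is circumvented precisely because we use a non-abelian group with several generators. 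This gives items (1) of the statement: $G=\langle g_1,\dots\rangle$ has an exceptional minimal set $K$.

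\emph{Stage 2: arrange minimality on $\mathbb{S}^1\setminus K$ and the expanding map $h$.} Within the complement $\mathbb{S}^1\setminus K$, which is a countable union of open gap intervals, I would choose the Schottky generators so that, on each gap, the induced dynamics is that of a minimal or expanding system: concretely, include among the $g_j$'s a map whose restriction to (a fundamental domain in) the largest gap behaves like a north--south map or a rotation-perturbation so that orbits of gap points spread densely. Since $K$ is $G$-invariant and the $g_j$ are homeomorphisms, the orbit of a gap point stays in the complement, and the ping-pong combinatorics forces it to visit every arc $I_i$ and then, by the chosen intra-gap dynamics, to become dense in $\mathbb{S}^1$. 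This yields (2). For (3), exhibit one extra diffeomorphism $h \in \mathrm{Diff}^r_+(\mathbb{S}^1)$ that collapses one of the gaps slightly, i.e. pushes the two endpoints of a chosen gap together so that the image of $K$ strictly contains $K$ (one point of $\mathbb S^1\setminus K$, namely a point interior to that gap, is mapped into $K$ while $K$ itself is carried onto a proper superset); this is a purely local $C^r$ modification supported near one gap and requires no compatibility with $G$.

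\emph{Stage 3: expansion and blending region for a symmetric generating set.} Now suppose $\mathscr{G}_0$ is a symmetric finite family with $G=\langle\mathscr{G}_0\rangle^+$. For (4), I would argue that the ping-pong structure gives expansion in the sense of Definition~\ref{def:expading}: for every $x\in\mathbb{S}^1$ there is a word $w$ in $\mathscr{G}_0$ with $w^{-1}$ contracting a neighborhood of $x$ (equivalently $w$ expanding near $x$) --- for $x\in K$ this is the standard fact that the minimal Cantor set of a $C^r$, $r\ge 1$, group action with no invariant measure carries an expansion (one uses that the generators are Morse--Smale-like on the arcs, so some $g_j^{\pm 1}$ is strictly contracting on the relevant arc), and for $x\notin K$ one uses the intra-gap expanding dynamics built in Stage 2. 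Since $\mathscr{G}_0$ is symmetric, $\langle\mathscr{G}_0\rangle^+$ is a group, so the same estimate applied to inverses is automatic; invoking Lemma~\ref{equiv para expanding} (or directly Definition~\ref{def:expading}) gives that $\mathscr{G}_0$ is expanding. For (5), take $B$ to be a small open arc sitting inside the largest gap and strictly inside a fundamental domain where some composition $h_1,\dots,h_m\in\langle\mathscr{G}_0\rangle^+$ act as contractions with $\overline{B}\subset h_1(B)\cup\dots\cup h_m(B)$ and $h_i:\overline{D}\to D$ contracting for an open $D\supset\overline B$; this is exactly the blending-region condition of Definition~\ref{def:blending-region}, and it can be met by choosing $B$ and appropriate short words in the Schottky generators whose images cover $B$ while shrinking a slightly larger arc $D$.

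\emph{Main obstacle.} The delicate step is Stage 1--2 combined: producing the Cantor set \emph{and} the smooth Schottky generators \emph{simultaneously}, so that (a) the maps are genuinely $C^r$ for all $r\le\infty$ despite the classical Denjoy/McDuff regularity restrictions, (b) $K$ is exactly the minimal set (no smaller invariant closed set, and no finite orbit), and (c) the complementary dynamics is minimal. This is where one must follow \cite{GS} carefully: their trick is to let the group be generated by maps that are $C^\infty$-tangent to the identity to high order at the endpoints of gaps, arranged along a tree structure matching the self-similar combinatorics of $K$, so that compositions remain smooth. I would reproduce that construction with the extra bookkeeping needed to also embed an expanding/minimal gadget inside one gap; verifying the ping-pong lemma hypotheses and the smoothness of arbitrary compositions is the technical heart of the proof.
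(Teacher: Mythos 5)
Your Stage 2 treatment of item (3) contains the key gap. You propose to obtain $h\in\mathrm{Diff}^r_+(\mathbb{S}^1)$ with $h(K)\supsetneq K$ as ``a purely local $C^r$ modification supported near one gap'' that ``requires no compatibility with $G$''. This cannot work as described: if $h$ is supported in a neighborhood $W$ of a single gap (and fixes $\mathbb{S}^1\setminus W$), then $h(K)\supset K$ forces $h^{-1}$ to map the two local Cantor pieces of $K$ adjacent to that gap into $K$ again, i.e.\ it imposes an exact $C^r$ self-similarity (renormalization) constraint on the germ of $K$ at the gap endpoints; a homeomorphism also cannot literally ``push the two endpoints of a gap together''. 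The compatibility you dismiss --- between $h$ and the differentiable structure of the Cantor set --- is precisely the crux, and it is why the paper does not use a generic Schottky-type Cantor set at all: it realizes Thompson's group $T$ smoothly \`a la Ghys--Sergiescu \cite{GS} with an explicit model map that is affine of slope $3$ on $[\frac14,\frac13]\cup[\frac5{12},\frac12]$, so that $K$ is the standard middle-thirds Cantor set of $[\frac14,\frac13]$ and the affine map $h(x)=3(x-\frac14)+\frac14$ carries $K$ onto the middle-thirds Cantor set of $[\frac14,\frac34]$, a strictly larger set (the existence of such a smooth map between central Cantor sets is exactly what is characterized in \cite{BMPV97}); $h$ is then globalized by Whitney extension. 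In your outline nothing guarantees that the Cantor set produced by a ping-pong construction admits any $C^r$ diffeomorphism expanding it onto a superset of itself, so item (3) is unproven.

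There are three further concrete shortfalls. First, the statement asks for a \emph{$4$-generated} group; your construction never controls the number of generators, whereas the paper gets $4$ because $T$ is $2$-generated and only two gap-supported maps $f,g$ are added --- and this count is what later yields $k\geq 8$ in Corollary~\ref{cor:rt} and Theorem~\ref{mainteo:exemplos-IFS}. Second, for item (4) on $K$ you invoke a ``standard fact'' that a $C^r$, $r\geq1$, minimal exceptional action without invariant measure is expanding; at $C^1$ regularity no such general theorem is available (Sacksteder-type results need more smoothness), and the paper instead reads expansion on $K$ directly off the slope-$3$ model map and obtains it off $K$ by first using density of orbits to enter the gap gadget --- so expansion must be built into the explicit construction, not quoted. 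Third, for item (5) a single north--south map inside a gap cannot furnish a blending region: with $m=1$ in Definition~\ref{def:blending-region} one would get $\diam(\overline B)\leq\diam(h_1(B))<\diam(B)$, a contradiction; one needs at least two contractions into the same gap with overlapping images covering $\overline B$ (the paper's maps $f$ and $g$ of Figure~\ref{fig2}), so the intra-gap gadget has to be designed for this purpose rather than extracted from ``short words in the Schottky generators''.
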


Before proving the proposition, we need to introduce Thompson's groups. To do this, consider first $\mathbb{S}^1$ as the unit interval $[0,1]$ with the endpoints identified. Thompson's group $T$ is the group of orientation-preserving piecewise linear homeomorphisms of $\mathbb{S}^1$ that are differentiable except at finitely many dyadic rational numbers. On intervals of differentiability, the derivatives are powers of 2, and this group maps the set of dyadic rational numbers onto itself. This group is finitely presented and $2$-generated~\cite{BHS22}.

A remarkable property of Thompson's group $T$ is that it can be represented as a group of $C^\infty$ circle diffeomorphisms.
According to~\cite[Lemma 1.8]{GS}, for each $C^r$ diffeomorphism $H : \mathbb{R} \to \mathbb{R}$ satisfying the following properties:
\begin{enumerate}
\item[(i)] for each $x \in \mathbb{R}$ one has
$H(x + 1) = H(x) + 2$;
\item [(ii)] $H(0)=0$;
\item [(iii)] $H'(0)= 1$ and $H^{(i)}(0)=0$ for all $i= 2, \dots, r$.
\end{enumerate}
There is, a representation of $T$ in $\mathrm{Diff}^r_+(\mathbb{S}^1)$, i.e., a homomorphism $\Phi_H : T \to \mathrm{Diff}^r_+(\mathbb{S}^1)$.

If the diffeomorphism $H : \mathbb{R} \to \mathbb{R}$ also  has at least two fixed points, then the group $\Phi_H(T)$ has an exceptional
minimal set,  as stated in~\cite[Prop.~1.14]{GS}.
This result is a consequence of the fact that the dynamics of the induced map $\bar{H}$ on the circle by $H$ determines the orbits of the group $\Phi_H(T)$. Namely, consider the equivalence relation defined on $\mathbb{S}^1$, where $x,y\in \mathbb{S}^1$ are considered related if and only if there exists a non-negative integer $n$ such that $y=\bar{H}^n(x)$ or vice-versa. The equivalence classes resulting from this relation are shown to be equivalent to the orbits of the group $\Phi_H(T)$. See Lemma 1.11 in \cite{GS}. The authors provide a proof of this lemma by constructing a sequence of maps $g_n$ of $T$ and arcs $[x_n,x_{n+1}]$ in $\mathbb{S}^1$, whose union is $\mathbb{S}^1$, such that the restriction of $\Phi_H(g_n)$ to $[x_n,x_{n+1}]$ coincides with $\bar{H}$.

\begin{proof}[Proof of Proposition~\ref{propo-existencia-minimal-excep}]
To prove the proposition, we will consider the following function~$H$. We first consider a $C^\infty$ function $\bar{H}$ as shown in Figure~\ref{fig}, defined in the interval $P=[0,\frac{1}{3}] \cup [\frac{5}{12},1]$, which satisfies conditions (ii) and (iii) for $r=\infty$. We extend $\bar{H}$ to $[\frac{1}{3},\frac{5}{12}]$ using the Whitney extension theorem~\cite{CS18,CS19}\footnote{In the case that $P$ is a finite  union of disjoint compact intervals, Whitney extension theorem asserts that a function $f:P \to \mathbb{R}$ admits an extension of class $C^r$ to the convex hull of $P$ if, and only if, $f$ is the class $C^r$ on $P$.} applied to $H(x) = \bar{H}(x)$ for $x\in [0,\frac{1}{3}]$ and $H(x) = \bar{H}(x) + 1$ for $x\in [\frac{5}{12},1]$. Finally, we define $H(x)$ for $x\not \in [0,1]$ such that it satisfies condition (i).

\begin{figure}
  \begin{center}
    \begin{tikzpicture}
\begin{axis}[
  ticklabel style={font=\small},
  axis lines=box,
  xmin=0, xmax=1,
  ymin=0, ymax=1,
  xtick={0,0.25,0.33,0.41,0.5,1},
  xticklabels={0,$\frac{1}{4}$,$\frac{1}{3}$,
  $\frac{5}{12}$,$\frac{1}{2}$,$1$},
  ytick={0.25,0.5,1},
  yticklabels={$\frac{1}{4}$,$\frac{1}{2}$,$1$},
  width=11cm, height=11cm,
  grid=major, 
    grid style={gray!30} 
  ]
\addplot[
    domain=0:1,
    samples=100,
  ] {x};
  \addplot[
    domain=0.25:0.333333,
    samples=100,
    color=blue,
  ] {3*(x-1/4)+1/4};
  \addplot[
    domain=0.41666666666666:0.5,
    samples=100,
    color=blue,
  ] {3*(x-1/2)+1/2};
  \addplot[
    domain=0:0.25,
    samples=100,
    color=blue,
  ] {x-8*x^2+32*x^3};
  \addplot[
    domain=0.5:1,
    samples=100,
    color=blue,
  ] {8*x^3-20*x^2+17*x-4};
\end{axis}
\end{tikzpicture}
     \caption{Graph of the function $\bar{H}$ on $P=[0,\frac{1}{3}] \cup [\frac{5}{12},1]$. In particular,  the function takes the form $\bar{H}(x)=3(x-\frac{1}{4})+\frac{1}{4}$ on the interval $[\frac{1}{4}, \frac{1}{3}]$, and $\bar{H}(x)=3(x-\frac{1}{2})+\frac{1}{2}$ on the interval $[\frac{5}{12}, \frac{1}{2}]$.}
     \label{fig}
\end{center}
\end{figure}
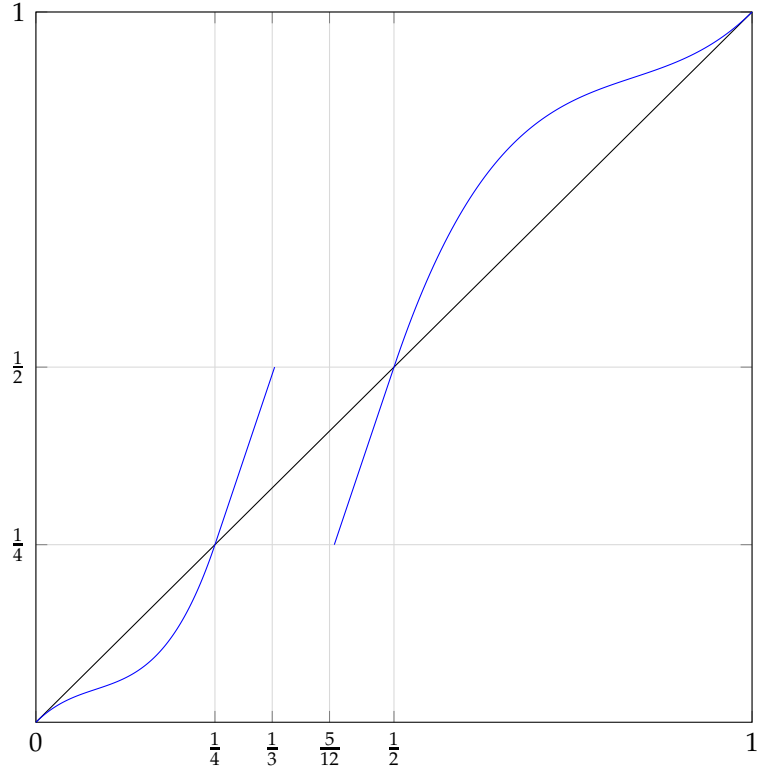


We observe that $$K_I = \bigcap_{n \geq 0} \bar{H}^{-n}(I),$$
where $I = [\frac{1}{4},\frac{1}{2}]$, is the standard middle Cantor set in $I$. The sets $\bar{H}^{-n}(\mathbb{S}^1 \setminus I)$ for $n \geq 1$ are the gaps of this Cantor set. Thus, $K = K_I$ is the exceptional minimal set of the group $\Phi_H(T)$, and there is only one class of gaps, meaning that for all gaps $V$ and $U$, there exists a $g \in \Phi_H(T)$ such that $g(U) = V$.

We also have that $h(K_I) = K_J$, where $h(x) = 3(x - \frac{1}{4}) + \frac{1}{4}$ for $x \in I$ and $K_J$ is the standard middle Cantor set in $J = [\frac{1}{4}, 1]$. See~\cite[Theorem B]{BMPV97} where the existence of a smooth function between central Cantor sets is characterized. Since $K_I \subset K_J$, we get that $h(K_I) \supsetneq K_I$. Using the Whitney Extension Theorem as above, we obtain an orientation-preserving $C^\infty$ diffeomorphism of $\mathbb{S}^1$ satisfying the property 3 above with $K = K_I$.

Now, we will demonstrate the density of the orbits of $\mathbb{S}^1\setminus K$ by adding two extra generators to $\Phi_H(T)$. First, fix a gap $U$ of $K$. Let $f$ and $g$ be as in Figure~\ref{fig2}.

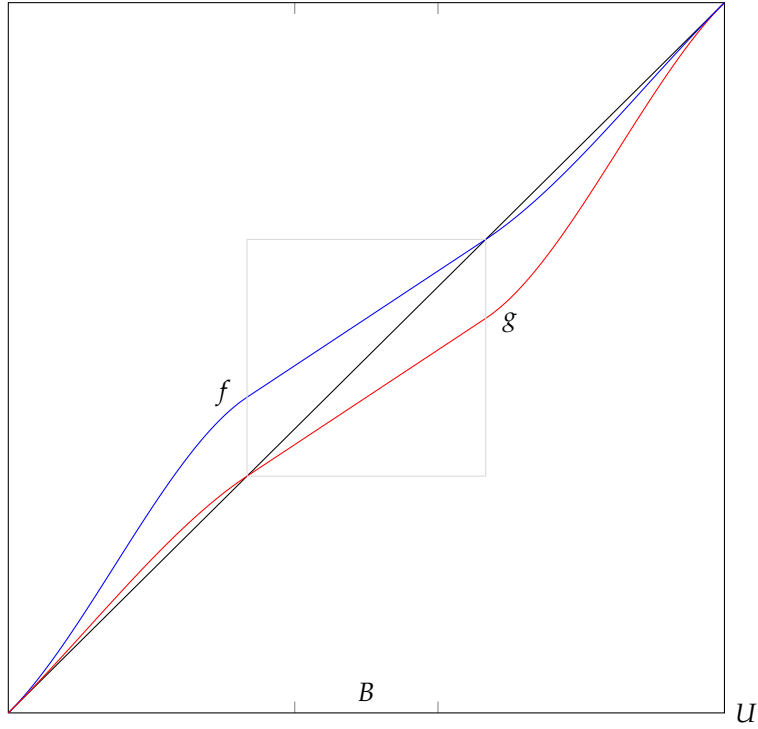
\begin{figure}
  \begin{center}
    \begin{tikzpicture}
\begin{axis}[
  ytick style={draw=none}, 
  axis lines=box,
  xmin=0, xmax=1,
  ymin=0, ymax=1,
  width=11cm, height=11cm,
  xtick={0.4,0.6}, xticklabels={},
  ytick={}, yticklabels={},
  ]
\addplot[
    domain=0:1,
    samples=100,
  ] {x};
  \addplot[
    domain=0.333333:0.666666,
    samples=100,
    color=red,
  ] {(2/3)*(x-1/3)+1/3};
  \addplot[
    domain=0.333333:0.666666,
    samples=100,
    color=blue,
  ] {(2/3)*(x-2/3)+2/3};
  \addplot[
    domain=0:0.33333,
    samples=100,
    color=blue,
  ] {x+4*x^2-9*x^3};
  \addplot[
    domain=0.666666:1,
    samples=100,
    color=red,
  ] {-9*x^3+23*x^2-18*x+5};
    \addplot[
    domain=0:0.33333,
    samples=100,
    color=red,
  ] {-3*x^3+x^2+x};
  \addplot[
    domain=0.666666:1,
    samples=100,
    color=blue,
  ] {-3*x^3+8*x^2-6*x+2};
  \addplot[domain=0.3333:0.6666, samples=2,color=gray!30] {0.3333333};
  \addplot[domain=0.3333:0.6666, samples=2,color=gray!30] {0.66666};

\draw [gray!30] (0.33333,0.33333) -- (0.33333,0.66666);

\draw [gray!30] (0.66666,0.33333) -- (0.66666,0.66666);

\node at (0.7, 0.55)   {\small $g$};

\node at (0.3, 0.45)   {\small $f$};
\node at (0.5, 0.03)   {\small $B$};
\end{axis}
\node at (9.7, 0)   {\small $U$};
\end{tikzpicture}
     \caption{The figure depicts the maps $f$ and $g$, defined on a gap $U$ of the Cantor set $K$, and highlights the central interval $B$ as a blending region for $\{f,g\}$. \vspace{-0.5cm}
}
    \label{fig2}
\end{center}
\end{figure}

These maps can be extended to $\mathbb{S}^1$ by defining its restriction to $\mathbb{S}^1\setminus U$ as the identity map. Let us consider the group $G$ generated by $\Phi_H(T)$, $f$, and $g$. We note that $K$ is the exceptional minimal set for $G$. Let $\mathscr{G}_0$ be a symmetric generating family for $G$. Then, $\mathscr{G}_0$ has a blending region $B$ in $U$, and for every $x\in U$, there exists $S\in G=\langle \mathscr{G}_0\rangle^+$ such that $S^{-1}(x)\in B$.
Since $G$ has only one class of gaps of $K$, this implies that $\mathbb{S}^1\setminus K = G(B) = \mathcal{O}^+_{\mathscr{G}_0}(B)=\mathcal{O}^-_{\mathscr{G}_0}(B)$. Hence, according to Theorem~\ref{thm-BFMS16}, $\mathscr{G}_0$ is minimal in $\mathbb{S}^1\setminus K$. Given that $K$ is a Cantor set, it follows that the $G$-orbits of the points in $\mathbb{S}^1\setminus K$ are dense in $\mathbb{S}^1$ .

To complete the proof of the proposition, we need to establish that $\mathscr{G}_0$ is expanding. Recall that $K\subset L\eqdef [\frac{1}{4},\frac{1}{3}]\cup [\frac{5}{12},\frac{1}{2}]$, and observe that in $L$, $\bar{H}'=3$ and hence $\bar{H}'|_{K}>1$. As previously mentioned, there exist maps $g_n$ in $T$ and arcs $[x_n,x_{n+1}]$ in $\mathbb{S}^1$ whose union cover $\mathbb{S}^1$ such that $\Phi_H(g_n)|{[x_n,x_{n+1}]}=\bar{H}$. Thus, for any $x\in K$, we can find an arc $[x_n,x_{n+1}]$ and $g_n \in T$ such that $\Phi_H(g_n)'(x)>1$. Since $\Phi_H(T)\subset G=\langle \mathscr{G}_0 \rangle^+$, it follows that $\mathscr{G}_0$ is expanding in $K$. For $x\in \mathbb{S}^1\setminus K$, since the $G$-orbit of $x$ is dense, we can find $S\in G$ such that $S(x)$ belongs to the blending region $B$. By applying $f^{-1}$ or $g^{-1}$ as necessary, we obtain a map $R\in G$ such that $R(S(x)) \in B$ and $R'(S(x))$ can be made arbitrarily large. Hence, $\mathscr{G}_0$ is also expanding in $\mathbb{S}^1\setminus K$. Therefore, we have shown that $\mathscr{G}_0$ is expanding in $\mathbb{S}^1$, completing the proof of the proposition.
\end{proof}

\vspace{-0.2cm}
As a consequence of Proposition~\ref{propo-existencia-minimal-excep}, we get  $\mathscr{G}$ in Theorem~~\ref{mainteo:exemplos-IFS}.
\vspace{-0.2cm}

\begin{cor} \label{cor:rt} Let $\mathscr{G}_0$ be a symmetric generating system of the group $G$ in Proposition~\ref{propo-existencia-minimal-excep}. Then, $\mathscr{G}_0$ is robustly transitive. Moreover, for every $r\geq 1$ and $k\geq 8$, $\mathscr{G}_0$ can be extended to a family $\mathscr{G}\in \mathrm{RT}^r_k(\mathbb{S}^1)$ that is neither forward nor backward minimal.
\end{cor}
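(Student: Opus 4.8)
The plan is to read the robust transitivity of $\mathscr{G}_0$ off Corollary~\ref{cor:tran-expanding-blending}, and then, for $k\geq 8$, to pad $\mathscr{G}_0$ with redundant generators to a family $\mathscr{G}$ of exactly $k$ maps that still preserves a proper Cantor set, applying Theorem~\ref{teo-RT} to get robust transitivity. First I would verify that $\mathscr{G}_0$ is transitive. Put $G=\langle\mathscr{G}_0\rangle^+$ and let $K$ be the exceptional minimal set of $G$ furnished by Proposition~\ref{propo-existencia-minimal-excep}. Being a Cantor set, $K$ is nowhere dense, so any nonempty open set $U\subset\mathbb{S}^1$ contains a point $x\in\mathbb{S}^1\setminus K$; by item~(2) of that proposition the orbit $G(x)$ is dense in $\mathbb{S}^1$, and since $G(x)\subset\mathcal{O}^+_{\mathscr{G}_0}(U)$ (recall $\langle\mathscr{G}_0\rangle^+=G$), the $\mathscr{G}_0$-orbit of $U$ is dense. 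Thus $\mathscr{G}_0$ is transitive. As $\mathbb{S}^1$ is a locally connected compact metric space and $\mathscr{G}_0$ is symmetric, transitive, expanding (item~(4)) and has a blending region (item~(5)), Corollary~\ref{cor:tran-expanding-blending} yields that $\mathscr{G}_0$ is robustly transitive.

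For the second statement, recall that $G$ is $4$-generated; we take $\mathscr{G}_0$ to be the symmetric generating system consisting of these four generators together with their inverses, so that $|\mathscr{G}_0|=8$. Given $k\geq 8$, let $\mathscr{G}$ be obtained from $\mathscr{G}_0$ by adjoining $k-8$ further copies of one fixed element of $\mathscr{G}_0$; then $\mathscr{G}\in\IFS^r_k(\mathbb{S}^1)$ and $\langle\mathscr{G}\rangle^+=\langle\mathscr{G}_0\rangle^+=G$. Since $\mathscr{G}_0\subset\mathscr{G}$ we have $\langle\mathscr{G}_0\rangle^+\subset\langle\mathscr{G}\rangle^+$, and since $\mathscr{G}_0$ is symmetric, also $\mathscr{G}_0=\mathscr{G}_0^{-1}\subset\mathscr{G}^{-1}$ and hence $\langle\mathscr{G}_0\rangle^+\subset\langle\mathscr{G}^{-1}\rangle^+$. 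Consequently $\mathscr{G}$ inherits from $\mathscr{G}_0$ a blending region (the witnessing contractions lie in $\langle\mathscr{G}_0\rangle^+$), transitivity (it contains the transitive family $\mathscr{G}_0$), and both the forward and the backward expanding property (the open covers and the expanding maps of $\mathscr{G}_0$ still witness these for $\mathscr{G}$ and for $\mathscr{G}^{-1}$). Theorem~\ref{teo-RT} then shows that $\mathscr{G}$ is robustly transitive; since $C^r$-closeness implies closeness in the metric $\bD$, this is robustness in the $C^r$ topology as well, i.e.\ $\mathscr{G}\in\mathrm{RT}^r_k(\mathbb{S}^1)$.

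It remains to observe that $\mathscr{G}$ is neither forward nor backward minimal. The Cantor set $K$ is compact, nonempty and properly contained in $\mathbb{S}^1$, and $g(K)=K$ for every $g\in G$. Since $\langle\mathscr{G}\rangle^+\subset G$ and $G$ is a group, we have $f(K)=K$ for all $f\in\langle\mathscr{G}\rangle^+$ and $f^{-1}(K)=K$ for all $f\in\mathscr{G}$; hence $K$ is a nonempty, proper, closed forward-invariant set for both $\langle\mathscr{G}\rangle^+$ and $\langle\mathscr{G}^{-1}\rangle^+$. Therefore $\mathscr{G}$ is not minimal and $\mathscr{G}^{-1}$ is not minimal, that is, $\mathscr{G}$ is neither forward nor backward minimal.

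The only step that requires any care is the inheritance of properties from $\mathscr{G}_0$ to the larger family $\mathscr{G}$: a blending region, transitivity, and the forward and backward expanding property are each witnessed by finitely many elements of the semigroup $\langle\mathscr{G}_0\rangle^+$, which is contained in both $\langle\mathscr{G}\rangle^+$ and $\langle\mathscr{G}^{-1}\rangle^+$, so all the witnesses remain available for $\mathscr{G}$. Everything substantial—the construction of the group $G$ with its exceptional minimal set, its expanding property and its blending region, and the passage from these properties to robust transitivity—has already been relegated to Proposition~\ref{propo-existencia-minimal-excep} and Theorem~\ref{teo-RT} (together with Corollary~\ref{cor:tran-expanding-blending}).
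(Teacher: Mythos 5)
Your proof is correct and follows essentially the same route as the paper: robust transitivity of $\mathscr{G}_0$ via its transitivity (open sets meet the dense orbits of points off the Cantor set $K$), the expanding property, the blending region and Corollary~\ref{cor:tran-expanding-blending}, followed by padding the $8$-element symmetric system to $k$ maps and observing that the $G$-invariant Cantor set $K$ obstructs forward and backward minimality. The only (inessential) deviation is that you pad with repeated copies of an existing generator and re-apply Theorem~\ref{teo-RT} to the enlarged family, whereas the paper pads with the identity map and lets the properties persist directly; both are valid since the paper allows families with repeated elements and all witnessing maps lie in $\langle\mathscr{G}_0\rangle^+$.
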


\begin{proof}
From Proposition~\ref{propo-existencia-minimal-excep}, $\mathscr{G}_0$ is expanding and has a blending region. Moreover, $\mathscr{G}_0$ is transitive since any open set $U$ intersects $\mathbb{S}^1\setminus K$ and the forward $\mathscr{G}_0$-orbits (the $G$-orbits) of these points are dense in $\mathbb{S}^1$. Thus, from Corollary~\ref{cor:tran-expanding-blending}, $\mathscr{G}_0$ is robustly transitive. To conclude the proof, we note that since $G$ is 4-generated, we can choose $\mathscr{G}_0$ to consist of 8 maps. By adding the identity map as many times as necessary, we obtain a family $\mathscr{G}$ in $\mathrm{RT}^r_k(\mathbb{S}^1)$ with $k\geq 8$ that is neither forward nor backward minimal.
\end{proof}

As noted in~\cite[Sec.~1.5.2]{Navas}, the dyadic feature in the Ghys-Sergiescu’s smooth realization is not essential. A similar construction can be performed for any integer $n \geq 2$ by using Thompson's group $T_{n,1}$ \cite{B87} and starting with a map $H$ satisfying $H(x + 1) = H(x) + n$ for all $x \in \mathbb{R}$. Thus, the argument in the proof of the above proposition can be generalized to show the following result. First, given $r \geq 1$ and a compact interval $J$, a Cantor set $K$ is said to be  \emph{preserving-orientation regular} in $J$ of class $C^r$  if there exist pairwise disjoint compact intervals  $I_1,\dots, I_k$, $k \geq  2$  where $J$ is the convex hull of the union $I \equiv I_1 \cup \dots \cup I_k$ and a $C^r$  functions $S : I \to J$ such that $S|_{I_j}$ is increasing,\footnote{In literature it is often requested that $S$ be expanding, meaning $|S'| > 1$. Here this condition is not required, although it is required that $S$ be monotonically increasing in order to preserve orientation.}
$S(I_j) = J$ for $j = 1, \dots, k$ and  $K = \bigcap_{
n\in\mathbb{N}} S^{-n}(I)$.

\begin{prop} \label{pro:Cantor} Let $J$ be a proper closed arc of $\mathbb{S}^1$ and consider any preserving orientation regular Cantor set $K$ in $J$ of class $C^r$ with $r\geq 1$ or $r=\infty$. Then,  there exists a  finitely generated group $G$ of $\mathrm{Diff}^r_+(\mathbb{S}^1)$ such that $K$ is the exceptional minimal set of $G$.
\end{prop}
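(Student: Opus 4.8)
The plan is to adapt the construction carried out in the proof of Proposition~\ref{propo-existencia-minimal-excep}, replacing the explicit piecewise map of Figure~\ref{fig} by the given data $S\colon I\to J$, and replacing Thompson's group $T=T_{2,1}$ by the Higman--Thompson group $T_{k,1}$ of \cite{B87} (as indicated in the discussion preceding the statement). Write $J=[a,b]$; then $\mathbb{S}^1\setminus J$ is a nonempty open arc because $J$ is proper, and since $J$ is the convex hull of $I=I_1\cup\dots\cup I_k$ the complement $J\setminus I$ consists of exactly $k-1$ bounded gaps. Fix an interior point $p$ of $\mathbb{S}^1\setminus J$; relabelling coordinates on the universal cover we may assume that $p$ corresponds to $0$. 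The core step is to produce a $C^r$ degree-$k$ covering map $\bar H\colon\mathbb{S}^1\to\mathbb{S}^1$ — equivalently a $C^r$ diffeomorphism $H\colon\mathbb{R}\to\mathbb{R}$ with $H(x+1)=H(x)+k$ — such that $\bar H|_{I_j}=S|_{I_j}$ for $j=1,\dots,k$, such that $H(0)=0$, $H'(0)=1$ and $H^{(i)}(0)=0$ for $2\le i\le r$, and such that $\bar H$ has at least two fixed points.

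I would build $\bar H$ by declaring it to equal $S$ on $I_1\cup\dots\cup I_k$ (an increasing $C^r$ diffeomorphism of each $I_j$ onto $J$ by hypothesis), by extending it on each of the $k-1$ gaps of $J\setminus I$ to an increasing $C^r$ diffeomorphism of that gap onto $\mathbb{S}^1\setminus J$ (so that $\bar H$ carries the right endpoint $b$ of $J$ around to the left endpoint $a$), and by extending it on $\mathbb{S}^1\setminus J$ to an increasing $C^r$ diffeomorphism of $\mathbb{S}^1\setminus J$ onto itself possessing the prescribed $r$-flat parabolic fixed point at $p$. These pieces glue together $C^r$-smoothly at the endpoints of the $I_j$ by the Whitney extension theorem (see \cite{CS18,CS19} and the footnote above), and at $\partial J$ by an elementary $C^r$ interpolation of the one-sided jets of $S$, whose first derivatives are positive because $S$ is increasing. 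A length bookkeeping on the universal cover, $k\,|J|+(k-1)(1-|J|)+(1-|J|)=k$, shows that $\bar H$ has degree $k\ge 2$; moreover $S$ sends the left (resp.\ right) endpoint of each $I_j$ to $a$ (resp.\ $b$), so $a$ and $b$ are two fixed points of $\bar H$, as required.

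Granting such a map $\bar H$, the proposition follows exactly as in Proposition~\ref{propo-existencia-minimal-excep}: by \cite{BHS22} there is a representation $\Phi_H\colon T_{k,1}\to\mathrm{Diff}^r_+(\mathbb{S}^1)$, the group $G\eqdef\Phi_H(T_{k,1})$ is finitely generated, and by \cite[Lemma~1.11 and Prop.~1.14]{GS} the $G$-orbits are precisely the equivalence classes of the relation $x\sim\bar H^{m}(x)$. Since $\bar H|_I=S$, the maximal forward-$\bar H$-invariant subset of $I$ is $\bigcap_{m\ge0}\bar H^{-m}(I)=\bigcap_{m\ge0}S^{-m}(I)=K$; a direct check gives $\bar H(K)=K$, and the $\bar H$-preimage of any point of $K$ lies in $I$ (because both the gaps of $J\setminus I$ and the arc $\mathbb{S}^1\setminus J$ are mapped into $\mathbb{S}^1\setminus J$), hence in $K$, so $K$ is a union of $G$-orbits and in particular a closed $G$-invariant set. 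Finally, since every branch $\bar H|_{I_j}$ maps $I_j$ onto all of $J$ and $K$ is a Cantor set, the backward $\bar H$-orbit of any point of $K$ is dense in $K$; thus every $G$-orbit contained in $K$ is dense in $K$, and $K$ is the exceptional minimal set of $G$.

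The step I expect to be the real obstacle is the construction of $\bar H$: one must \emph{simultaneously} keep $\bar H|_{I_j}=S|_{I_j}$ (so that the Cantor repeller is exactly $K$, not merely a homeomorphic copy), realize the correct covering degree, install the $r$-flat parabolic fixed point required by the $T_{k,1}$-realization, and $C^r$-match all the one-sided jets of $S$ at the endpoints of the $I_j$. This is routine but delicate, and it is where the Whitney extension theorem and jet interpolation do all the work. The only other point needing care is transcribing the Ghys--Sergiescu realization from $T=T_{2,1}$ to $T_{k,1}$, which the excerpt already records as available.
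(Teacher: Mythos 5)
Your proposal is correct and takes essentially the same route as the paper: the paper establishes this proposition only by remarking (citing~\cite{Navas} and~\cite{B87}) that the Ghys--Sergiescu realization works for the Thompson group $T_{k,1}$ with a map $H$ satisfying $H(x+1)=H(x)+k$, so that the construction of Proposition~\ref{propo-existencia-minimal-excep} generalizes, and your extension of $S$ to a degree-$k$ map $\bar H$ via Whitney/jet interpolation, with the $r$-flat fixed point, two fixed points of the lift, and the orbit identification of Ghys--Sergiescu, is exactly that generalization written out. The one caveat, shared with the paper's intended argument, is that turning $\bar H$ into a genuine $C^r$ \emph{diffeomorphism} tacitly uses positivity of the (one-sided) derivatives of $S$ on $I$, which the hypothesis ``increasing'' is implicitly meant to provide.
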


\begin{obs} 
Bowen constructed in~\cite{bowen1975horseshoe} a regular Cantor set $L$ of class $C^1$ with positive Lebesgue measure. Then, according to Proposition~\ref{pro:Cantor}, it follows that $L$ can be an exceptional minimal set of a group in $\mathrm{Diff}^1_+(\mathbb{S}^1)$. Ghys and Sullivan posed a question regarding the Lebesgue measure of an exceptional minimal set of a finitely-generated group of $C^2$ circle diffeomorphisms, asking whether this measure must necessarily be equal to zero. A significant step towards resolving this question affirmatively was achieved by Deroin, Kleptsyn, and Navas in~\cite{DKN} for groups of real-analytic diffeomorphisms.
\end{obs}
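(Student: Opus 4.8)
The only part of this observation that calls for an argument is the claim that Bowen's Cantor set $L$ satisfies the hypotheses of Proposition~\ref{pro:Cantor}; the sentences concerning the Ghys--Sullivan question and the theorem of Deroin, Kleptsyn and Navas~\cite{DKN} are bibliographic and require no proof. The plan is to recall the cookie-cutter structure underlying Bowen's example, to check that it can be presented as a \emph{preserving-orientation regular Cantor set of class $C^1$} in the sense of the definition preceding Proposition~\ref{pro:Cantor}, and then to apply that proposition verbatim.

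First I would recall the construction of~\cite{bowen1975horseshoe}. It produces a $C^1$ full-branch (cookie-cutter) map: there are pairwise disjoint compact intervals $I_1,\dots,I_k$ with $k\geq 2$, whose convex hull is a compact interval $J$, together with a $C^1$ map $S\colon I\equiv I_1\cup\dots\cup I_k\to J$ such that each restriction $S|_{I_j}$ is a $C^1$ diffeomorphism onto $J$, and such that the maximal invariant set
\[
  L=\bigcap_{n\in\mathbb{N}}S^{-n}(I)
\]
is a Cantor set of positive Lebesgue measure. The mechanism producing positive measure is the failure of bounded distortion, which is possible in class $C^1$ (but not $C^{1+\alpha}$) because the derivatives of the branches may be taken as close to $1$ as needed; this mechanism depends only on $|S'|$ and is insensitive to the sign of $S'$.

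The key step is therefore to guarantee that each branch $S|_{I_j}$ is \emph{increasing}, which is the sole extra requirement of the definition preceding Proposition~\ref{pro:Cantor} (note that, by the footnote there, no expansion $|S'|>1$ is needed, only monotone increasing branches). Since the positive-measure mechanism is unaffected by the sign of the derivative, one may carry out Bowen's construction with all full branches increasing onto $J$ --- equivalently, one uses an orientation-preserving cookie cutter rather than a folding horseshoe. With this normalization, $L$ meets the definition of a preserving-orientation regular Cantor set of class $C^1$ in $J$. Embedding $J$ as a proper closed arc of $\mathbb{S}^1$ and invoking Proposition~\ref{pro:Cantor}, we obtain a finitely generated group $G\subset\mathrm{Diff}^1_+(\mathbb{S}^1)$ whose exceptional minimal set is exactly $L$, a Cantor set of positive Lebesgue measure, as asserted.

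The main obstacle I anticipate is precisely this orientation normalization: one must verify that Bowen's delicate balance of derivatives near $1$ --- the source of the positive measure --- survives when the branches are arranged to be orientation-preserving and full onto $J$. Since the positivity of $\mathrm{Leb}(L)$ is governed entirely by the moduli $|S'|$ of the derivatives along cylinders and not by their signs, this survives; the only care required is to confirm that Bowen's example is genuinely of full-branch Markov type, so that $L=\bigcap_{n}S^{-n}(I)$ with $S(I_j)=J$ for each $j$, which is exactly the content of his construction.
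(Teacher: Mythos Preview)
Your proposal is correct and matches the paper's intent: the remark is stated without proof in the paper, as a direct application of Proposition~\ref{pro:Cantor} to Bowen's example, and you have supplied exactly the verification the paper leaves implicit---namely that Bowen's cookie-cutter can be taken with increasing full branches so that $L$ is a preserving-orientation regular Cantor set of class $C^1$. Your observation that the positive-measure mechanism depends only on $|S'|$ and is insensitive to orientation is the right justification for the only nontrivial point.
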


Finally, we complete the proof of Theorem~\ref{mainteo:exemplos-IFS} constructing $\mathscr{F}$.

\begin{lem}\label{lema-previo-exem-forward-but-backward-minimal}
Let $G$ and $K$ be as in Proposition~\ref{propo-existencia-minimal-excep}. Then, the closed subsets of $\mathbb{S}^{1}$ which
are invariant by $G$ are $\emptyset,K$ and $\mathbb{S}^{1}$.
\end{lem}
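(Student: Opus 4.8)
The plan is to classify nonempty closed $G$-invariant sets by a simple dichotomy, so let $C\subset\mathbb{S}^1$ be a nonempty closed set with $g(C)=C$ for every $g\in G$. First I would isolate the two structural facts that are needed, both of which are provided by Proposition~\ref{propo-existencia-minimal-excep}: (a) $K$ is the exceptional minimal set of $G$, so it is closed, $G$-invariant, and $\overline{G(x)}=K$ for every $x\in K$; and (b) the $G$-orbit of every point of $\mathbb{S}^1\setminus K$ is dense in the whole circle. I would also note the elementary remark that, since $G$ acts by homeomorphisms, the closure of any $G$-invariant set is again $G$-invariant, so $\overline{G(x)}$ is a closed $G$-invariant set for each $x$.

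Next I would split into two cases according to whether $C$ meets the complement of $K$. If $C\setminus K\neq\emptyset$, pick $x\in C\setminus K$; by (b), $\overline{G(x)}=\mathbb{S}^1$, and since $C$ is closed and $G$-invariant we have $\mathbb{S}^1=\overline{G(x)}\subset C$, hence $C=\mathbb{S}^1$. If instead $C\subset K$, pick any $x\in C$; since $x\in K$, fact (a) gives $\overline{G(x)}=K$, and again closedness and invariance of $C$ yield $K=\overline{G(x)}\subset C\subset K$, so $C=K$. Adding the trivial case $C=\emptyset$, this produces exactly the three claimed invariant sets and, in particular, shows there is no closed $G$-invariant set strictly between $K$ and $\mathbb{S}^1$.

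The main obstacle is essentially bookkeeping rather than substance: one must make sure that both facts (a) and (b) are genuinely at hand. Fact (a) is the definition of the exceptional minimal set (item (1) of Proposition~\ref{propo-existencia-minimal-excep} together with the minimality of $K$), and fact (b) is item (2) of that proposition, which was established there by adding the generators $f$ and $g$ supported on a single gap and using that $G$ has only one class of gaps of $K$. Granting these, the lemma follows from the one-line topological argument above, and no additional estimates are required.
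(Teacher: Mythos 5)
Your proof is correct and follows essentially the same route as the paper: it combines the minimality of $K$ (so $\overline{G(x)}=K$ for $x\in K$) with item (2) of Proposition~\ref{propo-existencia-minimal-excep} (density of $G$-orbits off $K$), differing only in the order of the case analysis — the paper first deduces $K\subseteq B$ and then treats $B\neq K$, while you split on whether the invariant set meets $\mathbb{S}^1\setminus K$. Both arguments use exactly the same two facts, so no further comment is needed.
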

\begin{proof}
Let $B$ be a closed subset of $\mathbb{S}^1$ that is invariant under the action of $G$. If $B\neq\emptyset$, then it follows from the minimality of $K$ that $K\subseteq B$. On the other hand, if $B\neq K$, then there exists a point $x\in\mathbb{S}^1\setminus K$ that belongs to $B$. By the invariance of $B$, the orbit of $x$ under the action of $G$ is also contained in $B$. However, the orbit of $x$ is dense in $\mathbb{S}^1$, which means that $B=\mathbb{S}^1$.
\end{proof}

\begin{prop} \label{prop:forward-not-backward}
Let $\mathscr{G}_0$ and $h$ be as defined in Proposition~\ref{propo-existencia-minimal-excep}. Then, the family $\mathscr{F}_0 = \mathscr{G}_0 \cup \{h\}$ is robustly transitive, not backward minimal and has $\mathbb{S}^1$ as a strict attractor. Additionally, for any $1 \le r \le \infty$ and $k \ge 8$, $\mathscr{F}_0$ can be extended to a family $\mathscr{F} \in \mathrm{RT}^r_{k+1}(\mathbb{S}^1)$ that is also not backward minimal and has $\mathbb{S}^1$ as a strict attractor, making it in particular forward minimal.
\end{prop}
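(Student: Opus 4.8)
The plan is to verify in turn the four properties claimed for $\mathscr{F}_0=\mathscr{G}_0\cup\{h\}$, using repeatedly that enlarging a family only enlarges its semigroup orbits, that $\mathscr{G}_0$ is symmetric and robustly transitive (Corollary~\ref{cor:rt}), and that $K$ is the common exceptional minimal set, with $h(K)\supsetneq K$ and with dense $G$-orbits on $\mathbb{S}^1\setminus K$ (Proposition~\ref{propo-existencia-minimal-excep}). For \emph{robust transitivity}: since $\mathscr{G}_0\subset\mathscr{F}_0$, the forward $\mathscr{F}_0$-orbit of any open set contains the dense forward $\mathscr{G}_0$-orbit, so $\mathscr{F}_0$ is transitive; and any family $C^r$-close to $\mathscr{F}_0$ restricts, upon discarding the generator matched to $h$, to a family $C^r$-close to $\mathscr{G}_0$, which is transitive because $\mathscr{G}_0$ is robustly transitive, so the larger family is transitive too. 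For the \emph{failure of backward minimality}: because $\mathscr{G}_0$ is symmetric, $\mathscr{F}_0^{-1}=\mathscr{G}_0\cup\{h^{-1}\}$; moreover $g(K)=K$ for all $g\in G=\langle\mathscr{G}_0\rangle^+$, and $h^{-1}(K)\subset K$ since $K\subsetneq h(K)$ by item~(3) of Proposition~\ref{propo-existencia-minimal-excep}. Hence $K$ is a nonempty, proper, closed set that is forward invariant for $\langle\mathscr{F}_0^{-1}\rangle^+$, so $\mathscr{F}_0^{-1}$ is not minimal; that is, $\mathscr{F}_0$ is not backward minimal. (By Lemma~\ref{lema-previo-exem-forward-but-backward-minimal}, $K$ is anyway the only nontrivial possibility.)

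The crucial point is \emph{forward minimality} of $\mathscr{F}_0$, and it is here that $h$, rather than $h^{-1}$, is used. For $x\in\mathbb{S}^1\setminus K$ the orbit $\langle\mathscr{F}_0\rangle^+(x)\supseteq G(x)$ is already dense by item~(2) of Proposition~\ref{propo-existencia-minimal-excep}; so it suffices to show that $\langle\mathscr{F}_0\rangle^+(x)$ meets $\mathbb{S}^1\setminus K$ for every $x\in K$, after which the previous case applies. Let $p$ denote the fixed point of $h$ (the left endpoint of the interval $I$ of the construction, where $h$ is affine and expanding). From the explicit form of $h$, the nested preimages $h^{-n}(K)$ are the leftmost Cantor pieces of $K$ and shrink to $\{p\}$, so $\bigcap_{n\ge 0}h^{-n}(K)=\{p\}$. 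Consequently every $x\in K\setminus\{p\}$ satisfies $h^{n}(x)\notin K$ for some $n$, with $h^{n}(x)\in\langle\mathscr{F}_0\rangle^+(x)$; and for $x=p$, the $G$-orbit of $p$ is dense in the uncountable Cantor set $K$, hence contains some $q\in K\setminus\{p\}$, which escapes $K$ as above. Thus every $\langle\mathscr{F}_0\rangle^+$-orbit is dense and $\mathscr{F}_0$ is forward minimal.

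With forward minimality in hand, I would derive $\mathbb{S}^1$ as a \emph{strict attractor} of $\mathscr{F}_0$ from Proposition~\ref{Sarizadeh}: the blending region $B\subset U$ for $\mathscr{G}_0$ provided by Proposition~\ref{propo-existencia-minimal-excep} comes with contracting maps $h_1,\dots,h_m\in\langle\mathscr{G}_0\rangle^+\subset\langle\mathscr{F}_0\rangle^+$ satisfying $h_i(\overline{D})\subset D$, and any such contraction of the complete space $\overline{D}$ has a unique, attracting fixed point, which is an attracting fixed point of an element of $\langle\mathscr{F}_0\rangle^+$; Proposition~\ref{Sarizadeh} then applies. (In particular this recovers the last clause, that $\mathscr{F}_0$ is forward minimal.) For the \emph{extension}, given $k\ge 8$ set $\mathscr{F}=\mathscr{F}_0\cup\{\mathrm{id},\dots,\mathrm{id}\}$ with $k-8$ copies of the identity, so that $\mathscr{F}\in\IFS^r_{k+1}(\mathbb{S}^1)$ because $\mathscr{G}_0,h$ and $\mathrm{id}$ are orientation-preserving $C^r$ diffeomorphisms. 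Adding identities preserves $\langle\mathscr{F}\rangle^+\supseteq\langle\mathscr{F}_0\rangle^+$ (so $\mathscr{F}$ remains forward minimal, and robustly transitive by the sub-family argument above) and keeps $K$ forward invariant for $\langle\mathscr{F}^{-1}\rangle^+$ (so $\mathscr{F}$ is still not backward minimal); finally, since $F_{\mathscr{F}}^{n}(S)\supseteq F_{\mathscr{F}_0}^{n}(S)$ for every compact $S$ (both Hutchinson operators being monotone), $\dH(F_{\mathscr{F}}^{n}(S),\mathbb{S}^1)\le \dH(F_{\mathscr{F}_0}^{n}(S),\mathbb{S}^1)\to 0$, so $\mathbb{S}^1$ is still a strict attractor of $\mathscr{F}$ (alternatively, apply Proposition~\ref{Sarizadeh} to $\mathscr{F}$). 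Hence $\mathscr{F}\in\mathrm{RT}^r_{k+1}(\mathbb{S}^1)$ with all the stated properties.

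I expect forward minimality to be the only genuine obstacle: it is exactly where the asymmetry between $h$ and $h^{-1}$ enters, and it relies on knowing that the nested preimages $h^{-n}(K)$ collapse to the single point $p$ — equivalently, that $\{p\}$ is the only $h$-invariant subset of $K$ — which in turn uses the concrete description of $h$ near its expanding fixed point in Proposition~\ref{propo-existencia-minimal-excep}. Everything else follows softly from the inclusion $\mathscr{G}_0\subset\mathscr{F}_0$ together with the robustness and strict-attractor machinery already in place.
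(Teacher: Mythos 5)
Your proposal is correct, and for the two decisive steps it takes a genuinely different route from the paper. For \emph{forward minimality} the paper argues softly: any closed set forward invariant under $\mathscr{F}_0$ is invariant under $G=\langle\mathscr{G}_0\rangle^+$, hence by Lemma~\ref{lema-previo-exem-forward-but-backward-minimal} it is $\emptyset$, $K$ or $\mathbb{S}^1$, and $K$ is excluded because $h(K)=K_1\supsetneq K$ is incompatible with $h(K)\subset K$; no information about $h$ beyond $h(K)\supsetneq K$ is needed. You instead run a pointwise escape argument: using the explicit affine form $h(x)=3(x-\tfrac14)+\tfrac14$ near its repelling fixed point $p$, you show $h^{-1}(K)\subset K$ and $\bigcap_{n\geq0}h^{-n}(K)=\{p\}$, so every point of $K\setminus\{p\}$ is pushed out of $K$ by some $h^n$ and then has dense $G$-orbit, with $p$ handled via minimality of $K$. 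This is valid (the forward iterates stay in $I$ where $h$ is affine, so the preimages shrink geometrically to $p$), but it leans on details of the construction in the proof of Proposition~\ref{propo-existencia-minimal-excep} rather than on its statement, whereas the trichotomy argument you mention only in passing would have sufficed and is more economical. For the \emph{strict attractor}, the paper applies Proposition~\ref{Sarizadeh} using an attracting fixed point of $h$ itself, while you obtain the attracting fixed point from a blending-region contraction $h_i\in\langle\mathscr{G}_0\rangle^+$ (a contraction of $\overline D$ into $D$ has a unique fixed point, which lies in $D$ and is attracting in the sense required by Proposition~\ref{Sarizadeh}); this is a pleasant variant that avoids invoking any unverified feature of $h$. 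The remaining parts — robust transitivity via the sub-family $\mathscr{G}_0$ and Corollary~\ref{cor:rt}, backward non-minimality via the forward invariance of $K$ under $\mathscr{F}_0^{-1}=\mathscr{G}_0\cup\{h^{-1}\}$, and the extension by copies of the identity (with the monotonicity remark for the Hutchinson operator) — coincide with the paper's argument.
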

\begin{proof}
Let $K$ be the exceptional minimal set of $G=\langle \mathscr{G}_0\rangle^+$, and let $K_1 = h(K)$. According to Proposition~\ref{propo-existencia-minimal-excep}, $K\subsetneq K_1$. The proof is divided into two parts:
\begin{enumerate}[label=\alph*)]
    \item $\mathscr{F}_0$ is not backward minimal: Since $K$ is the exceptional minimal set for $G$ and $\mathscr{G}_0$ is a symmetric generating system of $G$, we have that $g^{-1}(K)=K$ for all $g\in \mathscr{G}_0$. In addition, we have $h^{-1}(K)\subset h^{-1}(K_{1})=K$. Therefore, $K$ is forward invariant by $\mathscr{F}_0^{-1}$, and so $\mathscr{F}_0$ is not backward minimal.
    \item $\mathbb{S}^1$ is a strict attractor of $\mathscr{F}_0$: Let $B$ be a closed subset of $\mathbb{S}^1$ that is forward invariant by $\mathscr{F}_0$. Since $\mathscr{G}_0 \subset \mathscr{F}_0$ generates the group $G$, $B$ is also invariant by $G$. By Lemma~\ref{lema-previo-exem-forward-but-backward-minimal}, $B\in\{\emptyset,K,\mathbb{S}^{1}\}$. Moreover, $B\neq K$ because $K$ is not invariant by $h$ (otherwise, $K_{1}=h(K)=h(B)\subset B=K$, but $K_{1}$ strictly contains $K$). Hence, $B\in\{\emptyset,\mathbb{S}^{1}\}$, which means that $\mathscr{F}_0$ is forward minimal. Since $h$ contains an attracting fixed point, according to Proposition~\ref{Sarizadeh}, it follows that $\mathbb{S}^1$ is a strict attractor of $\mathscr{F}_0$.
\end{enumerate}

To complete the proof, we note that since $G$ is 4-generated, we can choose $\mathscr{G}_0$ to consist of 8 maps, and thus $\mathscr{F}_0$ has 9 elements. Additionally, according to Corollary~\ref{cor:rt}, $\mathscr{G}_0$ (and hence $\mathscr{F}_0$) is robustly transitive. By adding the identity map as many times as necessary, we can obtain a family $\mathscr{F}$ in $\mathrm{RT}^r_{k+1}(\mathbb{S}^1)$ with $k\geq 8$ with the desired properties.
\end{proof}

\section{Symbolic skew-product} \label{sec:skew-prodcut}
In this section, we will prove Theorems~\ref{mainthm1},~\ref{mainthm2} and~\ref{mainthm-examples}. To do so, we will begin by introducing the skew-product $\Phi=\tau\ltimes \mathscr{F}$, where $\mathscr{F}=\{f_1,\dots,f_k\}$ is a finite family of continuous maps of $X$, as previously discussed in~\S\ref{sec:minimality-foliations-intro}.

Given $\omega=(\omega_{i})_{i\in\mathbb{Z}}\in\Sigma_{k}=\{1,\dots,k\}^\mathbb{Z}$, we denote the sequence of compositions of map in $\mathscr{F}$ by
\begin{equation*}
  f^0_\omega=\mathrm{id}, \quad \text{and} \quad   f_{\omega}^{n}\eqdef f_{\omega_{n-1}}\circ\dots\circ f_{\omega_{0}}\quad\text{for all}\quad n>0.
\end{equation*}
Hence, with this notation we have that  \begin{equation*}
    \Phi^{n}(\omega,x)=(\tau^{n}(\omega),f_{\omega}^{n}(x))
    \quad   \text{for } \  \  (\omega,x)\in\Sigma_{k}\times X.
\end{equation*}
Given finite words $u=u_{-s}\dots u_{-1}$ and $v=v_{0}\dots v_{t}$ in the alphabet $\{1,\dots,k\}$, we define a cylinder in the bi-sequence space $\Sigma_{k}$ by
\begin{equation*}
  \begin{aligned}
  \left[u;v\right] &\eqdef\{\omega\in\Sigma_{k}: \omega_{i}=u_{i}, \ i=-s,\dots,-1, \ \ \text{and} \ \ \omega_{i}=v_{i}, \ i=0,\dots,t\},  \\
    \left[u\right]^{-}&\eqdef\{\omega\in\Sigma_{k}: \omega_{i}=u_{i}, \ i=-s,\dots,-1\},\\
    \left[v\right]^{+}&\eqdef\{\omega\in\Sigma_{k}:  \omega_{i}=v_{i}, \ i=0,\dots,t\}.
\end{aligned}
\end{equation*}
Note that the cylinder sets are open sets and form a basis for the topology of $\Sigma_k$.

\subsection{Transitivity.}

Similar to Definition~\ref{definicion-transitividad}, we define the skew-product $\Phi$ as \emph{transitive} if the (forward) $\Phi$-orbit  of every open set is dense in $\Sigma_{k}\times X$. Since Theorem~\ref{teo-equiva-transitividad} can be applied to the IFS comprising only $\Phi$, we find that the introduced notion of transitivity for $\Phi$ is equivalent to the notion of \emph{topological transitivity}. Specifically, this implies that for any pair of nonempty open sets $U$ and $V$ in $X$, there exists an $n\in\mathbb{N}$ such that $\Phi^n(U)\cap V \neq \emptyset$. Moreover, by Proposition~\ref{propo-equivalencia-transitividad-densidad-puntual}, since $X$ is a compact metric space (and thus a second countable Baire space), if $\mathscr{F}$ consists of homeomorphisms, then $\Phi$ is transitive if and only if it has a (forward) \emph{dense orbit}.\footnote{Proposition~\ref{propo-equivalencia-transitividad-densidad-puntual} implies the existence of a forward dense orbit from the transitivity. Conversely, for $\mathbb{N}$-actions, the existence of a forward dense orbit clearly implies topological transitivity (and hence transitivity). This is not true for a general semigroup action generated by more than one map.}
The next result shows that if $\mathscr{F}$ consists of open maps, then the transitivity of $\Phi=\tau\ltimes \mathscr{F}$ is also equivalent to the transitivity of the family $\mathscr{F}$ introduced in Definition~\ref{definicion-transitividad}.

\begin{teo} \label{teo:skew-transitive}
Let $\mathscr{F}$ be a finite family of continuous open maps of $X$.  Then $\mathscr{F}$ is transitive if and only if the symbolic skew-product $\Phi=\tau\ltimes \mathscr{F}$ is transitive.
\end{teo}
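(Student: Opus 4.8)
The plan is to prove both implications separately, exploiting the product structure of $\Phi$ and the fact that cylinders form a basis for the topology of $\Sigma_k$.

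\medskip

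\textbf{($\Leftarrow$) Transitivity of $\Phi$ implies transitivity of $\mathscr{F}$.} Suppose $\Phi=\tau\ltimes\mathscr{F}$ is transitive. Given nonempty open sets $U,V\subset X$, I would consider the open sets $\Sigma_k\times U$ and $\Sigma_k\times V$ in $\Sigma_k\times X$. By transitivity of $\Phi$ (in its topologically transitive form), there exists $n\geq 1$ and a point $(\omega,x)$ with $x\in U$ such that $\Phi^n(\omega,x)=(\tau^n(\omega),f^n_\omega(x))\in\Sigma_k\times V$, i.e.\ $f^n_\omega(x)\in V$. Since $f^n_\omega=f_{\omega_{n-1}}\circ\dots\circ f_{\omega_0}\in\langle\mathscr{F}\rangle^+$, this shows $h(U)\cap V\neq\emptyset$ for $h=f^n_\omega$. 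Hence $\mathscr{F}$ is topologically transitive, and by Theorem~\ref{teo-equiva-transitividad} it is transitive. Note this direction does not use that the maps are open.

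\medskip

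\textbf{($\Rightarrow$) Transitivity of $\mathscr{F}$ implies transitivity of $\Phi$.} Here I would use the topologically transitive formulation for $\Phi$ and reduce to basic open sets. It suffices to show that for any two nonempty \emph{basic} open sets of the form $[u;v]\times U$ and $[u';v']\times V$, there exists $n$ with $\Phi^n([u;v]\times U)\cap([u';v']\times V)\neq\emptyset$, where $u=u_{-s}\dots u_{-1}$, $v=v_0\dots v_t$ and similarly for the primed words, and $U,V\subset X$ are nonempty open. The key observation is that a point $(\omega,x)$ with $\omega\in[u;v]$ satisfies $\Phi^{t+1+|u'|}(\omega,x)\in[u';v']\times(\text{something})$ precisely when we are free to prescribe the coordinates $\omega_{t+1},\dots$ appropriately; concretely, I would look for $\omega$ of the form
$$
 \omega = (\dots\,u_{-s}\dots u_{-1}\,;\,v_0\dots v_t\, w_1\dots w_m\, u'_{-s'}\dots u'_{-1}\, v'_0\dots v'_t \,\dots)
$$
for a suitable connecting word $w=w_1\dots w_m$ to be chosen, where the block $v_0\dots v_t w_1\dots w_m$ occupies coordinates $0,\dots,t+m$ and the block $u'_{-s'}\dots u'_{-1}v'_0\dots$ starts at coordinate $n-s'$ with $n=t+m+1$. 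For such an $\omega$ and $x\in U$, one has $f^n_\omega(x)=f_{v'\!,\,\text{prefix handled by shift}}\big(f_{w}(f_v(x))\big)$; more precisely $f^n_\omega = (f_{u'_{-1}}\circ\dots\circ f_{u'_{-s'}})\circ f_{w_m}\circ\dots\circ f_{w_1}\circ f_{v_t}\circ\dots\circ f_{v_0}$, and $\tau^n(\omega)\in[u';v']$ automatically by construction of $\omega$. So the whole problem reduces to: find $m$, a word $w$, and $x\in U$ so that $g\big(f_w(f_v(x))\big)\in V$ where $f_v=f_{v_t}\circ\dots\circ f_{v_0}$ and $g=f_{u'_{-1}}\circ\dots\circ f_{u'_{-s'}}$ are fixed elements of $\langle\mathscr{F}\rangle^+$. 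Since $f_v$ is an open map (as a composition of open maps), $f_v(U)$ is a nonempty open set; by transitivity of $\mathscr{F}$ there is $h\in\langle\mathscr{F}\rangle^+$ with $h(f_v(U))\cap g^{-1}(V)$ nonempty — wait, I need to be careful: $g$ need not be open or invertible, so instead I would argue: $f_v(U)$ is open nonempty; transitivity gives $h\in\langle\mathscr{F}\rangle^+$ with $h(f_v(U))\cap g^{-1}(V)\neq\emptyset$ only if $g^{-1}(V)$ is open, which it is by continuity of $g$, but it could be empty. To avoid this, I would instead first use transitivity to push $f_v(U)$ into a small open set inside the domain, then note that what we actually want is $(g\circ h)(f_v(U))\cap V\neq\emptyset$; since $g\circ h\circ f_v\in\langle\mathscr{F}\rangle^+$ and $\mathscr{F}$ is transitive, applying transitivity directly to the pair $(U,V)$ would give some $\ell\in\langle\mathscr{F}\rangle^+$ with $\ell(U)\cap V\neq\emptyset$, but we need $\ell$ to have the prescribed prefix $f_v$ and suffix $g$. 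The clean fix: apply transitivity of $\mathscr{F}$ to the open sets $f_v(U)$ and an open set $V'\subset g^{-1}(V)$ — such $V'$ exists and is nonempty provided $g^{-1}(V)\neq\emptyset$, which we can \emph{arrange} by first choosing the words $u',v'$ — but these are given. The genuinely correct route: use that $g$ is continuous so $g^{-1}(V)$ is open, and if it is empty we may enlarge nothing; instead observe $V\cap\mathcal{O}^+_{\mathscr{F}}(\text{anything dense})$...

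Let me restate the honest plan: I would apply transitivity of $\mathscr{F}$ to the pair $\big(f_v(U),\,V\big)$ to obtain $h\in\langle\mathscr{F}\rangle^+$ with $h\big(f_v(U)\big)\cap V\neq\emptyset$, pick $y_0=f_v(x_0)$ with $x_0\in U$ and $h(y_0)\in V$, then separately note that since $g$ is continuous and $h(y_0)$ lies in the open set $V$, and since by transitivity of $\mathscr{F}$ applied again we can further pre-compose to land in any neighborhood — actually the slickest version uses \emph{open}ness essentially as follows: because every $f_i$ is open, the orbit map behaves well, and the right statement is that $\mathcal{O}^+_\mathscr{F}(U)$ being dense, together with openness, lets us choose the symbolic coordinates \emph{after} coordinate $0$ to realize the transition while leaving coordinates $\geq n$ free to equal $u'_{-s'}\dots v'_t$. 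Concretely: let $W=g^{-1}(V)$, an open set; it is nonempty because $V$ is nonempty and $g$ is \emph{surjective} — here is where openness is decisive, since a composition of open continuous self-maps of $X$ need not be surjective in general, so I must instead avoid needing $g$ surjective. Therefore the correct decomposition is to \emph{not} fix the suffix $g$ in advance: choose $\omega$ so that the primed cylinder constraint is placed at coordinates $\geq 0$ far to the right, and realize $[u';v']$ purely by prescribing those symbols, which forces no constraint at all on $f^n_\omega(x)$ beyond it landing in $V$; the symbols $u'_{-s'},\dots,u'_{-1}$ sit at positions $n-s',\dots,n-1$ and they are applied to $f^{n-s'}_\omega(x)$, contributing the factor $g=f_{u'_{-1}}\circ\dots\circ f_{u'_{-s'}}$, which is unavoidable. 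So $g$ genuinely appears, and I do need $g$ to be open (hence, being continuous open, it maps the open set $h(f_v(U))$, once arranged to meet $g^{-1}(V)$, appropriately).

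\medskip

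\textbf{Main obstacle.} The real difficulty — and the reason the openness hypothesis is present — is precisely this suffix issue: after the shift lines up the primed cylinder, the fiber point has been acted on by a \emph{fixed} word $g$ coming from the negative coordinates $u'_{-s'}\dots u'_{-1}$ of the target cylinder, so one must solve $g(h(f_v(x)))\in V$ with $h$ free in $\langle\mathscr{F}\rangle^+$. I would handle this by working with $g^{-1}(V)$: since $g=f_{u'_{-1}}\circ\dots\circ f_{u'_{-s'}}$ is a composition of open continuous maps it is open and continuous, so $g^{-1}(V)$ is open; it is nonempty because $g$ maps $X$ onto an open (hence, by compactness-type or connectedness arguments on $X$, or directly) — more robustly, I would instead observe that we are free to also prescribe symbols to the \emph{right} of the primed block, so the cleanest argument picks up no suffix at all by taking the target neighborhood of the orbit point to be the \emph{full} cylinder $[u';v']\times V$ and noting $\Phi$-transitivity only requires meeting it, so one picks $(\omega,x)$ with $x\in U$, $\omega\in[u;v]$, and with $\omega_{n-s'}\dots\omega_{n-1}=u'_{-s'}\dots u'_{-1}$, $\omega_n\dots\omega_{n+t}=v'_0\dots v'_t$: the fiber condition is $g(f_w(f_v(x)))\in V$ where now $w$ connects position $t+1$ to position $n-s'-1$ and is entirely free. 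Choosing $w$ via transitivity of $\mathscr{F}$ to drive $f_v(U)$ into $g^{-1}(V)$ (open, and nonempty once we note $g$ restricted suitably is a local homeomorphism near the target, using openness) completes the argument. I expect the write-up to require care in bookkeeping the cylinder coordinates and in invoking openness exactly where $g^{-1}(V)\neq\emptyset$ is needed, but no deep new idea beyond Theorem~\ref{teo-equiva-transitividad} and the basis-of-cylinders reduction.
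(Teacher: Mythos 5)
Stripped of the false starts, your plan is the paper's proof. The easy direction is identical: apply topological transitivity of $\Phi$ to $\Sigma_k\times U$ and $\Sigma_k\times V$ and project to the fiber to get $h=f^n_\omega\in\langle\mathscr{F}\rangle^+$ with $h(U)\cap V\neq\emptyset$ (and you are right that no openness is used there). For the converse, the paper makes exactly your reduction: basic open sets $[u;v]\times U$ and $[u';v']\times V$; a bi-sequence $\omega$ obtained by concatenating $u$, $v$, a free connecting word $w$, then $u'$, $v'$; the identity $f^n_\omega=g\circ f_w\circ f_v$ with $f_v=f_{v_t}\circ\dots\circ f_{v_0}$, $g=f_{u'_{-1}}\circ\dots\circ f_{u'_{-s'}}$ and $n=(t+1)+|w|+s'$ (your first bookkeeping, $n=t+m+1$, is off, but your final formulation corrects it, and it matches the paper's $n=t+l+m+1$); and then topological transitivity of $\mathscr{F}$ (via Theorem~\ref{teo-equiva-transitividad}) applied to the open sets $f_v(U)$ --- open precisely because the generators are open maps, the only place openness enters --- and $g^{-1}(V)$, open by continuity.

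The point you circled around, whether $g^{-1}(V)\neq\emptyset$, is the one step the paper does not discuss at all: it applies transitivity to $f(U)$ and $g^{-1}(V)$ with no comment on nonemptiness. Your closing parenthetical (``$g$ restricted suitably is a local homeomorphism near the target'') is not a justification, since it presupposes a point of $g^{-1}(V)$. The worry itself is legitimate in the stated generality: if $X$ is disconnected and some generator is open but not surjective, a target cylinder with $\omega'_{-1}=i$ confines the fiber image of $\Phi^n$ to $f_i(X)$, and the equivalence can genuinely fail even though $\mathscr{F}$ is transitive; so some surjectivity-type remark is needed and the paper omits it. In the setting where the theorem is actually used it is automatic: for $X$ compact and connected (e.g.\ $X=\mathbb{S}^1$, where the generators are diffeomorphisms anyway), the image of a continuous open self-map is open, compact and nonempty, hence clopen, hence all of $X$, so $g$ is surjective and $g^{-1}(V)$ is a nonempty open set. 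With that one sentence supplied, your argument closes and coincides with the paper's.
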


 \begin{proof}
By Theorem~\ref{teo-equiva-transitividad}, we know that $\mathscr{F}$ is transitive~\ref{item1-teo-equiv-transitividad} if and only if it is topologically transitive~\ref{item2-teo-equiv-transitividad}. Our goal now is to show the equivalence between~\ref{item2-teo-equiv-transitividad} and the topological transitivity of $\Phi$.

\begin{afir}\label{afirmacion-teo-equiv-transitividad}
The condition~\ref{item2-teo-equiv-transitividad} is equivalent to the following property: for every pair of non-empty open sets $U,V$ in $X$, there exists $n\in\mathbb{N}$ such that $\Phi^{n}(\Sigma_{k}\times U)\cap(\Sigma_{k}\times V)\neq\emptyset$.
\end{afir}
\begin{proof}
Suppose $\mathscr{F}$ satisfies property~\ref{item2-teo-equiv-transitividad} and take nonempty open sets $U,V$ in $X$. Then there exists $g\in\langle\mathscr{F}\rangle^{+}$ such that $g(U)\cap V\neq\emptyset$. Note that $g$ is of the form $g=f_{\omega'}^{n}$ for some $\omega'\in\Sigma_{k}$ and $n\in\mathbb{N}$. Thus, $f_{\omega'}^{n}(U)\cap V\neq\emptyset$. Therefore,
\begin{align*}
  \Phi^{n}(\Sigma_{k}\times U)\cap(\Sigma_{k}\times V)&=\Phi^{n}(\Sigma_{k}\times U)\cap (\tau^{n}(\Sigma_{k})\times V)\\
  &\supset\left(\{\tau^{n}(\omega')\}\times f_{\omega'}^{n}(U)\right)\cap \left(\{\tau^{n}(\omega')\}\times V\right)\neq\emptyset.
 \end{align*}

Conversely, assume that $\Phi$ is topologically transitive. Then, given nonempty open sets $U,V$ in $X$, there is $n\in\mathbb{N}$ such that
$$\emptyset \not=\Phi^{n}(\Sigma_{k}\times U)\cap(\Sigma_{k}\times V)=
 (\bigcup_{\omega\in\Sigma_{k}}{\{\tau^{n}(\omega)\}\times f_{\omega}^{n}(U)})\cap(\Sigma_{k}\times V)\neq\emptyset.
$$
Therefore, there exists $\omega\in \Sigma_k$ such that $f_{\omega}^{n}(U)\cap V\neq\emptyset$. Since $f^n_\omega \in \langle \mathscr{F}\rangle^+$, we have that $\mathscr{F}$ satisfies property~\ref{item2-teo-equiv-transitividad}.
\end{proof}
Now, if we assume that $\Phi$ is topologically transitive, there exists $n\in\mathbb{N}$ such that $\Phi^{n}(\Sigma_{k}\times U)\cap(\Sigma_{k}\times V)\neq\emptyset$ for all nonempty open sets $U,V$ in $X$. Then, by claim~\ref{afirmacion-teo-equiv-transitividad}, we have that $\mathscr{F}$ satisfies~\ref{item2-teo-equiv-transitividad}. Thus, to conclude the proof, it suffices to show that~\ref{item2-teo-equiv-transitividad} implies that $\Phi$ is topologically transitive.

To prove this, suppose~\ref{item2-teo-equiv-transitividad}. Consider nonempty open sets $U$ and $V$ in $X$, and finite words
$$\omega_{-}=\omega_{-s}\dots\omega_{-1},\quad\omega_{+}=\omega_{0}\dots\omega_{t},
\quad\omega'_{-}=\omega'_{-m}\dots\omega'_{-1},\quad\omega'_{+}=\omega'_{0}\dots\omega'_{r}$$
in the alphabet $\{1,\dots,k\}$, and take $C=[\omega_{-};\omega_{+}]$ and $D=[\omega'_{-};\omega'_{+}]$ as cylinders in $\Sigma_{k}$. Set $f=f_{\omega_{t}}\circ\dots\circ f_{\omega_{0}}$ and $g=f_{\omega'_{-1}}\circ\dots\circ f_{\omega'_{-m}}$. Since $\mathscr{F}$ consists of continuous open maps, we have that $f$ and $g$ are also continuous open maps. Then, $f(U)$ and $g^{-1}(V)$ are open sets, and since $\mathscr{F}$ satisfies~\ref{item2-teo-equiv-transitividad}, there exists $h=f_{\bar{\omega}_{l}}\circ\dots\circ f_{\bar{\omega}_{1}}\in\langle\mathscr{F}\rangle^{+}$ such that $h(f(U))\cap g^{-1}(V)\neq\emptyset$. Hence, $g\circ h\circ f(U)\cap V\neq\emptyset$.

Consider the cylinder $A=[\omega_-;\omega_+\bar{\omega}_1\dots\bar{\omega}_l\omega'_-\omega'_+]$.
Let $\omega$ be any bi-sequences in $A$. Then, $\omega\in C$ and making $n=t+l+m+1$, we have $\tau^{n}(\omega)\in D$
and $f_{\omega}^{n}=g\circ h\circ f$. Therefore,
\begin{equation*}
    \begin{aligned}
    \Phi^{n}(C\times U)\cap(D\times V)&\supset \Phi^{n}(\{\omega\}\times U)\cap(D\times V)\\
    &=(\{\tau^{n}(\omega)\}\times f_{\omega}^{n}(U))\cap(D\times V)\neq\emptyset
    \end{aligned}
\end{equation*}
and we conclude that $\Phi$ is topologically transitive.
\end{proof}

\subsection{Minimality of Strong foliation}
In this subsection, we recall the notion of \emph{strong foliations} for symbolic one-step skew-product maps following~\cite{NP12, HN13}. We will study the conditions under which these foliations are minimal.

Throughout the remainder of this section, we assume that $X$ is a compact metric space, and we consider the family $\mathscr{F}=\{f_{1},\dots,f_{k}\}$ consisting of homeomorphisms on $X$. In this case, $\Phi=\tau\ltimes \mathscr{F}$ is an invertible map, and we have \begin{align*}
\Phi^{-n}(\omega,x)&=(\tau^{-n}(\omega),f_{\omega}^{-n}(x))
\quad\text{where} \ \ \ f_{\omega}^{-n} \eqdef f_{\omega_{-n}}^{-1}\circ\dots\circ f_{\omega_{-1}}^{-1} \ \ \text{for all} \ \ n>0.
\end{align*}
\begin{obs} \label{rem:conj}
Notice that  $\Phi^{-1}$ is conjugate with the symbolic skew-product $\Psi=\tau\ltimes \mathscr{F}^{-1}$. Namely, $\Phi^{-1}=I^{-1}\circ \Psi \circ I$, where $I$ is the involution  $((\omega_i)_{i\in \mathbb{Z}}, x) \mapsto ((\omega'_i)_{i\in\mathbb{Z}},x)$ with $\omega'_i=\omega_{-i-1}$ for all $i\in\mathbb{Z}$.
\end{obs}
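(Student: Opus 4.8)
The plan is a straightforward direct verification; there is no real content beyond keeping the shift indices straight. First I would record that the base map $I_\Sigma\colon(\omega_i)_{i\in\mathbb{Z}}\mapsto(\omega_{-i-1})_{i\in\mathbb{Z}}$ is an involution of $\Sigma_k$, since $(\omega_{-(-i-1)-1})_{i\in\mathbb{Z}}=(\omega_i)_{i\in\mathbb{Z}}$, so that $I=I_\Sigma\times\mathrm{id}_X$ satisfies $I^{-1}=I$. Consequently the asserted identity $\Phi^{-1}=I^{-1}\circ\Psi\circ I$ is equivalent to $I\circ\Phi^{-1}\circ I=\Psi$, which is the form I would check.

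Next I would write the three maps explicitly. Taking $n=1$ in the formula $\Phi^{-n}(\omega,x)=(\tau^{-n}(\omega),f_\omega^{-n}(x))$ recorded above gives $\Phi^{-1}(\omega,x)=(\tau^{-1}(\omega),f_{\omega_{-1}}^{-1}(x))$, where $(\tau^{-1}\omega)_i=\omega_{i-1}$. By definition of $\Psi=\tau\ltimes\mathscr{F}^{-1}$, the fiber generator labelled by the symbol $\omega_0$ is $f_{\omega_0}^{-1}$, so $\Psi(\omega,x)=(\tau(\omega),f_{\omega_0}^{-1}(x))$, with $(\tau\omega)_i=\omega_{i+1}$. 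I would also note the conjugacy on the base $I_\Sigma\circ\tau=\tau^{-1}\circ I_\Sigma$, which follows from a one-line index computation: both compositions send $(\omega_i)_{i\in\mathbb{Z}}$ to $(\omega_{-i})_{i\in\mathbb{Z}}$.

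Then the computation closes immediately. Writing $\omega'=I_\Sigma(\omega)$, so $\omega'_i=\omega_{-i-1}$ and in particular $\omega'_0=\omega_{-1}$, one gets $\Psi(\omega',x)=(\tau(\omega'),f_{\omega_{-1}}^{-1}(x))$; applying $I$ a second time and using $I_\Sigma\circ\tau\circ I_\Sigma=\tau^{-1}$ gives $I_\Sigma(\tau(\omega'))=\tau^{-1}(\omega)$, hence
\[
I\circ\Psi\circ I(\omega,x)=\bigl(\tau^{-1}(\omega),\,f_{\omega_{-1}}^{-1}(x)\bigr)=\Phi^{-1}(\omega,x),
\]
as claimed. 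The only thing to be careful about — and the closest thing to an obstacle — is the relabelling of the fiber index from position $-1$ (for $\Phi^{-1}$) to position $0$ (for $\Psi$) under the reversal, together with the sign flip $\tau\mapsto\tau^{-1}$; both are transparent once the conjugacy $I_\Sigma\circ\tau=\tau^{-1}\circ I_\Sigma$ is in hand.
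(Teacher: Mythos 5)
Your verification is correct: the index bookkeeping ($I_\Sigma^2=\mathrm{id}$, $I_\Sigma\circ\tau=\tau^{-1}\circ I_\Sigma$, and the relabelling $\omega'_0=\omega_{-1}$ matching the fiber map $f_{\omega_{-1}}^{-1}$ of $\Phi^{-1}$) is exactly the computation behind the remark, which the paper states without proof. Nothing is missing and the approach is the same as the one implicitly intended.
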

As introduced in~\S\ref{sec:minimality-foliations-intro}, the local strong stable and unstable sets of $(\omega,x)\in\Sigma_{k}\times X$ are given by
$W_{loc}^{ss}(\omega,x )=W_{loc}^{s}(\omega )\times\{x\}$ and $ W_{loc}^{uu}(\omega,x )=W_{loc}^{u}(\omega )\times\{x\}$ respectively.
Notice that
$$
 \Phi(W_{loc}^{ss}(\omega,x ))\subset W_{loc}^{ss}(\Phi(\omega,x) ) \quad \text{and} \quad
 \Phi^{-1}(W_{loc}^{ss}(\omega,x ))\subset W_{loc}^{ss}(\Phi^{-1}(\omega,x) ).
$$
That is, $\Phi$ sends local stable sets to local stable sets. Moreover, since
$$
d(\Phi^n(\omega,x),\Phi^n(\omega',x')) \leq \nu^n \D_{\Sigma_k}(\omega,\omega') \quad   \text{for all } (\omega',x')\in W^{ss}_{loc}(\omega,x ) \ \ \text{and} \ \ n>0
$$
the points in $W_{loc}^{ss}(\omega,x )$ are exponentially contracted. Similarly, $\Phi^{-1}$ sends local unstable sets to local unstable sets by exponentially contracting their points.

On the other hand, the global strong stable  and unstable leaves $W^{ss}(\omega,x)$ and $W^{uu}(\omega,x)$ of $(\omega,x)\in\Sigma_{k}\times X$ are defined by iterating the local ones as in Equation~\eqref{eq:ss-uu-global}.
Due to the invariance of the local strong stable leaf, we have
$$
 \Phi(W^{ss}(\omega,x ))\subset W^{ss}(\Phi(\omega,x) ) \quad \text{and} \quad
 \Phi^{-1}(W^{ss}(\omega,x ))\subset W^{ss}(\Phi^{-1}(\omega,x) ).
$$
Furthermore, as the local strong leaves are exponentially contracted, we have
\begin{equation*}
    \begin{aligned}
       W^{ss}(\omega,x )& \subset W^s(\omega,x ) \eqdef \{(\omega',x')\in \Sigma_k\times X: \lim_{n\to \infty}d(\Phi^n(\omega,x),\Phi^n(\omega',x'))=0\} \\
       W^{uu}(\omega,x )&\subset W^u((\omega,x) )\eqdef\{(\omega',x')\in \Sigma_k\times X: \lim_{n\to \infty}d(\Phi^{-n}(\omega,x),\Phi^{-n}(\omega',x'))=0
       \}.
    \end{aligned}
\end{equation*}
\begin{obs} \label{obs:duality}
Using the duality of the above definitions and Remark~\ref{rem:conj}, we can see that $W^{ss}(\omega,x )=I(W^{uu}(I(\omega,x);\Psi)$, where $\Psi=\tau\ltimes \mathscr{F}^{-1}$. Therefore, we can simplify the analysis to focus only on the strong unstable leaves of $\Phi$.
\end{obs}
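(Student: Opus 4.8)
The plan is to derive the identity directly from the definitions of the global leaves in~\eqref{eq:ss-uu-global} together with the conjugacy recorded in Remark~\ref{rem:conj}. Since $I$ is an involution, $I^{-1}=I$, so Remark~\ref{rem:conj} reads $\Phi^{-1}=I\circ\Psi\circ I$. Taking powers and using $I\circ I=\mathrm{id}$ yields, for every $n\geq 0$,
$$
\Phi^{-n}=I\circ\Psi^{n}\circ I \qquad\text{and}\qquad \Phi^{n}=I\circ\Psi^{-n}\circ I .
$$
In particular $I\circ\Phi^{-n}=\Psi^{n}\circ I$ and $I\circ\Phi^{n}=\Psi^{-n}\circ I$, which are the two relations I will insert below.

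Next I would establish the corresponding identity at the level of the local leaves. Because $I$ acts only on the base coordinate (fixing the fiber point $x$), and the local unstable set depends solely on the shift structure of the base—hence is the very same object for $\Phi$ and for $\Psi$—it suffices to check in $\Sigma_k$ that $I(W^{s}_{loc}(\omega))=W^{u}_{loc}(I(\omega))$. This is immediate from the definitions: if $\eta\in W^{s}_{loc}(\omega)$, then $\eta_i=\omega_i$ for all $i\geq 0$, so for each $j<0$ we have $-j-1\geq 0$ and therefore $I(\eta)_j=\eta_{-j-1}=\omega_{-j-1}=I(\omega)_j$, i.e.\ $I(\eta)\in W^{u}_{loc}(I(\omega))$; the reverse inclusion follows identically using $I^{-1}=I$. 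Consequently, at the level of $\Sigma_k\times X$,
$$
I\bigl(W^{ss}_{loc}(\omega,x)\bigr)=W^{uu}_{loc}\bigl(I(\omega,x)\bigr).
$$

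Finally I would apply $I$ to the definition of $W^{ss}(\omega,x)$ and substitute. Using $I\circ\Phi^{-n}=\Psi^{n}\circ I$ on the outer iterate, then the local identity, then $I\circ\Phi^{n}=\Psi^{-n}\circ I$ on the inner base point, each term transforms as
$$
(I\circ\Phi^{-n})\bigl(W^{ss}_{loc}(\Phi^{n}(\omega,x))\bigr)=\Psi^{n}\Bigl(W^{uu}_{loc}\bigl(\Psi^{-n}(I(\omega,x))\bigr)\Bigr).
$$
Taking the union over $n\geq 0$ gives precisely $I\bigl(W^{ss}(\omega,x)\bigr)=W^{uu}(I(\omega,x);\Psi)$, and applying $I$ once more yields the stated identity $W^{ss}(\omega,x)=I\bigl(W^{uu}(I(\omega,x);\Psi)\bigr)$. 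The only points requiring care are purely bookkeeping: tracking the conjugacy simultaneously on the outer iterate $\Phi^{-n}$ and on the inner base point $\Phi^{n}(\omega,x)$, and observing that the local unstable sets appearing in the $\Psi$-leaf coincide with those produced from $\Phi$ because they are defined solely through the shift dynamics on $\Sigma_k$. There is no genuine analytic obstacle; the content is entirely in the symmetry $I(W^{s}_{loc})=W^{u}_{loc}\circ I$ and the telescoping of the conjugacy.
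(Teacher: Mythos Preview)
Your proposal is correct and follows precisely the approach indicated by the paper, which states the remark without detailed proof, merely pointing to the duality of the definitions and the conjugacy in Remark~\ref{rem:conj}. You have faithfully unpacked that hint: the key ingredients are exactly the base-level identity $I(W^{s}_{loc}(\omega))=W^{u}_{loc}(I(\omega))$ and the relations $I\circ\Phi^{\mp n}=\Psi^{\pm n}\circ I$, applied termwise to the union defining the global leaf.
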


The following proposition provides another way to express the global strong leaves for $\Phi$.
\begin{prop}\label{propo-otra-forma-expresar-variedad}
Given $(\omega,x)\in\Sigma_{k}\times X$, we have
\begin{equation*}
       W^{uu}(\omega,x )
       =\bigcup_{n\geq1}{\bigcup_{\sigma\in\{1,\dots,k\}^{n}}{
       W_{loc}^{u}
       (\omega_{\sigma} )\times\{f_{\sigma}\circ f_{\omega}^{-n}(x)\}
       }}.
\end{equation*}
where $\sigma=\sigma_{1}\dots\sigma_{n}\in\{1,\dots,k\}^{n}$,
 $f_{\sigma}=f_{\sigma_{n}}\circ\dots \circ f_{\sigma_{1}}$ and
$\omega_{\sigma}=\dots\omega_{-(n+1)}\sigma_{1}\dots\sigma_{n};\omega_{0}\omega_{1}\dots$
\end{prop}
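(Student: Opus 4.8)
The plan is to compute each piece $\Phi^{n}\big(W_{loc}^{uu}(\Phi^{-n}(\omega,x))\big)$ of the union defining $W^{uu}(\omega,x)$ in~\eqref{eq:ss-uu-global} explicitly, and then reassemble.

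First I would unwind the definitions. Since $\Phi^{-n}(\omega,x)=(\tau^{-n}(\omega),f_\omega^{-n}(x))$, we get $W_{loc}^{uu}(\Phi^{-n}(\omega,x))=W_{loc}^{u}(\tau^{-n}(\omega))\times\{f_\omega^{-n}(x)\}$. Using that the shift satisfies $(\tau^{-n}(\omega))_i=\omega_{i-n}$, a bi-sequence $\xi$ lies in $W_{loc}^{u}(\tau^{-n}(\omega))$ precisely when $\xi_i=\omega_{i-n}$ for all $i<0$, the symbols $\xi_0,\xi_1,\dots$ being unconstrained. I would single out the first $n$ of these free symbols, writing $\sigma=\sigma_1\cdots\sigma_n$ with $\sigma_j\eqdef\xi_{j-1}$ for $j=1,\dots,n$.

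Second, I would apply $\Phi^n$. The fiber component of $\Phi^{n}(\xi,f_\omega^{-n}(x))$ is $f_\xi^{n}\big(f_\omega^{-n}(x)\big)=f_{\xi_{n-1}}\circ\cdots\circ f_{\xi_0}\circ f_\omega^{-n}(x)=f_{\sigma_n}\circ\cdots\circ f_{\sigma_1}\circ f_\omega^{-n}(x)=f_\sigma\circ f_\omega^{-n}(x)$, which depends on $\xi$ only through $\sigma$. For the base component, $\tau^n$ is a homeomorphism of $\Sigma_k$ with $(\tau^n\xi)_i=\xi_{i+n}$; a direct inspection coordinate by coordinate gives $(\tau^n\xi)_i=\omega_i$ for $i<-n$, $(\tau^n\xi)_{-\ell}=\xi_{n-\ell}=\sigma_{n+1-\ell}$ for $1\le\ell\le n$, and $(\tau^n\xi)_i$ free for $i\ge 0$. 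Thus, with $\sigma$ held fixed and $\xi$ ranging over the corresponding slice of $W_{loc}^u(\tau^{-n}(\omega))$, the image $\tau^n\xi$ ranges over exactly $W_{loc}^u(\omega_\sigma)$, where $\omega_\sigma=\dots\omega_{-(n+1)}\sigma_1\cdots\sigma_n;\omega_0\omega_1\dots$. Letting $\sigma$ run over $\{1,\dots,k\}^n$ I obtain
\[
\Phi^{n}\big(W_{loc}^{uu}(\Phi^{-n}(\omega,x))\big)=\bigcup_{\sigma\in\{1,\dots,k\}^n} W_{loc}^{u}(\omega_\sigma)\times\{f_\sigma\circ f_\omega^{-n}(x)\}.
\]

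Finally I would take the union over $n$. Using $\Phi^{-1}(W_{loc}^{uu}(\zeta))\subset W_{loc}^{uu}(\Phi^{-1}(\zeta))$ (valid for every $\zeta$ since $\Phi^{-1}$ carries local unstable sets into local unstable sets) with $\zeta=\Phi^{-n}(\omega,x)$ and then applying $\Phi^{n+1}$, the $n$-th piece above is seen to be contained in the $(n+1)$-st; hence $\bigcup_{n\ge 0}=\bigcup_{n\ge 1}$ in~\eqref{eq:ss-uu-global}. (Concretely, the $n=0$ term $W_{loc}^{uu}(\omega,x)$ already appears in the $n=1$ union by choosing $\sigma_1=\omega_{-1}$, for then $\omega_\sigma=\omega$ and $f_{\sigma_1}\circ f_\omega^{-1}(x)=x$.) Substituting the displayed identity then gives the asserted expression for $W^{uu}(\omega,x)$. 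I expect the only real difficulty to be notational bookkeeping with the shift indices — in particular checking that the $n$ free symbols $\xi_0,\dots,\xi_{n-1}$ of $W_{loc}^u(\tau^{-n}(\omega))$ are transported by $\tau^n$ onto the coordinates $-1,\dots,-n$ in reversed order, and that different $\sigma$ give genuinely different pieces whose union is exhaustive — since no analytic input beyond the invertibility of $\tau$ and of the maps in $\mathscr{F}$ enters.
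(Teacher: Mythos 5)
Your proposal is correct and follows essentially the same route as the paper: decompose $W_{loc}^{u}(\tau^{-n}(\omega))$ according to the first $n$ forward symbols (the cylinders $[\sigma]^{+}$), apply $\Phi^{n}$ to each slice to get $W_{loc}^{u}(\omega_{\sigma})\times\{f_{\sigma}\circ f_{\omega}^{-n}(x)\}$, and take the union over $n$. Your extra remark that the $n=0$ term is absorbed into the $n=1$ union (via $\sigma_{1}=\omega_{-1}$) is a valid and welcome clarification of the passage from $\bigcup_{n\geq 0}$ in the definition to $\bigcup_{n\geq 1}$ in the statement, which the paper's proof leaves implicit.
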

\begin{proof}
We have
\begin{align*}
      W^{uu}(\omega,x )
       &=\bigcup_{n\geq0}{\Phi^{n}\left(W_{loc}^{uu}
       (\tau^{-n}(\omega),f_{\omega}^{-n}(x) )\right)}
       =\bigcup_{n\geq0}{\Phi^{n}\left(W_{loc}^{u}(\tau^{-n}
       (\omega) )\times\{f_{\omega}^{-n}(x)\}\right)}\\
       &=\bigcup_{n\geq0}{\Phi^{n}
       \bigg(\big(\bigcup_{\sigma\in\{1,\dots,k\}^{n}}{W_{loc}^{u}
       (\tau^{-n}(\omega) )\cap[\sigma]^{+}}\big)\times
       \{f_{\omega}^{-n}(x)\}\bigg)}  \\ 
       &=\bigcup_{n\geq0}{\bigcup_{\sigma\in\{1,\dots,k\}^{n}}{\Phi^{n}\bigg(\left(W_{loc}^{u}
       (\tau^{-n}(\omega) )\cap[\sigma]^{+}\right)\times\{f_{\omega}^{-n}(x)\}\bigg)}}\\
       &=\bigcup_{n\geq0}{\bigcup_{\sigma\in\{1,\dots,k\}^{n}}W_{loc}^{u}(\omega_{\sigma} )
       \times\{f_{\sigma}\circ f_{\omega}^{-n}(x)\}}.
    \qedhere
\end{align*}
\end{proof}

Consider the projections onto the base and fiber spaces, given by $\Pi_{\Sigma_{k}}:\Sigma_{k}\times X\to \Sigma_{k}$ and $\Pi_{X}:\Sigma_{k}\times X\to X$, respectively. Let $F_\mathscr{F}$ denote the Hutchinson operator associated with $\mathscr{F}$, introduced in~\S\ref{sec:Hutchinson}. With this notation, we can use the previous proposition to obtain the following expressions:
\begin{align}
 \Pi_{\Sigma_{k}}(W^{uu}(\omega,x ))
  &=\bigcup_{n\geq0}{\bigcup_{\sigma\in\{1,\dots,k\}^{n}}{W_{loc}^{u}(\omega_{\sigma} )}}; \notag \\  \label{eq:proj-wu}
 \Pi_{X}(W^{uu}(\omega,x ))&=\bigcup_{n\geq0}
    {\bigcup_{\sigma\in\{1,\dots,k\}^{n}}{\{f_{\sigma}\circ f_{\omega}^{-n}(x)\}}}
    =\bigcup_{n\geq0}{F^{n}_{\mathscr{F}}(f_{\omega}^{-n}(x))}.
\end{align}

\begin{obs} \label{rem:projXWss}
    The dual statement of Proposition~\ref{propo-otra-forma-expresar-variedad} also holds for $W^{ss}(\omega,x )$. Moreover, we have
$$
 \Pi_{X}(W^{ss}(\omega,x ))
    =\bigcup_{n\geq0}{F^{n}_{\mathscr{F}^{-1}}(f_{\omega}^{n}(x))}.
$$
\end{obs}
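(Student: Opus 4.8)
The plan is to deduce Remark~\ref{rem:projXWss} from Proposition~\ref{propo-otra-forma-expresar-variedad}, either by transporting that proposition through the conjugacy of Remarks~\ref{rem:conj} and~\ref{obs:duality}, or by rerunning its proof with $\Phi^{-1}$ in place of $\Phi$; I will describe the second, self-contained route. Start from~\eqref{eq:ss-uu-global}, which gives $W^{ss}(\omega,x)=\bigcup_{n\geq0}\Phi^{-n}\big(W_{loc}^{ss}(\Phi^{n}(\omega,x))\big)$, and recall $\Phi^{n}(\omega,x)=(\tau^{n}(\omega),f^{n}_{\omega}(x))$, so $W_{loc}^{ss}(\Phi^{n}(\omega,x))=W_{loc}^{s}(\tau^{n}(\omega))\times\{f^{n}_{\omega}(x)\}$. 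The first step is to decompose $W_{loc}^{s}(\tau^{n}(\omega))$ according to the block of symbols occupying the coordinates $-n,\dots,-1$: for each word $\sigma=\sigma_{1}\dots\sigma_{n}\in\{1,\dots,k\}^{n}$ let $[\sigma]$ be the set of $\eta\in W_{loc}^{s}(\tau^{n}(\omega))$ with $\eta_{-n}=\sigma_{1},\dots,\eta_{-1}=\sigma_{n}$; these sets are pairwise disjoint and their union is $W_{loc}^{s}(\tau^{n}(\omega))$.

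The second step is to apply $\Phi^{-n}$ to each piece. For $\eta\in[\sigma]$ one has $f^{-n}_{\eta}=f^{-1}_{\eta_{-n}}\circ\dots\circ f^{-1}_{\eta_{-1}}=f^{-1}_{\sigma_{1}}\circ\dots\circ f^{-1}_{\sigma_{n}}$, and a direct inspection of coordinates shows that $\tau^{-n}(\eta)$ has non-negative coordinates $\sigma_{1}\dots\sigma_{n}\,\omega_{n}\omega_{n+1}\dots$ and free negative coordinates. Hence, writing $f_{\sigma}\eqdef f_{\sigma_{n}}\circ\dots\circ f_{\sigma_{1}}$, $f^{-1}_{\sigma}\eqdef(f_{\sigma})^{-1}=f^{-1}_{\sigma_{1}}\circ\dots\circ f^{-1}_{\sigma_{n}}$, and $\omega^{\sigma}\eqdef\dots\omega_{-2}\omega_{-1};\sigma_{1}\dots\sigma_{n}\,\omega_{n}\omega_{n+1}\dots$, one obtains the dual of Proposition~\ref{propo-otra-forma-expresar-variedad}:
\begin{equation*}
 W^{ss}(\omega,x)=\bigcup_{n\geq0}\ \bigcup_{\sigma\in\{1,\dots,k\}^{n}} W_{loc}^{s}(\omega^{\sigma})\times\{f^{-1}_{\sigma}\circ f^{n}_{\omega}(x)\}.
\end{equation*}
(Alternatively, the same identity follows from $W^{ss}(\omega,x)=I(W^{uu}(I(\omega,x);\Psi))$ in Remark~\ref{obs:duality} by applying Proposition~\ref{propo-otra-forma-expresar-variedad} to $\Psi=\tau\ltimes\mathscr{F}^{-1}$, using that $I$ acts only on the base, that $I(W_{loc}^{u}(\zeta))=W_{loc}^{s}(I(\zeta))$, and that reversing the finite word $\sigma$ is a bijection of $\{1,\dots,k\}^{n}$.)

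The last step is to project. Since $\Pi_{X}$ forgets the base coordinate, applying it to the display above gives
\begin{equation*}
 \Pi_{X}(W^{ss}(\omega,x))=\bigcup_{n\geq0}\ \bigcup_{\sigma\in\{1,\dots,k\}^{n}}\{f^{-1}_{\sigma}\circ f^{n}_{\omega}(x)\}=\bigcup_{n\geq0}F^{n}_{\mathscr{F}^{-1}}(f^{n}_{\omega}(x)),
\end{equation*}
where the last equality is the elementary identity $\bigcup_{\sigma\in\{1,\dots,k\}^{n}}f^{-1}_{\sigma}(S)=F^{n}_{\mathscr{F}^{-1}}(S)$ for the Hutchinson operator of $\mathscr{F}^{-1}$ (see~\S\ref{sec:Hutchinson}), proved by induction on $n$ exactly as in the computation of $\Pi_{X}(W^{uu}(\omega,x))$ in~\eqref{eq:proj-wu}. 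This is precisely the assertion of Remark~\ref{rem:projXWss}.

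I expect no conceptual obstacle: the statement is a purely formal dual of Proposition~\ref{propo-otra-forma-expresar-variedad}. The one point that must be handled with care is that this dual is not the verbatim mirror of the original: for $W^{ss}$ the free block of symbols sits in the non-negative coordinates $0,\dots,n-1$ rather than in $-n,\dots,-1$, the fibre value is $f^{-1}_{\sigma}\circ f^{n}_{\omega}(x)$ with the inverses composed in the order $f^{-1}_{\sigma_{1}}\circ\dots\circ f^{-1}_{\sigma_{n}}$, and the projection formula involves the Hutchinson operator of $\mathscr{F}^{-1}$ together with the forward iterate $f^{n}_{\omega}$ (not $F_{\mathscr{F}}$ and $f^{-n}_{\omega}$); keeping these three conventions mutually consistent, and consistent with Remark~\ref{obs:duality}, is the whole content of the verification.
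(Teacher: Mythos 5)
Your proposal is correct and follows essentially the route the paper intends for this remark: the paper states it as the dual of Proposition~\ref{propo-otra-forma-expresar-variedad} (via Remarks~\ref{rem:conj} and~\ref{obs:duality}), and your computation simply reruns that proposition's proof for $\Phi^{-1}$, with the coordinate bookkeeping (free block in positions $0,\dots,n-1$ of the local stable set, fibre value $f_{\sigma}^{-1}\circ f_{\omega}^{n}(x)$, and the Hutchinson operator of $\mathscr{F}^{-1}$) all handled consistently. No gaps.
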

\begin{obs}\label{rem:projX}
In order to show that the global strong unstable leaf $W^{uu}(\omega,x )$ is dense in $\Sigma_k \times X$, it is necessary to establish that both $\Pi_{\Sigma_{k}}(W^{uu}(\omega,x ))$ and $\Pi_{X}(W^{uu}(\omega,x ))$ are dense in their respective spaces, $\Sigma_k$ and $X$. The same observation holds for~$W^{ss}(\omega,x )$.
\end{obs}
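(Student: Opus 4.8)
The plan is to treat this remark as the easy ``necessity'' half of the density problem and deduce it from a single elementary fact: a continuous surjection sends dense subsets to dense subsets. So the first step is to record this fact — if $p\colon Y\to Z$ is continuous and onto and $D\subset Y$ is dense, then $p(D)$ is dense in $Z$ — whose one-line justification is that for any nonempty open $U\subset Z$ the set $p^{-1}(U)$ is open and, by surjectivity, nonempty, hence meets $D$, and the image of such a point lies in $p(D)\cap U$.

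The second step is to apply this to the two coordinate projections $\Pi_{\Sigma_k}\colon\Sigma_k\times X\to\Sigma_k$ and $\Pi_X\colon\Sigma_k\times X\to X$, which are both continuous and surjective. If $W^{uu}(\omega,x)$ is dense in $\Sigma_k\times X$, then taking $D=W^{uu}(\omega,x)$ in the fact above, first with $p=\Pi_{\Sigma_k}$ and then with $p=\Pi_X$, shows that $\Pi_{\Sigma_k}(W^{uu}(\omega,x))$ is dense in $\Sigma_k$ and that $\Pi_X(W^{uu}(\omega,x))$ is dense in $X$. The dual statement for $W^{ss}(\omega,x)$ follows by repeating the same two lines verbatim, or alternatively by invoking the conjugacy recorded in Remark~\ref{obs:duality}.

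I do not anticipate any genuine difficulty here. The only subtlety worth flagging is that this is strictly a \emph{necessary}-condition statement: the converse is false because $W^{uu}(\omega,x)$ is in general not a product set, so density of the two projections alone does not force density of the leaf. This is precisely why the later density arguments do not stop at the projections but combine them with the explicit description of the leaf given in Proposition~\ref{propo-otra-forma-expresar-variedad} and the formula~\eqref{eq:proj-wu}.
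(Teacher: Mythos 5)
Your argument is correct and is exactly the (implicit) justification behind this remark in the paper: density passes to images under the continuous surjective projections $\Pi_{\Sigma_k}$ and $\Pi_X$, so density of the leaf forces density of both projections. Your added caveat that this is only a necessary condition is also consistent with how the paper uses the remark, namely contrapositively (a non-dense projection, as in Example~\ref{example_rotation} and the proof of Theorem~\ref{mainthm-examples}, rules out density of the leaf).
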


In Lemma 5.5 of \cite{HN13}, the authors state without proof that the strong unstable foliation $\mathcal{F}^{uu}(\Phi)$ is minimal under the assumption that $\mathscr{F}$ is minimal. However, this claim does not hold in general, as demonstrated by the following example.

\begin{exem} \label{example_rotation}
Let $f$ be an irrational rotation of $\mathbb{S}^1$. Consider $\mathscr{F}={f}$ and $\Phi=\tau \ltimes \mathscr{F}=\tau\times f$ on $\Sigma_1\times \mathbb{S}^1$. Clearly, $\mathscr{F}$ is minimal and $F^n_{\mathscr{F}}(f_\omega^{-n}(x))=\{x\}$ for all $(\omega,x)\in \Sigma_1\times X$.
By using~\eqref{eq:proj-wu}, we have that $\Pi_{\mathbb{S}^1}(W^{uu}(\omega,x ))= \{x\}$. According to Remark~\ref{rem:projX}, the global strong unstable leaf $W^{uu}(\omega,x )$ is not dense in $\Sigma_1\times \mathbb{S}^1$.
\end{exem}

As stated in Theorem~\ref{mainthm1}, we will now prove the density of the leaves of the strong unstable foliation $\mathcal{F}^{uu}(\Phi)$ of $\Phi = \tau \ltimes \mathscr{F}$ by satisfying a slightly stronger criterion than the minimality of $\mathscr{F},$ namely, assuming that $X$ is a strict attractor of $\mathscr{F}$.


\begin{proof}[Proof of Theorem~\ref{mainthm1}] Let $k\geq 1$ be the number of elements in $\mathscr{F}$. Take
$(\omega,x)\in\Sigma_{k}\times X$ and consider a basic open set  $D\times U$ in $\Sigma_{k}\times X$. That is, $U$ is an open set of $X$ and $D=[\alpha;\alpha']$ is a cylinder in $\Sigma_{k}$  where $\alpha=\alpha_{1}\alpha_{2}\dots\alpha_{r}$ and $\alpha'=\alpha'_{0}\alpha'_{1}\dots\alpha'_{r'}$
are finite words in the alphabet~$\{1,\dots,k\}$.

\begin{afir}\label{afi-previo-minimalidad}
There is  $n\in\mathbb{N}$ with $n>r$ and a finite word
$\beta=\beta_{1}\beta_{2}\dots\beta_{m}$ in the alphabet $\{1,\dots,k\}$ with $m<n$ such that
\begin{enumerate}[label=(\roman*)]
    \item\label{item1-afi-previo-minimalidad} $m+r=n$;
    \item\label{item2-afi-previo-minimalidad} $f_{\alpha}\circ f_{\beta}\circ f_{\omega}^{-n}(x)\in U$ where
    $f_{\alpha}=f_{\alpha_{r}}\circ\dots\circ f_{\alpha_{1}}$ and $f_{\beta}=f_{\beta_{m}}\circ\dots\circ f_{\beta_{1}}$;
    \item\label{item3-afi-previo-minimalidad}$W_{loc}^{u}(\omega_{\beta\alpha} )\cap D\neq \emptyset$ where
    $\omega_{\beta\alpha}=\dots\omega_{-(n+1)}\beta_{1}\beta_{2}\dots\beta_{m}\alpha_{1}\alpha_{2}\dots\alpha_{r};\omega_{0}\omega_{1}\dots$
\end{enumerate}
\end{afir}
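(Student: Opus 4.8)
The plan is to reduce condition~\ref{item2-afi-previo-minimalidad} to the uniform $\varepsilon$-density statement of Proposition~\ref{propo-previo-minimalidad}; conditions~\ref{item1-afi-previo-minimalidad} and~\ref{item3-afi-previo-minimalidad} will then be essentially automatic. First I would simply \emph{declare} $m\eqdef n-r$ for a parameter $n>r$ to be fixed later; this makes~\ref{item1-afi-previo-minimalidad} hold by definition, and since we may assume $r\geq 1$ (shrinking the cylinder $D$ if necessary, as cylinders with a nonempty negative word form a basis of the topology of $\Sigma_k$), it also forces $m<n$ and $m\geq 1$ once $n$ is large. For~\ref{item3-afi-previo-minimalidad}, observe that by construction the negative coordinates $-r,\dots,-1$ of $\omega_{\beta\alpha}$ spell out exactly the word $\alpha$ defining $D=[\alpha;\alpha']$; hence the bi-sequence $\eta$ that agrees with $\omega_{\beta\alpha}$ on all negative coordinates and satisfies $\eta_i=\alpha'_i$ for $i=0,\dots,r'$ belongs to $W^u_{loc}(\omega_{\beta\alpha})\cap D$, regardless of the particular choice of $\beta$ and $n$.

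The substance is~\ref{item2-afi-previo-minimalidad}. Set $y_n\eqdef f_\omega^{-n}(x)\in X$. Since $f_\beta=f_{\beta_m}\circ\dots\circ f_{\beta_1}$ runs over exactly the length-$m$ compositions counted by the Hutchinson operator $F=F_{\mathscr{F}}$, we have
\begin{equation*}
  \{\,f_\beta(y_n)\ :\ \beta\in\{1,\dots,k\}^{m}\,\}=F^{m}(\{y_n\}),
\end{equation*}
so~\ref{item2-afi-previo-minimalidad} is equivalent to finding $n>r$ with $F^{\,n-r}(\{y_n\})\cap f_\alpha^{-1}(U)\neq\emptyset$. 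Because each $f_i$ is a homeomorphism, $f_\alpha$ is a homeomorphism, so $f_\alpha^{-1}(U)$ is a nonempty open set; pick $z\in f_\alpha^{-1}(U)$ and $\varepsilon>0$ with $B_{\varepsilon}(z)\subset f_\alpha^{-1}(U)$, so that every $\varepsilon$-dense subset of $X$ meets $f_\alpha^{-1}(U)$. Now apply Proposition~\ref{propo-previo-minimalidad} to $F$ (which is continuous and monotone, and for which $X$ is a strict attractor by the hypothesis of the theorem) to obtain $n_0\in\mathbb{N}$ such that $F^{m}(\{y\})$ is $\varepsilon$-dense in $X$ for every $m\geq n_0$ and \emph{every} $y\in X$. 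Choosing $n$ with $n-r\geq n_0$ and taking $y=y_n$, the set $F^{\,n-r}(\{y_n\})$ is $\varepsilon$-dense, hence meets $f_\alpha^{-1}(U)$; a word $\beta\in\{1,\dots,k\}^{\,n-r}$ realizing a point of this intersection satisfies $f_\alpha\circ f_\beta\circ f_\omega^{-n}(x)\in U$, which is precisely~\ref{item2-afi-previo-minimalidad}.

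The one point that requires attention is that $y_n=f_\omega^{-n}(x)$ varies with $n$, so a merely pointwise density statement would be useless here; it is crucial that Proposition~\ref{propo-previo-minimalidad} provides a single threshold $n_0$ working \emph{uniformly} over all base points $y\in X$. Everything else is routine bookkeeping with cylinder coordinates.
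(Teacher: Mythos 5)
Your proof is correct and follows essentially the same route as the paper: choose $\varepsilon>0$ so that $f_\alpha^{-1}(U)$ contains an $\varepsilon$-ball, invoke the uniform-in-$y$ density threshold of Proposition~\ref{propo-previo-minimalidad} for the Hutchinson operator, apply it at the point $f_\omega^{-n}(x)$ with $n=m+r$, and read off the word $\beta$, with (i) and (iii) being bookkeeping. Your explicit remarks on the uniformity over base points and on the witness for (iii) are exactly the points the paper treats implicitly, so there is nothing to correct.
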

\begin{proof}
Notice that the Hutchinson operator $F=F_\mathscr{F}$ associated with $\mathscr{F}$ is continuous and clearly a monotone map. Then, since $X$ is a strict attractor of $\mathscr{F}$ (i.e., of  $F$) by Proposition~\ref{propo-previo-minimalidad},
setting $\varepsilon>0$ such that  $(f_{\alpha})^{-1}(U)$ contains a ball of radius $\varepsilon$, there is
$m\in\mathbb{N}$ such that
\begin{equation*}
    F^{N}(\{y\})\cap (f_{\alpha})^{-1}(U)\neq\emptyset,\quad\text{for all}\quad N\geq m\quad\text{and for every}\quad y\in X.
\end{equation*}
In particular,
\begin{equation}\label{iv.4}
    F^{m}(\{f_{\omega}^{-(m+r)}(x)\})\cap (f_{\alpha})^{-1}(U)\neq\emptyset.
\end{equation}
Let $n=m+r$. From~\eqref{iv.4}, we have that there is a finite word $\beta=\beta_{1}\beta_{2}\dots\beta_{m}$  in the alphabet $\{1,\dots, k\}$ such that
$f_{\beta}(f_{\omega}^{-n}(x))\in (f_{\alpha})^{-1}(U)$.
  Then $f_{\alpha}\circ f_{\beta}\circ f_{\omega}^{-n}(x)\in U$ which proves~\ref{item1-afi-previo-minimalidad} and~\ref{item2-afi-previo-minimalidad}. Trivially,~\ref{item3-afi-previo-minimalidad} follows.
\end{proof}


With the notation of Claim~\ref{afi-previo-minimalidad}, let us consider the finite word $\sigma=\beta\alpha \in \{1,\dots,k\}^n$ formed by the concatenation of the words $\beta$ and $\alpha$.
We have $f_{\sigma}=f_{\alpha}\circ f_{\beta}$ and $\omega_{\beta\alpha}=\omega_\sigma$. Using item~\ref{item2-afi-previo-minimalidad} and~\ref{item3-afi-previo-minimalidad} from the claim, we obtain that $(W_{loc}^{u}(\omega_{\sigma} )\times {f_{\sigma}\circ f_{\omega}^{-n}(x)})\cap(D\times U)\neq\emptyset$, which implies, by Proposition~\ref{propo-otra-forma-expresar-variedad}, that $W^{uu}(\omega,x )\cap (D\times U)\neq\emptyset$. Therefore, we conclude that the strong unstable leaf $W^{uu}(\omega,x )$ is dense in $X$, and hence the minimality of the foliation is proved.
\end{proof}



The counterexample presented in Example~\ref{example_rotation} of~\cite[Lemma 5.5]{HN13} is based on a family $\mathscr{F}$ consisting of only one element. It is natural to ask whether the strong unstable foliation $\mathcal{F}^{uu}(\Phi)$ can be minimal when $\mathscr{F}$ is minimal and the semigroup $\langle \mathscr{F}\rangle^+$ is free with at least two generators. However, the following corollary of Theorem~\ref{mainthm1} shows that the answer is still negative. This result provides also a non-trivial example in the study of IFS, highlighting the difference between strict attractors and minimality.

\begin{cor}\label{exemplo-pablo}
Let $f_{1}$ and $f_{2}$ be rotation circle diffeomorphisms of angle, respectively,  $\alpha,\beta\in\mathbb{R}\setminus\mathbb{Q}$
with
$\beta-\alpha\in\mathbb{Q}$. Set $\mathscr{F}=\{f_1,f_2\}$ and $\Phi=\tau\ltimes \mathscr{F}$ from $\Sigma_2\times \mathbb{S}^1$ to itseft.
Then
\begin{enumerate}
\item $\mathscr{F}$ is minimal,
     \item $\mathbb{S}^{1}$ is not a strict attractor for $\mathscr{F}$ and
     \item  the strong unstable foliation $\mathcal{F}^{uu}(\Phi)$ is not minimal.
\end{enumerate}
\end{cor}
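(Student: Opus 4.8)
The plan is to exploit that $f_1,f_2$ are rotations, so that the entire semigroup $\langle\mathscr{F}\rangle^+$ consists of rotations and the hypothesis $\beta-\alpha\in\mathbb{Q}$ confines their angles to a single arithmetic progression of rational step. Write $\beta-\alpha=p/q$ in lowest terms with $q\ge 1$, and let $\Lambda=\tfrac1q\mathbb{Z}/\mathbb{Z}$ denote the subgroup of order $q$ of $\mathbb{S}^1\cong\mathbb{R}/\mathbb{Z}$. The first step is the elementary observation that a composition of $n$ generators equals the rotation by $k\alpha+(n-k)\beta=n\alpha+(n-k)\tfrac pq$ for some $0\le k\le n$; consequently, for every $y\in\mathbb{S}^1$ one has $F^n(\{y\})=\{\,y+n\alpha+t:\ t\in S_n\,\}$, where $S_n=\{\,(jp\bmod q)/q:\ 0\le j\le n\,\}\subseteq\Lambda$ and $S_n=\Lambda$ as soon as $n\ge q-1$, because $p$ is invertible modulo $q$.

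Item (1) is then immediate: since $\langle f_1\rangle^+\subseteq\langle\mathscr{F}\rangle^+$ and $\alpha$ is irrational, the orbit $\{x+n\alpha:\ n\ge 1\}$ is dense in $\mathbb{S}^1$, so $\OF(x)$ is dense for every $x$ and $\mathscr{F}$ is minimal. For item (2), the description above shows that $F^n(\{x\})=x+n\alpha+\Lambda$ is a set of exactly $q$ equally spaced points for every $n\ge q-1$; such a set has Hausdorff distance exactly $\tfrac1{2q}$ to $\mathbb{S}^1$, so $F^n(\{x\})\not\to\mathbb{S}^1$. As $\{x\}\in\mathscr{K}(\mathbb{S}^1)$, this already contradicts the definition of a strict attractor, so $\mathbb{S}^1$ is not a strict attractor of the Hutchinson operator $F=F_{\mathscr{F}}$.

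For item (3) I would use the projection identity $\Pi_X(W^{uu}(\omega,x))=\bigcup_{n\ge 0}F^n_{\mathscr{F}}(f_\omega^{-n}(x))$ from \eqref{eq:proj-wu}. Here $f_\omega^{-n}$ is again a rotation, with angle $-(k\alpha+(n-k)\beta)=-n\alpha-(n-k)\tfrac pq$, i.e.\ $f_\omega^{-n}(x)=x-n\alpha+\lambda$ for some $\lambda\in\Lambda$ depending on $\omega$ and $n$. Substituting into $F^n$, the terms $+n\alpha$ and $-n\alpha$ cancel and one obtains $F^n(f_\omega^{-n}(x))=\{\,x+\lambda+t:\ t\in S_n\,\}\subseteq x+\Lambda$, with equality to $x+\Lambda$ whenever $n\ge q-1$. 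Hence $\Pi_X(W^{uu}(\omega,x))=x+\Lambda$ is a finite set and is not dense in $\mathbb{S}^1$; by Remark~\ref{rem:projX}, $W^{uu}(\omega,x)$ is not dense in $\Sigma_2\times\mathbb{S}^1$, so $\mathcal{F}^{uu}(\Phi)$ is not minimal.

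Every computation above is elementary; the only point that requires some care — and the sole obstacle — is the bookkeeping in the first step: a length-$n$ word contributes the angle $n\alpha+(n-k)p/q$ rather than a freely chosen number of copies of $\alpha$, and, dually, $f_\omega^{-n}$ contributes exactly $-n\alpha$ modulo $\Lambda$. This is precisely what makes the $n\alpha$-terms cancel in $\Pi_X(W^{uu}(\omega,x))$ and pins the fiber projection to the single coset $x+\Lambda$, independently of $\omega$ and of $n$ — which is the reason the irrationality of $\alpha$, despite making $\mathscr{F}$ minimal, does not spread the leaf around the circle.
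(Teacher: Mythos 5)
Your proof is correct, and its core is the same bookkeeping as in the paper: a composition of $n$ generators rotates by $n\alpha$ plus a multiple of $\beta-\alpha$, while $f_\omega^{-n}$ rotates by $-n\alpha$ modulo the same rational subgroup, so the $n\alpha$-terms cancel in $\Pi_{\mathbb{S}^1}(W^{uu}(\omega,x))=\bigcup_n F^n(f_\omega^{-n}(x))$, which is therefore contained in a finite coset and not dense; combined with Remark~\ref{rem:projX} this gives item (3) exactly as the paper does, except that the paper restricts to the single sequence $\omega=111\dots$ (which suffices), whereas you run the computation for arbitrary $\omega$, a mild strengthening. The genuine difference is item (2): the paper obtains it \emph{indirectly}, by noting that if $\mathbb{S}^1$ were a strict attractor then Theorem~\ref{mainthm1} would force $\Pi_{\mathbb{S}^1}(W^{uu}(\omega,x))$ to be dense, contradicting its finiteness; you instead prove it \emph{directly}, computing $F^n(\{x\})=x+n\alpha+\Lambda$ for $n\geq q-1$ (with $\Lambda$ the order-$q$ subgroup, $\beta-\alpha=p/q$ in lowest terms) and observing that this set stays at Hausdorff distance $\tfrac{1}{2q}$ from $\mathbb{S}^1$, so singletons do not converge to the circle under the Hutchinson operator. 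Your route is more self-contained (no appeal to Theorem~\ref{mainthm1} and the stability machinery behind it) and yields the quantitative obstruction explicitly; the paper's route is shorter given that Theorem~\ref{mainthm1} is already available and emphasizes the logical point that a strict attractor would force leafwise density. Your treatment of item (1) via the sub-semigroup generated by $f_1$ alone is also fine (the paper leaves this item implicit). The only cosmetic caveat is that the exact value $\tfrac{1}{2q}$ depends on normalizing the circle metric to arc length of total length $1$; all that is needed, and all you actually use, is a positive lower bound independent of $n$.
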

\begin{proof}
Let $\omega=111\dots$ be the infinite sequence whose digits are all one. Given $n\in\mathbb{N}$ and $\sigma\in\{1,2\}^{n}$, consider $0\leq n_{\sigma}\leq n$
the number of times that the digit $2$ appears in the word $\sigma$. Thus, for any $x\in\mathbb{S}^{1}$  we have
\begin{equation*}
    \begin{aligned}
        f_{\sigma}(f_{\omega}^{-n}(x))=& f_{\sigma}(x-n\alpha)
        = x-n\alpha+n_{\sigma}\beta+(n-n_{\sigma})
        \alpha
        =x+n_{\sigma}(\beta-\alpha)
        =R_{\beta-\alpha}^{n_{\sigma}}(x).
    \end{aligned}
\end{equation*}
Consider $F$ the Hutchinson operator associated with $\mathscr{F}$. Note that
\begin{equation} \label{eq:orb-rotation}
   \bigcup_{n\in\mathbb{N}}F^{n}(f_{\omega}^{-n}(x))=\bigcup_{n\in\mathbb{N}} \bigcup_{\sigma\in\{1,2\}^{n}}{f_{\sigma}(f_{\omega}^{-n}(x))}
   =\bigcup_{n\in\mathbb{N}}\bigcup_{\sigma\in\{1,2\}^{n}}{R_{\beta-\alpha}^{n_{\sigma}}(x)} \subset \mathcal{O}^+_{R_{\beta-\alpha}}(x).
\end{equation}
 Since $R_{\beta-\alpha}$ is a rational rotation, the orbit $\mathcal{O}^+_{R_{\beta-\alpha}}(x)$ of $x$ is a finite set. Then, in view of~\eqref{eq:orb-rotation} and~\eqref{eq:proj-wu}, we get that $\Pi_{\mathbb{S}^1}(W^{uu}(\omega,x ))$ is finite where $\Phi=\tau\ltimes \mathscr{F}$. However, if $\mathbb{S}^1$ will be a strict attractor of $\mathscr{F}$, by Theorem~\ref{mainthm1}, this set should be dense in $\mathbb{S}^1$ and thus, infinite.
 This completes the proof of the proposition.
\end{proof}

\subsection{Genericity of minimal strong foliations}
Recall that two maps $f$ and $g$ are conjugated if there exists a homeomorphism $h$ such that $h \circ f = g \circ h$.

\begin{prop} \label{eq:conj}
Let $\mathscr{F}$ and $\mathscr{G}$ be two families that are identified by reordering their elements. Then,
$\Phi=\tau\ltimes \mathscr{F}$ and $\Psi=\tau\ltimes \mathscr{G}$ are conjugated.
\end{prop}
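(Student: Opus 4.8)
The plan is to produce an explicit conjugacy coming from a \emph{one-block recoding} of the base. Write $\mathscr{F}=\{f_{1},\dots,f_{k}\}$ and $\mathscr{G}=\{g_{1},\dots,g_{k}\}$. Since $\mathscr{G}$ is obtained from $\mathscr{F}$ by reordering its elements, there is a permutation $\sigma\in S_{k}$ with $g_{i}=f_{\sigma(i)}$ for every $i\in\{1,\dots,k\}$. Define $\hat\sigma\colon\Sigma_{k}\to\Sigma_{k}$ by $(\hat\sigma(\omega))_{i}=\sigma(\omega_{i})$ for all $i\in\mathbb{Z}$, and set
$$
H\colon\Sigma_{k}\times X\to\Sigma_{k}\times X,\qquad H(\omega,x)=(\hat\sigma(\omega),x).
$$
I would then show that $H$ is a conjugacy between $\Psi=\tau\ltimes\mathscr{G}$ and $\Phi=\tau\ltimes\mathscr{F}$.

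First I would check that $H$ is a homeomorphism. As $\sigma$ is a bijection of $\{1,\dots,k\}$, the map $\hat\sigma$ is a bijection with inverse $\widehat{\sigma^{-1}}$; moreover the integer $m=\min\{i\geq0:\omega_{i}\neq\omega'_{i}\text{ or }\omega_{-i}\neq\omega'_{-i}\}$ entering the definition of $\D_{\Sigma_{k}}$ is unchanged after applying a fixed bijection coordinatewise, so $\hat\sigma$ is in fact an isometry of $(\Sigma_{k},\D_{\Sigma_{k}})$. Hence $H=\hat\sigma\times\mathrm{id}_{X}$ is a homeomorphism of $\Sigma_{k}\times X$.

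Next I would verify the intertwining identity $H\circ\Psi=\Phi\circ H$. The recoding commutes with the shift, $\tau\circ\hat\sigma=\hat\sigma\circ\tau$, and $(\hat\sigma(\omega))_{0}=\sigma(\omega_{0})$; therefore, for every $(\omega,x)\in\Sigma_{k}\times X$,
$$
H(\Psi(\omega,x))=H(\tau(\omega),g_{\omega_{0}}(x))=(\hat\sigma(\tau(\omega)),f_{\sigma(\omega_{0})}(x))=(\tau(\hat\sigma(\omega)),f_{(\hat\sigma(\omega))_{0}}(x))=\Phi(\hat\sigma(\omega),x)=\Phi(H(\omega,x)).
$$
Thus $H$ conjugates $\Psi$ to $\Phi$, which proves the proposition.

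The argument is essentially formal; the only point that needs a sentence of justification is that the coordinatewise recoding $\hat\sigma$ is a homeomorphism of $\Sigma_{k}$ commuting with $\tau$, and this follows immediately from $\sigma$ being a bijection. So I do not anticipate any genuine obstacle here.
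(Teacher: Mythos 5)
Your proof is correct and is essentially the same as the paper's: both conjugate the two skew-products via the map $H=\hat\sigma\times\mathrm{id}_X$, where $\hat\sigma$ relabels each symbol coordinatewise by the permutation relating $\mathscr{F}$ and $\mathscr{G}$ (the paper just states this more briefly, leaving the shift-commutation and homeomorphism checks implicit). The only difference is the direction of the intertwining identity, which is immaterial since $H$ is invertible.
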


\begin{proof}
Let $\sigma$ be the permutation of the elements of $\mathscr{G}$ that provides $\mathscr{F}$.
Consider the complement map $h:\Sigma_k \to \Sigma_k$, which changes the label of each symbol according to the permutation~$\sigma$. Then, the map $H=h\times \mathrm{id}$ satisfies $H\circ \Phi = \Psi \circ H$.
\end{proof}


Let $1\leq r\leq \infty$ and $k\geq 1$ be fixed. Recall the definitions of the metric spaces  $\IFS^r_k(\mathbb{S}^1)$, $\mathrm{RT}_k^r(\mathbb{S}^1)$, $\mathrm{S}_{k}^{r}(\mathbb{S}^{1})$ and $\mathrm{SRT}_k^r(\mathbb{S}^1)$   introduced in~\S\ref{sec:gen-min-intro} and~\S\ref{IFS-intro}.  By Proposition \ref{eq:conj} and the fact that the families in  $\IFS_k^r(\mathbb{S}^1)$
are considered equal regardless of the order of their
element, we get that the symbolic one-step skew-product  maps in $\mathrm{S}_{k}^{r}(\mathbb{S}^{1})$ are identified up to conjugation. Moreover,
in view of Theorem~\ref{teo:skew-transitive}, we have that
\begin{equation*}
    \mathrm{SRT}_{k}^{r}(\mathbb{S}^{1})=\{\Phi=\tau\ltimes \mathscr{F}\in\mathrm{S}_{k}^{r}(\mathbb{S}^{1}): \, \mathscr{F}\in \mathrm{RT}_k^r(\mathbb{S}^1)\}.
\end{equation*}



\begin{proof}[Proof of Theorem~\ref{mainthm2}]
Let $\mathcal{W}$ be the open and dense subset of $\mathrm{RT}_{k}^{r}(\mathbb{S}^{1})$ obtained in Theorem~\ref{mainthm-IFS}. Consider
$\mathcal{R}\eqdef\{ \Phi=\tau\ltimes\mathscr{F}\in\mathrm{SRT}_{k}^{r}(\mathbb{S}^{1}): \mathscr{F}\in\mathcal{W}\}$.
It is clear that $\mathcal{R}$ is both open and dense in $\mathrm{SRT}_{k}^{r}(\mathbb{S}^{1})$. Moreover, for any $\Phi=\tau\ltimes\mathscr{F}\in \mathcal{R}$, we have that $\mathscr{F} \in \mathcal{W}$, which implies that $\mathbb{S}^1$ is a strict attractor for both $\mathscr{F}$ and $\mathscr{F}^{-1}$. Therefore, by applying Theorem~\ref{mainthm1} and Remark~\ref{obs:duality}, we conclude that both the strong stable and strong unstable foliations of $\Phi$ 
are minimal.

On the other hand, if $\Psi=\tau\ltimes\mathscr{G}\in \mathrm{S}^r_k(\mathbb{S}^1)$ is transitive, it follows by Theorem~\ref{teo:skew-transitive} that $\mathscr{G}$ is transitive. Therefore, Theorem~\ref{mainthm-IFS} implies that $\mathscr{G}$ can be approximated by IFSs in $\mathcal{W}$. In other words, $\Psi$ can be approximated by a skew-product in $\mathcal{R}$ as required.
\end{proof}

Finally, we complete the proof of Theorem~\ref{mainthm-examples}.

\begin{proof}[Proof of Theorem~\ref{mainthm-examples}]
Let $\mathscr{F}$ and $\mathscr{G}$ be the families in $\mathrm{RT}^r_{k+1}(\mathbb{S}^1)$ and $\mathrm{RT}^r_{k}(\mathbb{S}^1)$, respectively, as described in Theorem~\ref{mainteo:exemplos-IFS} with $k\geq 8$ and $1\leq r\leq \infty$. By Proposition~\ref{prop:forward-not-backward} and Corollary~\ref{cor:rt}, both families contain the symmetric family $\mathscr{G}_0$ described in Proposition~\ref{propo-existencia-minimal-excep}. Moreover, the group $G=\langle \mathscr{G}_0\rangle^+$ has an exceptional minimal set $K$, and $f(K)\supset K$ for all $f\in \mathscr{F} \cup \mathscr{G}$.

Consider $\Phi=\tau\ltimes \mathscr{F}$ and $\Psi=\tau\ltimes \mathscr{G}$, which belong to $\mathrm{SRT}^r_{k+1}(\mathbb{S}^1)$ and $\mathrm{SRT}^r_k(\mathbb{S}^1)$, respectively. By Theorem~\ref{mainthm1}, the strong unstable foliation $\mathcal{F}^{uu}(\Phi)$ is minimal because $\mathbb{S}^1$ is a strict attractor of $\mathscr{F}$ as established in Proposition~\ref{prop:forward-not-backward}.

To show that $\mathcal{F}^{ss}(\Phi)$ is not minimal, we take $x\in K$ and a sequence $\omega\in \Sigma_k$ such that $f_{\omega}^n \in G$ for all $n\geq 1$. Since $f(K)\subset K$ for all $f\in \mathscr{F}^{-1}$, we have $F_{\mathscr{F}^{-1}}^n(f^n_\omega(x))\subset K$ for all $n\geq 0$, where $F_{\mathscr{F}^{-1}}$ is the Hutchinson operator associated with $\mathscr{F}^{-1}$. Hence, by Remarks~\ref{rem:projXWss} and~\ref{rem:projX}, we have $\Pi_X(W^{ss}(\omega,x )) \subset K$ and is not dense. Therefore, $\mathcal{F}^{ss}(\Phi)$ is not minimal.

The same argument can be applied to show that neither $\mathcal{F}^{ss}(\Psi)$ nor $\mathcal{F}^{uu}(\Psi)$ is minimal, completing the proof of the theorem.
\end{proof}


\section*{Acknowledgement}
The results presented in this paper form part of the doctoral thesis of the second author, J.~A.~Cisneros.
The first author, P.~G.~Barrientos, would like to express his deepest gratitude for the support provided by grant PID2020-113052GB-I00 funded by MCIN, PQ 305352/2020-2 (CNPq), and JCNE E-26/201.305/2022 (FAPERJ).


\begin{thebibliography}{RHRHU07}

\bibitem[BMPV97]{BMPV97}
R.~Bam{\'o}n, C.~G. Moreira, S.~Plaza, and J.~Vera.
\newblock Differentiable structures of central cantor sets.
\newblock {\em Ergodic Theory and Dynamical Systems}, 17(5):1027--1042, 1997.

\bibitem[BL15]{barnsley2015continuity}
M.~F. Barnsley and K.~Le{\'s}niak.
\newblock On the continuity of the hutchinson operator.
\newblock {\em Symmetry}, 7(4):1831--1840, 2015.

\bibitem[BV12]{barnsley2012real}
M.~F. Barnsley and A.~Vince.
\newblock Real projective iterated function systems.
\newblock {\em Journal of Geometric Analysis}, 22(4):1137--1172, 2012.

\bibitem[BF20]{BF20}
P.~G. Barrientos and A.~Fakhari.
\newblock Ergodicity of non-autonomous discrete systems with non-uniform expansion.
\newblock {\em Discrete \& Continuous Dynamical Systems-Series B}, 25(4), 2020.

\bibitem[BFMS17]{BFMS16}
P.~G. Barrientos, A.~Fakhari, D.~Malicet, and A.~Sarizadeh.
\newblock Expanding actions: minimality and ergodicity.
\newblock {\em Stochastics and Dynamics}, 17(04):1750031, 2017.

\bibitem[BFS14]{BFS14}
P.~G. Barrientos, A.~Fakhari, and A.~Sarizadeh.
\newblock Density of fiberwise orbits in minimal iterated function systems on the circle.
\newblock {\em Discrete {\&} Continuous Dynamical Systems - A}, 34(9):3341--3352, 2014.

\bibitem[BGMS17]{BGMS17}
P.~G. Barrientos, F.~H. Ghane, D.~Malicet, and A.~Sarizadeh.
\newblock On the chaos game of iterated function systems.
\newblock {\em Topological methods in nonlinear analysis}, 49(1):105--132, 2017.

\bibitem[BHS23]{BHS22}
C.~Bleak, S.~Harper, and R.~Skipper.
\newblock Thompson's group {$T$} is {$\frac{3}{2}$}-generated.
\newblock {\em Israel Journal of Mathematics}, 2023.

\bibitem[BCGP14]{BCGP14}
C.~Bonatti, S.~Crovisier, N.~Gourmelon, and R.~Potrie.
\newblock Tame dynamics and robust transitivity chain-recurrence classes versus homoclinic classes.
\newblock {\em Transactions of the American Mathematical Society}, 366(9):4849--4871, 2014.

\bibitem[BD96]{BD96}
C.~Bonatti and L.~J. D{\'\i}az.
\newblock Persistent nonhyperbolic transitive diffeomorphisms.
\newblock {\em Annals of Mathematics}, 143(2):357--396, 1996.

\bibitem[BDU02]{BDU02}
C.~Bonatti, L.~J. D{\'\i}az, and R.~Ures.
\newblock Minimality of strong stable and unstable foliations for partially hyperbolic diffeomorphisms.
\newblock {\em Journal of the Institute of Mathematics of Jussieu}, 1(4):513--541, 2002.

\bibitem[Bow75]{bowen1975horseshoe}
R.~Bowen.
\newblock A horseshoe with positive measure.
\newblock {\em Inventiones mathematicae}, 29:203--204, 1975.

\bibitem[Bro87]{B87}
K.~S. Brown.
\newblock Finiteness properties of groups.
\newblock {\em Journal of Pure and Applied Algebra}, 44(1-3):45--75, 1987.

\bibitem[CKN07]{cairns2007topological}
G.~Cairns, A.~Kolganova, and A.~Nielsen.
\newblock Topological transitivity and mixing notions for group actions.
\newblock {\em The Rocky Mountain Journal of Mathematics}, pages 371--397, 2007.

\bibitem[CSS19]{CS19}
K.~Ciesielski and J.~Seoane-Sep{\'u}lveda.
\newblock Differentiability versus continuity: restriction and extension theorems and monstrous examples.
\newblock {\em Bulletin of the American Mathematical Society}, 56(2):211--260, 2019.

\bibitem[CSS18]{CS18}
K.~C. Ciesielski and J.~B. Seoane-Sep{\'u}lveda.
\newblock Simultaneous small coverings by smooth functions under the covering property axiom.
\newblock {\em Real Analysis Exchange}, 43(2):359--386, 2018.

\bibitem[Der20]{deroin2020locally}
B.~Deroin.
\newblock Locally discrete expanding groups of analytic diffeomorphisms of the circle.
\newblock {\em Journal of topology}, 13(3):1216--1229, 2020.

\bibitem[DKN18]{DKN}
B.~Deroin, V.~Kleptsyn, and A.~Navas.
\newblock On the ergodic theory of free group actions by real-analytic circle diffeomorphisms.
\newblock {\em Inventiones mathematicae}, 212:731--779, 2018.

\bibitem[DKN09]{DKN09}
B.~Deroin, V.~A. Kleptsyn, and A.~Navas.
\newblock On the question of ergodicity for minimal group actions on the circle.
\newblock {\em Moscow Mathematical Journal}, 9(2):263--303, 2009.

\bibitem[Duf81]{Mc81}
D.~M. Duff.
\newblock ${C}^{1}$-minimal subsets of the circle.
\newblock In {\em Annales de l'Institut Fourier}, volume~31, pages 177--193, 1981.

\bibitem[DCJ10]{durand2010pointwise}
E.~Durand-Cartagena and J.~Jaramillo.
\newblock Pointwise lipschitz functions on metric spaces.
\newblock {\em Journal of Mathematical Analysis and Applications}, 363(2):525--548, 2010.

\bibitem[Ghy01]{Gh01}
{\'E}.~Ghys.
\newblock Groups acting on the circle.
\newblock {\em Enseignement Mathematique}, 47(3/4):329--408, 2001.

\bibitem[GS87]{GS}
{\'E}.~Ghys and V.~Sergiescu.
\newblock Sur un groupe remarquable de diff{\'e}omorphismes du cercle.
\newblock {\em Commentarii Mathematici Helvetici}, 62:185--239, 1987.

\bibitem[GI99]{GI99}
A.~S. Gorodetski and Y.~S. Ilyashenko.
\newblock Certain new robust properties of invariant sets and attractors of dynamical systems.
\newblock {\em Functional Analysis and Its Applications}, 33(2):95--105, April 1999.

\bibitem[GI00]{GI00}
A.~Gorodetski and Y.~S. Ilyashenko.
\newblock Certain properties of skew products over a horseshoe and a solenoid.
\newblock In {\em Proc. Steklov Inst. Math}, volume 231, pages 90--112, 2000.

\bibitem[HPS77]{HPS77}
M.~Hirsch, C.~Pugh, and M.~Shub.
\newblock Invariant manifolds (lecture notes in mathematics, 583), 1977.

\bibitem[HN13]{HN13}
A.~J. Homburg and M.~Nassiri.
\newblock Robust minimality of iterated function systems with two generators.
\newblock {\em Ergodic Theory and Dynamical Systems}, 34(6):1914--1929, 2013.

\bibitem[Hut81]{HUT81}
J.~E. Hutchinson.
\newblock Fractals and self similarity.
\newblock {\em Indiana University Mathematics Journal}, 30(5):713--747, 1981.

\bibitem[KKO18]{kleptsyn2018classification}
V.~Kleptsyn, Y.~Kudryashov, and A.~Okunev.
\newblock Classification of generic semigroup actions of circle diffeomorphisms.
\newblock {\em Preprint arXiv:1804.00951}, 2018.

\bibitem[KN04]{kleptsyn2004contraction}
V.~A. Kleptsyn and M.~Nalskii.
\newblock Contraction of orbits in random dynamical systems on the circle.
\newblock {\em Functional Analysis and Its Applications}, 38(4):267--282, 2004.

\bibitem[KN10]{koropecki2009transitivity}
A.~Koropecki and M.~Nassiri.
\newblock Transitivity of generic semigroups of area-preserving surface diffeomorphisms.
\newblock {\em Mathematische Zeitschrift}, 3(266):707--718, 2010.

\bibitem[LSSV22]{LSSV22}
K.~Leśniak, N.~Snigireva, F.~Strobin, and A.~Vince.
\newblock Transition phenomena for the attractor of an iterated function system.
\newblock {\em Nonlinearity}, 35(10):5396, 2022.

\bibitem[Ma{\~n}78]{mane1978contributions}
R.~Ma{\~n}{\'e}.
\newblock Contributions to the stability conjecture.
\newblock {\em Topology}, 17(4):383--396, 1978.

\bibitem[Moi77]{Moise1977}
E.~E. Moise.
\newblock {\em Homeomorphisms between Cantor sets}, pages 83--90.
\newblock Springer New York, New York, NY, 1977.

\bibitem[NP12]{NP12}
M.~Nassiri and E.~R. Pujals.
\newblock Robust transitivity in hamiltonian dynamics.
\newblock In {\em Annales scientifiques de l'{\'E}cole normale sup{\'e}rieure}, volume~45, pages 191--239, 2012.

\bibitem[Nav11]{Navas}
A.~Navas.
\newblock {\em Groups of circle diffeomorphisms}.
\newblock University of Chicago Press, 2011.

\bibitem[Nob15]{Nob:15}
F.~Nobili.
\newblock Minimality of invariant laminations for partially hyperbolic attractors.
\newblock {\em Nonlinearity}, 28(6):1897, 2015.

\bibitem[Pi{\~n}23]{piñeyrúa2023hyperbolicity}
L.~P. Pi{\~n}eyr{\'u}a.
\newblock Some hyperbolicity revisited and robust transitivity.
\newblock {\em preprint arXiv:2302.01914}, 2023.

\bibitem[PS06]{PS06}
E.~R. Pujals and M.~Sambarino.
\newblock A sufficient condition for robustly minimal foliations.
\newblock {\em Ergodic Theory Dynam. Systems}, 26(1):281--289, 2006.

\bibitem[RHRHU07]{RHU07}
F.~Rodriguez~Hertz, M.~A. Rodriguez~Hertz, and R.~Ures.
\newblock Some results on the integrability of the center bundle for partially hyperbolic diffeomorphisms.
\newblock In {\em Partially hyperbolic dynamics, laminations, and {T}eichm\"{u}ller flow}, volume~51 of {\em Fields Inst. Commun.}, pages 103--109. Amer. Math. Soc., Providence, RI, 2007.

\bibitem[RHUY22]{RUY22}
J.~Rodriguez~Hertz, R.~Ures, and J.~Yang.
\newblock Robust minimality of strong foliations for da diffeomorphisms: $cu$-volume expansion and new examples.
\newblock {\em Transactions of the American Mathematical Society}, 375(6):4333--4367, 2022.

\bibitem[Ryp19]{MR19}
M.~Rypka.
\newblock {Stability of multivalued attractor}.
\newblock {\em Topological Methods in Nonlinear Analysis}, 53(1):97 -- 109, 2019.

\bibitem[Sar23]{sarizadeh2023attractor}
A.~Sarizadeh.
\newblock Attractor for minimal iterated function systems.
\newblock {\em Collectanea Mathematica}, pages 2038--4815, 2023.

\bibitem[Shu71]{shub1971topological}
M.~Shub.
\newblock Topological transitive diffeomorphisms in ${T}^{4}$ (lecture notes in mathematics, 206), 1971.

\bibitem[SS85]{shub1985expanding}
M.~Shub and D.~Sullivan.
\newblock Expanding endomorphisms of the circle revisited.
\newblock {\em Ergodic Theory and Dynamical Systems}, 5(2):285--289, 1985.

\bibitem[Sri08]{srivastava2008course}
S.~M. Srivastava.
\newblock {\em A course on Borel sets}, volume 180.
\newblock Springer Science \& Business Media, 2008.

\end{thebibliography}
\end{document}